\documentclass[11pt,a4paper]{article}
\usepackage[utf8]{inputenc}
\usepackage{amsmath}
\usepackage{amsfonts}
\usepackage{amsthm}
\usepackage{todonotes}
\usepackage{hyperref}
\usepackage{enumerate}
\numberwithin{equation}{section}
\usepackage{enumitem}
\usepackage{tikz-cd}
 
\usepackage{bm} 
\usepackage{tikz}

\newcounter{savecntr}

\DeclareMathOperator{\Var}{Var}
\DeclareMathOperator{\Cov}{Cov}
\newcommand{\ve}{\varepsilon} 
\newcommand{\skodual}{\mathcal D(\mathbf{R}_+,\mathcal H^{-L,\gamma}(\mathbf{R}^d))}
\newcommand{\sob}{\mathcal H^{L,\gamma}(\mathbf{R}^d)}

\newcommand{\fsn}{\frac{1}{N}\sum_{i=1}^N}

\newcommand{\dtki}{W_{k+1}^i-W_k^i}
\newcommand{\indep}{\perp \!\!\! \perp}
\newcommand{\noi}{\noindent{}} 
\newcommand{\di}{\mathrm{d}} 


\newcommand{\rp}{\mathbf{R}_+}
\usepackage{amssymb}

\newtheorem{theo}{Theorem} 
\newtheorem{deff}{Definition} 
\newtheorem{rem}{Remark} 
\newtheorem{lem}{Lemma}[section]
\newtheorem{cor}{Corollary}[section]
\newtheorem{prop}[lem]{Proposition}

 \usepackage{titlesec}

\titleformat*{\section}{\large\bfseries}
\titleformat*{\subsection}{\normalsize\bfseries}
\titleformat*{\subsubsection}{\normalsize\bfseries}
\titleformat*{\paragraph}{\normalsize\bfseries}
\titleformat*{\subparagraph}{\normalsize\bfseries}

\usepackage{graphicx}
\usepackage[left=2cm,right=2cm,top=2cm,bottom=2cm]{geometry}
\bibliographystyle{alpha} 

\usepackage{geometry}
 \geometry{hmargin=1.4cm,vmargin=3.2cm}

\usepackage{mathrsfs}

\title{\Large{\textbf{Law of large numbers and central limit theorem for wide two-layer neural networks: the mini-batch and noisy   case}}}

\author{Arnaud Descours\setcounter{savecntr}{\value{footnote}}\thanks{Laboratoire de Math\'ematiques Blaise Pascal UMR 6620, Université Clermont-Auvergne, Aubière, France. E-mail: \{arnaud.descours,boris.nectoux\}@uca.fr}, \, Arnaud Guillin\setcounter{savecntr}{\value{footnote}}\thanks{Laboratoire de Math\'ematiques Blaise Pascal UMR 6620, Université Clermont-Auvergne, Aubière, France,  and Institut Universitaire de France. E-mail:  arnaud.guillin@uca.fr}, \,  Manon Michel\thanks{CNRS, Laboratoire de Math\'ematiques Blaise Pascal UMR 6620, Université Clermont-Auvergne, Aubière, France. E-mail:  manon.michel@uca.fr},  \,  and Boris Nectoux\footnotemark[1]} 
\date{}

\begin{document}
\maketitle

\begin{abstract}
  In this work, we consider a wide two-layer neural network and study
  the behavior of its empirical weights under a dynamics set by a
  stochastic gradient descent along the quadratic loss with
  mini-batches and noise. Our goal is to prove a trajectorial law of
  large number as well as a central limit theorem for their
  evolution. When the noise is scaling as $1/N^\beta$ and
  $1/2<\beta\le\infty$, we rigorously derive and generalize the LLN
  obtained for example in \cite{Vanden1,mei2,sirignano2020lln}. When
  $3/4<\beta\le\infty$, we also generalize the CLT (see also
  \cite{sirignano2020clt}) and further exhibit the effect of
  mini-batching on the asymptotic variance which leads the
  fluctuations. The case $\beta=3/4$ is trickier and we give an
  example showing the divergence with time of the variance thus
  establishing the instability of the predictions of the neural
  network in this case. It is illustrated by simple numerical
  examples.
 \end{abstract}

\noindent \textbf{Keywords.} Machine learning, neural networks, law of large numbers, central limit theorem, empirical measures, particle systems,  mean field.   \\
 \textbf{AMS classification (2020).} 68T07,	 60F05,	60F15,	35Q70.

  \tableofcontents

\section{Setting and main results}

\subsection{Introduction}

\textbf{Setting and purpose of this work}. Thanks to their impressive
results, deep learning techniques have nowadays become standard
supervised learning methods in various fields of engineering or
research \cite{goodfellow2016deep}. A robust understanding of their
behavior and efficiency is however still lacking and a large effort is
put towards achieving mathematical foundations of empirical
observations. Among this effort, the case of wide two-layer single
network, and its connection with mean-field network, has particularly
been fruitful, as considered for example in \cite{Vanden2,mei2018mean,
  sirignano2020lln,sirignano2020clt}. In such setting, a convergence
towards a limit PDE system can be established when the neuron numbers
goes to infinity. The behavior in long time of this limit PDE may then
give an easier framework to establish the convergence towards
minimizers of the loss function of the neural network. Partial results
can be found in this direction \cite{mei2018mean,NEURIPS2018_a1afc58c}
but as underlined in \cite{E-2020}, a lot still remains to be
understood and proved mathematically rigorously. In this context, our
work is two-fold. First, we will concern ourselves with the
mathematical justification of the law of large numbers and central
limit theorems of the trajectory of the empirical measure of the
weights, under the optimization by a stochastic gradient descent
(SGD), with mini-batching and in the presence of noise with a range of
scalings.  Mini-batch SGD \cite{Bottou2018} is widely used in machine
learning since it allows for shorter training times thanks to
parallelisation, while reducing the variance in SGD estimates. How to
choose the optimal mini-batch size, and furthermore with theoretical
guarantees, remains an active research line
\cite{keskar2017large,smith2018,qian19}.  Introducing noise in SGD, as
considered in \cite{mei2018mean}, can lead to better generalisation
perfomance thanks to an improved ability to escape saddle points, as
shown in \cite{jin2021nonconvex}. Note that this differs from the
analysis approach consisting in directly modelizing the noise of SGD
as for instance done in \cite{Wu-2019,Sim-2019}. Second we will do so
by providing a rigorous framework which could be generalized to study
overparametrized limit of other neural networks (e.g. deep ensemble,
bayesian neural networks, ...). Thus, the benefit of the
overparametrized limit and its convexification of the loss landscape
through a non-linear PDE could lead in these different architectures
to derivations of theoretical guarantees of convergence, while it
remains hard to analyse these landscapes directly in the case of a
finite number of neurons, even large.

Let us now precise the framework for this paper.
Let $(\Omega, \mathcal F,\mathbf P)$ be a probability space, and
$\mathcal{X}$ and $\mathcal{Y}$ be subsets of $\mathbf{R}^n$
($n\geq 1$) and $\mathbf{R}$ respectively. In this work, we consider
the following two-layer neural network
\begin{equation}\label{neural network g}g_W^N(x):=\frac{1}{N}\sum_{i=1}^N\sigma_*(W^i,x),\end{equation}
where $x\in\mathcal{X}$ denotes the input data,
$g_W^N(x)\in\mathbf{R}$ the output returned by the neural network,
$\sigma_*:\mathbf{R}^d\times\mathcal{X}\rightarrow\mathbf{R}$ the
activation function, $N\geq 1$ the number of neurons on the hidden
layer, and $W=(W^1,\dots,W^N)\in(\mathbf{R}^d)^N$ are the weights to
optimize ($d\geq 1$).  In the supervised learning setting, a data
point $(x,y)\in\mathcal{X}\times\mathcal{Y}$ is distributed according
to $\pi\in\mathcal{P}(\mathcal{X}\times\mathcal{Y})$, where
$\mathcal{P}(\mathcal{X}\times\mathcal{Y})$ denotes the set of
probability measures on $\mathcal{X}\times\mathcal{Y}$.  Ideally, one
chooses the weights $W=(W^1,\dots,W^N)$ as a global minimizer of the
risk $\textbf{E}_{\pi}[\mathsf L(g_W^N(x),y)]$, where
$\mathsf L:\mathbf{R}\times\mathbf{R}\rightarrow\mathbf{R}$ is the
so-called loss function ($\textbf{E}_{\pi}$ stands for the expectation
when $(x,y)\sim \pi$). In this work, we consider the square loss
function out of simplicity, but other loss function or classification
problem could be considered, namely:
$$\mathsf L(g_W^N(x),y)=\frac{1}{2}\big |g_W^N(x)-y\big |^2.$$
Since the risk can not be computed (because $\pi$ is unknown), the
parameters are usually learned by stochastic gradient descent.  In
this work, we consider the mini-batch setting with weak noise,
which is defined as follows. First, for $k\ge 0$, consider
$((x_k^n,y_k^n))_{n\ge 1}$ a sequence of random elements on
$\mathcal{X}\times\mathcal{Y}$ (each $(x_k^n,y_k^n)$ being distributed
according to $\pi$), and $N_k$ a random element with values in
$\mathbf N^*=\{1,2,3,\ldots\}$. Then, the mini-batch $B_k$ is defined
by:
$$B_k=\{(x^1_k,y_k^1), \ldots, (x^{N_k}_k,y^{N_k}_k)\}, \text{ in particular $|B_k|=N_k$, where $|B_k|$ denotes the cardinality of $B_k$}.$$
In addition, at each
iteration of SGD, we add a Gaussian noise term, whose variance is
scaled according to $N^{-2\beta}$, with $\beta>\frac 12$, hence qualified
\emph{weak}. Note that the case of Gaussian noise with $\beta = 1/2$
is addressed in \cite{mei2018mean} and could also be considered here
in our setting, but with additional assumptions to integrate the noise
in the limit process.

Thus, the SGD algorithm we consider is the following : for $k\geq 0$
and $i\in \{1,\dots,N\}$,
\begin{equation}\begin{cases}\label{algorithm}
W_{k+1}^i=W_k^i+\displaystyle\frac{\alpha}{N|B_k|}\sum_{(x,y)\in B_k}(y-g_{W_k}^N(x))\nabla_W\sigma_*(W_k^i,x)+\frac{\ve_k^i}{N^\beta},\\
W_0^i\sim \mu_0,
\end{cases}\end{equation}
where $\ve_k^i\sim \mathcal{N}(0,I_d)$ and $\mu_0\in\mathcal{P}(\mathbf{R}^d)$.   
The evolution of the weights is tracked through their empirical distribution $\nu_k^N$ (for $k\geq 0$) and its scaled version $\mu_t^N$ (for $t\in\mathbf{R}_+$), which are defined as follows:  
$$
\nu_k^N:=\frac{1}{N}\sum_{i=1}^N\delta_{W_k^i} \ \text{ and }  \  
\mu_t^N:=\nu_{\lfloor Nt\rfloor}^N.
$$
For an element $\mu \in \mathcal M_b(\mathbf R^d)$ (the space of  bounded countably additive measures on $\mathbf R^d$),  we use the notation
$$\langle f,\mu\rangle_{\mathrm m} =\int_{\mathbf{R}^d} f(w)  \mu(\di w),$$
for any $f: \mathbf R^d\to \mathbf R$ such that $\int_{\mathbf{R}^d} f(w)  \mu(\di w)$ exists. If no confusion is possible, we simply denote $\langle f,\mu\rangle_{\mathrm m}$ by $\langle f,\mu\rangle$. 
For instance, considering the neural network~\eqref{neural network g}, we have, for any $x\in\mathcal{X}$, 
 $$g_{W_k}^N(x)=\frac{1}{N}\sum_{i=1}^N\sigma_*(W^i_k,x)=\langle\sigma_*(\cdot,x),\nu_k^N\rangle, \  \ k\ge 0.$$ 
 In this work, we prove that the the whole trajectory of the scaled
 empirical measures of the weights defined by~\eqref{algorithm}
 (namely $\lbrace t\mapsto\mu_t^N,t\in\mathbf{R}_+\rbrace_{N\geq1}$)
 satisfies a law of large numbers and a central limit theorem, see
 respectively Theorem~\ref{thm:lln} and Theorem~\ref{thm:clt}.  We
 also exhibit a particular fluctuation behavior depending on the value
 of the parameter $\beta$ ruling the weakness of the added noise.\medskip

 \noindent \textbf{Related works}.  Law of large numbers and central
 limits theorems have been obtained for several kinds of mean-field
 interacting particle systems in the mathematical literature, see for
 instance~\cite{sznitman_topics_1991,hitsuda1986tightness,fernandez1997hilbertian,jourdainAIHP,delarue,delmoral,kurtz2004stochastic}
 and references therein. When considering particle systems arising
 from the SGD-minimization problem in a two-layer neural network, we
 refer to~\cite{mei2018mean} for a law of large numbers on the
 empirical measure at fixed times, see also \cite{mei2}. We also refer
 to~\cite{Vanden2} where conditions for global convergence of the GD
 on the ideal loss and of the SGD with mini-batches increasing in size
 with $N$, as well as the scaling of the error with the size of the
 network, are established from formal asymptotic arguments. Doing so,
 they also observe with increasing mini-batch size in the SGD the
 reduction of the variance of the process leading the fluctuations of
 the empirical measure of the weights (see \cite[Arxiv-V2. Sec
 3.3]{Vanden2}), until the mini-batches are large enough to recover
 the situation of the idealized gradient descent (similar to an
 infinite batch), which leads to other order of fluctuations (see
 \cite[Arxiv-V2. Prop 2.3]{Vanden2}). We also refer to \cite{Vanden1}
 for a similar line of work on the GD on the empirical loss. A law of
 large numbers and a central limit theorem on the whole trajectory of
 the empirical measure are also obtained in
 \cite{sirignano2020lln,sirignano2020clt} for a standard SGD
 scheme. We also mention the work done in \cite{durmus-neural} on
 propagation of chaos for SGD with different step-size schemes. In
 this work, and compared to the existing literature dealing with the
 SGD minimization problem in two-layer neural networks, we provide a
 rigorous proof with precise justifications of all steps of the
 existence of the limit PDE (in particular, uniqueness and relative
 compactness) in the law of large numbers as well as the limit process
 for the central limit theorem on the trajectory of the empirical
 measure. This will be the basis for future works on deep ensembles or
 overparameterized bayesian neural networks. We furthermore do so in a
 more general variant of SGD with mini-batching of any size and weak
 noise (see~\eqref{algorithm}). A noisy SGD was also considered in
 \cite{mei2018mean}, corresponding to $\beta=1/2$ in our setting, for
 which they obtain for the LLN a different limit PDE than in the
 non-noisy case (presence of an additionnal regularizing Laplacian
 term in the limit equation). While we could recover in a
 straightforward manner a trajectorial version of \cite{mei2018mean},
 we consider here out of concision the range $\beta> 1/2$, showing a
 single limit PDE for the LLN, and obtain a similar result for
 $\beta > 3/4$ for the CLT, while showing analytically for $\beta=3/4$
 and numerically for $\beta\leq 3/4$ a particular fluctuation
 behavior. Furthermore, we analytically show the expected reduction,
 with the mini-batch size, of the variance of the process leading the
 fluctuations of the weight empirical measure and numerically display
 the reduction of the global variance.

\subsection{Main results}

The sequence
$\lbrace t\mapsto\mu_t^N,t\in\mathbf{R}_+\rbrace_{N\geq1}$ is studied
as a sequence of processes with values in the dual of some (weighted)
Hilbert space on $\mathbf R^d$. These Hilbert spaces are introduced in
the next section.

\subsubsection{Notation and assumptions}

\textbf{Weighted Sobolev spaces}. Following \cite[Chapter
3]{adams2003sobolev}, we consider, for a function
$g\in \mathcal C_c^\infty(\mathbf R^d)$ (the space of functions
$g:\mathbf{R}^d\rightarrow\mathbf{R}$ of class $\mathcal C^\infty$
with compact support), the following norm, defined for
$J\in\textbf{N}$ and $b\geq0$ :
$$\|g\|_{\mathcal H^{J,b}}:=\Big(\sum_{|k|\leq J}\int_{\mathbf{R}^d}\frac{|D^kg(x)|^2}{1+|x|^{2b}}\di x\Big)^{1/2}.$$
Let $\mathcal H^{J,b}(\mathbf{R}^d)$ be the closure of the set
$\mathcal C_c^\infty(\mathbf{R}^d)$ for this norm. The space
$\mathcal H^{J,b}(\mathbf{R}^d)$ is a Hilbert space when endowed
with the norm $\|\cdot\|_{\mathcal H^{J,b}}$. The associated
scalar product on $\mathcal H^{J, b}(\mathbf{R}^d)$ will be denoted
by $\langle\cdot,\cdot\rangle_{\mathcal H^{J,b}}$.  We denote by
$\mathcal H^{-J,b}(\mathbf{R}^d)$ its dual space.  For an element
$\Phi \in \mathcal H^{-J,b}(\mathbf{R}^d)$, we use the notation
$$\langle f,\Phi\rangle_{J,b}= \Phi[f], \ f\in \mathcal H^{J,b}(\mathbf{R}^d).$$
For ease of notation, and if no confusion is possible, we simply
denote $\langle f,\Phi\rangle_{J,b }$ by $\langle f,\Phi\rangle$.
Let us now define $\mathcal C^{J,b }(\mathbf{R}^d)$ as the space of
functions $g:\mathbf{R}^d\rightarrow\mathbf{R}$ with continuous
partial derivatives up to order $J\in\textbf{N}$ such that
$$\text{for all} \ |k|\leq J, \ \lim_{|x|\rightarrow\infty}\frac{|D^kg(x)|}{1+|x|^b }=0.$$
This space is endowed with the norm
$$\|g\|_{\mathcal C^{J,b }}:=\sum_{|k|\leq J}\ \sup_{x\in\mathbf{R}^d}\frac{|D^kg(x)|}{1+|x|^b }.$$
We also introduce $\mathcal C_b(\mathbf{R}^d)$, the space of bounded
continuous functions $g:\mathbf{R}^d\rightarrow\mathbf{R}$, endowed
with the supremum norm. We also denote by
$\mathcal C^\infty_b(\mathbf{R}^d)$ the space of smooth functions over
$\mathbf R^d$ whose derivatives of all order are bounded.  We have
$\mathcal C^\infty_b(\mathbf{R}^d)\subset \mathcal
H^{J,b }(\mathbf{R}^d)$ as soon as $b >d/2$ (more generally
$x\in \mathbf{R}^d \mapsto (1-\chi(x))|x|^{ a}\in \mathcal
H^{J,b }(\mathbf{R}^d)$ if $b - a>d/2$, where $\chi\in \mathcal C_c^\infty(\mathbf R^d, [0,1])$  equals $1$ near $0$).  \medskip

\noindent
\textbf{Weighted Sobolev embeddings}. We recall that from
\cite[Section 2]{fernandez1997hilbertian},
\begin{equation}\label{eq.Sobolev-embH}
  \text{$\mathcal H^{\ell+j,a}(\mathbf{R}^d)\hookrightarrow_{\text{H.S.}} \mathcal H^{j,a+b }(\mathbf{R}^d)$ when $\ell>d/2$, $b >d/2$, and $a,j\ge 0$}
\end{equation}
where $\hookrightarrow_{\text{H.S.}}$ means that the embedding is of Hilbert-Schmidt type, and 
\begin{equation}\label{eq.SEC}
\text{$\mathcal H^{\ell+j,a}(\mathbf{R}^d)\hookrightarrow \mathcal C^{j,a}(\mathbf{R}^d)$ when $\ell>d/2$,  and  $a,j\ge 0$}.
\end{equation}
We  set 
\begin{equation}\label{eq.Lgamma}
L= \lceil\frac{d}{2}\rceil+3 , \ \gamma=4\lceil\frac{d}{2}\rceil+5,  \text{ and } \gamma_* :=\gamma+1.
\end{equation} 
 According to~\eqref{eq.SEC} and since $\gamma_*>\gamma$, it holds:
\begin{equation}
\label{eq.SE1}
\mathcal H^{L,\gamma}(\mathbf{R}^d)\hookrightarrow \mathcal C^{2,\gamma}(\mathbf{R}^d)\hookrightarrow \mathcal C^{2,\gamma_*}(\mathbf{R}^d).
\end{equation}

We set throughout this work, for all $N\geq 1$:
$$\mu^N:=\lbrace t\mapsto\mu_t^N, t\in\mathbf{R}_+\rbrace.$$
When $E$ is a metric space, we denote by $E'$ its dual and by
 $\mathcal D(\textbf R_+, E)$ the set of càdlàg functions from
$\mathbf{R}_+$ to $E$.
 For $b \ge 0$ and for all $N\geq 1$,
$\mu^N$ is a random element of 
$\mathcal D(\textbf R_+, \mathcal C^{0,b }(\textbf R^d)')$, and thus also of $\mathcal D(\textbf R_+, \mathcal H^{-J,b }(\textbf R^d))$, as soon as $J>d/2$ (by \eqref{eq.SEC}).

Let for $k\ge 1$,
\begin{equation}\label{eq.Pk}
\mathcal{P}_k(\mathbf{R}^d):=\big\lbrace\mu\in\mathcal{P}(\mathbf{R}^d), \ \int_{\mathbf{R}^d}|w|^k\mu(\di w)<+\infty\big\rbrace,
\end{equation}
 which is endowed with the   Wasserstein distance 
 $$\mathsf W_k(\mu,\nu)= \big[\inf\{\textbf E[ |X-Y|^k|], \,\textbf P_X=\mu \text{ and }  \textbf P_Y=\nu \}\big]^{1/k}.$$
 We refer for instance to \cite[Chapter 5]{santambrogio2015optimal}
 for more about these spaces.  We recall that
 $\mathsf W_1(\mu,\nu)\le \mathsf W_k(\mu,\nu)$ ($k\ge 1$) and the
 dual formula for $\mathsf W_1(\mu,\nu)$:
\begin{equation}\label{Kantorovitch Rubinstein}
\mathsf W_1(\mu,\nu)=\sup\Big \lbrace\big|\int_{\mathbf{R}^d}f(w)\di\mu(w)-\int_{\mathbf{R}^d}f(w)\nu(\di w)\big|, \  \|f\|_{\text{Lip}}\leq 1\Big\rbrace.\end{equation}
 Note also that for all $N\geq 1$,
$\mu^N$ is a random element of
$\mathcal D(\textbf R_+, \mathcal P_{q}(\mathbf R^d))$, for all $q\ge 0$.
\medskip

\noindent
\textbf{Assumptions}.
For $N\ge 1$, we introduce  the $\boldsymbol{\sigma}$-algebras,   
\begin{equation}\label{eq.filtration}
\mathcal{F}_0^N=\boldsymbol{\sigma}\{ \{W_0^i\}_{i=1}^N\big \} \text{ and, for $k\ge1$,  } \mathcal{F}_k^N=\boldsymbol{\sigma}\big\{ W_0^i, \{B_j\}_{j=0}^{k-1},   \{\ve_j^i\}_{j=0}^{k-1},  \,  i\in \{1,\dots,N\}\big\}.
\end{equation}
The main assumptions of this work are the following:
\begin{enumerate}[label=\normalfont\textbf{A\arabic*}., ref=\normalfont\textbf{A\arabic*}]
\item\label{as:batch} For all $k,q\in\textbf{N}$, $|B_q|\indep ((x_k^n,y_k^n))_{n\ge 1}$. In addition, for all $k\in\textbf{N}$, $\big(|B_k|, ((x_k^n,y_k^n))_{n\ge 1}\big)\indep \mathcal  {F}_k^N$.
\item\label{as:sigma} The activation function $\sigma_* : \mathbf{R}^d\times\mathcal{X}\rightarrow\mathbf{R}$ belongs to $\mathcal C^\infty_b(\mathbf{R}^d\times \mathcal{X})$. 
\item\label{as:data} For all $\ell\neq k\in\textbf{N}$,
  $((x^n_\ell,y^n_\ell))_{n\ge 1} \indep ((x_k^n,y_k^n))_{n\ge 1}$. In
  addition, for all $k\in \textbf{N}$, $((x_k^n,y_k^n))_{n\ge 1}$ is a
  sequence of i.i.d random variables from
  $ \pi\in\mathcal{P}(\mathcal{X}\times\mathcal{Y})$, and
  $\mathbf{E}[|y|^{16\gamma_*}]$ is finite.
\item\label{as:moments initiaux}The randomly initialized parameters $\{W_0^i\}_{i=1}^N$ are i.i.d. with a distribution $\mu_0\in\mathcal{P}(\mathbf{R}^d)$ such that $\mathbf{E}[|W_0^1|^{8\gamma_*}]<+\infty$.
\item\label{as:noise} For all $k\in\textbf{N}$ and
  $i\in \{1,\dots,N\}$, $\ve_k^i\sim\mathcal{N}(0,I_{d})$ and
  $\ve_k^i\indep \mathcal F_k^N$. In addition, for all
  $k,l\in\textbf{N}$ and $i,j\in \{1,\dots,N\}$ such that
  $(i,k)\neq (j,l)$, $\ve_k^i\indep \ve_l^j$.
\end{enumerate}

\subsubsection{Law of large numbers for the empirical measure}
\label{sec.LLNR}
\textbf{Statement of the law of large numbers}. The first main result
of this work is a law of large numbers for the trajectory of the
scaled empirical measures.
\begin{theo} 
\label{thm:lln}
Let $\beta>1/2$ and assume~\ref{as:batch}-\ref{as:noise}.  Then,  the sequence $(\mu^N)_{N\ge 1}$ converges in probability to a deterministic element
  $\bar \mu$ in $\mathcal D(\mathbf R_+, \mathcal P_{\gamma}(\mathbf{R}^d))$. In addition, $\bar \mu \in \mathcal C(\mathbf R_+, \mathcal P_{1}(\mathbf{R}^d))$ and it is the unique solution in $\mathcal C(\mathbf R_+, \mathcal P_{1}(\mathbf{R}^d)$ of the following measure-valued equation:
\begin{align}
\nonumber
&\forall f\in \mathcal C_b^\infty(\mathbf R^d), t\in \mathbf R_+, \\
\label{eq limite} 
& \langle f,\bar \mu_t\rangle =\langle f,\mu_0\rangle  + \int_0^t \int_{\mathcal{X}\times\mathcal{Y}}\alpha(y-\langle \sigma_*(\cdot, x),\bar \mu_s\rangle )\langle\nabla f\cdot\nabla\sigma_*(\cdot,x),\bar \mu_s\rangle \, \pi(\di x,\di y)\, \di s. 
\end{align} 
\end{theo}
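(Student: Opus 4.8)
The plan is to follow the classical martingale/compactness route for empirical measures of interacting particle systems (as in \cite{sirignano2020lln,fernandez1997hilbertian}), adapted to the discrete-time mini-batch SGD with weak noise. First I would derive the basic \emph{a priori} moment estimates: using \ref{as:moments initiaux}, \ref{as:data}, \ref{as:noise} and the boundedness of $\sigma_*$ and its derivatives from \ref{as:sigma}, one controls $\mathbf E[\sup_{i}|W_k^i|^p]$ and $\mathbf E[\langle |w|^p,\nu_k^N\rangle]$ uniformly in $N$ for the relevant exponents tied to $\gamma,\gamma_*$ in \eqref{eq.Lgamma}; the one-step increment in \eqref{algorithm} is of size $O(1/N)$ plus a noise term of size $O(N^{-\beta})$ with $\beta>1/2$, so summing $\lfloor Nt\rfloor$ steps gives $O(t)$ drift contributions and an $L^2$ noise contribution of order $N^{1-2\beta}\to 0$. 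This is where the scaling $\beta>1/2$ enters decisively.

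Next I would obtain the evolution equation for $\langle f,\mu_t^N\rangle$. Applying a second-order Taylor expansion of $f\in\mathcal C_b^\infty(\mathbf R^d)$ to the increment $W_{k+1}^i-W_k^i$ and averaging over $i$, one writes, for $t\in\mathbf R_+$,
\begin{equation}\label{eq.decomp-plan}
\langle f,\mu_t^N\rangle=\langle f,\mu_0^N\rangle+\int_0^t\int_{\mathcal X\times\mathcal Y}\alpha\big(y-\langle\sigma_*(\cdot,x),\mu_s^N\rangle\big)\langle\nabla f\cdot\nabla\sigma_*(\cdot,x),\mu_s^N\rangle\,\pi(\di x,\di y)\,\di s+M_t^N+R_t^N,
\end{equation}
where $M_t^N$ is a martingale (with respect to the filtration \eqref{eq.filtration}) collecting the centered mini-batch and noise fluctuations, and $R_t^N$ gathers the time-discretization error, the second-order Taylor remainder (of size $O(N^{-2\beta})$ per step from the noise, $O(N^{-2})$ from the drift), and the difference between the empirical batch average and its $\pi$-expectation. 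Using \ref{as:batch}, \ref{as:data}, \ref{as:noise} (independence of the batch and noise from $\mathcal F_k^N$, and i.i.d.\ structure within a batch), one shows $\mathbf E[\sup_{t\le T}|M_t^N|^2]\to 0$ and $\mathbf E[\sup_{t\le T}|R_t^N|]\to 0$; the key point for the martingale term is that the conditional variance of each increment is $O(1/N^2)+O(N^{-2\beta})$, so the bracket over $\lfloor NT\rfloor$ steps is $O(1/N)+O(N^{1-2\beta})\to 0$.

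Then I would establish relative compactness of $(\mu^N)_{N\ge1}$ in $\mathcal D(\mathbf R_+,\mathcal H^{-L,\gamma}(\mathbf R^d))$ (equivalently in $\mathcal D(\mathbf R_+,\mathcal P_\gamma(\mathbf R^d))$), via a criterion of Aldous–Rebolledo type: the moment bounds give compact containment (using the Hilbert–Schmidt embedding \eqref{eq.Sobolev-embH} to pass from a ball in $\mathcal H^{-L+?,\cdot}$ to a compact set), and the decomposition \eqref{eq.decomp-plan} together with the increment estimates on the drift, $M^N$ and $R^N$ gives the oscillation control. Any limit point $\bar\mu$ then satisfies \eqref{eq limite} by passing to the limit in \eqref{eq.decomp-plan}, using continuity/boundedness of $\sigma_*,\nabla\sigma_*$ and the uniform moment bounds to handle the nonlinear term $y-\langle\sigma_*(\cdot,x),\mu_s\rangle$; continuity in time of $\bar\mu$ follows since the jumps of $\mu^N$ are $O(1/N)$. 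Finally, uniqueness of the solution to \eqref{eq limite} in $\mathcal C(\mathbf R_+,\mathcal P_1(\mathbf R^d))$ is proved by a Gronwall argument: for two solutions test against $1$-Lipschitz $f$, use the dual formula \eqref{Kantorovitch Rubinstein}, and exploit that $w\mapsto\nabla\sigma_*(w,x)$ and its derivatives are bounded (so the map $\mu\mapsto$ right-hand side is Lipschitz in $\mathsf W_1$ on sets with bounded, propagated first moment), obtaining $\mathsf W_1(\bar\mu_t^1,\bar\mu_t^2)\le C\int_0^t\mathsf W_1(\bar\mu_s^1,\bar\mu_s^2)\,\di s$. Uniqueness plus relative compactness upgrades convergence along subsequences to convergence in probability of the whole sequence.

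The main obstacle I anticipate is the careful bookkeeping in the discrete-to-continuous passage: controlling $R_t^N$ requires simultaneously handling the Riemann-sum error from replacing $\sum_{k<\lfloor Nt\rfloor}\frac1N(\cdots)$ by $\int_0^t(\cdots)\di s$, the accumulation of second-order Taylor remainders — which for the noise term are $O(N^{-2\beta})$ per step and thus $O(N^{1-2\beta})$ in total, finite and vanishing precisely because $\beta>1/2$ — and the replacement of the random batch average by the $\pi$-integral, all while keeping every bound uniform in $N$ in the weighted norms indexed by $\gamma,\gamma_*$. Ensuring the moment exponents in \ref{as:data}--\ref{as:moments initiaux} are large enough to feed all these estimates (through the Sobolev embeddings \eqref{eq.SE1}) is the delicate quantitative heart of the argument.
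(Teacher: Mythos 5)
Most of your plan tracks the paper's proof closely: the Taylor expansion leading to the pre-limit equation, the decomposition into drift, martingale and remainder terms with the bracket of order $O(1/N)+O(N^{1-2\beta})$, the moment estimates, tightness by compact containment plus oscillation control (the paper uses Jakubowski's criterion together with a modified Kurtz-type regularity lemma rather than Aldous--Rebolledo, but this is a cosmetic difference), the identification of limit points via continuity of the map $m\mapsto\boldsymbol\Lambda_t[f](m)$, and the continuity in time of limit points from the $O(1/N)$ jump bound. All of that is sound.

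The genuine gap is in your uniqueness argument. You propose to test \eqref{eq limite} against $1$-Lipschitz $f$, invoke the duality \eqref{Kantorovitch Rubinstein}, and close a Gronwall inequality $\mathsf W_1(\mu^1_t,\mu^2_t)\le C\int_0^t\mathsf W_1(\mu^1_s,\mu^2_s)\,\di s$. This does not go through as stated. Writing the difference of the two right-hand sides, the problematic contribution is
\begin{equation*}
\int_{\mathcal X\times\mathcal Y}\alpha\,(y-\langle\sigma_*(\cdot,x),\mu_s^1\rangle)\,\big\langle\nabla f\cdot\nabla\sigma_*(\cdot,x),\,\mu_s^1-\mu_s^2\big\rangle\,\pi(\di x,\di y),
\end{equation*}
and to bound $\langle\nabla f\cdot\nabla\sigma_*(\cdot,x),\mu_s^1-\mu_s^2\rangle$ by $C\,\mathsf W_1(\mu_s^1,\mu_s^2)$ you would need $w\mapsto\nabla f(w)\cdot\nabla\sigma_*(w,x)$ to be Lipschitz with a constant uniform over the class $\|f\|_{\mathrm{Lip}}\le 1$. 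It is not: for $f$ merely $1$-Lipschitz, $\nabla f$ is only bounded (and for smooth $1$-Lipschitz $f$ its second derivatives are uncontrolled), so $\nabla f\cdot\nabla\sigma_*(\cdot,x)$ is bounded but has no uniform Lipschitz bound, and a bounded measurable test function is not controlled by $\mathsf W_1$. This is the standard obstruction to a direct $\mathsf W_1$-Gronwall for first-order (transport) nonlocal equations, and it is exactly why the paper instead rewrites \eqref{eq limite} as $\partial_t\mu_t=\mathrm{div}(\mathbf V[\mu_t]\mu_t)$, represents solutions as pushforwards $\mu_t=\phi_t\#\mu_0$ along the flow of the globally Lipschitz velocity field (via \cite[Theorem 5.34]{villani2021topics}), and then uses the flow-stability estimate of \cite[Proposition 4]{piccoli2016properties}, $\mathsf W_1(\phi^1_t\#\mu,\phi^2_t\#\nu)\le e^{Lt}\mathsf W_1(\mu,\nu)+\frac{e^{Lt}-1}{L}\sup_{s\le t}\|v^1_s-v^2_s\|_\infty$, combined with $\|\mathbf V[\mu]-\mathbf V[\nu]\|_\infty\le L'\mathsf W_1(\mu,\nu)$ and a local-in-time bootstrap on $t_0=\inf\{t:\mathsf W_1(\mu^1_t,\mu^2_t)\neq0\}$. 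Your argument needs to be replaced by (or reduced to) such a characteristics-based comparison; as written, the Gronwall inequality you assert is not justified.
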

\begin{cor}\label{co.e}
 Assume~\ref{as:batch}-\ref{as:noise}. Then, 
$\bar \mu \in \mathcal C(\mathbf{R}_+,\mathcal H^{-L,\gamma}(\mathbf{R}^d))$. In addition,  $\bar \mu$ satisfies also \eqref{eq limite}  for test functions $f\in \mathcal H^{L,\gamma}(\mathbf{R}^d)$. 
\end{cor}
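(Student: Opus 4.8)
The plan is to upgrade the regularity of $\bar\mu$ from $\mathcal C(\mathbf R_+,\mathcal P_1(\mathbf R^d))$ to $\mathcal C(\mathbf R_+,\mathcal H^{-L,\gamma}(\mathbf R^d))$ using moment bounds, and then to extend the validity of the weak equation \eqref{eq limite} from test functions in $\mathcal C_b^\infty(\mathbf R^d)$ to test functions in $\sob$ by a density argument. First I would establish that $\bar\mu_t$ has finite moments of sufficiently high order, uniformly on compact time intervals: since $\mu^N\to\bar\mu$ in $\mathcal D(\mathbf R_+,\mathcal P_\gamma(\mathbf R^d))$ and the $N$-particle system has uniformly (in $N$) bounded moments of order up to (at least) $8\gamma_*$ by \ref{as:moments initiaux}, \ref{as:data} and a Gronwall estimate on \eqref{algorithm} (these moment bounds are precisely what is needed earlier to get tightness in $\mathcal P_\gamma$), Fatou's lemma passes a bound of the form $\sup_{s\le t}\langle |w|^{2\gamma},\bar\mu_s\rangle<\infty$ to the limit. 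In particular $\bar\mu_t$ defines an element of $\sobdual$: for $f\in\sob$, by \eqref{eq.SE1} we have $\|f\|_{\mathcal C^{0,\gamma}}\lesssim\|f\|_{\mathcal H^{L,\gamma}}$, so $|\langle f,\bar\mu_t\rangle|\le \|f\|_{\mathcal C^{0,\gamma}}\langle 1+|w|^\gamma,\bar\mu_t\rangle\lesssim \|f\|_{\mathcal H^{L,\gamma}}(1+\langle|w|^\gamma,\bar\mu_t\rangle)$, which is finite and gives a bounded linear functional on $\sob$.

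Next I would prove continuity in time with values in $\sobdual$. Fix $f\in\sob$ with $\|f\|_{\mathcal H^{L,\gamma}}\le 1$. From \eqref{eq limite} applied with test function $f$ (legitimate once the equation is known for such $f$ — see the density step below, or first run this argument for $f\in\mathcal C_b^\infty$ and then pass to the limit), for $0\le s\le t$,
\begin{equation*}
|\langle f,\bar\mu_t\rangle-\langle f,\bar\mu_s\rangle|\le \int_s^t\int_{\mathcal X\times\mathcal Y}\alpha\,|y-\langle\sigma_*(\cdot,x),\bar\mu_u\rangle|\,|\langle\nabla f\cdot\nabla\sigma_*(\cdot,x),\bar\mu_u\rangle|\,\pi(\di x,\di y)\,\di u.
\end{equation*}
Using \ref{as:sigma} (so $\sigma_*$ and $\nabla\sigma_*$ are bounded), \ref{as:data} (so $\mathbf E_\pi[|y|]<\infty$), and $|\langle\nabla f\cdot\nabla\sigma_*(\cdot,x),\bar\mu_u\rangle|\le \|\nabla f\|_{\mathcal C^{0,\gamma}}\|\nabla\sigma_*\|_\infty\langle 1+|w|^\gamma,\bar\mu_u\rangle\lesssim \|f\|_{\mathcal H^{L,\gamma}}(1+\langle|w|^\gamma,\bar\mu_u\rangle)$, the integrand is bounded by $C(1+\sup_{u\le t}\langle|w|^\gamma,\bar\mu_u\rangle)^2$ uniformly in $\|f\|_{\mathcal H^{L,\gamma}}\le 1$. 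Hence the right-hand side is bounded by $C_t\,|t-s|$ with $C_t$ independent of $f$, which yields (after taking the supremum over $f$ in the unit ball, i.e. the $\sobdual$ norm) local Lipschitz continuity of $t\mapsto\bar\mu_t$ in $\sobdual$, in particular continuity, so $\bar\mu\in\mathcal C(\mathbf R_+,\sobdual)$.

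Finally, for the extension of \eqref{eq limite} to $f\in\sob$: given $f\in\sob$, pick a sequence $f_n\in\mathcal C_c^\infty(\mathbf R^d)\subset\mathcal C_b^\infty(\mathbf R^d)$ with $f_n\to f$ in $\sob$. Each $f_n$ satisfies \eqref{eq limite}. By \eqref{eq.SE1}, $f_n\to f$, $\nabla f_n\to\nabla f$ in $\mathcal C^{0,\gamma}$, i.e. $\sup_w |f_n(w)-f(w)|/(1+|w|^\gamma)\to 0$ and similarly for gradients. The left-hand term $\langle f_n,\bar\mu_t\rangle\to\langle f,\bar\mu_t\rangle$ because $|\langle f_n-f,\bar\mu_t\rangle|\le\|f_n-f\|_{\mathcal C^{0,\gamma}}(1+\langle|w|^\gamma,\bar\mu_t\rangle)\to0$. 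For the right-hand side, the integrand converges pointwise in $(x,y,s)$ with the same type of estimate, and is dominated by $C(1+|y|)(1+\sup_{s\le t}\langle|w|^\gamma,\bar\mu_s\rangle)^2\sup_n\|f_n\|_{\mathcal H^{L,\gamma}}$, which is $\pi\otimes\di s$-integrable on $\mathcal X\times\mathcal Y\times[0,t]$ thanks to $\mathbf E_\pi[|y|]<\infty$ and the moment bound; dominated convergence then gives convergence of the time integral. This proves \eqref{eq limite} for all $f\in\sob$. I expect the only real subtlety to be bookkeeping the weights $1+|w|^\gamma$ consistently and making sure the moment order $\gamma$ carried by $\bar\mu$ matches the growth allowed by the $\mathcal H^{L,\gamma}$-embedding \eqref{eq.SE1} — which it does by the choice \eqref{eq.Lgamma} — while everything else is a routine density-plus-dominated-convergence argument resting on the already-established moment bounds for $\bar\mu$. $\hfill\square$
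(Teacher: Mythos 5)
Your proposal is correct and follows essentially the same route as the paper: a moment bound on $\bar\mu$ combined with the embedding $\mathcal H^{L,\gamma}(\mathbf{R}^d)\hookrightarrow\mathcal C^{1,\gamma}(\mathbf{R}^d)$ to show $\bar\mu_t\in\mathcal H^{-L,\gamma}(\mathbf{R}^d)$, a Lipschitz-in-time estimate read off from \eqref{eq limite} for $f\in\mathcal C_c^\infty(\mathbf{R}^d)$, and a density-plus-dominated-convergence argument to extend the equation to $f\in\mathcal H^{L,\gamma}(\mathbf{R}^d)$. The only (immaterial) difference is that you obtain the locally uniform $\gamma$-moment bound on $\bar\mu$ by Fatou from the particle system's uniform moments, whereas the paper gets it more directly from $\bar\mu\in\mathcal D(\mathbf R_+,\mathcal P_\gamma(\mathbf{R}^d))$ together with \cite[Theorem 6.9]{villani2009optimal}.
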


\begin{proof}[Proof of Corollary \ref{co.e}]
Note first that by \ref{as:moments initiaux}, $\mu_0\in \mathcal C^{0,\gamma}(\mathbf R^d)' \hookrightarrow \mathcal H^{-L,\gamma}(\mathbf{R}^d)$ according to \eqref{eq.SE1}. 
By \eqref{eq limite}, \ref{as:sigma}, and \ref{as:data}, it holds for all $f\in \mathcal C_c^\infty(\mathbf R^d)$ and  $0\le s\le t \le T$, 
\begin{align*}
|\langle f,\bar \mu_t\rangle-\langle f,\bar \mu_s\rangle| \le C|t-s| \Vert f \Vert_{\mathcal C^{1,\gamma}} \sup_{u\in [0,T]} |\langle 1+|\cdot |^{ \gamma},\bar \mu_u\rangle|.
\end{align*} 
Note that $\sup_{u\in [0,T]} |\langle 1+|\cdot |^\gamma,\bar \mu_u\rangle|<+\infty$ since $t\ge 0\mapsto \langle 1+|\cdot |^\gamma,\bar \mu_u\rangle\in  \mathcal D(\mathbf R_+, \mathbf R)$ (indeed this follows from the fact that $\bar \mu \in \mathcal D(\mathbf R_+, \mathcal P_{\gamma}(\mathbf{R}^d))$ and~\cite[Theorem 6.9]{villani2009optimal}). 
Thus, using \eqref{eq.SE1}, it holds $\bar \mu_t\in   \mathcal H^{-L,\gamma}(\mathbf{R}^d)$ and  $|\langle f,\bar \mu_t\rangle-\langle f,\bar \mu_s\rangle| \le C|t-s| \Vert f \Vert_{\mathcal H^{L,\gamma}}$, proving the first claim in Corollary~\ref{co.e}.  The second claim in Corollary~\ref{co.e} is obtained by a density argument and the fact that $\mathcal H^{L,\gamma}(\mathbf{R}^d) \hookrightarrow \mathcal C^{1,\gamma}(\mathbf R^d)$. 
\end{proof}

  \noindent
 \textbf{On the proof of Theorem~\ref{thm:lln}}. 
 Theorem~\ref{thm:lln} is proved in Section~\ref{sec.THMLLN}. The proof strategy is
  the following.  We first derive an
  identity satisfied by $(\mu^N)_{N\ge 1}$, namely the pre-limit
  equation~\eqref{eq: prelim mu}. This is done in
  Section~\ref{sec.PREL}.  Then, we show in Section~\ref{sec.RC-LLN}
 that $(\mu^N)_{N\ge 1}$ is relatively compact in $\mathcal D(\mathbf R_+, \mathcal P_{\gamma}(\mathbf{R}^d))$.   To this end we use \cite[Theorem 4.6]{jakubowski1986skorokhod}. The compact containment  of $(\mu^N)_{N\ge 1}$ relies on a characterization of the compact subsets of  $\mathcal P_{\gamma_0}(\mathbf R^{d+1})$ (see Proposition \ref{prop.compact_wasserstein}) and moment estimates on  $\{\theta_k^i, i\in \{1,\ldots N\}\}_{k=0, \ldots, \lfloor NT \rfloor}$ (see Lemma~\ref{le.W}). 
 We then use the pre-limit equation~\eqref{eq: prelim mu} to prove that any
  limit point of the sequence $(\mu^N)_{N\ge 1}$ in
$\mathcal D(\mathbf R_+, \mathcal P_{\gamma}(\mathbf{R}^d))$ satisfies~\eqref{eq limite}. This requires to study the continuity property of the involved operator (namely $\Lambda_t[f]$, see Lemma \ref{le.Fc}). This the purpose of Section~\ref{sec:conv to lim eq}, and more precisely of Proposition \ref{lem:convergence to lim eq} there.
With rough estimates on the jumps of the function $t\in \mathbf R_+\mapsto \langle f, \mu_t^N\rangle$ (where $f$ is uniformly Lipschitz over $\mathbf R^{d+1}$),  we also prove in Section \ref{subsec: continuity prop sob} that any limit point  $(\mu^N)_{N\ge 1}$ in
$\mathcal D(\mathbf R_+, \mathcal P_{\gamma}(\mathbf{R}^d))$ belongs a.s. to  
$\mathcal C(\mathbf R_+, \mathcal P_{1}(\mathbf{R}^d))$. This is indeed needed since we then prove in Section \ref{sec: uniqueness}  that~\eqref{eq limite} admits a unique solution in $\mathcal C(\mathbf R_+, \mathcal P_{1}(\mathbf{R}^d))$. 
To prove that there is at most one solution to~\eqref{eq limite}, 
we use arguments of~\cite{piccoli2015control} which are based on 
a  representation formula for solution to measure-valued equations~\cite[Theorem 5.34]{villani2021topics} together with time estimates in  Wasserstein distances between two solutions of~\eqref{eq limite} derived in \cite{piccoli2016properties}. 

\begin{rem} 
In view of their proofs, Theorem \ref{thm:lln} and Corollary \ref{co.e} are still valid for $\gamma>\frac d2$ and  $L>d/2+1$. 
\end{rem}

\begin{rem}
  When $\beta=1/2$, one can obtain a similar limit equation for
  $\bar\mu$, with an additionnal (regularizing) Laplacian term in the
  limit equation. To derive it, one should consider a Taylor expansion
  up to order 3 of the test function in the pre-limit
  equation~\eqref{eq: prelim mu}. Let us mention that the case
  $\beta=1/2$ is studied in \cite{mei2018mean} but only at fixed
  $t$. Straightforward application of our method would lead to a
  trajectorial version of \cite[Theorem~3]{mei2018mean} which we leave to
  the reader for the sake of brevity.
\end{rem}

\begin{rem}
  Of course, one important question is the convergence of $\bar\mu_t$
  in long time. It is not hard to see that the loss function decays
  (but not strictly a priori) along the training, i.e. with $t$.  This
  asymptotic behavior of $\bar\mu_t$ as $t\to+\infty$ has been studied
  in~\cite[Theorem~7]{mei2018mean} or \cite{NEURIPS2018_a1afc58c} who give
  partial results in the case without noise. Roughly speaking, they
  prove that if it is known that $\bar\mu_t$ is converging in
  Wasserstein distance then it converges to the minimum of the loss
  function. It is however quite hard to prove such a convergence. We
  refer also to~\cite{E-2020, ma2020towards} for what remains to do in
  this direction which is clearly a difficult open problem. In the
  case with noise $\beta=1/2$ then the situation is different as the
  limit PDE is a usual McKean-Vlasov diffusion and one can study the
  free energy and study convergence in long time
  \cite[Theorem~4]{mei2018mean}. \end{rem}

\subsubsection{Central limit theorem for the empirical measure}
 
\textbf{Fluctuation process and extra assumptions.}
Assume~\ref{as:batch}-\ref{as:noise}. The fluctuation process is the
process $\eta^N =\lbrace t\mapsto \eta^N_t, t\in\mathbf{R}_+\rbrace$
defined by:
\begin{equation}\label{eq.etaN}
\eta_t^N=\sqrt{N}(\mu_t^N -\bar \mu_t), \ N\ge 1, \ t\in \mathbf R_+, 
\end{equation}
where $\bar \mu =\lbrace t\mapsto\bar \mu_t, t\in\mathbf{R}_+\rbrace$
is the limit of $(\mu^N)_{N\ge 1}$ in $\mathcal D(\mathbf R_+, \mathcal P_{\gamma}(\mathbf{R}^d))$ (see
Theorem~\ref{thm:lln}). Let us introduce the following additional
assumptions:
\begin{enumerate}[label=\normalfont\textbf{A\arabic*}.,
  ref=\normalfont\textbf{A\arabic*},resume]
\item \label{as: mu0compact} The distribution
  $\mu_0\in\mathcal{P}(\mathbf{R}^d)$ is compactly supported.
\item\label{as:batch limite} $|B_k|\to |B_\infty|$ a.s. as $k\rightarrow\infty$.
\end{enumerate}
 
\noindent
 Let  
\begin{align}\label{eq.J0}
  J_0\ge 4\lceil\frac{d}{2}\rceil+8 \text{ and } j_0= \lceil\frac{d}{2}\rceil+2.
\end{align}
For later purpose, we also set
  \begin{align}\label{eq.J1}
\text{$J_1=2\lceil\frac{d}{2}\rceil+4$,  $j_1=3\lceil\frac{d}{2}\rceil+4 $,  $J_2=3\lceil\frac{d}{2}\rceil+6$, and $j_2=2\lceil\frac{d}{2}\rceil+3$.}
\end{align}
 By~\eqref{eq.Sobolev-embH},  we have the following embeddings:
\begin{align}\label{eq.SE2}
\mathcal H^{J_0-1,j_0}(\mathbf{R}^d)\hookrightarrow_{\text{H.S.}}\mathcal H^{J_2,j_2}(\mathbf{R}^d), \  \mathcal H^{J_2,j_2}(\mathbf{R}^d)\hookrightarrow_{\text{H.S.}}\mathcal H^{J_1+1,j_1}(\mathbf{R}^d),\ 
\mathcal H^{J_1,j_1}(\mathbf{R}^d)\hookrightarrow_{\text{H.S.}}\mathcal H^{L,\gamma}(\mathbf{R}^d).
\end{align}

\noindent
\textbf{G-process and the limit equation.}

\begin{deff} 
\label{de.gaussian}
We say that
$\mathscr G\in \mathcal C(\mathbf R_+,\mathcal
H^{-J_0,j_0}(\mathbf{R}^d))$ is a {\rm G-process} if for all $k\ge 1$
and $f_1\dots,f_{k}\in \mathcal H^{J_0,j_0}(\mathbf R^d)$,
$\{t\mapsto ({\langle f_{1}, \mathscr G}_t\rangle,\dots, \langle
f_{k}, \mathscr G_t\rangle)^T ,t\in\mathbf{R}_+\}\in \mathcal
C(\mathbf{R}_+,\mathbf{R}^{k})$ is a process with zero-mean,
independent Gaussian increments (and thus a martingale), and with
covariance structure given by: for all $1\leq i,j\leq {k}$ and all
$0\le s\le t$,
\begin{equation}\label{eq.cov2}
\Cov\big ( \langle f_i, \mathscr G_t \rangle,\langle f_j, \mathscr G_s \rangle\big )=\alpha^2 \mathbf{E}\left[\frac{1}{|B_{\infty}|}\right]\int_0^s\Cov (\mathrm Q_v[f_i](x,y),\mathrm Q_v[f_j ](x,y))\, \di v,
\end{equation}
 where $\mathrm Q_v[f](x,y):=(y-\langle \sigma_*(\cdot, x),\bar \mu_v\rangle)\langle\nabla f\cdot\nabla\sigma_*(\cdot,x),\bar \mu_v\rangle$ for $f\in \mathcal H^{J_0,j_0}(\mathbf{R}^d)$ and $\bar \mu$ is given by Theorem~\ref{thm:lln}. 
\end{deff}
\noindent
Let us make some comments about Definition~\ref{de.gaussian}.  The
first one is that we have decided to call such a process G-process to
ease the statement of the results.
In addition, notice that $\mathrm Q_s[f](x,y)$ is well defined for
$f\in \mathcal H^{J_0,j_0}(\mathbf{R}^d))$ (indeed for all
$k\in \{1,\ldots, d\}$,
$\partial_{e_k} f\in \mathcal H^{J_0-1,j_0}(\mathbf{R}^d)\hookrightarrow \mathcal
H^{L,\gamma}(\mathbf{R}^d))$ and
$\bar \mu\in \mathcal C(\mathbf R_+,\in \mathcal
H^{-L,\gamma}(\mathbf{R}^d))$).  Finally, we mention that by
Proposition~\ref{prop P Q coincide} below, the law of a process
$\mu \in \mathcal D(\mathbf R_+,\mathcal H^{-J,b}(\mathbf{R}^d))$
is fully determined by the family of laws of the processes
$ (\langle f_{1},{\mu}\rangle,\dots,\langle f_{k},{\mu}\rangle)^T \in
\mathcal D(\mathbf{R}_+,\mathbf{R})^k$, $k\ge 1$ and where
$\{f_a\}_{a\ge 1}$ is an orthonormal basis
$\mathcal H^{J,b}(\mathbf R^d)$.  \medskip

\noindent
For $\eta$ a
$\mathcal C(\mathbf R_+,\mathcal H^{-J_0+1,j_0}(\mathbf{R}^d))$-valued
process and
$\mathscr G\in \mathcal C(\mathbf R_+,\mathcal
H^{-J_0,j_0}(\mathbf{R}^d))$ a {\rm G}-process (see
Definition~\ref{de.gaussian}), define the following equation:
\begin{align} 
\nonumber 
&\text{A.s. } \forall f\in \mathcal H^{J_0,j_0}(\mathbf R^d), \forall t\in\mathbf R_+, \\
\nonumber
&\,  \langle f,\eta_t\rangle - \langle f,\eta_0 \rangle=\int_0^t\int_{\mathcal{X}\times\mathcal{Y}}\alpha(y-\langle\sigma_*(\cdot,x),\bar \mu_s\rangle)\langle\nabla f\cdot\nabla\sigma_*(\cdot,x),\eta_s\rangle \pi(\di x,\di y)\\\label{eq.CLT}
&\quad \quad \quad  \quad \quad \quad \quad \  -\int_0^t\int_{\mathcal{X}\times\mathcal{Y}}\alpha {\langle\sigma_*(\cdot,x),\eta_s\rangle \langle}\nabla f\cdot\nabla\sigma_*(\cdot,x),\bar \mu _s\rangle \pi(\di x,\di y) +\langle f, \mathscr G_t\rangle.
\end{align}

\begin{deff} 
\label{de.weak}
Let $\nu$ be a $\mathcal H^{-J_0+1 ,j_0}(\mathbf R^d)$-valued random
variable. We say that a
$\mathcal C(\mathbf R_+,\mathcal H^{-J_0+1,j_0}(\mathbf{R}^d))$-valued
process $ \eta$ on a probability space is a {\rm weak solution}
of~\eqref{eq.CLT} with initial distribution $\nu$ if there exist a
{\rm G}-process
$\mathscr G\in \mathcal C(\mathbf R_+,\mathcal
H^{-J_0,j_0}(\mathbf{R}^d))$ such that~\eqref{eq.CLT} holds and
$ \eta_0=\nu$ in distribution. In addition, we say that {\rm weak
  uniqueness} holds if for any weak two solutions $ \eta^1$ and $\eta^2$
of~\eqref{eq.CLT} (possibly defined on two different probability
spaces) with the same initial distributions, it holds $\eta_1=\eta_2$
in distribution.
\end{deff}

The second main result of this work is a central limit theorem for
the trajectory of the scaled empirical measures.

\begin{theo}\label{thm:clt}
\begin{sloppypar} 
Let $\beta>3/4$. Assume~\ref{as:batch}-\ref{as:batch limite}.   Then:
\begin{enumerate}

\item (Convergence) The sequence
  $(\eta^N)_{N\ge1}\subset \mathcal D(\mathbf R_+,\mathcal
  H^{-J_0+1,j_0}(\mathbf R^d))$ (see~\eqref{eq.etaN}) converges in
  distribution to a process
  $\eta^*\in \mathcal C(\mathbf R_+,\mathcal
  H^{-J_0+1,j_0}(\mathbf{R}^d))$.

\item (Limit equation) The process $\eta^*$ has the same distribution
  as the unique weak solution $\eta^\star$ of~\eqref{eq.CLT} with
  initial distribution $\nu_0$ (see Definition~\ref{de.weak}), where
  $\nu_0$ is the unique $\mathcal H^{-J_0+1 ,j_0}(\mathbf R^d)$-valued
  random variable such that for all $k\ge 1$ and
  $f_1\dots,f_{k}\in \mathcal H^{J_0-1,j_0}(\mathbf R^d)$,
$$(\langle f_{1}, \eta_0^*\rangle,\dots, \langle f_{k}, \eta^*_0\rangle)^T \sim \mathcal N (0, \Gamma(f_{1},\ldots,f_{k})),$$
where $\Gamma(f_{1},\ldots,f_{k})$ is the covariance matrix of the
vector $( f_{1}(W_0^1),\dots, f_{k}(W_0^1))^T$.

\end{enumerate}
\end{sloppypar}

\end{theo}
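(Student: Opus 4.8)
The plan is to run the classical three–step programme for fluctuations of mean–field particle systems — derivation of a pre-limit equation for $\eta^N$, tightness, and identification of all limit points together with a uniqueness statement for the limit equation — but now in the scale of weighted Sobolev spaces fixed in \eqref{eq.J0}--\eqref{eq.SE2}. The starting point is a \emph{pre-limit equation for} $\eta^N$: subtracting the limit equation \eqref{eq limite} for $\bar\mu$ from the pre-limit equation \eqref{eq: prelim mu} for $\mu^N$ and multiplying by $\sqrt N$, one gets, for every $f\in\mathcal H^{J_0,j_0}(\mathbf R^d)$,
\[
\langle f,\eta^N_t\rangle=\langle f,\eta^N_0\rangle+\int_0^t\mathsf D^N_s[f]\,\di s+M^N_t(f)+R^N_t(f),
\]
where $\mathsf D^N_s[f]$ is \emph{linear} in $(\eta^N_s,\mu^N_s)$ and converges, after passing to the limit, to the drift in \eqref{eq.CLT}; $M^N(f)$ is an $(\mathcal F^N_{\lfloor N\cdot\rfloor})$-martingale gathering the mini-batch sampling fluctuations and the first-order contribution of the Gaussian perturbation $\ve^i_k/N^\beta$; and $R^N(f)$ collects the Taylor remainders of order $\ge2$ of the SGD increment (in particular the $O(|\ve^i_k|^2/N^{2\beta})$ term) and the time-discretisation errors. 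Using the moment estimates of Lemma~\ref{le.W} and the compact-support assumption \ref{as: mu0compact}, one checks $\mathbf E[\sup_{t\le T}|R^N_t(f)|^2]\to0$; here $\beta>3/4$ is exactly what is needed, since the mean of the second-order noise term is of order $\sqrt N\cdot N^{1-2\beta}=N^{3/2-2\beta}$ (the borderline $\beta=3/4$ leaving a diverging drift, as discussed in the introduction), whereas $\beta>1/2$ already makes the first-order noise contribution to the bracket of $M^N$ vanish — which is why no noise term survives in the covariance \eqref{eq.cov2}.

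Next I would prove \emph{tightness} of $(\eta^N)_{N\ge1}$ in $\mathcal D(\mathbf R_+,\mathcal H^{-J_0+1,j_0}(\mathbf R^d))$ along the Hilbertian lines of \cite{fernandez1997hilbertian} and the criterion of \cite[Theorem 4.6]{jakubowski1986skorokhod}. Compact containment follows from a uniform bound $\sup_N\mathbf E[\sup_{t\le T}\|\eta^N_t\|^2_{\mathcal H^{-J_2,j_2}}]<\infty$ together with the Hilbert--Schmidt embedding $\mathcal H^{-J_2,j_2}(\mathbf R^d)\hookrightarrow_{\mathrm{H.S.}}\mathcal H^{-J_0+1,j_0}(\mathbf R^d)$ dual to the first embedding in \eqref{eq.SE2}. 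That bound is obtained by squaring the pre-limit equation for $\eta^N$ along an orthonormal basis of $\mathcal H^{J_2,j_2}(\mathbf R^d)$, summing, controlling the drift by the boundedness and Hilbert--Schmidt properties of the linear operators across the chain \eqref{eq.SE2} (cf. Lemma~\ref{le.Fc}), controlling $\sum_a M^N_\cdot(f_a)^2$ by its predictable bracket, using Lemma~\ref{le.W} and \ref{as: mu0compact} for the remainder and for $\eta^N_0$, and closing by Gronwall. Rough estimates on the jumps $\langle f,\eta^N_t\rangle-\langle f,\eta^N_{t-}\rangle=O(N^{-1/2})$ combined with an Aldous-type estimate, again read off the pre-limit equation, give the required modulus of continuity and force any limit point to have continuous paths, hence to live in $\mathcal C(\mathbf R_+,\mathcal H^{-J_0+1,j_0}(\mathbf R^d))$.

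Third, I would \emph{identify every limit point}. By Skorokhod's representation theorem one may assume a subsequence of $(\eta^N,\mu^N,M^N)$ converges almost surely in the appropriate path spaces, with $\mu^N\to\bar\mu$ by Theorem~\ref{thm:lln}. The two drift terms of \eqref{eq.CLT} are recovered by passing to the limit using continuity of the operator $\Lambda_t[f]$ and of the cross term (Lemma~\ref{le.Fc}) and the embeddings \eqref{eq.SE2}. For the martingale part I would apply a functional martingale central limit theorem: one computes the predictable quadratic variation $\langle M^N(f)\rangle_t$ and shows it converges to $\alpha^2\mathbf E[1/|B_\infty|]\int_0^t\Var(\mathrm Q_v[f](x,y))\,\di v$ — the factor $\mathbf E[1/|B_\infty|]$ arising because, conditionally on $|B_k|$, the mini-batch gradient has sampling variance of order $1/|B_k|$, which converges under \ref{as:batch limite} — and verifies the Lindeberg condition from the uniform $O(N^{-1/2})$ smallness of the jumps of $M^N$; this exhibits the limit as a G-process in the sense of Definition~\ref{de.gaussian} with covariance \eqref{eq.cov2}, and shows jointly that the limiting triple satisfies \eqref{eq.CLT}. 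For the initial datum, $\eta^N_0=\sqrt N(\tfrac1N\sum_{i=1}^N\delta_{W_0^i}-\mu_0)$ with i.i.d. $W_0^i\sim\mu_0$ of compact support, so a Hilbert-space CLT (finite-dimensional convergence from the multivariate CLT, tightness from $\sup_N\mathbf E\|\eta^N_0\|^2_{\mathcal H^{-J_2,j_2}}<\infty$ and an $\mathrm{H.S.}$ embedding) gives $\eta^N_0\Rightarrow\eta^*_0\sim\nu_0$ with the announced Gaussian law.

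Finally, \emph{weak uniqueness} for \eqref{eq.CLT} follows from its affine structure: the difference of two weak solutions driven by the same G-process with the same initial value solves the homogeneous linear equation with zero data, hence vanishes by a Gronwall estimate in $\mathcal H^{-J_0,j_0}(\mathbf R^d)$ using boundedness of the linear operators, and this upgrades to uniqueness in distribution (the law being determined, via Proposition~\ref{prop P Q coincide}, by its one-dimensional projections). Combined with tightness, this shows all subsequential limits of $(\eta^N)$ share the law of the unique weak solution $\eta^\star$ of \eqref{eq.CLT} with initial law $\nu_0$; hence the whole sequence converges to it, and the limit has continuous paths, which is both assertions. The step I expect to be the main obstacle is the uniform moment bound in the tightness argument: because the $\eta^N$-equation is self-referential (its drift contains $\eta^N_s$) and the remainder and bracket terms must be dominated uniformly in $N$, one must carefully combine the weight moment estimates of Lemma~\ref{le.W}, the control of the martingale bracket, and the compact-support hypothesis inside a single Gronwall argument running through the chain of Sobolev spaces in \eqref{eq.SE2}.
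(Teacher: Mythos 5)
Your proposal is correct in outline and follows the same architecture as the paper's proof: a pre-limit equation for $\eta^N$ obtained by linearising the drift, uniform moment bounds and tightness in the dual weighted Sobolev scale via Hilbert--Schmidt embeddings and Gronwall, continuity of limit points from jump estimates, identification of the martingale part as a G-process through the functional martingale CLT (with the $\mathbf E[1/|B_\infty|]$ factor arising exactly as you describe), a Hilbert-space CLT for $\eta_0^N$, and pathwise uniqueness for the linear limit equation upgraded to weak uniqueness. Two differences of route are worth recording. First, the paper does not run the Gronwall loop on $\eta^N$ directly: it introduces the auxiliary i.i.d.\ particle system \eqref{particle system} and splits $\eta^N=\Upsilon^N+\Theta^N$ (see \eqref{eq.dec-eta}--\eqref{eq.dec-eta2}), handling $\Theta^N$ by plain i.i.d.\ variance computations and reserving the Gronwall argument for $\Upsilon^N$, whose initial value vanishes; your direct linearisation around $\bar\mu$ (compare \eqref{trick decompo} and \eqref{eq.CVL3}) avoids the auxiliary system at the price of carrying the $O(1)$ initial fluctuation $\eta_0^N$ through the estimate, which is harmless since the $W_0^i$ are i.i.d.\ with compact support. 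Second, for the identification step you invoke Skorokhod representation, whereas the paper uses continuity of the relevant functionals, the continuous mapping theorem, and a separating-class argument (Lemma~\ref{lem separating class}, Proposition~\ref{prop P Q coincide}) to pin down the law of the limit of $\sqrt N M^N$ in the infinite-dimensional state space; both are standard and interchangeable here.

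The one step where your plan, as written, would stall is the closure of the Gronwall inequality. The dangerous drift contribution is $\int(y-\langle\sigma_*(\cdot,x),\mu_s^N\rangle)\langle\nabla f\cdot\nabla\sigma_*(\cdot,x),\eta_s^N\rangle\,\pi(\di x,\di y)$: pairing it with $\langle f,\eta_s^N\rangle$ and summing over an orthonormal basis of $\mathcal H^{J_1,j_1}(\mathbf R^d)$ produces $\langle\eta_s^N,\mathsf T_x^*\eta_s^N\rangle_{-J_1,j_1}$, and Cauchy--Schwarz only yields $\|\eta_s^N\|_{\mathcal H^{-J_1,j_1}}\|\eta_s^N\|_{\mathcal H^{-J_1+1,j_1}}$ --- a loss of one derivative that no Hilbert--Schmidt embedding repairs, since the stronger norm reappears on the right-hand side of the very inequality you are trying to close. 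The paper's fix is Lemma~\ref{lem : <xi,gxi>}: an integration by parts in the top-order term gives $|\langle\Upsilon,\mathsf T_x^*\Upsilon\rangle_{\mathcal H^{-J,\gamma}}|\le C\|\Upsilon\|_{\mathcal H^{-J,\gamma}}^2$ uniformly in $x$, exploiting the first-order transport structure of $\mathsf T_x$. This lemma is the linchpin of both the uniform bound (Lemma~\ref{lem : unif bound xi and eta}) and the pathwise uniqueness proof (Proposition~\ref{prop:uniqueness clt}); your phrase about the ``boundedness and Hilbert--Schmidt properties of the linear operators across the chain'' does not supply it, so you should state and prove it explicitly.
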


\begin{rem}\label{rem.red_var}
  By looking at the definition of the {\rm G}-process and in particular
  its covariance \eqref{eq.cov2}, one remarks the effect of
  mini-batching by the $|B_\infty|^{-1}$ prefactor, thus leading to a reduced
  variance of the {\rm G}-process. Note that this is quite intricate to
  deduce proper information on the variance of the fluctuation process
  $\eta$, since the terms appearing in~\eqref{eq.CLT} are a priori
  dependent. Nonetheless, it will be shown through the numerical
  experiments of the next subsection that the variance of fluctuation
  process reduces when the size of the mini-batches increases (see in
  particular Figure~\ref{fig:mont_red_var}).
\end{rem}

\noindent
Theorem~\ref{thm:clt} is proved in Section~\ref{sec.TH2}, following
inspiration from the previous
works~\cite{fernandez1997hilbertian,jourdainAIHP,delarue}.  The
starting point to prove Theorem~\ref{thm:clt}, consists in proving,
like in the current literature \cite{sirignano2020clt}, that
$(\eta^N)_{N\ge1}\subset \mathcal D(\mathbf R_+,\mathcal
H^{-J_0,j_0}(\mathbf R^d))$ is relatively compact (see
Propositions~\ref{prop relative compactness}). We then prove that the
whole sequence $(\eta^N)_{N\ge1}$ converges in distribution to the
unique weak solution of~\eqref{eq.CLT} in Section~\ref{sec.conTCL}.
\medskip

\noindent
When $\beta=3/4$, $(\eta^N)_{N\ge1}$ is still relatively compact in
$\mathcal D(\mathbf R_+,\mathcal H^{-J_0+1,j_0}(\mathbf R^d))$ (see
Proposition~\ref{prop relative compactness}) but the derivation of the
limit equation satisfied by its limit points is more tricky. However,
in a specific case (when $d=1$ and the test function is
$\mathsf f_2:x\in \mathbf R\mapsto |x|^2$),
Proposition~\ref{pr.beta34} below suggests how the
equation~\eqref{eq.CLT} might be perturbed, as shown numerically in
Figure~\ref{fig.explosion_w_inset} and more precisely in the inset.

\begin{prop}\label{pr.beta34}
  Let $\beta=3/4$ and assume that
  conditions~\ref{as:batch}-\ref{as:batch limite} hold.  Let $\eta$ be
  a limit point of $(\eta^N)_{N\ge 1}$ in
  $\mathcal D(\mathbf R_+,\mathcal H^{-J_0+1,j_0}(\mathbf R))$ (see
  Proposition~\ref{prop relative compactness}).  Then, $\eta_0=\nu_0$
  in distribution (see Lemma~\ref{le.eta0}), and there exist a
  $\mathcal D(\mathbf R_+,\mathcal
  H^{-J_0+1,j_0}(\mathbf{R}^d))$-valued process $ \eta^*$ and a {\rm
    G}-process
  $\mathscr G^*\in \mathcal C(\mathbf R_+,\mathcal
  H^{-J_0,j_0}(\mathbf{R}))$ such that $\eta=\eta_*$ in distribution,
  and a.s.  for every $t\in\mathbf R_+$,
\begin{align}\label{eq.clt.beta=34}
\langle \mathsf f_2,\eta^*_t\rangle-\langle \mathsf f_2,\eta_0^*\rangle&=\int_0^t\int_{\mathcal{X}\times\mathcal{Y}}\alpha(y-\langle\sigma_*(\cdot,x),\overline{\mu}_s\rangle)\langle\nabla \mathsf f_2\cdot\nabla\sigma_*(\cdot,x),\eta^*_s\rangle\pi(\di x,\di y)\nonumber\\
&\quad -\int_0^t\int_{\mathcal{X}\times\mathcal{Y}}\alpha\langle\sigma_*(\cdot,x),\eta^*_s\rangle\langle\nabla \mathsf f_2\cdot\nabla\sigma_*(\cdot,x),\overline \mu _s\rangle\pi(\di x,\di y) +\langle \mathsf f_2,\mathscr G^*_t\rangle+t\mathbf E[\mathsf f_2(\ve_1^1)].
\end{align}  
\end{prop}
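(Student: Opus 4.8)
The plan is to revisit the pre-limit equation for $\langle \mathsf f_2,\eta^N_t\rangle$ obtained by plugging $f=\mathsf f_2$ into~\eqref{eq: prelim mu} (after multiplication by $\sqrt N$), and to track carefully the terms that, at the threshold value $\beta=3/4$, no longer vanish in the limit — unlike in the proof of Theorem~\ref{thm:clt} where $\beta>3/4$ kills them. Writing the SGD update~\eqref{algorithm} as $W_{k+1}^i=W_k^i+\frac{1}{N}h_k^i+\frac{\ve_k^i}{N^\beta}$ with $h_k^i$ the (mini-batch) drift term, a second-order Taylor expansion of $\mathsf f_2$ around $W_k^i$ (here exact since $\mathsf f_2$ is quadratic and $d=1$, so $\mathsf f_2''\equiv 2$) gives
$$
\mathsf f_2(W_{k+1}^i)-\mathsf f_2(W_k^i)=\mathsf f_2'(W_k^i)\Big(\tfrac1N h_k^i+\tfrac{\ve_k^i}{N^\beta}\Big)+\Big(\tfrac1N h_k^i+\tfrac{\ve_k^i}{N^\beta}\Big)^2.
$$
Summing over $i$, dividing by $N$, multiplying by $\sqrt N$, and summing over $k\le \lfloor Nt\rfloor$, the novelty compared with the $\beta>3/4$ analysis is the diagonal noise term $\sqrt N\cdot \frac1N\sum_i \frac{1}{N}\sum_{k<\lfloor Nt\rfloor}\frac{(\ve_k^i)^2}{N^{2\beta}}=N^{1-2\beta}\sqrt N\cdot\frac1N\sum_{i,k}\frac{(\ve_k^i)^2}{N}$; with $\beta=3/4$ one has $N^{1-2\beta}\sqrt N=1$, so this term is of order one, and by the law of large numbers for the i.i.d.\ family $\{(\ve_k^i)^2\}$ (each with mean $\mathbf E[(\ve_1^1)^2]=\mathbf E[\mathsf f_2(\ve_1^1)]$ since $d=1$) it converges in probability, uniformly on compacts in $t$, to $t\,\mathbf E[\mathsf f_2(\ve_1^1)]$. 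This is exactly the extra deterministic drift $t\mathbf E[\mathsf f_2(\ve_1^1)]$ appearing in~\eqref{eq.clt.beta=34}.

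Next I would argue that all the remaining terms behave exactly as in the proof of Theorem~\ref{thm:clt}: the two drift integrals on the right-hand side of~\eqref{eq.CLT} come, as there, from the $\mathsf f_2'(W_k^i)\frac1N h_k^i$ contribution by linearising the loss $y-\langle\sigma_*(\cdot,x),\mu^N_s\rangle$ around $\bar\mu_s$ and using the convergence $\mu^N\to\bar\mu$ and $\eta^N\to\eta$ along the chosen subsequence (continuity of the operators $\Lambda_s$, Lemma~\ref{le.Fc}, and the moment bounds of Lemma~\ref{le.W}); the martingale term $\langle \mathsf f_2,\mathscr G^*_t\rangle$ is the limit of the martingale part of $\sqrt N\sum_k \mathsf f_2'(W_k^i)\frac1N h_k^i$ built from the mini-batch fluctuations, whose bracket converges to the covariance~\eqref{eq.cov2} by the same computation as in Section~\ref{sec.conTCL} (this is where the $\mathbf E[1/|B_\infty|]$ factor and assumption~\ref{as:batch limite} enter), and whose Gaussianity follows from a martingale CLT. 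The mixed term $\frac2N\sum_{i,k}\frac{h_k^i}{N}\cdot\frac{\ve_k^i}{N^\beta}$ and the pure-drift second-order term $\frac1{N^2}\sum_{i,k}\frac{(h_k^i)^2}{N}$, after the $\sqrt N$ scaling, carry prefactors $N^{1/2-1-\beta}$ and $N^{1/2-1}$ respectively (times an $O(1/N)\sum_k\,(\cdots)$ that is $O(1)$ by the moment estimates), hence vanish; likewise the noise-martingale term $\sqrt N\frac1N\sum_{i,k}\mathsf f_2'(W_k^i)\frac{\ve_k^i}{N^\beta}$ has conditional variance of order $N\cdot N^{-2}\cdot N^{-2\beta}\cdot N\cdot\sup|\mathsf f_2'(W_k^i)|^2=O(N^{1-2\beta})=o(1)$ using the uniform $L^2$ moment bound on $W_k^i$ from Lemma~\ref{le.W}, and the centered diagonal part $N^{1-2\beta}\sqrt N\cdot\frac1N\sum_{i,k}\frac{(\ve_k^i)^2-1}{N}$ has variance $O(N^{2-4\beta}\cdot N^{-1})=O(N^{1-4\beta})\to0$. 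Combining, every limit point $\eta$ of $(\eta^N)$ satisfies, when tested against $\mathsf f_2$, equation~\eqref{eq.clt.beta=34} for some G-process; the identification $\eta_0=\nu_0$ is Lemma~\ref{le.eta0}.

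The main obstacle I anticipate is \emph{not} the identification of the extra $t\mathbf E[\mathsf f_2(\ve_1^1)]$ term, which is a clean law-of-large-numbers statement, but rather the bookkeeping needed to show that the coupling between the new diagonal noise term and the rest of the equation is asymptotically trivial — i.e.\ that the G-process $\mathscr G^*$ appearing here is genuinely the same object (same covariance~\eqref{eq.cov2}) as in Theorem~\ref{thm:clt}, with the $\ve$-driven contributions decoupling from the mini-batch-driven martingale in the limit. Concretely this requires controlling the joint bracket of the two martingale pieces and showing the cross term vanishes, which rests on the independence structure in~\ref{as:batch} and~\ref{as:noise} together with the $N^{1-2\beta}\to0$-type estimates above; one also needs tightness in $\mathcal D(\mathbf R_+,\mathcal H^{-J_0+1,j_0}(\mathbf R))$, but that is already granted by Proposition~\ref{prop relative compactness}. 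A secondary technical point is that $\mathsf f_2\notin\mathcal C_b^\infty(\mathbf R)$ nor in $\mathcal H^{J_0,j_0}(\mathbf R)$, so the expansion and all the limit passages must be justified directly for this particular unbounded quadratic test function, using that $d=1$ makes the Taylor remainder vanish identically and that the weight $j_0=\lceil d/2\rceil+2\ge 3$ makes $\langle 1+|\cdot|^2,\mu^N_s\rangle$ and $\langle 1+|\cdot|^2,\eta^N_s\rangle$ controllable via Lemma~\ref{le.W}.
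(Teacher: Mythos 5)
Your overall strategy is exactly the paper's: isolate, inside the second-order Taylor remainder $\langle f,R_k^N\rangle$, the squared-noise contribution $\frac{1}{2N^{1+2\beta}}\sum_i|\ve_k^i|^2 f''(\widehat W_k^i)$, observe that at $\beta=3/4$ its $\sqrt N$-rescaled time-sum is exactly $\frac{1}{N^2}\sum_{k<\lfloor Nt\rfloor}\sum_i|\ve_k^i|^2$ (for $f=\mathsf f_2$, $f''\equiv 2$), identify its a.s. limit $t\,\mathbf E[|\ve_1^1|^2]$ by the law of large numbers, and show every other term converges as in the proof of Theorem~\ref{thm:clt}. Two of your side remarks, however, should be corrected. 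First, $\mathsf f_2$ \emph{does} belong to $\mathcal H^{J_0,j_0}(\mathbf R)$ when $d=1$: the weight exponent is $j_0=\lceil 1/2\rceil+2=3$ and $j_0-2>1/2$, so $\int x^4/(1+x^{2j_0})\,\di x<\infty$ and likewise for all derivatives. The paper states and uses this membership; it is what lets one apply the continuity of the map $m\mapsto\mathbf B_t[\mathsf f_2](m)$ on $\mathcal D(\mathbf R_+,\mathcal H^{-J_0+1,j_0}(\mathbf R))$ and the continuous mapping theorem directly, so no ad hoc justification ``outside the space'' is needed — believing the contrary would have sent you down an unnecessarily hard path, since $\langle\mathsf f_2,\eta^N_t\rangle$ would not even be a continuous functional of $\eta^N_t$ otherwise. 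Second, the ``main obstacle'' you anticipate (controlling the joint bracket of the noise-driven and mini-batch-driven martingale pieces, and decoupling the diagonal noise term from $\mathscr G^*$) does not arise: the diagonal term converges \emph{almost surely to a deterministic} function of $t$, so it contributes a pure drift and trivially decouples from any weak limit; the linear-in-$\ve$ martingale $\frac{\sqrt N}{N^{1+\beta}}\sum_{k,i}\nabla f(W_k^i)\cdot\ve_k^i$ vanishes in $L^2$ at rate $N^{1/2-\beta}=N^{-1/4}$; and the centered part of the diagonal term vanishes in $L^2$ as well. The paper's proof therefore needs no joint-bracket computation — it combines the distributional convergence of $\mathbf B_t[\mathsf f_2](\eta^{N'})-\langle\mathsf f_2,\eta_0^{N'}\rangle-\langle\mathsf f_2,\sqrt{N'}M_t^{N'}\rangle$ at continuity points of $\eta^*$, the $L^1$-negligibility of the remaining error $\tilde{\mathbf e}^N_t[\mathsf f_2]$, and the a.s. convergence of the extra term, finishing with a right-continuity argument to upgrade from $t\in\mathcal C(\eta^*)$ to all $t$ (a step your sketch omits but which is routine).
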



\subsection{Numerical Experiments}
We now illustrate numerically the results derived in the previous
sections. First, we consider a regression task on simulated
data, based upon an example of \cite{mei2018mean}. More precisely, we
consider~\eqref{neural network g} with $\sigma_*(W^i,x)=f(W^i\cdot x)$
where
\begin{align*}
f(t)=
\begin{cases} -2.5\ \ \text{if}\ \  t\leq 0.5,\\
10t-7.5\ \ \text{if}\ \ 0.5\leq t\leq 1.5, \\
 7.5 \ \ \text{if} \ \ t\geq 1.5.
 \end{cases}
\end{align*}  
The distribution $\pi$ of the data is defined as follows: with probability $1/2$, $y=1$ and $x\sim\mathcal{N}(0, (1+0.2)^2I_d)$ and, with probability $1/2$, $y=-1$ and $x\sim\mathcal{N}(0, (1-0.2)^2I_d)$.
This setting satisfies the assumptions of Theorems~\ref{thm:lln}
and~\ref{thm:clt}, except~\ref{as:sigma}, due to the fact that $f$ is
not differentiable at $t=0.5$ and $t=1.5$ (a smooth modification of
$f$ around those points would tackle this problem and would not change
the numerical results).

Then, we consider a typical classification task on the
MNIST dataset. The neural network we consider is fully connected with
one-hidden layer of $N$ neurons and ReLU activation
function\footnote{ReLU function $(\cdot)_+$: $u\in\mathbf R\mapsto 0$
  if $u<0$, $u$ if $u\ge 0$.}. The last layer is a softmax layer (we
consider one-hot encoding and use Keras and Tensorflow
librairies). Given a data $x\in \mathbf R^d$ ($d=784$ here), the
neural network returns
$\hat y=\mathrm{softmax}((W^{\mathbf o, c}\cdot W^{\mathbf
  h}(x))_{c=0}^9)$ where
$W^{\mathbf h}(x)=((W^{\mathbf h,i}\cdot x)_+)_{i=1}^N$ is the hidden
layer ($W^{\mathbf h,i}\in\mathbf R^d$ is the weight of the $i$-th
neuron) and 
$W^{\mathbf o, c}\in\mathbf R^N$ is the weight of the output layer
corresponding to class $c$. The total number of trainable parameters
is thus $dN+10N$.
The neural network is trained with respect to the categorical
cross-entropy loss.  This case is not covered by our mathematical
analysis and the motivation here is to show numerical evidence that the variance
reduction derived in Theorem~\ref{thm:clt} is still valid in this case.

\noi \textbf{Variance Reduction with increasing mini-batch size.}  We
illustrate here that the variance of the limiting fluctuation process
decreases with the mini-batch size, even though we only have a mathematical
structure of the variance of the {\rm G}-process (see~\eqref{eq.cov2}
together with Remark~\ref{rem.red_var}). On both experiments, we
consider a fixed mini-batch size during the training (i.e. $|B_k|=|B|$ for
all $k\in\mathbf N$). We first consider the regression task. Consider
$\mathsf L=1000$ neural networks (initialized and trained
independently) whose $N=800$ initial neurons are drawn independently
according to $\mu_0=\mathcal{N}(0,\frac{0.8^2}{d}I_d)$. For each
neural network, we run $k=1000$ iterations of the SGD
algorithm~\eqref{algorithm} and compute
$\mathsf m_\ell:=\langle
\|\cdot\|_2,\mu_{t}^N\rangle=\frac{1}{N}\sum_{i=1}^N\|W_{k}^i\|_2$,
where $\ell\in\{1,\dots,\mathsf L\}$, $t=k/N=1.25$ and
$\|w\|_2:=\sqrt{\sum_{j=1}^dw_j^2}$. Finally, we compute the empirical
variance of this quantity, i.e.,
$$\mathsf V:=\widehat{\Var}(\mathsf m_1,\dots,\mathsf m_{\mathsf
  L})=\frac{1}{\mathsf L}\sum_{\ell=1}^{\mathsf L}\Big(\mathsf
m_\ell-\frac{1}{\mathsf L}\sum_{\ell'=1}^{\mathsf L}\mathsf
m_{\ell'}\Big)^2.$$ and display for different mini-batch sizes $|B|$ in
Figure~\ref{fig:mont_red_var} the obtained boxplots from 10 samples of
$\mathsf V$. The other parameters are $d=40$, $\alpha=0.1$, $\beta=1$,
and the noise is $\ve_k^i\sim\mathcal{N}(0,0.01I_d)$.

Second, we turn to the classification task. Consider $\mathsf L=30$
neural networks (initialized and trained independently) with $N=10000$
neurons on the hidden-layer, until iteration $k=3000$ of the SGD
algorithm ($t=k/N=0.3$), and compute the mean of the weight of the
output layer corresponding to class 0, $i.e.$, for each
$\ell=1,\dots,\mathsf L$, we compute
$\mathsf m_\ell:=\frac 1N\sum_{j=1}^{N}W^{\mathbf
  o,0,j}_{k}$. Finally, we compute the empirical variance of this
quantity, i.e.,
$\mathsf V=\widehat{\Var}(\mathsf m_1,\dots,\mathsf m_{\mathsf L})$
and exhibit for different sizes $|B|$ the boxplots obtained with 10
samples of $\mathsf V$ in Figure~\ref{fig:mont_red_var}.

\begin{figure}[!htbp]
  \caption{Variance $\mathsf V$ reduction of the fluctuation process
    with increasing mini-batch size. {\bf Left:} Regression task on
    simulated data. $\mathsf V$ is an empirical estimation from $1000$
    realisations of the variance of
    $\langle\|\cdot\|_2, \mu_t^N\rangle$, where $N =800$ and
    $t=1.25$. The other parameters are $d=40$, $\alpha=0.1$,
    $\beta=1$, and the noise is
    $\ve_k^i\sim\mathcal{N}(0,0.01I_d)$. The boxplots are obtained
    with 10 samples of $\mathsf V$.  {\bf Right:} Classification task
    on MNIST dataset. $\mathsf V$ is an empirical estimation from $30$
    realisations of the variance of
    $\frac 1N\sum_{j=1}^{N}W^{\mathbf o,0,j}_{k}$, where $N=10000$ and
    $k=3000$ ($t=0.3$). The boxplots are obtained with 10 samples of
    $\mathsf V.$}\label{fig:mont_red_var}
\hspace*{-1cm}\includegraphics[scale=0.5]{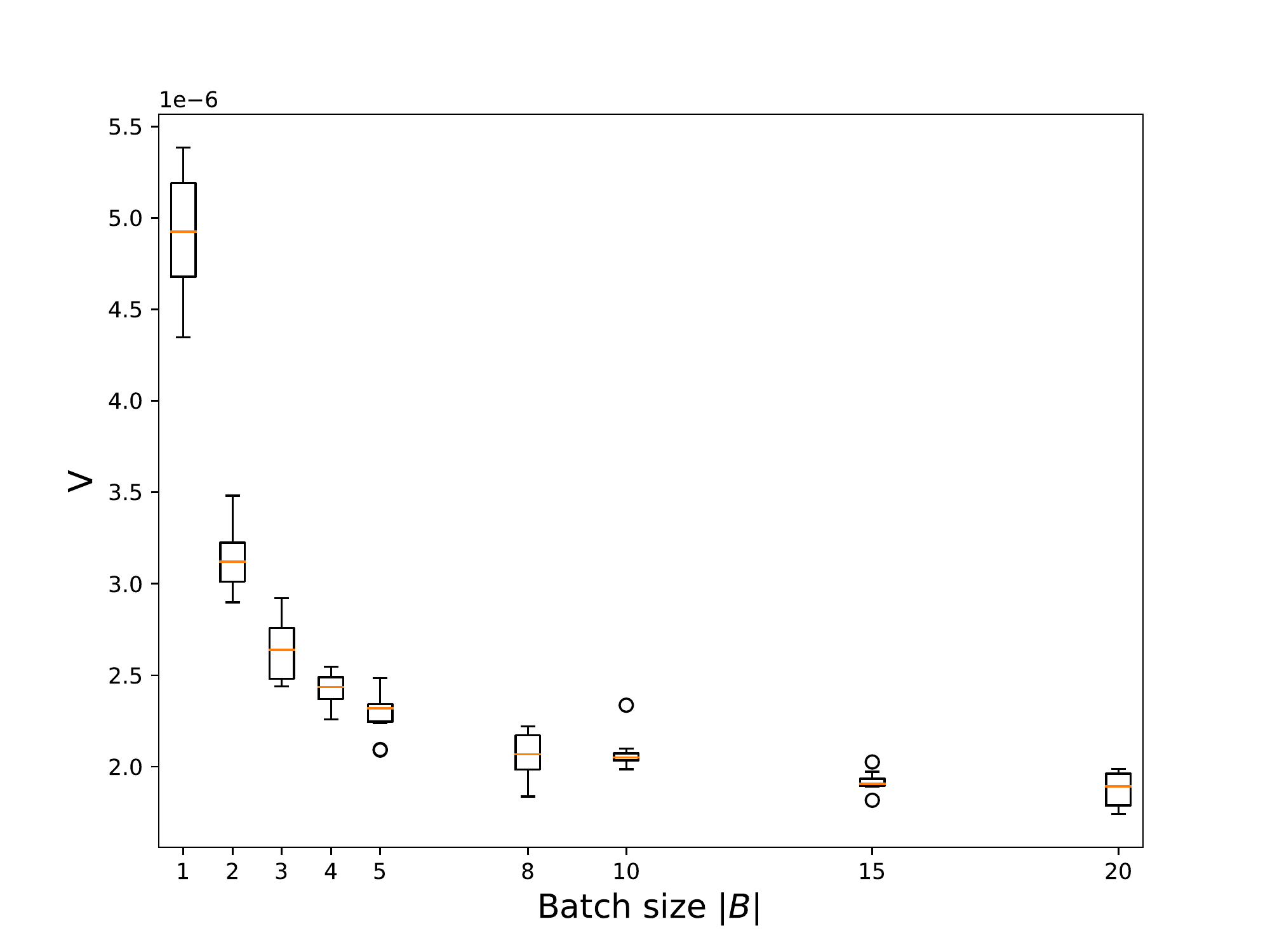} 
\includegraphics[scale=0.5]{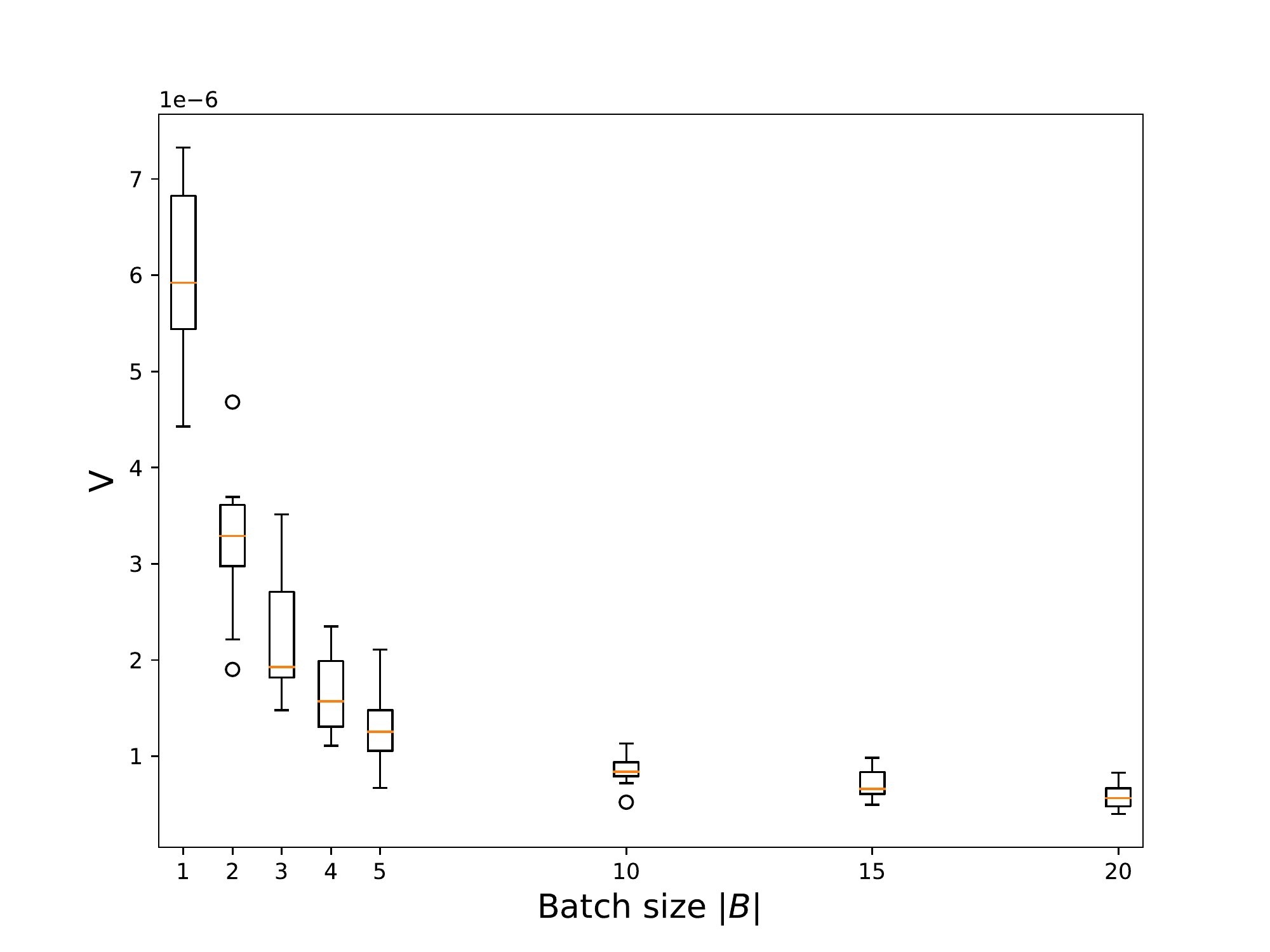} 
\end{figure}

\noi \textbf{Central Limit Theorem.} We focus here on the regression
task.
For different values of $\beta$, we plot in
Figure~\ref{fig.explosion_w_inset}
$\langle\mathsf f_2,\eta_t^N\rangle$ for $0\leq t\leq 8$ (recall
$\mathsf f_2(x)=|x|^2$), to show the agreement of
$\langle\mathsf f_2, \eta_t^N\rangle$ for different values of
$\beta > 3/4$, corresponding to the regime of~\eqref{eq.CLT}, and the
divergence from it when $\beta\leq 3/4$. For $\beta=3/4$, we also
illustrate the regime derived in Proposition~\ref{pr.beta34}. The
parameters chosen are $d=|B|=1$, $N=20000$, $\alpha=0.1$ and
$\ve_k^i\sim\mathcal N(0,0.01)$. The procedure to obtain the plots is
as follows. We first compute $\langle \mathsf f_2,\mu_t^N\rangle$ (we
repeat this procedure 20000 times to get confidence intervals).  Then,
we approximate $\langle \mathsf f_2,\bar\mu_t\rangle$ by
$\langle \mathsf f_2,\mu_t^{N'}\rangle$ where $N'=250000$. On
Figure~\ref{fig.explosion_w_inset}, we plot
$\sqrt N(\langle \mathsf f_2,\mu_t^N\rangle-\langle \mathsf
f_2,\mu_t^{N'}\rangle)\simeq\langle \mathsf f_2,\eta_t^N\rangle$ as a
function of $t$.

%

%

\begin{figure}\caption{Time evolution of the fluctuation process for
    different values of $\beta$ on the regression task, with
    $\mathsf f_2:x\in\mathbf R\mapsto|x|^2$, $N=20000$, $d=|B|=1$,
    $\alpha=0.1$ and $\ve_k^i\sim\mathcal N(0,0.01)$. Confidence
    intervals are obtained from $20000$ realisations. The case $\beta > 3/4$
    is driven by~\eqref{eq.CLT}. The case $\beta=3/4$ is driven
    by~\eqref{eq.clt.beta=34}. The case $\beta < 3/4$ is not covered
    by our analysis. The inset exhibits the linear term in time
    appearing in~\eqref{eq.clt.beta=34}. }
\label{fig.explosion_w_inset}
\includegraphics[scale=0.6]{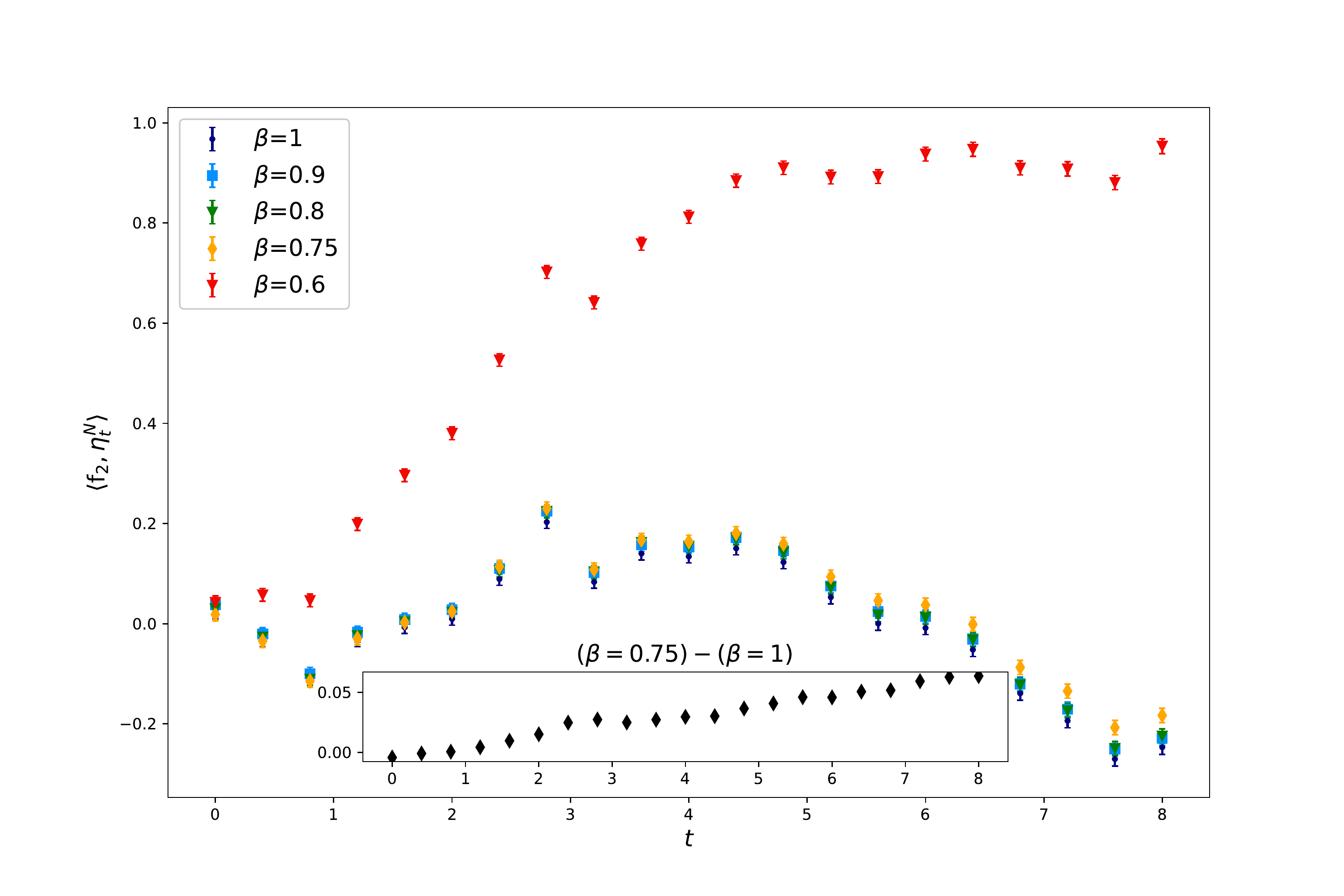} 
\end{figure}

\section{Proof of Theorem~\ref{thm:lln}}
 \label{sec.THMLLN}

\subsection{Pre-limit equation  and remainder terms}
\label{sec.PREL}
In this section, we derive the so-called pre-limit equation~\eqref{eq:
  prelim mu}. We then show that the remainder terms in this equation
are negligible as $N\to +\infty$.

\subsubsection{Pre-limit equation\label{subsec:prelim}}
In this section, we introduce several (random) operators acting on
$\mathcal C^{2,\gamma_*}(\mathbf R^d)$. Using~\ref{as:sigma}
and~\ref{as:data}, it is easy to check that all these operators
belong a.s. to the dual of $\mathcal C^{2,\gamma_*}(\mathbf
R^d)$. The duality bracket we use in this section then is the one for
the duality in $\mathcal C^{2,\gamma_*}(\mathbf R^d)$.  Let us
consider $f\in \mathcal C^{2,\gamma_*}(\mathbf R^d)$.  The
Taylor-Lagrange formula yields, for $N\geq 1$ and $k\in\textbf{N}$,
\begin{align*}
\langle f,\nu_{k+1}^N\rangle -\langle f,\nu_k^N\rangle &=\fsn f(W_{k+1}^i)-f(W_k^i)\nonumber \\
&=\fsn \nabla f(W_k^i)\cdot (\dtki) +
\frac{1}{2N}\sum_{i=1}^N(W_{k+1}^i-W_k^i)^T\nabla^2f(\widehat{W}_k^i)(W_{k+1}^i-W_k^i),
\end{align*}
where, for all $i\in \{1,\dots,N\}$, $\widehat{W}_k^i\in (W_k^i,W_{k+1}^i)$. 
Using~\eqref{algorithm}, we have \begin{align}\label{W. eq : nk1-nk}
\langle f,\nu_{k+1}^N\rangle -\langle f,\nu_k^N\rangle &=\fsn \nabla f(W_k^i)\cdot \left[\frac{\alpha}{N|B_k|}\sum_{(x,y)\in B_k}(y-g_{W^k}^N(x))\nabla_W\sigma_*(W_k^i,x)+\frac{\ve_k^i}{N^\beta}\right]+\langle f,R_k^{N}\rangle\nonumber\\
&=\frac{\alpha}{N|B_k|}\sum_{(x,y)\in B_k}(y-\langle\sigma_*(\cdot,x),\nu_k^N\rangle )\langle\nabla f\cdot\nabla\sigma_*(\cdot,x),\nu_k^N\rangle  \nonumber\\
&\quad +\frac{1}{N^{1+\beta}}\sum_{i=1}^N\nabla f(W_k^i)\cdot\ve_k^i+\langle f,R_k^{N}\rangle,
\end{align} 
where, for $N\geq1$, $k\in\textbf{N}$ and $i=1,\dots,N$,
\begin{equation}\label{R_k^{2,N}}
\langle f,R_k^{N}\rangle:=\frac{1}{2N}\sum_{i=1}^N(W_{k+1}^i-W_k^i)^T\nabla^2f(\widehat{W}_k^i)(W_{k+1}^i-W_k^i).
\end{equation}
For $k\in\textbf{N}$, we define:
\begin{align}
\langle f,D_k^N\rangle &:=\frac{\alpha}{N}\int_{\mathcal{X}\times\mathcal{Y}} (y-\langle\sigma_*(\cdot,x),\nu_k^N\rangle \langle\nabla f\cdot\nabla\sigma_*(\cdot,x),\nu_k^N\rangle \pi(\di x,\di y),\label{def Dk}
\\
\langle f,M_k^N\rangle &:=\frac{\alpha}{N|B_k|}\sum_{(x,y)\in B_k}(y-\langle\sigma_*(\cdot,x),\nu_k^N\rangle )\langle\nabla f\cdot\nabla\sigma_*(\cdot,x),\nu_k^N\rangle -\langle f,D_k^N\rangle .\label{def Mk}
\end{align}
Equation~\eqref{W. eq : nk1-nk} writes, for $k\in\textbf{N}$,
\begin{equation}\label{f nu_k+1-f nu_k}
\langle f,\nu_{k+1}^N\rangle -\langle f,\nu_k^N\rangle =\langle f,D_k^N\rangle +\langle f,M_k^N\rangle +\langle f,R_k^{N}\rangle+\frac{1}{N^{1+\beta}}\sum_{i=1}^N\nabla f(W_k^i)\cdot\ve_k^i.
\end{equation}
Define  for $N\geq1$  and $t\in \mathbf R_+$:   
\begin{equation}\label{eq.def-M}
\langle f,D^N_t\rangle :=\sum_{k=0}^{\lfloor Nt\rfloor-1}\langle f,D_k^N\rangle \  \text{ and } \ \langle f,M^N_t\rangle :=\sum_{k=0}^{\lfloor Nt\rfloor-1}\langle f,M_k^N\rangle, 
\end{equation}
with the convention that $\sum_0^{-1}=0$ (which occurs if and only if
$0\le t<1/N$).  It will be proved later that
$\{t \mapsto \langle f, M_t^N\rangle,t\in \mathbf R_+\}$ is a
martingale (see indeed Lemma~\ref{le.martingale}), hence the notation.
One has, for $t\in\rp$,
\begin{equation*}\begin{split}
\langle f,D^N_t\rangle &=\sum_{k=0}^{\lfloor Nt\rfloor-1}\int_{\frac{k}{N}}^{\frac{k+1}{N}}\int_{\mathcal{X}\times\mathcal{Y}}\alpha(y-\langle\sigma_*(\cdot,x),\nu_k^N\rangle )\langle\nabla f\cdot\nabla\sigma_*(\cdot,x),\nu_k^N\rangle \pi(\di x,\di y)\di s\\
&=\sum_{k=0}^{\lfloor Nt\rfloor-1}\int_{\frac{k}{N}}^{\frac{k+1}{N}}\int_{\mathcal{X}\times\mathcal{Y}}\alpha(y-\langle\sigma_*(\cdot,x),\mu_s^N\rangle )\langle\nabla f\cdot\nabla\sigma_*(\cdot,x),\mu_s^N\rangle \pi(\di x,\di y)\di s\\
&=\int_0^t\int_{\mathcal{X}\times\mathcal{Y}}\alpha(y-\langle\sigma_*(\cdot,x),\mu_s^N\rangle )\langle\nabla f\cdot\nabla\sigma_*(\cdot,x),\mu_s^N\rangle \pi(\di x,\di y)\di s+\langle f,V_t^N\rangle,
\end{split}\end{equation*}
where $\langle f,V_t^N\rangle$, for $t\in\rp$: 
\begin{equation}\label{eq.defV}
\langle f,V_t^N\rangle:=-\int_{\frac{\lfloor Nt\rfloor}{N}}^t\int_{\mathcal{X}\times\mathcal{Y}}\alpha(y-\langle\sigma_*(\cdot,x),\mu_s^N\rangle )\langle\nabla f\cdot\nabla\sigma_*(\cdot,x),\mu_s^N\rangle \pi(\di x,\di y)\di s.
\end{equation}
Therefore, using~\eqref{f nu_k+1-f nu_k}, we obtain that the scaled
empirical measure process $\lbrace t\mapsto\mu_t^N, t\in\rp\rbrace$
satisfies the following pre-limit equation : for
$f\in\mathcal C^{2,\gamma_*}(\mathbf R^d)$, $N\geq 1$ and $t\in\rp$,
\begin{align}\label{eq: prelim mu}
\langle f,\mu_t^N\rangle -\langle f,\mu_0^N\rangle &=\sum_{k=0}^{\lfloor Nt\rfloor-1}\langle f,\nu_{k+1}^N\rangle -\langle f,\nu_k^N\rangle \nonumber\\
&=\langle f,D^N_t\rangle +\langle f,M^N_t\rangle +\sum_{k=0}^{\lfloor Nt\rfloor-1}\langle f,R_k^{N}\rangle+ \frac{1}{N^{1+\beta}}\sum_{k=0}^{\lfloor Nt\rfloor-1}\sum_{i=1}^N\nabla f(W_k^i)\cdot\ve_k^i\nonumber\\
&=\int_0^t\int_{\mathcal{X}\times\mathcal{Y}}\alpha(y-\langle\sigma_*(\cdot,x),\mu_s^N\rangle )\langle\nabla f\cdot\nabla\sigma_*(\cdot,x),\mu_s^N\rangle \pi(\di x,\di y)\di s\nonumber\\
&\quad +\langle f,M^N_t\rangle +\langle f,V_t^N\rangle+\sum_{k=0}^{\lfloor Nt\rfloor-1}\langle f,R_k^{N}\rangle+\frac{1}{N^{1+\beta}}\sum_{k=0}^{\lfloor Nt\rfloor-1}\sum_{i=1}^N\nabla f(W_k^i)\cdot\ve_k^i. 
\end{align} 
In the next section, we study the four last terms of~\eqref{eq: prelim mu}.

\subsubsection{The remainder terms\label{subsec: study rem terms} in
  (\ref{eq: prelim mu}) are negligible}


The aim of this section is to show that the last four terms
of~\eqref{eq: prelim mu} vanish as $N\rightarrow+\infty$. This is the
purpose of Lemma~\ref{lem:remainder terms}. The following result will
be used several times in this work.

\begin{lem}
  \label{le.W} Let $\beta\ge 1/2$ and
  assume~\ref{as:batch}-\ref{as:noise}. Then, for all $T>0$, there
  exists a constant $C<+\infty$ such that for all $N\geq 1,$
  $i\in\lbrace1,\dots,N\rbrace$ and
  $k\in\lbrace0,\dots,\lfloor NT\rfloor\rbrace$,
$$\mathbf{E}\left[|W_k^i|^{8\gamma_*}\right]\leq C.$$
\end{lem}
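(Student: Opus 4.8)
The plan is to prove the moment bound by induction on $k$, establishing a recursive inequality of the form $\mathbf E[|W_{k+1}^i|^{8\gamma_*}] \le (1 + C/N)\mathbf E[|W_k^i|^{8\gamma_*}] + C/N$, uniformly in $i$ and in $k \le \lfloor NT\rfloor$, and then iterating: since $(1+C/N)^{NT} \le e^{CT}$ and the additive errors sum to at most $CT e^{CT}$, one gets a bound depending only on $T$ and on $\mathbf E[|W_0^1|^{8\gamma_*}]$, which is finite by~\ref{as:moments initiaux}. The base case $k=0$ is exactly~\ref{as:moments initiaux}.

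For the induction step, I would start from the update rule~\eqref{algorithm}, writing $W_{k+1}^i = W_k^i + \frac{\alpha}{N}H_k^i + \frac{\ve_k^i}{N^\beta}$ where $H_k^i := \frac{1}{|B_k|}\sum_{(x,y)\in B_k}(y - g_{W_k}^N(x))\nabla_W\sigma_*(W_k^i,x)$. Using $|a+b+c|^{8\gamma_*} \le 3^{8\gamma_*-1}(|a|^{8\gamma_*}+|b|^{8\gamma_*}+|c|^{8\gamma_*})$ — or, to get the sharper $(1+C/N)$ factor, the elementary inequality $|a+b|^p \le |a|^p + C_p(|a|^{p-1}|b| + |b|^p)$ — I would expand and take expectations. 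The key structural facts are: by~\ref{as:sigma}, $\sigma_*$ and $\nabla_W\sigma_*$ are bounded, so $|g_{W_k}^N(x)| \le \|\sigma_*\|_\infty$ and $|\nabla_W\sigma_*(W_k^i,x)| \le \|\nabla_W\sigma_*\|_\infty$; hence $|H_k^i| \le C(1 + |y^{(1)}| + \cdots)$ is controlled by $C(1 + \max_{(x,y)\in B_k}|y|)$, and $\mathbf E[|H_k^i|^q \mid \mathcal F_k^N]$ is bounded by a constant depending only on $q$ and moments of $|y|$, using~\ref{as:data} (finiteness of $\mathbf E[|y|^{16\gamma_*}]$ gives plenty of room) and~\ref{as:batch} to handle the random batch size via $\frac{1}{|B_k|}\sum \le \max$. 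The Gaussian term contributes $\mathbf E[|\ve_k^i|^q]/N^{q\beta}$, a constant over $N^{q\beta}$, and since $\beta \ge 1/2$ all these noise contributions are $O(N^{-q/2}) = O(1/N)$ for $q \ge 2$. Crucially $H_k^i$ and $\ve_k^i$ are independent of $W_k^i$ given less (or at least, conditioning on $\mathcal F_k^N$ makes $W_k^i$ measurable while $H_k^i,\ve_k^i$ have conditional moments bounded by constants), so the cross terms like $\mathbf E[|W_k^i|^{8\gamma_*-1}|H_k^i|]$ factor as $\mathbf E[|W_k^i|^{8\gamma_*-1}] \cdot C \le C(1 + \mathbf E[|W_k^i|^{8\gamma_*}])$ by Young's inequality, and each such term carries a prefactor $1/N$ (or smaller). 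Collecting everything yields the claimed recursion.

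The main obstacle I expect is bookkeeping the dependence on the random mini-batch size $|B_k|$ and verifying that $\mathbf E[|H_k^i|^{8\gamma_*} \mid \mathcal F_k^N]$ is bounded by a deterministic constant: one must use $\big|\frac{1}{|B_k|}\sum_{(x,y)\in B_k} a_{(x,y)}\big|^{8\gamma_*} \le \frac{1}{|B_k|}\sum_{(x,y)\in B_k}|a_{(x,y)}|^{8\gamma_*}$ by Jensen, then use the independence structure in~\ref{as:batch} and~\ref{as:data} to take expectations over the $(x_k^n,y_k^n)$ and over $|B_k|$ separately, arriving at a bound in terms of $\mathbf E[(1+|y|)^{8\gamma_*}]$ which is finite (indeed $\mathbf E[|y|^{16\gamma_*}]<\infty$ is more than enough). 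A minor point is that $\widehat W_k^i$ does not appear here (that is for $R_k^N$), so the Taylor remainder plays no role in this lemma. Once the recursion is in hand, the iteration and the bound $\lfloor NT\rfloor/N \le T$ finish the proof.
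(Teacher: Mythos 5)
Your induction strategy is genuinely different from the paper's proof, which bounds the cumulative displacement directly: it writes $|W_k^i|\le |W_0^i| + \frac{C}{N}\sum_{j<k}\frac{1}{|B_j|}\sum_{(x,y)\in B_j}(|y|+C) + N^{-\beta}\bigl|\sum_{j<k}\ve_j^i\bigr|$, raises this to the power $p=8\gamma_*$, and uses that $\sum_{j<k}\ve_j^{i,l}\sim\mathcal N(0,k)$, so $\mathbf E\bigl[\bigl|\sum_{j<k}\ve_j^{i,l}\bigr|^p\bigr]\le Ck^{p/2}\le CN^{p/2}$ and the noise contributes $N^{p(1/2-\beta)}\le 1$. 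The square-root cancellation in the Gaussian sum is what makes the noise harmless all the way down to $\beta=1/2$.

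Your recursion, as written, loses exactly this cancellation, and that is a genuine gap. With $b:=\frac{\alpha}{N}H_k^i+N^{-\beta}\ve_k^i$, the inequality $|a+b|^p\le |a|^p+C_p(|a|^{p-1}|b|+|b|^p)$ produces the cross term $C_p\,\mathbf E[|W_k^i|^{p-1}|b|]$, and since $\mathbf E[|b|\mid\mathcal F_k^N]\ge N^{-\beta}\mathbf E[|\ve_k^i|]=cN^{-\beta}$, this term is of order $N^{-\beta}$ per step, not $O(1/N)$: your claim that "each such term carries a prefactor $1/N$ (or smaller)" is only true of the $H_k^i$ part, and your observation that noise contributions are $O(N^{-q/2})$ for $q\ge 2$ does not cover this $q=1$ term. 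The recursion then reads $u_{k+1}\le(1+CN^{-\beta})u_k+CN^{-\beta}$, which after $\lfloor NT\rfloor$ iterations gives a factor of order $e^{CTN^{1-\beta}}$, unbounded for every $\beta<1$ — so the argument fails precisely in the regime $1/2\le\beta<1$ that the lemma must cover. The fix is to keep the linear term signed so that the centering $\mathbf E[\ve_k^i\mid\mathcal F_k^N]=0$ from A5 can act: since $p=8\gamma_*$ is an even integer, expand $|W_k^i+b|^p=(|W_k^i|^2+2W_k^i\cdot b+|b|^2)^{p/2}$ and take conditional expectations, so the first-order term becomes $p|W_k^i|^{p-2}W_k^i\cdot\mathbf E[b\mid\mathcal F_k^N]$, which is $O(|W_k^i|^{p-1}/N)$ because only the drift survives, while every term containing $\ve_k^i$ to a power $q\ge2$ is $O(N^{-2\beta})=O(1/N)$ as you say. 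With that modification your induction closes and gives an alternative proof; without it, the proposal does not establish the lemma.
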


\begin{proof} 
  Let us recall the following convexity inequality : for $m,p\geq 1$
  and $x_1,\dots,x_p\in\mathbf{R}_+$,
\begin{equation}\label{convexity inequality}
\Big(\sum_{l=1}^mx_l\Big)^p\leq m^{p-1}\sum_{l=1}^mx_l^p.
\end{equation}
Let $C>0$ denotes a constant, independent of
$i\in\lbrace1,\dots,N\rbrace$ and $0\leq k\leq\lfloor NT\rfloor$,
which can change from one occurence to another. Set $p=8\gamma_*$. For
$i\in\lbrace 1,\dots,N\rbrace$ and $1\leq k\leq\lfloor NT\rfloor$, we
have, using~\eqref{algorithm} and~\ref{as:sigma} :
\begin{align*} 
|W_k^i|&\leq |W_0^i|+\Big|\sum_{j=0}^{k-1}W_{j+1}^i-W_j^i\Big| \leq |W_0^i|+\frac{C}{N }\sum_{j=0}^{k-1} \frac{1}{|B_j|}\sum_{(x,y)\in B_j}(|y|+C)+\frac{1}{N^\beta}\Big|\sum_{j=0}^{k-1}\ve_j^i\Big|.
\end{align*}
Thus, by~\eqref{convexity inequality}, 
\begin{align*}
|W_k^i|^p&\leq C\Big[|W_0^i|^p+\frac{1}{N}\sum_{j=0}^{k-1} \frac{1}{|B_j|^p}\Big(\sum_{(x,y)\in B_j}(|y|+C)\Big)^p+\frac{1}{N^{p\beta}}\Big|\sum_{j=0}^{k-1}\ve_j^i\Big|^p\Big]\\
&\le C\Big[|W_0^i|^p+\frac{1}{N}\sum_{j=0}^{k-1} \frac{1}{|B_j|} \sum_{(x,y)\in B_j}(|y|+C)^p+\frac{1}{N^{p\beta}}\Big|\sum_{j=0}^{k-1}\ve_j^i\Big|^p\Big].
\end{align*}
We have:    
\begin{align*}
\textbf{E}\Big [\frac{1}{|B_j|} \sum_{(x,y)\in B_j}(|y|+C)^p\Big ]&\le C\Big[\textbf{E}\Big [\frac{1}{|B_j|} \sum_{n=1}^{|B_j|}|y_j^n|^p \Big ] +1\Big],
\end{align*}
and, using~\ref{as:batch} and~\eqref{as:data}, it holds for $j\ge 0$:
\begin{align}
\nonumber
\textbf{E}\Big [\frac{1}{|B_j|} \sum_{n=1}^{|B_j|}|y_j^n|^p \Big ]=   \sum_{q=1}^{+\infty} \, \textbf{E}\Big [  \frac{ \mathbf 1_{|B_j|=q}}{q}\, \sum_{n=1}^{q}|y_j^n|^p \Big  ]&=   \sum_{q=1}^{+\infty} \,  \frac{1}{q}\, \sum_{n=1}^{q} \, \textbf{E}\big [  |y_j^n|^p  \mathbf 1_{|B_j|=q} \big  ]\\
\nonumber
&=  \sum_{q=1}^{+\infty} \,  \frac{1}{q}\, \sum_{n=1}^{q} \, \textbf{E}\big [  |y_j^n|^p  \big  ]\, \textbf{E}\big [  \mathbf 1_{|B_j|=q} \big  ]\\
\label{eq.conditio}
&= \textbf{E}\big [  |y_1^1|^p  \big  ]<+\infty.
\end{align}
Thus, using the two previous inequalities, we deduce that: 
$$ \textbf{E}\Big[\frac{1}{N}\sum_{j=0}^{k-1} \frac{1}{|B_j|} \sum_{(x,y)\in B_j}(|y|+C)^p\Big]\leq C.$$
 By~\ref{as:moments initiaux}, $\textbf{E}\left[|W_0^i|^p\right]\le C$.  In addition, we have that, for  $i\in\lbrace 1,\dots,N\rbrace$, 
\begin{align*}
\Big|\sum_{j=0}^{k-1}\ve_j^i\Big|^p \leq C\sum_{l=1}^d\Big|\sum_{j=0}^{k-1}\ve_j^{i,l}\Big|^p
\end{align*}
Since we deal with the sum of centered independent Gaussian random
variables, we have that,
for all $i\in\lbrace1,\dots,N\rbrace$ and $l\in\lbrace1,\dots,d\rbrace$, 

\begin{equation*}\label{eq.BGau}
\textbf{E}\Big[\Big|\sum_{j=0}^{k-1}\ve_j^{i,l}\Big|^p\Big]
\leq Ck^{p/2}\leq CN^{p/2}.
\end{equation*}
Putting all these inequalities together, we obtain that
$\textbf{E}\left[|W_k^i|^p\right]\leq
C\left[1+\frac{N^{p/2}}{N^{p\beta}}\right]\leq C$ (recall $\beta\ge
1/2)$. This concludes the proof of the lemma.
\end{proof}



\begin{lem}
\label{lem:remainder terms}  Let $\beta\ge 1/2$  and assume~\ref{as:batch}-\ref{as:noise}. Then, for all $T>0$ there exists $C<\infty$ such that for all $N\geq1$ and $f\in\mathcal C^{2,\gamma_*}(\mathbf R^d)$,
\begin{enumerate}[label=(\roman*), ref=\textit{(\roman*)}]
\item \label{borne E[R_k]} $\max_{0\leq k<\lfloor NT\rfloor}\mathbf{E}\left[|\langle f,R_k^{N}\rangle|\right]\leq C\|f\|_{\mathcal C^{2,\gamma_*
}}\left[\frac{1}{N^2}+\frac{1}{N^{2\beta}}\right]$. 
\item \label{borne E[V]} $\sup_{t\in[0,T]}\mathbf{E}\left[|\langle f,V_t^N\rangle|\right]\leq  {C\|f\|_{\mathcal C^{2,\gamma_*}}}/ {N}$.
\item \label{borne E[f,M^N]} 
$\sup_{t\in[0,T]}\mathbf{E}\left[|\langle f,M_t^N\rangle |^2\right]\leq  {C\|f\|_{\mathcal C^{2,\gamma_*}}^2}/ {N}$. 
\item \label{borne E[noise]}$\sup_{t\in[0,T]}\mathbf{E}\left[\left|\frac{1}{N^{1+\beta}}\sum_{k=0}^{\lfloor Nt\rfloor-1}\sum_{i=1}^N \nabla f(W_k^i)\cdot\ve_k^i\right|^2\right]\leq  {C\|f\|_{\mathcal C^{2,\gamma_*}}^2}/ {N^{2\beta}}$. 
\end{enumerate} 
\end{lem}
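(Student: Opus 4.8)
The plan is to prove the four bounds separately, using throughout three ingredients: (a) the pointwise estimates $|f(w)|,|\nabla f(w)|,\|\nabla^2 f(w)\|\le\|f\|_{\mathcal C^{2,\gamma_*}}(1+|w|^{\gamma_*})$, immediate from the definition of $\|\cdot\|_{\mathcal C^{2,\gamma_*}}$; (b) the uniform moment bound of Lemma~\ref{le.W}, which in particular gives $\sup_{0\le k\le\lfloor NT\rfloor,\,1\le i\le N}\mathbf E[|W_k^i|^{2\gamma_*}]\le C$; and (c) the moment assumptions \ref{as:data}, \ref{as:moments initiaux} and \ref{as:noise}. A preliminary step I would record once and for all is that, by \eqref{algorithm} and \ref{as:sigma}, the one-step increment satisfies $|W_{k+1}^i-W_k^i|\le\frac{C}{N|B_k|}\sum_{(x,y)\in B_k}(|y|+C)+\frac1{N^\beta}|\ve_k^i|$, whence, combining the convexity inequality \eqref{convexity inequality}, the conditioning identity \eqref{eq.conditio} (used with $p=4$) and Gaussian moments, $\mathbf E[|W_{k+1}^i-W_k^i|^4]\le C(N^{-4}+N^{-4\beta})$ and likewise $\mathbf E[|W_{k+1}^i-W_k^i|^{2\gamma_*}]\le C$.

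For \ref{borne E[R_k]}, I would start from $\langle f,R_k^N\rangle=\frac1{2N}\sum_{i=1}^N(W_{k+1}^i-W_k^i)^T\nabla^2 f(\widehat W_k^i)(W_{k+1}^i-W_k^i)$, bound $\|\nabla^2 f(\widehat W_k^i)\|$ via (a) together with $|\widehat W_k^i|\le|W_k^i|+|W_{k+1}^i-W_k^i|$, and then apply Cauchy–Schwarz: the factor $\mathbf E[(1+|\widehat W_k^i|^{\gamma_*})^2]^{1/2}$ is bounded by a constant thanks to (b) and the increment moment bound, while $\mathbf E[|W_{k+1}^i-W_k^i|^4]^{1/2}\le C(N^{-2}+N^{-2\beta})$; averaging over $i$ gives \ref{borne E[R_k]}. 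For \ref{borne E[V]}, I would use that the interval of integration in \eqref{eq.defV} has length $t-\lfloor Nt\rfloor/N\le1/N$ and that, by \ref{as:sigma} and (a), the integrand is bounded by $\alpha(|y|+\|\sigma_*\|_\infty)\cdot C\|f\|_{\mathcal C^{2,\gamma_*}}\langle1+|\cdot|^{\gamma_*},\mu_s^N\rangle$; taking expectations (treating $(x,y)$ as the dummy variable integrated against $\pi$, independent of the $\mathcal F^N_{\lfloor Ns\rfloor}$-measurable $\mu_s^N$), then using $\mathbf E_\pi[|y|]<\infty$ and Lemma~\ref{le.W}, the integrand is $O(1)$ in $L^1$ uniformly in $s\le T$, which yields the $1/N$ factor.

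For \ref{borne E[f,M^N]} and \ref{borne E[noise]} the key structural fact I would exploit is that both sums are sums of orthogonal increments for $(\mathcal F_k^N)_k$. For \ref{borne E[f,M^N]}: conditioning on $\mathcal F_k^N$ freezes $\nu_k^N$, hence the map $\Psi:=(y-\langle\sigma_*(\cdot,x),\nu_k^N\rangle)\langle\nabla f\cdot\nabla\sigma_*(\cdot,x),\nu_k^N\rangle$; conditioning further on $\{|B_k|=q\}$ and invoking \ref{as:batch} and \ref{as:data} then shows $\mathbf E[\langle f,M_k^N\rangle\mid\mathcal F_k^N]=0$, so $\mathbf E[|\langle f,M_t^N\rangle|^2]=\sum_{k<\lfloor Nt\rfloor}\mathbf E[|\langle f,M_k^N\rangle|^2]$; the same double conditioning gives $\mathbf E[|\langle f,M_k^N\rangle|^2\mid\mathcal F_k^N]=\frac{\alpha^2}{N^2}\mathbf E[|B_k|^{-1}]\,\mathrm{Var}_\pi(\Psi)$, and I would bound $\mathrm{Var}_\pi(\Psi)\le\mathbf E_\pi[\Psi^2]$ using that $\sigma_*,\nabla\sigma_*$ are bounded, (a), $\mathbf E_\pi[(|y|+C)^2]<\infty$, $\mathbf E[|B_k|^{-1}]\le1$ and Lemma~\ref{le.W}, to obtain $\mathbf E[|\langle f,M_k^N\rangle|^2]\le C\|f\|_{\mathcal C^{2,\gamma_*}}^2N^{-2}$; summing the $\lfloor Nt\rfloor\le NT$ terms proves \ref{borne E[f,M^N]}. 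For \ref{borne E[noise]}: by \ref{as:noise} the $\ve_k^i$ are centered, mutually independent and independent of $\mathcal F_k^N$ while $W_k^i$ is $\mathcal F_k^N$-measurable, so the increments are orthogonal and $\mathbf E[|\sum_i\nabla f(W_k^i)\cdot\ve_k^i|^2\mid\mathcal F_k^N]=\sum_i|\nabla f(W_k^i)|^2$; hence the quantity in question equals $\frac1{N^{2+2\beta}}\sum_{k<\lfloor Nt\rfloor}\sum_i\mathbf E[|\nabla f(W_k^i)|^2]$, which by (a) and Lemma~\ref{le.W} is $\le\frac1{N^{2+2\beta}}\cdot NT\cdot N\cdot C\|f\|_{\mathcal C^{2,\gamma_*}}^2=CT\|f\|_{\mathcal C^{2,\gamma_*}}^2N^{-2\beta}$.

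I expect no genuinely deep obstacle, but the delicate bookkeeping sits in \ref{borne E[f,M^N]}: one must condition simultaneously on $\mathcal F_k^N$ (to make $\nu_k^N$, and therefore $\Psi$, deterministic) and on the random batch size $|B_k|$, exploiting the independence in \ref{as:batch} so that the mini-batch contributes precisely the prefactor $\mathbf E[|B_k|^{-1}]$ — the quantity which, in the limit $\mathbf E[|B_\infty|^{-1}]$, governs the variance reduction in the G-process covariance \eqref{eq.cov2}. A secondary point requiring attention is in \ref{borne E[R_k]}, where the polynomial weight $|\widehat W_k^i|^{\gamma_*}$ at the Taylor–Lagrange intermediate point must be absorbed by Cauchy–Schwarz against $|W_{k+1}^i-W_k^i|^2$ without degrading the $N^{-2}+N^{-2\beta}$ rate, which is what makes the uniform moment control of Lemma~\ref{le.W} (applied to both $W_k^i$ and the increments $W_{k+1}^i-W_k^i$) essential rather than mere boundedness of $f$.
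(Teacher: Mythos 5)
Your proposal is correct and follows essentially the same route as the paper: bound each remainder via the one-step increment estimate and the uniform moments of Lemma~\ref{le.W}, and treat the martingale and noise terms by showing the increments are orthogonal through conditioning on $\mathcal F_k^N$ and on the batch size. The only cosmetic differences are that in (i) you use Cauchy--Schwarz where the paper uses a Young-type splitting of $|W_{k+1}^i-W_k^i|^2(1+|\widehat W_k^i|^{\gamma_*})$, and in (iii) you compute the exact conditional variance with the $\mathbf{E}[|B_k|^{-1}]$ prefactor (the identity the paper only derives later, in the proof of Proposition~\ref{prop:convergence to martingale0}) where the paper settles for the cruder direct bound $\mathbf{E}[|\langle f,M_k^N\rangle|^2]\le C\|f\|_{\mathcal C^{2,\gamma_*}}^2/N^2$ — both yield the same rate.
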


\begin{proof}Let $T>0$ and $f\in\mathcal C^{2,\gamma_*}(\mathbf
  R^d)$. In what follows, $C>0$ is a constant, independent of
  $N\geq1$, $t\in[0,T]$, $f$, and
  $k\in\lbrace0,\dots,\lfloor NT\rfloor-1\rbrace$, which can change
  from one line to another.  \medskip

\noindent
\textbf{Proof of item~\ref{borne E[R_k]}}.  For
$k\in\lbrace0,\dots,\lfloor NT\rfloor-1\rbrace$, by~\eqref{R_k^{2,N}},
we have
\begin{equation}\label{ineg R_k}
|\langle f,R_k^{N}\rangle|\leq \frac{C\|f\|_{\mathcal C^{2,\gamma_*}}}{N}\sum_{i=1}^N|W_{k+1}^i-W_k^i|^2(1+|\widehat{W}_k^i|^{\gamma_*}).
\end{equation}
On the other hand, by~\eqref{algorithm}, we have:
\begin{equation}\label{pour wasser}
|W_{k+1}^i-W_k^i|\leq \frac{C}{N|B_k|}\sum_{(x,y)\in B_k}\big (|y|+|g_{W_k}^N(x)|\big )+\frac{|\ve_k^i|}{N^{\beta}}.
\end{equation}
By~\eqref{convexity inequality} and the triangle inequality, we deduce 
\begin{equation*}
|W_{k+1}^i-W_k^i|^2\leq C\Big[\frac{1}{N^2|B_k|}\sum_{(x,y)\in B_k}(|y|^2+|g_{W_k}^N(x)|^2)+\frac{|\ve_k^i|^2}{N^{2\beta}}\Big].
\end{equation*}
By definition of $\widehat{W}_k^i$, there exists $\alpha_k^i\in (0,1)$
such that $\widehat{W}_k^i=\alpha_k^iW_k^i+(1-\alpha_k^i)W_{k+1}^i$,
leading, by~\eqref{convexity inequality}, to
$ |\widehat{W}_k^i|^{\gamma_*}\leq
C\left[|W_k^i|^{\gamma_*}+|W_{k+1}^i|^{\gamma_*}\right]$.  Therefore,
\begin{align}
|W_{k+1}^i-W_k^i|^2(1+|\widehat{W}_k^i|^{\gamma_*})&\leq C\Big[\frac{1}{N^2|B_k|}\sum_{(x,y)\in B_k}(|y|^2+|g_{W_k}^N(x)|^2)+\frac{|\ve_k^i|^2}{N^{2\beta}}\Big](1+|W_k^i|^{\gamma_*}+|W_{k+1}^i|^{\gamma_*})\nonumber\\
&\leq \frac{C}{N^2}(1+|W_k^i|^{\gamma_*}+|W_{k+1}^i|^{\gamma_*})^2+\frac{C}{N^2|B_k|}\sum_{(x,y)\in B_k}(|y|^4+|g_{W_k}(x)|^4)\nonumber\\
&\quad +\frac{C}{N^{2\beta}}\left[|\ve_k^i|^4+(1+|W_k^i|^{\gamma_*}+|W_{k+1}^i|^{\gamma_*})^2\right].\label{17}
\end{align}
Plugging~\eqref{17} in~\eqref{ineg R_k}, we obtain
\begin{align}\label{borne r_k}
|\langle f,R_k^{N}\rangle|&\leq \frac{C\|f\|_{\mathcal C^{2,\gamma_*}}}{N}\sum_{i=1}^N\Big[\frac{1}{N^2}(1+|W_k^i|^{\gamma_*}+|W_{k+1}^i|^{\gamma_*})^2+\frac{1}{N^2|B_k|}\sum_{(x,y)\in B_k}(|y|^4+C)\nonumber\\
&\quad +\frac{1}{N^{2\beta}}\left[|\ve_k^i|^4+(1+|W_k^i|^{\gamma_*}+|W_{k+1}^i|^{\gamma_*})^2\right]\Big].
\end{align}
Finally, using Lemma~\ref{le.W},~\ref{as:data}, and~\ref{as:noise},
one deduces that
$ \textbf{E}\left[|\langle f,R_k^{N}\rangle|\right]\leq
C\|f\|_{\mathcal C^{2,\gamma_*}} ({1}/{N^2}+ {1}/{N^{2\beta}})$.
This proves item~\ref{borne E[R_k]}. 
\medskip

\noindent
\textbf{Proof of item~\ref{borne E[V]}}. Let $t\in[0,T]$.   
Since $\sigma_*$ and all its derivatives are bounded
(see~\ref{as:sigma}), it holds for all $s\ge 0$:
\begin{equation}\label{bound sigma,mu}
|\langle\sigma_*(\cdot,x),\mu_s^N\rangle |=\Big|\frac{1}{N}\sum_{i=1}^N\sigma_*(W_{\lfloor Ns\rfloor}^i,x)\Big|\leq C,
\end{equation}
and 
\begin{align}
  |\langle\nabla f\cdot\nabla\sigma_*(\cdot,x),\mu_s^N\rangle |&=\Big|\frac{1}{N}\sum_{i=1}^N\nabla f(W_{\lfloor Ns\rfloor}^i)\cdot\nabla_W\sigma_*(W_{\lfloor Ns\rfloor}^i,x)\Big| \leq \frac{C\|f\|_{\mathcal C^{2,\gamma_*}}}{N}\sum_{i=1}^N(1+|W_{\lfloor Ns\rfloor}^i|^{\gamma_*}). \label{bound nabla f nabla sigma, mu}
\end{align}
Notice that $C$ above is also independent of $x\in \mathcal X$.  Since
$\mathbf E[|y|]<+\infty$ (see (\ref{as:data})), we obtain
\begin{equation}\label{eq.boundLs}
\left|\int_{\mathcal{X}\times\mathcal{Y}}\alpha(y-\langle\sigma_*(\cdot,x),\mu_s^N\rangle)\langle\nabla f\cdot\nabla\sigma_*(\cdot,x),\mu_s^N\rangle\pi(\di x,\di y)\right|\leq \frac{C\|f\|_{\mathcal C^{2,\gamma_*}}}{N}\sum_{i=1}^N(1+|W_{\lfloor Ns\rfloor}^i|^{\gamma_*}).
\end{equation}
Noticing that
$s\in(\frac{\lfloor Nt\rfloor}{N},t)\Rightarrow\lfloor
Ns\rfloor=\lfloor Nt\rfloor$, we obtain (see~\eqref{eq.defV})
\begin{equation}\label{borne v}
|\langle f,V_t^N\rangle|\leq \left( t-\frac{\lfloor Nt\rfloor}{N}\right)\frac{C\|f\|_{\mathcal C^{2,\gamma_*}}}{N}\sum_{i=1}^N(1+|W_{\lfloor Nt\rfloor}^i|^{\gamma_*}).
\end{equation}
Then, by Lemma~\ref{le.W},
$\textbf{E}\left[|\langle f,V_t^N\rangle|\right]\leq {C\|f\|_{\mathcal
    C^{2,\gamma_*}}}/{N}$. This proves item~\ref{borne E[V]}.
\medskip

\noindent
\textbf{Proof of item~\ref{borne E[f,M^N]}}.  Let $t\in[0,T]$.  Recall
the definition of $\mathcal{F}_k^N$ in~\eqref{eq.filtration} and
$\langle f,M_k^N\rangle$ in~\eqref{def Mk}.  \medskip

\noindent
\underline{Step 1}.  In this step we prove that 
\begin{equation}\label{eq.MTT}
\textbf{E}\left[|\langle f,M_k^N\rangle| ^2\right]\leq {C\|f\|_{\mathcal C^{2,\gamma_*}}^2}/{N^2}.
 \end{equation}  
 With the same arguments as those used to get~\eqref{bound sigma,mu}
 and~\eqref{bound nabla f nabla sigma, mu}, we have
\begin{align}\label{ineg <sigma, nu_k>}
|\langle \sigma_*(\cdot,x),\nu_k^N\rangle |\leq C\quad\text{and}\quad
|\langle\nabla f\cdot\nabla\sigma_*(\cdot,x),\nu_k^N\rangle |\leq \frac{C\|f\|_{\mathcal C^{2,\gamma_*}}}{N}\sum_{i=1}^N(1+|W_k^i|^{\gamma_*}). 
\end{align}
Note that $C$ above is also independent of $x\in \mathcal X$.
By~\eqref{convexity inequality} and~\eqref{ineg <sigma, nu_k>}, we
have:
\begin{align}
|\langle f,M_k^N\rangle| ^2&\leq\frac{C}{N^2|B_k|}\sum_{(x,y)\in B_k}(y-\langle \sigma_*(\cdot,x),\nu_k^N\rangle )^2\langle\nabla f\cdot\nabla\sigma_*(\cdot,x),\nu_k^N\rangle ^2+C|\langle f,D_k^N\rangle |^2\nonumber\\
&\leq \frac{C}{N^2|B_k|}\sum_{(x,y)\in B_k}(|y|^2+C)\langle\nabla f\cdot\nabla\sigma_*(\cdot,x),\nu_k^N\rangle^2+C|\langle f,D_k^N\rangle |^2\nonumber\\
&\leq\frac{C\|f\|_{\mathcal C^{2,\gamma_*}}^2}{N^3|B_k|}\sum_{(x,y)\in B_k}(|y|^2+C)\sum_{i=1}^N(1+|W_k^i|^{\gamma_*})^2+C|\langle f,D_k^N\rangle| ^2.\label{borne m_k^2}
\end{align}
On the other hand, it holds since $(W_k^1,\dots,W_k^N)$ is
$\mathcal{F}_k^N$-measurable and by~\ref{as:batch}:
 \begin{align}
 \nonumber
 \mathbf E\Big[\frac{1}{|B_k|} \sum_{(x,y)\in B_k}(|y|^2+C)\sum_{i=1}^N(1+|W_k^i|^{\gamma_*})^2\Big]&=\sum_{q\ge 1}\, \frac 1q \, \mathbf E\Big[   \mathbf 1_{|B_k|=q}  \sum_{n=1}^{q}(|y_k^n|^2+C)\sum_{i=1}^N(1+|W_k^i|^{\gamma_*})^2  \Big]\\
 \nonumber
 &=\sum_{q\ge 1}\, \frac 1q \, \mathbf E\Big[   \mathbf 1_{|B_k|=q}  \sum_{n=1}^{q}(|y_k^n|^2+C)   \Big] \mathbf E\Big[  \sum_{i=1}^N(1+|W_k^i|^{\gamma_*})^2  \Big]\\
 \nonumber
 &=\sum_{q\ge 1}\, \frac 1q \, \mathbf E\Big[   \mathbf 1_{|B_k|=q}     \Big]\mathbf E\Big[  \sum_{n=1}^{q}(|y_k^n|^2+C)   \Big]  \mathbf E\Big[  \sum_{i=1}^N(1+|W_k^i|^{\gamma_*})^2  \Big]\\
 \nonumber
 &=\sum_{q\ge 1} \mathbf E\Big[   \mathbf 1_{|B_k|=q}     \Big]\mathbf E\Big[ |y_1^1|^2+C  \Big]  \mathbf E\Big[  \sum_{i=1}^N(1+|W_k^i|^{\gamma_*})^2  \Big]\\
 \label{eq.EcalculM}
 &=\mathbf E\Big[ |y_1^1|^2+C   \Big]  \mathbf E\Big[  \sum_{i=1}^N(1+|W_k^i|^{\gamma_*})^2  \Big] \le CN,
 \end{align}
 where we have used Lemma~\ref{le.W} and~\ref{as:data} for the last
 inequality.  Consequently, one has:
\begin{equation}\label{eq.Mk2}
\textbf{E}\left[|\langle f,M_k^N\rangle |^2\right] 
 \leq \frac{C\|f\|_{\mathcal C^{2,\gamma_*}}^2}{N^2}+C\textbf{E}\left[\langle f,D_k^N\rangle ^2\right].
\end{equation}
On the other hand, we easily obtain with similar arguments that
$\textbf{E}\left[|\langle f,D_k^N\rangle| ^2\right]\le
{C\|f\|_{\mathcal C^{2,\gamma_*}}^2}/{N^2}$. Together
with~\eqref{eq.Mk2}, this ends the proof of~\eqref{eq.MTT}.  \medskip

\noindent
\underline{Step 2}.  
In this step we prove that  for all $k\ge 0$:
\begin{align}\label{eq.MkE=0}
\textbf{E}\left[\langle f,M_k^N\rangle |\mathcal{F}_k^N\right]=0.
\end{align}
For ease of notation, we set 
\begin{equation}\label{eq.QQ-d}
\mathrm Q^N[f](x,y,\{W_k^i\}_{i=1,\ldots,N})=\big (y-\langle\sigma_*(\cdot,x),\nu_k^N\rangle \big )\langle\nabla f\cdot\nabla\sigma_*(\cdot,x),\nu_k^N\rangle.
\end{equation}
With this notation, we have (see~\eqref{def Dk})
$\langle f,D_k^N\rangle=\frac{\alpha}{N}\int_{\mathcal X\times
  \mathcal Y}\mathrm Q^N[f](x,y,\{W_k^i\}_{i=1,\ldots,N}) \pi(\di x,\di
y)$ and
$\langle f,M_k^N\rangle= \frac{\alpha}{N|B_k|}\sum_{(x,y)\in B_k}
\mathrm Q^N[f](x,y,\{W_k^i\}_{i=1,\ldots,N})- \langle f,D_k^N\rangle$.
It then holds:
\begin{align*}
\mathbf E\Big [\frac{1}{|B_k|}\sum_{(x,y)\in B_k}\mathrm Q^N[f](x,y,\{W_k^i\}_{i=1,\ldots,N}) \, \big |\, \mathcal{F}_k^N\Big ]&=\mathbf E\Big [\frac{1}{|B_k|}\sum_{n=1}^{|B_k|}\mathrm Q^N[f](x_k^n,y_k^n,\{W_k^i\}_{i=1,\ldots,N})  \, \big |\, \mathcal{F}_k^N\Big ]\\
&=\sum_{q\ge 1}  \frac1q \mathbf E\Big [  \mathbf 1_{|B_k|=q}  \sum_{n=1}^{q}\mathrm Q^N[f](x_k^n,y_k^n,\{W_k^i\}_{i=1,\ldots,N})\, \big |\, \mathcal{F}_k^N \Big ].
\end{align*}
Since $(W_k^1,\dots,W_k^N)$ is $\mathcal{F}_k^N$-measurable,
$\big(|B_k|, ((x_k^n,y_k^n))_{n\ge 1}\big)\indep \mathcal {F}_k^N$,
and $|B_k|\indep ((x_k^n,y_k^n))_{n\ge 1}$ (see~\ref{as:batch}), we
deduce that
\begin{align*}
 & \mathbf E\Big [  \mathbf 1_{|B_k|=q}  \sum_{n=1}^{q}\mathrm Q^N[f](x_k^n,y_k^n,\{W_k^i\}_{i=1,\ldots,N}) \, \big |\, \mathcal{F}_k^N \Big ]\\
  &=\mathbf E\Big [  \mathbf 1_{|B_k|=q}  \sum_{n=1}^{q}\mathrm Q^N[f](x_k^n,y_k^n,\{w_k^i\}_{i=1,\ldots,N}) \Big ]\Big|_{\{w_k^i\}_i=\{W_k^i\}_i}\\
 &=\mathbf E\big [  \mathbf 1_{|B_k|=q}   \big ]\mathbf E\Big [  \sum_{n=1}^{q}\mathrm Q^N[f](x_k^n,y_k^n,\{w_k^i\}_{i=1,\ldots,N})   \Big ] \Big|_{\{w_k^i\}_i=\{W_k^i\}_i}\\
 &=q \mathbf E\big [  \mathbf 1_{|B_k|=q}   \big ]\mathbf E\Big [  \mathrm Q^N[f](x_1^1,y_1^1,\{w_k^i\}_{i=1,\ldots,N})   \Big ] \Big|_{\{w_k^i\}_i=\{W_k^i\}_i}\\
 &= q \, \frac N\alpha  \mathbf E\big [ \mathbf 1_{|B_k|=q}  \big ] \, \langle f,D_k^N\rangle,
\end{align*}
where we have used~\ref{as:data} to deduce the last two equalities. We
have thus proved that
$$\mathbf E\Big [\frac{\alpha}{N|B_k|}\sum_{(x,y)\in B_k}\mathrm Q^N[f](x_k^n,y_k^n,\{W_k^i\}_{i=1,\ldots,N})   \, \big |\, \mathcal{F}_k^N\Big ]=  \langle f,D_k^N\rangle.$$
Therefore, using in addition that $\mathbf E [ \langle f,D_k^N\rangle   |\, \mathcal{F}_k^N  ]=\langle f,D_k^N\rangle$ (because $(W_k^1,\dots,W_k^N)$ is $\mathcal{F}_k^N$-measurable), 
we finally deduce~\eqref{eq.MkE=0}. 
\medskip

\noindent
\underline{Step 3}. We now end the proof of item~\ref{borne E[f,M^N]}.
If $j>k$, $\langle f,M_k^N\rangle$ is $\mathcal{F}_j^N$-measurable
(because $\langle f,M_k^N\rangle$ is
$\mathcal{F}_{k+1}^N$-measurable). Then, using also~\eqref{eq.MkE=0},
one obtains that for $j>k$:
\begin{align}\label{eq.Mk==}
\textbf{E}\left[\langle f,M_k^N\rangle\langle f,M_j^N\rangle \right]=\textbf{E}\left[\langle f,M_k^N\rangle\textbf{E}\left[\langle f,M_j^N\rangle |\mathcal{F}_j^N\right]\right]=\textbf{E}\left[\langle f,M_k^N\rangle \times 0\right]=0.
\end{align}
We then have (see~\eqref{eq.def-M}):
\begin{align}\label{22}
\textbf{E}\left[|\langle f,M^N_t\rangle| ^2\right]& =\sum_{k=0}^{\lfloor Nt\rfloor-1}\textbf{E}\left[|\langle f,M_k^N\rangle| ^2\right].
\end{align}  
Plugging~\eqref{eq.MTT} in~\eqref{22}  implies item~\ref{borne E[f,M^N]}.  \medskip

\noindent
\textbf{Proof of item~\ref{borne E[noise]}}.  Let $t\in[0,T]$.  By
Lemma~\ref{le.W}, $\nabla f(W_k^i)\cdot\ve_k^i$ is square-integrable
for all $k\in \mathbf N$ and $i\in \{1,\ldots,N\}$.  From the equality
$$
\textbf{E}\Big[\Big|\sum_{k=0}^{\lfloor Nt\rfloor-1}\sum_{i=1}^N \nabla f(W_k^i)\cdot\ve_k^i\Big|^2\Big]=\sum_{j,k=0}^{\lfloor Nt\rfloor-1} \sum_{i,\ell=1}^N\textbf{E}\big[ \nabla f(W_k^i)\cdot\ve_k^i\,  \nabla f(W_j^\ell)\cdot\ve_j^\ell \big].
$$
Recall that $ W_a^b$ is $\mathcal{F}_a^N$-measurable for all
$a\in \mathbf N$ and $b\in \{1,\ldots,N\}$, and that
$\ve_a^b\indep \mathcal F_a^N$ (see~\ref{as:noise}).  Let $e_q$
denotes the $q$-th element of the canonical basis of $\mathbf R^d$
($q\in \{1,\ldots,d\})$.  Assume that
$0\le j<k\le \lfloor Nt\rfloor-1$. Then, $\varepsilon_j^\ell$ is
$\mathcal{F}_k^N$-measurable, and it holds for all
$i, \ell\in \{1,\ldots,N\}$:
\begin{align}\label{eq.ekekj-0}
\textbf{E}\big[ \nabla f(W_k^i)\cdot\ve_k^i\,  \nabla f(W_j^\ell)\cdot\ve_j^\ell \big]=\sum_{n,m=1}^d  \textbf{E}\Big[ \partial_{e_n} f(W_k^i) \,  \partial_{e_m} f(W_j^\ell) \,    \ve_j^\ell\cdot e_m \Big]  \mathbf E \big [   \ve_k^i\cdot e_n     \big] =0.
\end{align} 
because $\ve_k^i\sim\mathcal{N}(0,I_{d})$ (see~\ref{as:noise}). On the
other hand, using~\ref{as:noise}, we have for all
$0\le k\le \lfloor Nt\rfloor-1$ and when
$i\neq \ell\in \{1,\ldots,N\}$:
\begin{align}\label{eq.ekekj}
\textbf{E}\big[ \nabla f(W_k^i)\cdot\ve_k^i\,  \nabla f(W_k^\ell)\cdot\ve_k^\ell \big]&=\sum_{n,m=1}^d\textbf{E}\Big[ \, \partial_{e_n} f(W_k^i) \,  \partial_{e_m} f(W_k^\ell)   \,\Big]   \mathbf E \big[ \ve_k^i\cdot e_n       \big] \mathbf E \big[     \ve_k^\ell\cdot e_m     \big]=0.
\end{align} 
 Consequently, we have:
\begin{align*}
\textbf{E}\Big[ \Big |\sum_{k=0}^{\lfloor Nt\rfloor-1}\sum_{i=1}^N\nabla f(W_k^i)\cdot\ve_k^i\Big|^2\Big]=\sum_{k=0}^{\lfloor Nt\rfloor-1}\sum_{i=1}^N\textbf{E}\big[|\nabla f(W_k^i)\cdot\ve_k^i|^2\big].
\end{align*}
Using the Cauchy-Schwarz inequality, we deduce, using also
Lemma~\ref{le.W} and~\ref{as:noise}, that:
\begin{align}\label{eq333}
\textbf{E}\Big[\Big|\sum_{k=0}^{\lfloor Nt\rfloor-1}\sum_{i=1}^N\nabla f(W_k^i)\cdot\ve_k^i\Big|^2\Big]&\leq\sum_{k=0}^{\lfloor Nt\rfloor-1}\sum_{i=1}^N\textbf{E}\left[|\nabla f(W_k^i)|^2\right]\textbf{E}\left[|\ve_k^i|^2\right]\nonumber\\
&\leq C\|f\|_{\mathcal C^{2,\gamma_*}}^2\sum_{k=0}^{\lfloor Nt\rfloor-1}\sum_{i=1}^N\textbf{E}\left[(1+|W_k^i|^{\gamma_*})^2\right] \leq C\|f\|_{\mathcal C^{2,\gamma_*}}^2N^2.
\end{align}
This proves~\ref{borne E[noise]}. The proof of
Lemma~\ref{lem:remainder terms} is complete.
\end{proof}


We now want to pass to the limit in~\eqref{eq: prelim mu}. To this
end, we first prove that $(\mu^N)_{N\ge 1}$ is relatively compact in
$\skodual$. This is the purpose of the following section.


\subsection{Relative compactness in $\mathcal D(\mathbf R_+, \mathcal P_{\gamma}(\mathbf R^d))$ and convergence to the limit equation \label{sec:rel compact}}
 In this section, we show that
 $(\mu^N)_{N\ge 1}$ is relatively compact in $\mathcal D(\mathbf R_+, \mathcal  P_{\gamma}(\mathbf R^d))$. Then, we prove 
that any limit point of
 $ (\mu^N)_{N\ge 1} $
 satisfies a.s.~\eqref{eq limite}.


\subsubsection{Relative compactness  in $\mathcal D(\mathbf R_+, \mathcal  P_{\gamma}(\mathbf R^d))$}
\label{sec.RC-LLN}

In this section we prove the following result.

{\prop{\label{lem:rel comp Sobolev}Let $\beta\ge 1/2$ and assume that the
    conditions~\ref{as:batch}-\ref{as:noise} hold. Then,
    $(\mu^N)_{N\ge 1}$ is relatively compact  in  in $\mathcal D(\mathbf R_+, \mathcal P_{\gamma}(\mathbf R^d))$}.    }

\medskip
 
 We first recall the following standard result.

\begin{prop}\label{prop.compact_wasserstein}
 Let $q>p\ge 1$ and $C>0$. The set $\mathscr K_C^{q}:=\{\mu\in \mathcal P_p(\mathbf R^{d}), \int_{\mathbf R^{d}}|x|^{q}\mu(\di x)\leq C\}$ is compact.
\end{prop}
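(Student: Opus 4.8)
The plan is to prove that $\mathscr K_C^q$ is sequentially compact in the metric space $(\mathcal P_p(\mathbf R^{d}),\mathsf W_p)$, which by metrizability is the same as compactness. So let $(\mu_n)_{n\ge 1}\subset \mathscr K_C^q$ and let us extract a $\mathsf W_p$-convergent subsequence with limit in $\mathscr K_C^q$. First I would establish \emph{tightness}: by Markov's inequality, for every $\mu\in\mathscr K_C^q$ and $R>0$ one has $\mu(\{|x|>R\})\le R^{-q}\int_{\mathbf R^d}|x|^q\mu(\di x)\le CR^{-q}$, which tends to $0$ as $R\to\infty$ \emph{uniformly} over $\mathscr K_C^q$. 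Hence Prokhorov's theorem yields a subsequence (not relabeled) converging narrowly to some $\mu\in\mathcal P(\mathbf R^{d})$.

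Second, I would upgrade narrow convergence to $\mathsf W_p$-convergence, and this is where the strict inequality $q>p$ is used. For every $R>0$,
\[
\int_{\{|x|>R\}}|x|^p\,\mu_n(\di x)\le R^{p-q}\int_{\mathbf R^{d}}|x|^q\,\mu_n(\di x)\le C\,R^{p-q},
\]
which goes to $0$ as $R\to\infty$ uniformly in $n$; that is, the family $\{|x|^p\}$ is uniformly integrable along $(\mu_n)_{n\ge 1}$. Combined with the narrow convergence $\mu_n\to\mu$, this classically gives $\mu\in\mathcal P_p(\mathbf R^d)$ together with $\langle\phi,\mu_n\rangle\to\langle\phi,\mu\rangle$ for every continuous $\phi$ with $|\phi(x)|\le c(1+|x|^p)$, in particular convergence of the $p$-th moments; equivalently, $\mathsf W_p(\mu_n,\mu)\to 0$ (this is precisely the characterization of $\mathsf W_p$-convergence as narrow convergence plus convergence of $p$-th moments, see \cite[Theorem 6.9]{villani2009optimal} or \cite[Chapter 5]{santambrogio2015optimal}).

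Finally, I would check that the limit remains in $\mathscr K_C^q$, i.e.\ that $\mathscr K_C^q$ is closed: since $x\mapsto |x|^q$ is nonnegative and lower semicontinuous, $\mu\mapsto\int_{\mathbf R^d}|x|^q\mu(\di x)$ is lower semicontinuous for the narrow topology, so $\int_{\mathbf R^d}|x|^q\mu(\di x)\le\liminf_n\int_{\mathbf R^d}|x|^q\mu_n(\di x)\le C$, hence $\mu\in\mathscr K_C^q$. This shows every sequence in $\mathscr K_C^q$ has a subsequence converging in $\mathsf W_p$ to an element of $\mathscr K_C^q$, which proves the claim.

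I expect the only genuinely delicate step to be the passage from narrow convergence to $\mathsf W_p$-convergence: it relies precisely on the strict inequality $q>p$, which furnishes the uniform integrability of the $p$-th moment needed to rule out escape of mass to infinity. The remaining ingredients (tightness via Markov, closedness via lower semicontinuity of $\mu\mapsto\int|x|^q\di\mu$) are routine.
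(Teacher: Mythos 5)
Your proof is correct. Note that the paper does not actually prove this proposition — it is recalled as a standard fact without argument — so there is no in-text proof to compare against; your three steps (tightness via Markov, upgrade from narrow to $\mathsf W_p$-convergence via the uniform integrability of $|x|^p$ furnished by $q>p$, and closedness via lower semicontinuity of $\mu\mapsto\int|x|^q\,\mu(\di x)$) constitute exactly the standard argument one finds in the references the paper cites, e.g.\ \cite[Theorem 6.9]{villani2009optimal} and \cite[Chapter 5]{santambrogio2015optimal}, and you correctly identify the role of the strict inequality $q>p$.
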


 We have the following  result.
  
\begin{lem}
  \label{lem:cc sobo} 
   Let $\beta\ge 1/2$ and assume
 that~\ref{as:batch}-\ref{as:noise} hold. Then, for every
  $T>0$, there exists $C>0$ such that for all $f\in\mathcal C^{2,\gamma_*}(\mathbf R^d)$,
\begin{align}\label{cc eq obj}
\sup_{N\geq1}\mathbf{E}\Big[ \sup_{t\in[0,T]}\langle f,\mu_t^N\rangle ^2\Big]\le C \Vert f\Vert_{\mathcal C^{2,\gamma_*}}^2.
\end{align} 
\end{lem}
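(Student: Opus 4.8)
The plan is to use the pre-limit equation~\eqref{eq: prelim mu} and simply estimate the supremum over $t\in[0,T]$ of each term on its right-hand side in $L^2$. Starting from
$$\langle f,\mu_t^N\rangle = \langle f,\mu_0^N\rangle + \langle f,D_t^N\rangle + \langle f,M_t^N\rangle + \sum_{k=0}^{\lfloor Nt\rfloor-1}\langle f,R_k^N\rangle + \frac{1}{N^{1+\beta}}\sum_{k=0}^{\lfloor Nt\rfloor-1}\sum_{i=1}^N\nabla f(W_k^i)\cdot\ve_k^i,$$
I would bound $\langle f,\mu_t^N\rangle^2$ by a constant times the sum of the squares of these five terms (convexity inequality~\eqref{convexity inequality}), take $\sup_{t\in[0,T]}$ inside, and then take expectations. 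The initial term is handled by $|\langle f,\mu_0^N\rangle|\le \frac{C\|f\|_{\mathcal C^{2,\gamma_*}}}{N}\sum_i(1+|W_0^i|^{\gamma_*})$ together with Lemma~\ref{le.W}. The term $\langle f,D_t^N\rangle$ is the integral term: using~\eqref{eq.boundLs} one gets $\sup_{t\in[0,T]}|\langle f,D_t^N\rangle|\le \frac{CT\|f\|_{\mathcal C^{2,\gamma_*}}}{N}\sup_{0\le k\le \lfloor NT\rfloor}\sum_i(1+|W_k^i|^{\gamma_*})$, whose square has bounded expectation by Lemma~\ref{le.W} after one more use of~\eqref{convexity inequality} (paying a factor $N$ when pulling the square inside the $i$-sum, absorbed by the $1/N$ prefactor squared giving $1/N^2$ and the sum giving $N$, so $O(1/N)$; since $\sup_k$ over $O(NT)$ indices can be dominated by the full sum over $k$, this is still $O(1)$).

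For the remaining three terms I would invoke Lemma~\ref{lem:remainder terms}, but with the caveat that that lemma only controls each term at a fixed $t$, whereas here I need the supremum over $t$. For $\sum_{k=0}^{\lfloor Nt\rfloor-1}\langle f,R_k^N\rangle$ this is immediate: $\sup_{t\in[0,T]}|\sum_{k=0}^{\lfloor Nt\rfloor-1}\langle f,R_k^N\rangle|\le \sum_{k=0}^{\lfloor NT\rfloor-1}|\langle f,R_k^N\rangle|$, and using the explicit bound~\eqref{borne r_k} together with Lemma~\ref{le.W},~\ref{as:data},~\ref{as:noise} one gets the $L^2$-norm of this sum is $O(N(1/N^2+1/N^{2\beta}))=o(1)$ (here $\beta\ge 1/2$ suffices). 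The noise term is a martingale (the summands are martingale increments with respect to a suitable filtration refining $(\mathcal F_k^N)_k$, by the computations~\eqref{eq.ekekj-0}--\eqref{eq.ekekj}), so Doob's $L^2$-inequality upgrades item~\ref{borne E[noise]} of Lemma~\ref{lem:remainder terms} to a bound on $\mathbf E[\sup_{t\in[0,T]}|\cdot|^2]$, still $O(1/N^{2\beta})$. Likewise $\{t\mapsto\langle f,M_t^N\rangle\}$ is a martingale (Lemma~\ref{le.martingale}, referenced in the text), so Doob gives $\mathbf E[\sup_{t\in[0,T]}|\langle f,M_t^N\rangle|^2]\le 4\mathbf E[|\langle f,M_T^N\rangle|^2]\le C\|f\|_{\mathcal C^{2,\gamma_*}}^2/N$ from item~\ref{borne E[f,M^N]}.

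Collecting the five bounds and using that all the $N$-dependent factors are uniformly bounded (in fact all but the $D$-term and the $\mu_0$-term vanish as $N\to\infty$), I obtain $\sup_{N\ge 1}\mathbf E[\sup_{t\in[0,T]}\langle f,\mu_t^N\rangle^2]\le C\|f\|_{\mathcal C^{2,\gamma_*}}^2$, which is~\eqref{cc eq obj}. The main obstacle I anticipate is the passage from fixed-time estimates to the uniform-in-$t$ estimate: for the $R$-term and the $D$-term one controls the whole increasing sum (or the running integral), which is harmless, but for the martingale terms $M_t^N$ and the Gaussian-noise partial sums one genuinely needs Doob's maximal inequality, so it is worth spelling out explicitly why these two processes are (square-integrable) martingales in the appropriate filtration before applying Doob. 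Everything else is a routine reuse of Lemma~\ref{le.W} and Lemma~\ref{lem:remainder terms}.
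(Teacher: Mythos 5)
Your overall strategy is exactly the paper's: start from the pre-limit equation~\eqref{eq: prelim mu}, square, take the supremum over $[0,T]$ term by term, and reuse Lemma~\ref{le.W} and the fixed-time bounds of Lemma~\ref{lem:remainder terms}. The one genuine methodological difference is your treatment of the two martingale-type terms via Doob's maximal inequality. The paper instead uses the cruder bound $|\sum_{k=0}^{m-1}a_k|^2\le m\sum_{k=0}^{m-1}a_k^2$ (i.e.~\eqref{convexity inequality}) combined with the per-increment second-moment estimates \eqref{eq.MTT} and the orthogonality computation leading to~\eqref{eq333}; this already yields $O(1)$ and $O(N^{1-2\beta})$ bounds, which suffice here. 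Your Doob route is also valid and gives the sharper rates $O(1/N)$ and $O(N^{-2\beta})$, but it requires the martingale property, which the paper only establishes later (Lemma~\ref{le.martingale}); there is no circularity, since that proof rests only on~\eqref{eq.MkE=0} and \ref{as:noise}, but you should either re-derive the conditional centering on the spot or make clear that the forward reference is harmless. (Incidentally, the paper itself switches to the Doob argument in the CLT section, where the extra factor of $N$ makes the crude bound insufficient.)

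There is, however, a quantitative gap in your treatment of the drift term $\langle f,D_t^N\rangle$ as written. After the bound $\sup_{t\le T}|\langle f,D_t^N\rangle|\le \frac{CT\|f\|}{N}\sup_{k\le \lfloor NT\rfloor}\sum_i(1+|W_k^i|^{\gamma_*})$, the quantity $\frac{1}{N^2}\big(\sum_i(1+|W_k^i|^{\gamma_*})\big)^2$ has expectation of order $1$ for each fixed $k$, not $1/N$: the $1/N^2$ prefactor, the factor $N$ from~\eqref{convexity inequality}, and the $N$ terms in the $i$-sum cancel exactly. Consequently, dominating the $\sup$ over the $O(NT)$ indices $k$ by the corresponding sum produces a bound of order $N$, not $O(1)$, and the estimate does not close. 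The fix is standard and is what the paper effectively does: apply Cauchy--Schwarz at the level of the $k$-sum itself, namely $\big(\sum_{k=0}^{\lfloor NT\rfloor-1}|\langle f,D_k^N\rangle|\big)^2\le \lfloor NT\rfloor\sum_k\langle f,D_k^N\rangle^2$ together with $\mathbf E[\langle f,D_k^N\rangle^2]\le C\|f\|^2_{\mathcal C^{2,\gamma_*}}/N^2$ (established in the proof of Lemma~\ref{lem:remainder terms}), which gives $O(T^2)$; equivalently, split off $V_t^N$ and apply Jensen to the time integral using~\eqref{eq.boundLs} and Lemma~\ref{le.W}. With that repair, the rest of your argument is sound.
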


\begin{proof} Let $T>0$ and
  $f\in\mathcal C^{2,\gamma_*}(\mathbf R^d)$.  All along the proof,
  $C<\infty$ denotes a constant independent of $t\in [0,T]$, $N\ge 1$,
  $k\in \{0,\ldots, \lfloor Nt\rfloor\}$, $i\in \{1,\ldots,N\}$, and
  $f\in\mathcal C^{2,\gamma_*}(\mathbf R^d)$, which can change from
  one occurence to another.  From~\eqref{eq: prelim mu}, we have:
\begin{align}\label{cc eq depart}
\sup_{t\in[0,T]}\langle f,\mu_t^N\rangle ^2&\leq C\Big[\langle f,\mu_0^N\rangle ^2+\int_0^T\int_{\mathcal{X}\times\mathcal{Y}}(y-\langle\sigma_*(\cdot,x),\mu_s^N\rangle )^2\langle\nabla f\cdot\nabla\sigma_*(\cdot,x),\mu_s^N\rangle ^2\pi(\di x,\di y)\di s \nonumber\\
&\quad +\sup_{t\in[0,T]}\langle f,M^N_t\rangle ^2+\sup_{t\in[0,T]}|\langle f,V_t^N\rangle|^2+\sup_{t\in[0,T]}\Big|\sum_{k=0}^{\lfloor Nt\rfloor-1}\langle f,R_k^{N}\rangle\Big|^2\nonumber\\
&\quad +\frac{1}{N^{2+2\beta}}\sup_{t\in[0,T]}\Big|\sum_{k=0}^{\lfloor Nt\rfloor-1}\sum_{i=1}^N\nabla f(W_k^i)\cdot\ve_k^i\Big|^2\, \Big].
\end{align}
We now study each term of the right-hand side of~\eqref{cc eq
  depart}. Let us deal with the first term in the right-hand side
of~\eqref{cc eq depart}.  Using~\ref{as:moments initiaux}
and~\eqref{convexity inequality}, it holds:
\begin{align*}
\textbf{E}\left[\langle f,\mu_0^N\rangle ^2\right]=\textbf{E}\left[\langle f,\nu_0^N\rangle ^2\right]=\textbf{E}\Big[\Big|\frac{1}{N}\sum_{i=1}^N f(W_0^i)\Big|^2\Big] \leq \frac{1}{N}\sum_{i=1}^N\textbf{E}\left[|f(W_0^i)|^2\right] &\le  \frac{\|f\|^2_{\mathcal C^{2,\gamma_*}}}{N}\sum_{i=1}^N\textbf{E}\left[(1+|W_0^i|^{\gamma_*})^2\right]\\
&\leq C\|f\|^2_{\mathcal C^{2,\gamma_*}}.
\end{align*}
For the second term in the right-hand side of~\eqref{cc eq depart}, we
have since $\mathbf E[|y|^2]<+\infty$ (see (\ref{as:data})) and
using~\eqref{bound sigma,mu},~\eqref{bound nabla f nabla sigma, mu},
and Lemma~\ref{le.W}:
\begin{align*}
\textbf{E}\left[\int_0^T\int_{\mathcal{X}\times\mathcal{Y}}(y-\langle\sigma_*(\cdot,x),\mu_s^N\rangle )^2\langle\nabla f\cdot\nabla\sigma_*(\cdot,x),\mu_s^N\rangle ^2\pi(\di x,\di y)\di s\right]\leq C\|f\|^2_{\mathcal C^{2,\gamma_*}}.
\end{align*}
Let us deal with the third term in the right-hand side of~\eqref{cc eq
  depart}.  By~\eqref{eq.def-M} and~\eqref{convexity inequality}, we
have, for $t\in[0,T]$,
\begin{align*}
\sup_{t\in[0,T]}|\langle f,M_t^N\rangle |^2\leq\lfloor NT\rfloor\sum_{k=0}^{\lfloor NT\rfloor-1}\langle f,M_k^N\rangle ^2. 
\end{align*}
Hence, using~\eqref{eq.MTT}, we obtain that
$ \textbf{E}\big[\sup_{t\in[0,T]}\langle f,M_t^N\rangle ^2\big]\leq
C\|f\|_{\mathcal C^{2,\gamma_*}}^2$. Let us deal with the fourth term
in the right-hand side of~\eqref{cc eq depart}.  From~\eqref{convexity
  inequality} and~\eqref{borne v},
\begin{align*}
\sup_{t\in[0,T]}|\langle f,V_t^N\rangle|^2\leq \frac{C\|f\|_{\mathcal C^{2,\gamma_*}}^2}{N^3}\sum_{i=1}^N\max_{0\leq k\leq\lfloor NT\rfloor}(1+|W_k^i|^{\gamma_*})^2,
\end{align*}
which leads to 
\begin{align}\nonumber
\textbf{E}\Big[\sup_{t\in[0,T]}|\langle f,V_t^N\rangle|^2\Big]&\leq \frac{C\|f\|_{\mathcal C^{2,\gamma_*}}^2}{N^3}\sum_{i=1}^N\textbf{E}\left[\max_{0\leq k\leq\lfloor NT\rfloor}(1+|W_k^i|^{\gamma_*})^2\right]\\
\label{eq.supVt}
&\leq \frac{C\|f\|_{\mathcal C^{2,\gamma_*}}^2}{N^3}\sum_{i=1}^N\sqrt{\sum_{k=0}^{\lfloor NT\rfloor}\textbf{E}\left[(1+|W_k^i|^{\gamma_*})^4\right]} \leq \frac{C\|f\|_{\mathcal C^{2,\gamma_*}}^2}{N^{3/2}}.
\end{align}
Let us now consider the fifth term in the right-hand side of~\eqref{cc
  eq depart}. From~\eqref{borne r_k} and~\eqref{convexity inequality},
we have
\begin{align*}
|\langle f,R_k^{N}\rangle|^2&\leq \frac{C\|f\|^2_{\mathcal C^{2,\gamma_*}}}{N}\sum_{i=1}^N\Big[\frac{1}{N^4}(1+|W_k^i|^{\gamma_*}+|W_{k+1}^i|^{\gamma_*})^4+\frac{1}{N^4|B_k|}\sum_{(x,y)\in B_k}(|y|^4+C)^2\\
&\quad+\frac{1}{N^{4\beta}}\left[|\ve_k^i|^8+(1+|W_k^i|^{\gamma_*}+|W_{k+1}^i|^{\gamma_*})^4\right]\Big].
\end{align*}
Then, by~\ref{as:data} and~\ref{as:noise} together with
Lemma~\ref{le.W} and~\eqref{eq.conditio}, we obtain
\begin{align}\label{borneERk]}
\textbf{E}\left[|\langle f,R_k^{N}\rangle|^2\right]\leq C\|f\|^2_{\mathcal C^{2,\gamma_*}}\Big[\frac{1}{N^4}+\frac{1}{N^{4\beta}}\Big].
\end{align}
Therefore, using also~\eqref{convexity inequality}, it holds:
\begin{align}\label{eq.borneERk2}
\textbf{E}\Big[\sup_{t\in[0,T]}\Big|\sum_{k=0}^{\lfloor Nt\rfloor-1}\langle f,R_k^{N}\rangle\Big|^2\Big]&\leq \lfloor NT\rfloor\sum_{k=0}^{\lfloor NT\rfloor-1}\textbf{E}\left[|\langle f,R_k^{N}\rangle|^2\right] \leq C\|f\|^2_{\mathcal C^{2,\gamma_*}}\Big[\frac{1}{N^2}+\frac{N^2}{N^{4\beta}}\Big].
\end{align}
Let us deal with the last term in the right-hand side of~\eqref{cc eq
  depart}.  Using the same arguments leading to~\eqref{eq333} together
with~\eqref{convexity inequality} and~\eqref{eq.ekekj} we have
\begin{align}
\nonumber
\frac{1}{N^{2+2\beta}}\textbf{E}\Big[\sup_{t\in[0,T]}\Big|\sum_{k=0}^{\lfloor Nt\rfloor-1}\sum_{i=1}^N\nabla f(W_k^i)\cdot\ve_k^i\Big|^2\Big]&\leq \frac{C}{N^{2+2\beta}}\textbf{E}\Big[ \sup_{t\in[0,T]} \lfloor Nt\rfloor \sum_{k=0}^{\lfloor Nt\rfloor-1} \Big|\sum_{i=1}^N\nabla f(W_k^i)\cdot\ve_k^i\Big|^2\Big]\\
\nonumber
&\le \frac{C \lfloor NT\rfloor }{N^{2+2\beta}} \textbf{E}\Big[  \sum_{k=0}^{\lfloor NT\rfloor-1} \Big|\sum_{i=1}^N\nabla f(W_k^i)\cdot\ve_k^i\Big|^2\Big]\\
\nonumber
&\leq \frac{C}{N^{1+2\beta}}\sum_{k=0}^{\lfloor NT\rfloor-1}\textbf{E}\Big[ \Big|\sum_{i=1}^N\nabla f(W_k^i)\cdot\ve_k^i\Big|^2\Big]\\
\label{eq.boundRk1}
&\leq \frac{C}{N^{1+2\beta}}\sum_{k=0}^{\lfloor NT\rfloor-1}\sum_{i=1}^N\textbf{E}\big[|\nabla f(W_k^i)\cdot\ve_k^i|^2\big]\leq \frac{C\|f\|^2_{\mathcal C^{2,\gamma_*}}}{N^{2\beta-1}}.
\end{align}
Plugging all these previous bounds in~\eqref{cc eq depart}, we obtain
(recall that $\beta\ge 1/2$), for all
$f\in \mathcal C^{2,\gamma_*}(\mathbf R^d)$, $
\textbf{E} [\sup_{t\in[0,T]}\langle f,\mu_t^N\rangle ^2 ] \leq C\|f\|_{\mathcal C^{2,\gamma_*}}^2$. 
This
proves~\eqref{cc eq obj} and ends the proof of the lemma.
\end{proof}

 Lemma \ref{lem:cc sobo}   provides  the following compact containment for $(\mu^N)_{N\ge1}$ in $\mathcal D(\mathbf R_+,\mathcal P_{\gamma}(\mathbf R^d))$.
 
 \begin{cor}
 \label{lem_cc_mu^N}
Assume $\beta\ge 1/2$ and~\ref{as:batch}-\ref{as:noise}.  Let $0<\epsilon<1$. For every $T>0$, 
\begin{equation}
\sup_{N\ge1}\mathbf E\Big[\sup_{t\in[0,T]}\int_{\mathbf R^{d}}|w|^{\gamma+\epsilon}\mu_t^N(\di w) \Big] <+\infty.
\end{equation}
\end{cor}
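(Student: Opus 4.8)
The plan is to obtain this estimate directly from Lemma~\ref{lem:cc sobo}, by feeding it a smooth test function that dominates $w\mapsto|w|^{\gamma+\epsilon}$ while still belonging to $\mathcal C^{2,\gamma_*}(\mathbf R^d)$; no new estimate beyond Lemma~\ref{lem:cc sobo} (and the moment bound of Lemma~\ref{le.W} hidden inside it) will be needed.

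Concretely, I would fix $T>0$ and $0<\epsilon<1$ and set $\phi(w):=(1+|w|^2)^{(\gamma+\epsilon)/2}$, which is of class $\mathcal C^\infty(\mathbf R^d)$. First I would check that $\phi\in\mathcal C^{2,\gamma_*}(\mathbf R^d)$: one has $|\phi(w)|\le C(1+|w|^{\gamma+\epsilon})$, $|\nabla\phi(w)|\le C(1+|w|^{\gamma+\epsilon-1})$ and $|\nabla^2\phi(w)|\le C(1+|w|^{\gamma+\epsilon-2})$, so for every multi-index $|k|\le 2$ the ratio $|D^k\phi(w)|/(1+|w|^{\gamma_*})$ tends to $0$ as $|w|\to\infty$ (because $\gamma+\epsilon<\gamma+1=\gamma_*$), hence is bounded; in particular $\|\phi\|_{\mathcal C^{2,\gamma_*}}<+\infty$. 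Applying Lemma~\ref{lem:cc sobo} with $f=\phi$ then gives $\sup_{N\ge1}\mathbf E\big[\sup_{t\in[0,T]}\langle\phi,\mu_t^N\rangle^2\big]\le C\|\phi\|_{\mathcal C^{2,\gamma_*}}^2<+\infty$. Since $\phi\ge0$, the Cauchy--Schwarz (or Jensen) inequality yields $\sup_{N\ge1}\mathbf E\big[\sup_{t\in[0,T]}\langle\phi,\mu_t^N\rangle\big]<+\infty$. Finally, from the pointwise bound $|w|^{\gamma+\epsilon}=(|w|^2)^{(\gamma+\epsilon)/2}\le(1+|w|^2)^{(\gamma+\epsilon)/2}=\phi(w)$ one gets $\int_{\mathbf R^d}|w|^{\gamma+\epsilon}\mu_t^N(\di w)\le\langle\phi,\mu_t^N\rangle$ for all $t$, hence $\sup_{t\in[0,T]}\int_{\mathbf R^d}|w|^{\gamma+\epsilon}\mu_t^N(\di w)\le\sup_{t\in[0,T]}\langle\phi,\mu_t^N\rangle$, and taking expectations and the supremum over $N$ concludes.

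There is essentially no real obstacle here: the only point requiring a little care is that one should \emph{not} plug the non-smooth function $w\mapsto|w|^{\gamma+\epsilon}$ into Lemma~\ref{lem:cc sobo}, but pass through the $\mathcal C^\infty$ dominating surrogate $\phi$. It is also worth noting why the seemingly more elementary route — bounding $\sup_{t\in[0,T]}\langle|\cdot|^{\gamma+\epsilon},\mu_t^N\rangle$ by $\frac1N\sum_{i=1}^N\max_{0\le k\le\lfloor NT\rfloor}|W_k^i|^{\gamma+\epsilon}$ and using Lemma~\ref{le.W} — does not work directly: the maximum over the $\sim NT$ indices $k$ cannot be controlled uniformly in $N$ by the pointwise moment bounds of Lemma~\ref{le.W} alone. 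All the genuine work (the time-uniform $L^2$ control coming from the pre-limit equation~\eqref{eq: prelim mu} and the estimates of Lemma~\ref{lem:remainder terms}) is already packaged in Lemma~\ref{lem:cc sobo}, so the present corollary is a short consequence.
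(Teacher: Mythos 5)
Your proof is correct and follows essentially the same route as the paper: the paper also reduces the corollary to Lemma~\ref{lem:cc sobo} applied to a $\mathcal C^{2,\gamma_*}(\mathbf R^d)$ surrogate of $w\mapsto|w|^{\gamma+\epsilon}$ (it uses $(1-\chi(w))|w|^{\gamma+\epsilon}$ with a cut-off $\chi$ near the origin, whereas you use the dominating function $(1+|w|^2)^{(\gamma+\epsilon)/2}$, an immaterial difference). Your version is in fact slightly more explicit, spelling out the membership in $\mathcal C^{2,\gamma_*}(\mathbf R^d)$ and the Cauchy--Schwarz step from the $L^2$ bound to the $L^1$ bound, both of which the paper leaves implicit.
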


\begin{proof}
 Recall $\gamma_*-\gamma=1$. Thus, it holds $f:w\mapsto(1-\chi(w))|w|^{\gamma +\epsilon}\in \mathcal C^{2,\gamma_*}(\mathbf R^{d})$ since $\gamma_*>\gamma +\epsilon$. The result follows from Lemma \ref{lem:cc sobo}. 
\end{proof}

 The following result will also be needed.

\begin{lem}
  \label{lem:rc sobo} Assume~\ref{as:batch}-\ref{as:noise} and
  $\beta \ge 1/2$. For all $T>0$, there exists $C>0$ such that for all
  $\delta>0$ and $0\leq r<t\leq T$ such that $t-r\leq\delta$, one has
  for all $N\ge 1$ and $f\in \mathcal C^{2,\gamma_*}(\mathbf R^d)$:
\begin{align}\label{rel comp eq7}
\mathbf{E}\left[|\langle f,\mu_t^N\rangle -\langle f,\mu_r^N\rangle |^2\right]\leq C\|f\|^2_{\mathcal C^{2,\gamma_*}}\left[\delta^2+\frac{1}{N}+\frac{1}{N^2}+(N\delta+1)\left[\frac{1}{N^4}+\frac{1}{N^{4\beta}}\right]+\frac{1}{N^{2\beta}}\right].
\end{align}
\end{lem}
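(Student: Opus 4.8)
The plan is to write the pre-limit identity~\eqref{eq: prelim mu} at times $t$ and $r$ and subtract, which gives, for $f\in\mathcal C^{2,\gamma_*}(\mathbf R^d)$,
\begin{align*}
\langle f,\mu_t^N\rangle-\langle f,\mu_r^N\rangle&=\int_r^t\int_{\mathcal X\times\mathcal Y}\alpha(y-\langle\sigma_*(\cdot,x),\mu_s^N\rangle)\langle\nabla f\cdot\nabla\sigma_*(\cdot,x),\mu_s^N\rangle\pi(\di x,\di y)\di s+\sum_{k=\lfloor Nr\rfloor}^{\lfloor Nt\rfloor-1}\langle f,M_k^N\rangle\\
&\quad+\big(\langle f,V_t^N\rangle-\langle f,V_r^N\rangle\big)+\sum_{k=\lfloor Nr\rfloor}^{\lfloor Nt\rfloor-1}\langle f,R_k^N\rangle+\frac{1}{N^{1+\beta}}\sum_{k=\lfloor Nr\rfloor}^{\lfloor Nt\rfloor-1}\sum_{i=1}^N\nabla f(W_k^i)\cdot\ve_k^i.
\end{align*}
Applying~\eqref{convexity inequality} with $p=2$ reduces matters to estimating the second moment of each of these five contributions. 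Nearly all the work has already been done in Lemmas~\ref{le.W} and~\ref{lem:remainder terms}; the only new point is to localise those estimates to the window $\{\lfloor Nr\rfloor,\dots,\lfloor Nt\rfloor-1\}$, which contains at most $N(t-r)+1$ indices with $t-r\le\min(\delta,T)$.

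For the drift term, I would bound the inner integral by~\eqref{eq.boundLs}, apply Cauchy--Schwarz in $s$ using $t-r\le\delta$, and take expectations via Lemma~\ref{le.W}; this yields $C\|f\|_{\mathcal C^{2,\gamma_*}}^2\delta^2$. For the martingale increment, the $\langle f,M_k^N\rangle$ are pairwise $L^2$-orthogonal by~\eqref{eq.Mk==}, so $\mathbf E[|\sum_k\langle f,M_k^N\rangle|^2]=\sum_k\mathbf E[|\langle f,M_k^N\rangle|^2]$, which by~\eqref{eq.MTT} and $\lfloor Nt\rfloor-\lfloor Nr\rfloor\le NT+1\le CN$ is at most $C\|f\|_{\mathcal C^{2,\gamma_*}}^2/N$. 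For the $V^N$-increment I would simply use $\mathbf E[|\langle f,V_t^N\rangle-\langle f,V_r^N\rangle|^2]\le 2\mathbf E[|\langle f,V_t^N\rangle|^2]+2\mathbf E[|\langle f,V_r^N\rangle|^2]$ together with~\eqref{borne v} (and $t-\lfloor Nt\rfloor/N<1/N$), \eqref{convexity inequality}, and Lemma~\ref{le.W}, obtaining $C\|f\|_{\mathcal C^{2,\gamma_*}}^2/N^2$. For the noise term, the cross terms in $(k,i)$ vanish exactly as in~\eqref{eq.ekekj-0}--\eqref{eq.ekekj} (using~\ref{as:noise} and the $\mathcal F_k^N$-measurability of $W_k^i$), so its second moment equals $\frac{1}{N^{2+2\beta}}\sum_{k}\sum_i\mathbf E[|\nabla f(W_k^i)\cdot\ve_k^i|^2]$; by Cauchy--Schwarz, Lemma~\ref{le.W}, and $\mathbf E[|\ve_k^i|^2]<\infty$ each summand is $\le C\|f\|_{\mathcal C^{2,\gamma_*}}^2$, and with at most $(NT+1)N\le CN^2$ summands the bound is $C\|f\|_{\mathcal C^{2,\gamma_*}}^2/N^{2\beta}$.

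The only term requiring a little care is the sum of Taylor remainders. Applying~\eqref{convexity inequality} once more gives $\mathbf E[|\sum_k\langle f,R_k^N\rangle|^2]\le(\lfloor Nt\rfloor-\lfloor Nr\rfloor)\sum_k\mathbf E[|\langle f,R_k^N\rangle|^2]\le (N\delta+1)^2\,C\|f\|_{\mathcal C^{2,\gamma_*}}^2(N^{-4}+N^{-4\beta})$, using~\eqref{borneERk]} and $\lfloor Nt\rfloor-\lfloor Nr\rfloor\le N\delta+1$. To reach exactly the announced form, I would then use $(N\delta+1)^2\le 2N^2\delta^2+2(N\delta+1)$ and note that $\beta\ge 1/2$ forces $N^2(N^{-4}+N^{-4\beta})\le2$, so that the $N^2\delta^2$ part is absorbed into a $\delta^2$ term. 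Collecting the five estimates gives~\eqref{rel comp eq7}. I do not expect a genuine obstacle: the argument is mechanical once Lemmas~\ref{le.W} and~\ref{lem:remainder terms} are available, the only things to watch being the counting of indices in the time window and this last mild rearrangement of the remainder contribution.
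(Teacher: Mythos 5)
Your proof is correct and follows essentially the same route as the paper: the same decomposition of $\langle f,\mu_t^N\rangle-\langle f,\mu_r^N\rangle$ via the pre-limit equation, Jensen/convexity, and the term-by-term bounds from Lemmas~\ref{le.W} and~\ref{lem:remainder terms} (for the martingale increment the paper uses the crude bound $|\langle f,M_t^N\rangle-\langle f,M_r^N\rangle|^2\le 2\langle f,M_t^N\rangle^2+2\langle f,M_r^N\rangle^2$ where you invoke orthogonality of the $\langle f,M_k^N\rangle$ over the window, but both yield $C\|f\|^2_{\mathcal C^{2,\gamma_*}}/N$). Your treatment of the Taylor-remainder sum is in fact slightly more careful than the paper's display~\eqref{rel comp eq5}, which records only a single factor of $(\lfloor Nt\rfloor-\lfloor Nr\rfloor)$ where the convexity inequality produces its square; your step $(N\delta+1)^2\le 2N^2\delta^2+2(N\delta+1)$ combined with $N^2(N^{-4}+N^{-4\beta})\le 2$ for $\beta\ge 1/2$ is exactly what is needed to land on the stated form of~\eqref{rel comp eq7}.
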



\begin{proof}  
  Let $\delta>0$ and $0\leq r<t\leq T$ such that $t-r\leq\delta$.  Let
  $f\in \mathcal C^{2,\gamma_*}(\mathbf{R}^d)$.  In the following,
  $C>0$ is a constant independent of $t$, $r$, $\delta$, $N$, and $f$,
  which can change from one occurence to another.  From~\eqref{eq:
    prelim mu}, we have
\begin{align*}
\langle f,\mu_t^N\rangle -\langle f,\mu_r^N\rangle &=\int_r^t\int_{\mathcal{X}\times\mathcal{Y}}\alpha(y-\langle\sigma_*(\cdot,x),\mu_s^N\rangle )\langle\nabla f\cdot\nabla\sigma_*(\cdot,x),\mu_s^N\rangle \pi(\di x,\di y)\di s\\
&\quad +\langle f,M_t^N\rangle -\langle f,M_r^N\rangle +\langle f,V_t^N\rangle-\langle f,V_r^N\rangle +\sum_{k=\lfloor Nr\rfloor}^{\lfloor Nt\rfloor-1}\langle f,R_k^{N}\rangle+\frac{1}{N^{1+\beta}}\sum_{k=\lfloor Nr\rfloor}^{\lfloor Nt\rfloor-1}\sum_{i=1}^N\nabla f(W_k^i)\cdot\ve_k^i.
\end{align*}
Jensen's inequality provides 
\begin{align}\label{rel comp1}
|\langle f,\mu_t^N\rangle -\langle f,\mu_r^N\rangle |^2&\leq C\Big[(t-r)\int_r^t\int_{\mathcal{X}\times\mathcal{Y}}\left|(y-\langle\sigma_*(\cdot,x),\mu_s^N\rangle )\langle\nabla f\cdot\nabla\sigma_*(\cdot,x),\mu_s^N\rangle \right|^2\pi(\di x,\di y)\di s\nonumber\\
&\quad +\left|\langle f,M_t^N\rangle -\langle f,M_r^N\rangle \right|^2+\left|\langle f,V_t^N\rangle-\langle f,V_r^N\rangle\right|^2+\Big|\sum_{k=\lfloor Nr\rfloor}^{\lfloor Nt\rfloor-1}\langle f,R_k^{N}\rangle\Big|^2\nonumber\\
&\quad +\frac{1}{N^{2+2\beta}}\Big|\sum_{k=\lfloor Nr\rfloor}^{\lfloor Nt\rfloor-1}\sum_{i=1}^N\nabla f(W_k^i)\cdot\ve_k^i\Big|^2\Big].
\end{align}
We now study each term of the right-hand side of~\eqref{rel comp1}.
Let us consider the first term in the right-hand side of~\eqref{rel
  comp1}. From~\eqref{bound sigma,mu},~\eqref{bound nabla f nabla
  sigma, mu} and~\eqref{convexity inequality}, we have:
\begin{align*}
\textbf{E}\left[|y-\langle\sigma_*(\cdot,x),\mu_s^N\rangle |^2|\langle\nabla f\cdot\nabla\sigma_*(\cdot,x),\mu_s^N\rangle| ^2\right]&\leq \frac{C\|f\|_{\mathcal C^{2,\gamma_*}}^2}{N}\textbf{E}\Big[(|y|^2+C)\sum_{i=1}^N(1+|W_{\lfloor Ns\rfloor}^i|^{\gamma_*})^2\Big] \leq C\|f\|_{\mathcal C^{2,\gamma_*}}^2,
\end{align*}
where the last inequality follows from~\ref{as:data} and
Lemma~\ref{le.W}. We then have:
\begin{align}
\nonumber
\textbf{E}\left[(t-r)\int_r^t\int_{\mathcal{X}\times\mathcal{Y}}\left((y-\langle\sigma_*(\cdot,x),\mu_s^N\rangle )\langle\nabla f\cdot\nabla\sigma_*(\cdot,x),\mu_s^N\rangle \right)^2\pi(\di x,\di y)\di s\right] &\leq C(t-r)^2\|f\|_{\mathcal C^{2,\gamma_*}}^2\\
&\leq C\delta^2\|f\|_{\mathcal C^{2,\gamma_*}}^2.\label{rel comp eq2}
\end{align}
Let us consider the second term in the right-hand side of~\eqref{rel
  comp1}. From item~\ref{borne E[f,M^N]} of Lemma~\ref{lem:remainder
  terms}, we have
\begin{align}\label{rel comp eq3}
\textbf{E}\left[\left(\langle f,M_t^N\rangle -\langle f,M_r^N\rangle \right)^2\right]\leq 2\textbf{E}\left[\langle f,M_t^N\rangle ^2+\langle f,M_r^N\rangle ^2\right]\leq \frac{C\|f\|_{\mathcal C^{2,\gamma_*}}^2}{N}.
\end{align}
Let us consider the third term in the right-hand side of~\eqref{rel
  comp1}. From~\eqref{borne v} and~\eqref{convexity inequality}, we
have
\begin{align*} 
|\langle f,V_t^N\rangle|^2\leq \frac{C\|f\|_{\mathcal C^{2,\gamma_*}}^2}{N^3}\sum_{i=1}^N(1+|W_{\lfloor Nt\rfloor}^i|^{\gamma_*})^2.
\end{align*}
Therefore, by Lemma~\ref{le.W}, we obtain that:
\begin{align}\label{rel comp eq4}
\textbf{E}\big[\big|\langle f,V_t^N\rangle-\langle f,V_r^N\rangle\big|^2\big]\leq 2\textbf{E}\left[|\langle f,V_t^N\rangle|^2+|\langle f,V_r^N\rangle|^2\right]\leq \frac{C\|f\|_{\mathcal C^{2,\gamma_*}}^2}{N^2}.
\end{align}
Let us consider the fourth term in the right-hand side of~\eqref{rel
  comp1}.
By~\eqref{borneERk]},  
\begin{align}\label{rel comp eq5}
\textbf{E}\Big[\Big|\sum_{k=\lfloor Nr\rfloor}^{\lfloor Nt\rfloor-1}\langle f,R_k^{N}\rangle\Big|^2\Big]&\leq C\|f\|^2_{\mathcal C^{2,\gamma_*}}(\lfloor Nt\rfloor-\lfloor Nr\rfloor)\left[\frac{1}{N^4}+\frac{1}{N^{4\beta}}\right]\nonumber\\
&\leq C\|f\|^2_{\mathcal C^{2,\gamma_*}}(N\delta+1)\left[\frac{1}{N^4}+\frac{1}{N^{4\beta}}\right].
\end{align}
Let us consider the last term in the right-hand side of~\eqref{rel
  comp1}.  By item~\ref{borne E[noise]} in Lemma~\ref{lem:remainder
  terms},
\begin{align}
&\frac{1}{N^{2+2\beta}}\, \textbf{E}\Big[ \Big|\sum_{k=\lfloor Nr\rfloor}^{\lfloor Nt\rfloor-1}\sum_{i=1}^N\nabla f(W_k^i)\cdot\ve_k^i\Big|^2\Big]\nonumber\\
&\leq\frac{2}{N^{2+2\beta}}\, \textbf{E}\Big[\Big|\sum_{k=0}^{\lfloor Nt\rfloor-1}\sum_{i=1}^N\nabla f(W_k^i)\cdot\ve_k^i\Big|^2+\Big|\sum_{k=0}^{\lfloor Nr\rfloor-1}\sum_{i=1}^N\nabla f(W_k^i)\cdot\ve_k^i\Big|^2\Big] \leq \frac{C\|f\|^2_{\mathcal C^{2,\gamma_*}}}{N^{2\beta}}.\label{rel comp eq6}
\end{align}
Using~\eqref{rel comp eq2},~\eqref{rel comp eq3},~\eqref{rel comp
  eq4},~\eqref{rel comp eq5},~\eqref{rel comp eq6}, and ~\eqref{rel
  comp1}, we deduce \eqref{rel comp eq7}. 
%
%
%
\end{proof}

We now collect the results of the  previous lemmata to prove
Proposition~\ref{lem:rel comp Sobolev}.

\begin{proof}[Proof of Proposition~\ref{lem:rel comp Sobolev}]
To prove Proposition~\ref{lem:rel comp Sobolev}, we apply \cite[Theorem 4.6]{jakubowski1986skorokhod} with $E=  \mathcal P_{\gamma}(\mathbf R^{d})$  and $\mathbb F=\{\mathsf V_f, f\in \mathcal C^\infty_c(\mathbf R^{d})\}$ where 
$$\mathsf  V_f: \nu \in\mathcal P_{\gamma}(\mathbf R^{d})\mapsto \langle f, \nu \rangle.$$ 
The set  $\mathbb F$ on $\mathcal P_{\gamma}(\mathbf R^{d})$ satisfies Conditions~\cite[(3.1) and (3.2) in Theorem 3.1]{jakubowski1986skorokhod}. Condition (4.8) there is a consequence of Proposition \ref{prop.compact_wasserstein}, Corollary \ref{lem_cc_mu^N}, together with  Markov's inequality. 
We now prove that~\cite[Condition (4.9)]{jakubowski1986skorokhod} is verified, i.e. let us show that all $f\in \mathcal C^\infty_c(\mathbf R^{d})$,  the sequence $(\langle f,\mu^N\rangle)_{N\ge1}$ is relatively compact in $\mathcal D(\mathbf R_+,\mathbf R)$.  
 To do so, it suffices to use  Lemma \ref{lem:rc sobo}    and     Proposition~\ref{lem:note kurtz} below (with $\mathcal H_1=\mathcal H_2=\mathbf R$ there).   
 In conclusion, according to \cite[Theorem 4.6]{jakubowski1986skorokhod},  $(\mu^N)_{N\ge1}$ is relatively compact in $ \mathcal D(\mathbf R_+,\mathcal P_{\gamma}(\mathbf R^{d}))$.
\end{proof}


\subsubsection{Limit points in   $ \mathcal D(\mathbf R_+,\mathcal P_{\gamma}(\mathbf R^{d}))$ are continuous  in time }\label{subsec: continuity prop sob}

In this section we show that any limit point of $ (\mu^N)_{N\geq1}$   in   $ \mathcal D(\mathbf R_+,\mathcal P_{\gamma}(\mathbf R^{d}))$
belongs a.s. to
  $ \mathcal C(\mathbf R_+,\mathcal P_{1}(\mathbf R^{d}))$.

\begin{prop}\label{p-limit in P}
 Let $\beta>1/2$ and
  assume~\ref{as:batch}-\ref{as:noise}. Consider  $\mu^*\in \mathcal D(\mathbf R_+,\mathcal P_{\gamma}(\mathbf R^{d}))$   a limit point of $(\mu^N)_{N\ge 1}$ in $ \mathcal D(\mathbf R_+,\mathcal P_{\gamma}(\mathbf R^{d}))$. Then,   a.s.   $\mu^*\in\mathcal C(\mathbf R_+,\mathcal P_{1}(\mathbf R^{d}))$.
\end{prop}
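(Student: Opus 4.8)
The plan is to show that the step processes $\mu^N$ have jumps that vanish uniformly on compact time intervals, and then to transfer this to any limit point by testing against Lipschitz functions and performing a countable reduction. So let $(N_k)$ be a subsequence along which $\mu^{N_k}$ converges in distribution to the given limit point $\mu^*$ in $\mathcal D(\rp,\mathcal P_{\gamma}(\mathbf R^d))$ (such a subsequence exists by Proposition~\ref{lem:rel comp Sobolev}); below $C$ is a constant changing from line to line. \emph{Step 1 (uniform jump bound).} The map $t\mapsto\mu_t^N$ is piecewise constant and jumps only at the times $j/N$, $j\ge 1$, with $\mu_{j/N}^N=\nu_j^N$ and $\mu_{(j/N)^-}^N=\nu_{j-1}^N$, so $\sup_{0<t\le T}\mathsf W_1(\mu_t^N,\mu_{t^-}^N)=\max_{1\le j\le\lfloor NT\rfloor}\mathsf W_1(\nu_j^N,\nu_{j-1}^N)$. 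Coupling $\delta_{W_j^i}$ with $\delta_{W_{j-1}^i}$ and then using~\eqref{algorithm} together with the boundedness of $\sigma_*$, $\nabla_W\sigma_*$ and of $g^N$ (see~\ref{as:sigma}),
\begin{equation*}
\mathsf W_1(\nu_j^N,\nu_{j-1}^N)\le\frac1N\sum_{i=1}^N|W_j^i-W_{j-1}^i|\le\frac{C}{N|B_{j-1}|}\sum_{(x,y)\in B_{j-1}}(|y|+C)+\frac1{N^{1+\beta}}\sum_{i=1}^N|\ve_{j-1}^i|.
\end{equation*}
Squaring, using~\eqref{convexity inequality}, the conditioning computation leading to~\eqref{eq.conditio} (which relies on~\ref{as:batch} and~\ref{as:data}) and $\mathbf E[|\ve|^2]<\infty$ (see~\ref{as:noise}), one gets $\mathbf E[\mathsf W_1(\nu_j^N,\nu_{j-1}^N)^2]\le C(N^{-2}+N^{-2\beta})$ uniformly in $j$ and $N$. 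A union bound and Markov's inequality then give, for every $\epsilon>0$ and $T>0$,
\begin{equation*}
\mathbf P\Big(\sup_{0<t\le T}\mathsf W_1(\mu_t^N,\mu_{t^-}^N)>\epsilon\Big)\le\frac{\lfloor NT\rfloor}{\epsilon^2}\,C\Big(\frac1{N^2}+\frac1{N^{2\beta}}\Big)\le\frac{CT}{\epsilon^2}\Big(\frac1N+\frac1{N^{2\beta-1}}\Big)\longrightarrow 0
\end{equation*}
as $N\to\infty$, since $\beta>1/2$. Hence $\sup_{0<t\le T}\mathsf W_1(\mu_t^N,\mu_{t^-}^N)\to0$ in probability for every $T>0$ (note that Lemma~\ref{le.W} is not even needed here, since only increments of the weights are involved).

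\emph{Step 2 (transfer to the limit point).} By the Skorokhod representation theorem we may realize the convergence $\mu^{N_k}\to\mu^*$ almost surely in $\mathcal D(\rp,\mathcal P_{\gamma}(\mathbf R^d))$ on a common probability space without altering the laws; Step~1 still applies to this realization, so along a further subsequence (not relabelled) $\sup_{0<t\le T}\mathsf W_1(\mu_t^{N_k},\mu_{t^-}^{N_k})\to0$ a.s.\ for every $T\in\mathbf N$. Fix a bounded Lipschitz $f:\mathbf R^d\to\mathbf R$. Since $\mu\mapsto\langle f,\mu\rangle$ is continuous on $\mathcal P_{\gamma}(\mathbf R^d)$, we get $\langle f,\mu^{N_k}\rangle\to\langle f,\mu^*\rangle$ a.s.\ in $\mathcal D(\rp,\mathbf R)$, whereas $\sup_{0<t\le T}|\langle f,\mu_t^{N_k}\rangle-\langle f,\mu_{t^-}^{N_k}\rangle|\le\|f\|_{\mathrm{Lip}}\sup_{0<t\le T}\mathsf W_1(\mu_t^{N_k},\mu_{t^-}^{N_k})\to0$ a.s. Composing, on each $[0,T]$, the Skorokhod convergence with its accompanying time changes $\lambda_k$ (continuous increasing bijections with $\|\lambda_k-\mathrm{id}\|_\infty\to0$), the shifted functions $y_k:=\langle f,\mu^{N_k}\rangle\circ\lambda_k$ are càdlàg, converge uniformly to $\langle f,\mu^*\rangle$, and satisfy $\sup_t|y_k(t)-y_k(t^-)|=\sup_t|\langle f,\mu^{N_k}\rangle(\lambda_k(t))-\langle f,\mu^{N_k}\rangle(\lambda_k(t)^-)|\to0$; from uniform convergence one also has $y_k(t^-)\to\langle f,\mu^*_{t^-}\rangle$, whence $|\langle f,\mu^*_t\rangle-\langle f,\mu^*_{t^-}\rangle|=\lim_k|y_k(t)-y_k(t^-)|=0$. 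Letting $T\to\infty$, $\langle f,\mu^*\rangle\in\mathcal C(\rp,\mathbf R)$ almost surely.

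\emph{Step 3 (from scalar functionals to $\mathcal P_1$-continuity, and the main difficulty).} Fix a countable family $(h_k)_{k\ge1}$ of bounded Lipschitz functions on $\mathbf R^d$ that determines narrow convergence of probability measures. Since $\mu^*\in\mathcal D(\rp,\mathcal P_{\gamma}(\mathbf R^d))$ and $\mu\mapsto\int_{\mathbf R^d}|w|^{\gamma}\mu(\di w)$ is continuous on $\mathcal P_{\gamma}(\mathbf R^d)$, the real-valued path $t\mapsto\int|w|^{\gamma}\mu_t^*(\di w)$ is càdlàg, hence bounded on $[0,T]$ by some random $C_T<\infty$ (no appeal to Corollary~\ref{lem_cc_mu^N} is needed); therefore $(\mu_t^*)_{0\le t\le T}$ remains in the $\mathsf W_1$-compact set $\mathscr K_{C_T}$ of Proposition~\ref{prop.compact_wasserstein}. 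On $\mathscr K_{C_T}$, as $\gamma>1$ entails uniform integrability of $w\mapsto|w|$, narrow convergence implies convergence of first moments, so the $\mathsf W_1$-topology coincides there with the narrow topology, which is in turn generated by $\{\langle h_k,\cdot\rangle\}_{k\ge1}$. Intersecting over $k$ the almost sure events furnished by Step~2 applied to $f=h_k$, we conclude that a.s.\ every $t\mapsto\langle h_k,\mu_t^*\rangle$ is continuous, hence a.s.\ $t\mapsto\mu_t^*$ is $\mathsf W_1$-continuous on each $[0,T]$, i.e.\ $\mu^*\in\mathcal C(\rp,\mathcal P_1(\mathbf R^d))$. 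The crux is precisely this last reduction: the uncountable supremum in the Kantorovich--Rubinstein formula for $\mathsf W_1$ can only be traded for a countable family of test functions because the $\gamma$-th moment is automatically locally bounded along the path, confining it to a fixed $\mathsf W_1$-compact set on which $\mathsf W_1$ collapses to the narrow topology; the jump estimate of Step~1 and the Skorokhod-space observation of Step~2 are, by comparison, routine.
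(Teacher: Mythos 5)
Your proof is correct, but it takes a noticeably different route from the paper's. The starting jump estimate is essentially identical (both bound $\mathsf W_1(\nu_{k+1}^N,\nu_k^N)$ by $\frac1N\sum_i|W_{k+1}^i-W_k^i|$ and control its second moment by $C(N^{-2}+N^{-2\beta})$), but the paper then finishes in one stroke: since $\mathsf W_1\le\mathsf W_\gamma$, the convergence also holds in $\mathcal D(\mathbf R_+,\mathcal P_1(\mathbf R^d))$, and \cite[Proposition~3.26, Chapter~VI]{jacod2003skorokhod} applied to the $\mathcal P_1$-valued process says that limit points are a.s.\ continuous as soon as $\mathbf E[\sup_{t\le T}\mathsf W_1(\mu^N_{t^-},\mu^N_t)]\to0$; the Kantorovich--Rubinstein duality~\eqref{Kantorovitch Rubinstein} makes the supremum over Lipschitz test functions collapse to the single uniform bound $\max_k\beta_k^N$, so the ``uncountable supremum'' you identify as the crux never actually arises there. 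You instead reprove the relevant implication by hand (Skorokhod representation, explicit time changes, vanishing of the limit's jumps for each fixed scalar functional), which forces the extra Step~3: upgrading continuity of countably many $t\mapsto\langle h_k,\mu^*_t\rangle$ to $\mathsf W_1$-continuity via confinement of the path to a $\mathsf W_1$-compact moment ball on which the narrow and $\mathsf W_1$ topologies coincide. That step is correct (local boundedness of $t\mapsto\langle|\cdot|^\gamma,\mu^*_t\rangle$ follows from the càdlàg property exactly as in the proof of Corollary~\ref{co.e}, and Proposition~\ref{prop.compact_wasserstein} plus uniform integrability do the rest), and your convergence-in-probability version of the jump control is in fact the form of the Jacod--Shiryaev condition. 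What the paper's approach buys is brevity and the avoidance of Step~3 altogether; what yours buys is a self-contained argument that does not invoke the metric-space-valued tightness criterion as a black box.
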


\begin{proof}
Let $N'$ be a subsequence such that in distribution  $\mu^{N'} \to  \mu^*$   in  $\mathcal D(\mathbf R_+,\mathcal P_{\gamma}(\mathbf R^{d}))$. 
Because $\mathsf W_1\le \mathsf W_{\gamma}$,   $\mu^{N'} \to  \mu^*$ in distribution  also in $ \mathcal D(\mathbf R_+,\mathcal P_{1}(\mathbf R^{d}))$.
By \cite[Proposition 3.26 in Chapter VI]{jacod2003skorokhod},  $\mu^*\in\mathcal C(\mathbf R_+,\mathcal P_{1}(\mathbf R^{d}))$ a.s.  if  for all $T>0$, $\lim_{N\to +\infty} \mathbf E\big[ \sup_{t\in [0,T]} \mathsf W_1(\mu^N_{t_-},\mu^N_t)  \big]=0$. According to the duality formula \eqref{Kantorovitch Rubinstein}, this is equivalent to   
\begin{equation}\label{eq.L1}
\lim_{N\to +\infty} \mathbf E\Big[ \sup_{t\in [0,T]} \sup_{\Vert f\Vert_{\text{Lip}}\le 1}|\langle f,\mu^N_{t_-}\rangle-\langle f,\mu^N_t\rangle| \Big]=0.
\end{equation}
Let $T>0$ and consider a Lipschitz function $f:\mathbf R^{d}\to \mathbf R$ such that $\Vert f\Vert_{\text{Lip}}\le 1$. One has that  $\langle f,\mu_t^N\rangle=\langle f,\mu_0^N\rangle+ \sum_{k=0}^{\lfloor Nt\rfloor-1}\langle f,\nu_{k+1}^N\rangle-\langle f,\nu_k^N\rangle$ (with the convention $\sum_0^{-1}=0$). Therefore,  the discontinuity points of $t\in [0,T]\mapsto \langle f,\mu_t^N\rangle$ are exactly  $\{1/N, 2/N,\ldots, \lfloor NT\rfloor/N\}$ and for all $t\in [0,T]$, 
\begin{align}\label{eq.Bs}
|\langle f,\mu^N_{t_-}\rangle-\langle f,\mu^N_t\rangle|\le \max_{k=0,\ldots,\lfloor NT\rfloor-1}|\langle f,\nu_{k+1}^N\rangle-\langle f,\nu_{k}^N\rangle|.
\end{align}
Let  $k\in \{0,\ldots,\lfloor NT\rfloor-1\}$. We have using~\eqref{algorithm} and~\ref{as:sigma}: 
\begin{align}\label{eq.Bs2}
|\langle f,\nu_{k+1}^N\rangle-\langle f,\nu_{k}^N\rangle|&\le \frac 1N\sum_{i=1}^N  |W_{k+1}^i-W_k^i|  \le \frac CN \sum_{i=1}^N\Big[ \frac{1}{N  |B_k|}\sum_{(x,y)\in B_k}(|y|+1)+\frac{|\varepsilon_k^i|}{N^\beta}  \Big]=:\beta_k^N
\end{align}
Then, one deduces that 
\begin{align*}
|\beta_k^N|^2 \le   \frac CN \sum_{i=1}^N\Big[ \frac{1}{N^2  |B_k|}\sum_{(x,y)\in B_k}(|y|^2+1)+\frac{|\varepsilon_k^i|^2}{N^{2\beta}}  \Big],
\end{align*}
and hence that $\mathbf E[|\beta_k^N|^2]\le C(1/N^2 +1/N^{2\beta})$ where   $C>0$ is independent of $N\ge 1$ and $k=0,\ldots,\lfloor NT\rfloor-1$.
Then, using \eqref{eq.Bs} and\eqref{eq.Bs2}, 
\begin{align*}
\mathbf E\Big[ \sup_{t\in [0,T]} \sup_{\Vert f\Vert_{\text{Lip}}\le 1}|\langle f,\mu^N_{t_-}\rangle-\langle f,\mu^N_t\rangle| \Big]&\le \mathbf E\Big[ \sup_{\Vert f\Vert_{\text{Lip}}\le 1} \max_{k=0,\ldots,\lfloor NT\rfloor-1}|\langle f,\nu_{k+1}^N\rangle-\langle f,\nu_{k}^N\rangle| \Big]\\
&\le \mathbf E\Big[  \max_{k=0,\ldots,\lfloor NT\rfloor-1}\beta_k^N \Big] \\
&\le \mathbf E\Big[  \sqrt{\sum_{k=0}^{\lfloor NT\rfloor-1} |\beta_k^N|^2 }\Big]\le \sqrt{\mathbf E\Big[   \sum_{k=0}^{\lfloor NT\rfloor-1} |\beta_k^N|^2 \Big]}\le C\big[1/\sqrt N +  \sqrt{ N/N^{2\beta}}\big].
\end{align*}
This proves \eqref{eq.L1} since $\beta>1/2$. 
The proof of Proposition \ref{p-limit in P} is complete. 
\end{proof}

 We end this section with the following result which will be used later in the proof of Theorem \ref{thm:clt}.

\begin{lem}
  \label{lem:mu bar is continuous}
  Let $\beta\ge 1/2$ and
  assume~\ref{as:batch}-\ref{as:noise}.  Then,
  for all $T>0$ there exists $C>0$, 
\begin{align}\label{799}
\mathbf{E}\Big[\sup_{t\in[0,T]}\langle f,\mu_t^N-\mu_{t^-}^N\rangle^2\Big]\leq   C\|f\|^2_{\mathcal C^{2,\gamma_*}}\Big[\frac{1}{N^{3/2}}+\sqrt{\frac{1}{N^7}+\frac{1}{N^{8\beta-1}}}+\frac{\sqrt{N}}{N^{2\beta}}\Big], \ \forall f\in \mathcal C^{2,\gamma_*}(\mathbf{R}^d).
\end{align}
\end{lem}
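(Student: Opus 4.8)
The plan is the following. Since $\mu_t^N=\nu_{\lfloor Nt\rfloor}^N$, the map $t\mapsto\langle f,\mu_t^N\rangle$ is constant on each interval $[k/N,(k+1)/N)$ and its only discontinuities on $[0,T]$ are at the points $k/N$, $k\in\{1,\dots,\lfloor NT\rfloor\}$, with jump $\langle f,\nu_k^N-\nu_{k-1}^N\rangle$; hence
$$\sup_{t\in[0,T]}\langle f,\mu_t^N-\mu_{t^-}^N\rangle^2=\max_{0\le k\le\lfloor NT\rfloor-1}\langle f,\nu_{k+1}^N-\nu_k^N\rangle^2 .$$
Plugging the one-step identity~\eqref{f nu_k+1-f nu_k} and using~\eqref{convexity inequality} (with $m=4$, $p=2$) together with the fact that a maximum of sums of nonnegative terms is bounded by the sum of the maxima, one reduces the estimate to bounding the four quantities $\mathbf E[\max_{k}\langle f,D_k^N\rangle^2]$, $\mathbf E[\max_{k}\langle f,M_k^N\rangle^2]$, $\mathbf E[\max_{k}\langle f,R_k^N\rangle^2]$ and $N^{-2-2\beta}\,\mathbf E[\max_{k}|\sum_{i=1}^N\nabla f(W_k^i)\cdot\ve_k^i|^2]$, all maxima being taken over $0\le k\le\lfloor NT\rfloor-1$.

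For the first three terms the device is the elementary inequality $\mathbf E[\max_k X_k^2]\le\mathbf E[(\sum_k X_k^4)^{1/2}]\le(\sum_k\mathbf E[X_k^4])^{1/2}$: after summing over the $\lfloor NT\rfloor$ indices, a pointwise fourth-moment bound of order $N^{-a}$ yields a contribution of order $N^{(1-a)/2}$, which is exactly what produces the exponents in~\eqref{799} (note that the cruder bound $\max_k\le\sum_k$ applied to the second moments would only give $1/N$ for the $D$ and $M$ terms, which is not good enough). The required fourth-moment bounds are obtained exactly along the lines of Lemma~\ref{le.W} and Lemma~\ref{lem:remainder terms}, using the uniform bounds~\eqref{ineg <sigma, nu_k>}, the convexity inequality~\eqref{convexity inequality}, the conditioning-on-$\mathcal F_k^N$ computation based on~\ref{as:batch} and~\ref{as:data} (as in~\eqref{eq.conditio}), and the moment bound $\mathbf E[|W_k^i|^{8\gamma_*}]\le C$ of Lemma~\ref{le.W}. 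One gets $\mathbf E[\langle f,D_k^N\rangle^4]\le C\|f\|_{\mathcal C^{2,\gamma_*}}^4/N^4$, $\mathbf E[\langle f,M_k^N\rangle^4]\le C\|f\|_{\mathcal C^{2,\gamma_*}}^4/N^4$, and, by squaring the pointwise estimate~\eqref{17}--\eqref{borne r_k} and handling the $|y|$, $|\ve_k^i|$ and weight moments as there (using that $\ve_k^i$ is independent of $\mathcal F_k^N$ by~\ref{as:noise}, while $W_{k+1}^i$ is controlled through~\eqref{algorithm}), $\mathbf E[\langle f,R_k^N\rangle^4]\le C\|f\|_{\mathcal C^{2,\gamma_*}}^4(1/N^8+1/N^{8\beta})$. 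Summing over $k$ and taking square roots yields $\mathbf E[\max_k\langle f,D_k^N\rangle^2]+\mathbf E[\max_k\langle f,M_k^N\rangle^2]\le C\|f\|_{\mathcal C^{2,\gamma_*}}^2/N^{3/2}$ and $\mathbf E[\max_k\langle f,R_k^N\rangle^2]\le C\|f\|_{\mathcal C^{2,\gamma_*}}^2\sqrt{1/N^7+1/N^{8\beta-1}}$.

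For the noise term it suffices to bound the maximum by the sum and use the Gaussian structure: conditioning on $\mathcal F_k^N$ and using~\ref{as:noise} ($\ve_k^i\sim\mathcal N(0,I_d)$, independent across $i$ and of $\mathcal F_k^N$) gives $\mathbf E[|\sum_i\nabla f(W_k^i)\cdot\ve_k^i|^2]=\sum_i\mathbf E[|\nabla f(W_k^i)|^2]\le C\|f\|_{\mathcal C^{2,\gamma_*}}^2 N$ by Lemma~\ref{le.W}, whence $N^{-2-2\beta}\mathbf E[\max_k|\sum_i\nabla f(W_k^i)\cdot\ve_k^i|^2]\le N^{-2-2\beta}\sum_{k=0}^{\lfloor NT\rfloor-1}C\|f\|_{\mathcal C^{2,\gamma_*}}^2N\le C\|f\|_{\mathcal C^{2,\gamma_*}}^2/N^{2\beta}\le C\|f\|_{\mathcal C^{2,\gamma_*}}^2\sqrt N/N^{2\beta}$. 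Collecting the four contributions gives~\eqref{799}.

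The only genuinely technical step is the fourth-moment estimate for $R_k^N$: one has to raise $(W_{k+1}^i-W_k^i)^{T}\nabla^2f(\widehat W_k^i)(W_{k+1}^i-W_k^i)$ to the fourth power and control the resulting mixed moments involving $|y|$, $|\ve_k^i|$ and $|W_k^i|,|W_{k+1}^i|$; these are all finite and uniformly bounded in $k\le\lfloor NT\rfloor$ and $N$ thanks to~\ref{as:data},~\ref{as:noise},~\ref{as:moments initiaux} and Lemma~\ref{le.W} (this is precisely why the moment assumptions $\mathbf E[|y|^{16\gamma_*}]<\infty$ and $\mathbf E[|W_0^1|^{8\gamma_*}]<\infty$ are imposed), so beyond careful bookkeeping no new difficulty arises.
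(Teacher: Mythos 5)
Your proof is correct and follows essentially the same route as the paper: identify the jumps of $t\mapsto\langle f,\mu_t^N\rangle$ at the points $k/N$, decompose each jump via the one-step identity into the drift, martingale, remainder and noise contributions, and control the maximum over $k$ through $\mathbf E[\max_k X_k^2]\le(\sum_k\mathbf E[X_k^4])^{1/2}$ with exactly the fourth-moment bounds the paper establishes. The only (harmless) deviation is your treatment of the noise term by $\max_k\le\sum_k$ with second moments, which in fact yields the slightly sharper bound $N^{-2\beta}$ in place of the paper's $\sqrt N N^{-2\beta}$, still consistent with~\eqref{799}.
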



\begin{proof} 
 The arguments used in the proof of Proposition \ref{p-limit in P} are not sufficient to prove  \eqref{799}. We will rather use~\eqref{eq: prelim mu}. 
Let $T>0$ and  $f\in \mathcal C^{2,\gamma_*}(\mathbf{R}^d)$. 
  In what follows, $C>0$ is a constant, independent of $N\geq1$,
  $k=0,\dots \lfloor NT\rfloor-1$, and $f$, which can change from one
  occurence to another.  Recall that $ t\in[0,T]\mapsto\langle f,\mu_t^N\rangle \in\mathbf{R}$ has
  $\lfloor NT\rfloor$ discontinuities, located at the points
  $\frac{1}{N},\frac{2}{N},\dots,\frac{\lfloor NT\rfloor}{N}$. In addition, from~\eqref{eq: prelim mu},~\eqref{eq.defV},
  and~\eqref{eq.def-M},  for
  $k\in\lbrace1,\dots,\lfloor NT\rfloor\rbrace$, its $k$-th
  discontinuity is equal to
\begin{align}
\nonumber
\mathbf {d}_k^N[f]:=&\langle f,M_{k-1}^N\rangle +\int_{\frac{k-1}{N}}^{\frac{k}{N}}\int_{\mathcal{X}\times\mathcal{Y}}\alpha(y-\langle\sigma_*(\cdot,x),\mu_s^N\rangle)\langle\nabla f\cdot\nabla\sigma_*(\cdot,x),\mu_s^N\rangle\pi(\di x,\di y)\di s\\
\label{eq.disc}
&+\langle f,R_{k-1}^N\rangle+\frac{1}{N^{1+\beta}}\sum_{i=1}^N\nabla f(W_{k-1}^i)\cdot\ve_{k-1}^i.
\end{align}
Thus, \begin{align}\label{cont eq1}
\sup_{t\in[0,T]}|\langle f,\mu_t^N-\mu_{t^-}^N\rangle|^2&\leq \max\big   \lbrace  |\mathbf{d}_{k+1}^N[f]|^2,   \ 0\leq k<\lfloor NT\rfloor  \big \rbrace.
\end{align}
By~\eqref{borne m_k^2} and~\eqref{convexity inequality}, it holds:
\begin{align*}
|\langle f,M_k^N\rangle|^4\leq\frac{C\|f\|_{\mathcal C^{2,\gamma_*}}^4}{N^5|B_k|}\sum_{(x,y)\in B_k}\sum_{i=1}^N(y^4+C)(1+|W_k^i|^{\gamma_*})^4.
\end{align*}
Then, using Lemma~\ref{le.W}, we have with the same computations as
the one made in~\eqref{eq.EcalculM}:
\begin{align}\label{2.40}
\textbf{E}\left[|\langle f,M_k^N\rangle|^4\right]&\leq \frac{C\|f\|^4_{\mathcal C^{2,\gamma_*}}}{N^5}\sum_{i=1}^N\textbf{E}\left[y^4+C\right]\textbf{E}\left[(1+|W_k^i|^{\gamma_*})^4\right] \leq\frac{C\|f\|^4_{\mathcal C^{2,\gamma_*}}}{N^4}. 
\end{align}
Consequently, one has:
 \begin{align}\label{E max f M_k}
\textbf{E}\left[\max_{0\leq k<\lfloor NT\rfloor}\langle f,M_k^N\rangle^2\right]&\leq\Big|\sum_{k=0}^{\lfloor NT\rfloor-1}\textbf{E}\left[\langle f,M_k^N\rangle^4\right]\Big|^{1/2} \leq \frac{C\|f\|^2_{\mathcal C^{2,\gamma_*}}}{N^{3/2}}.
\end{align}
By~\eqref{bound sigma,mu},~\eqref{bound nabla f nabla sigma, mu} and
since $\lfloor Ns\rfloor=k$ when $s\in [k/N,(k+1)/N]$, we have
\begin{align*}
&\Big|\int_{\frac{k}{N}}^{\frac{k+1}{N}}\int_{\mathcal{X}\times\mathcal{Y}}\alpha(y-\langle\sigma_*(\cdot,x),\mu_s^N\rangle)\langle\nabla f\cdot\nabla\sigma_*(\cdot,x),\mu_s^N\rangle\, \pi(\di x,\di y)\, \di s\Big|\\
&\le \frac{C\|f\|_{\mathcal C^{2,\gamma_*}}}{N} \int_{\frac{k}{N}}^{\frac{k+1}{N}}\int_{\mathcal{X}\times\mathcal{Y}}  (|y|+C)\sum_{i=1}^N(1+|W_k^i|^{\gamma_*})\,  \pi(\di x,\di y)\, \di s  \\
&= \frac{C\|f\|_{\mathcal C^{2,\gamma_*}}}{N^2}  \mathbf E[|y|+C] \sum_{i=1}^N(1+|W_k^i|^{\gamma_*})\\
&\le \frac{C\|f\|_{\mathcal C^{2,\gamma_*}}}{N^2}   \sum_{i=1}^N(1+|W_k^i|^{\gamma_*}).
\end{align*}
By~\eqref{convexity inequality} and Lemma~\ref{le.W}, it then holds:
\begin{align*}
&\textbf{E}\Big[ \Big|\int_{\frac{k}{N}}^{\frac{k+1}{N}}\int_{\mathcal{X}\times\mathcal{Y}}\alpha(y-\langle\sigma_*(\cdot,x),\mu_s^N\rangle)\langle\nabla f\cdot\nabla\sigma_*(\cdot,x),\mu_s^N\rangle\, \pi(\di x,\di y)\, \di s\Big|^4\Big]\\
&\le\frac{C\|f\|^4_{\mathcal C^{2,\gamma_*}}N^3 }{N^8}     \textbf{E}\Big[\sum_{i=1}^N(1+|W_k^i|^{\gamma_*})^4\Big]\le \frac{C\|f\|^4_{\mathcal C^{2,\gamma_*}}}{N^4}. 
\end{align*}
Thus, one has:
\begin{align*}
&\textbf{E}\Big[\max_{0\leq k<\lfloor NT\rfloor}\Big|\int_{\frac{k}{N}}^{\frac{k+1}{N}}\int_{\mathcal{X}\times\mathcal{Y}}\alpha(y-\langle\sigma_*(\cdot,x),\mu_s^N\rangle)\langle\nabla f\cdot\nabla\sigma_*(\cdot,x),\mu_s^N\rangle\pi(\di x,\di y)\di s\Big|^2\Big]\\
&\leq\Big |\sum_{k=0}^{\lfloor NT\rfloor-1}\textbf{E}\Big[\Big|\int_{\frac{k}{N}}^{\frac{k+1}{N}}\int_{\mathcal{X}\times\mathcal{Y}}\alpha(y-\langle\sigma_*(\cdot,x),\mu_s^N\rangle)\langle\nabla f\cdot\nabla\sigma_*(\cdot,x),\mu_s^N\rangle\pi(\di x,\di y)\di s\Big|^4\Big]\Big |^{1/2} \leq \frac{C\|f\|^2_{\mathcal C^{2,\gamma_*}}}{N^{3/2}}.
\end{align*}
On the other hand, from~\eqref{borne r_k} and~\eqref{convexity inequality}, we have
\begin{align*}
|\langle f,R_k^{N}\rangle|^4&\leq \frac{C\|f\|^4_{\mathcal C^{2,\gamma_*}}}{N}\sum_{i=1}^N\Bigg[\frac{1}{N^8}(1+|W_k^i|^{\gamma_*}+|W_{k+1}^i|^{\gamma_*})^8+\frac{1}{N^8|B_k|}\sum_{(x,y)\in B_k}(|y|^4+C)^4\\
&\quad +\frac{C}{N^{8\beta}}\left[|\ve_k^i|^{16}+(1+|W_k^i|^{\gamma}+|W_{k+1}^i|^{\gamma_*})^8\right]\Bigg].
\end{align*}
Using Lemma~\ref{le.W},~\ref{as:data}, and the same computations as
those made in~\eqref{eq.conditio}, we deduce that:
$$
\textbf{E}\left[|\langle f,R_k^{N}\rangle|^4\right]\leq C\|f\|^4_{\mathcal C^{2,\gamma}}\big( {1}/{N^8}+ {1}/{N^{8\beta}}\big).$$
Then, it holds:
\begin{align*}
\textbf{E}\Big[\max_{0\leq k<\lfloor NT\rfloor}|\langle f,R_k^{N}\rangle|^2\Big]&\leq\Big |\sum_{k=0}^{\lfloor NT\rfloor-1}\textbf{E}\left[|\langle f,R_k^{N}\rangle|^4\right]\Big |^{1/2} \leq C\|f\|_{\mathcal C^{2,\gamma_*}}^2\Big[\frac{1}{N^7}+\frac{1}{N^{8\beta-1}}\Big]^{1/2}. 
\end{align*}
By~\eqref{convexity inequality},~\ref{as:noise}, and Lemma~\ref{le.W},
\begin{align*}
\textbf{E}\Big[\Big|\sum_{i=1}^N\nabla f(W_k^i)\cdot\ve_k^i\Big|^4\Big]&\leq N^3\sum_{i=1}^N\textbf{E}\left[|\nabla f(W_k^i)\cdot\ve_k^i|^4\right]\\
&\leq N^3\|f\|^4_{\mathcal C^{2,\gamma_*}}\sum_{i=1}^N\textbf{E}\left[(1+|W_k^i|^{\gamma_*})^4\right]\textbf{E}\left[|\ve_k^i|^4\right]\leq C\|f\|^4_{\mathcal C^{2,\gamma_*}}N^4.
\end{align*}
Thus, one deduces that 
\begin{align*}
\textbf{E}\Big[\max_{0\leq k<\lfloor NT\rfloor}\Big|\frac{1}{N^{1+\beta}}\sum_{i=1}^N\nabla f(W_k^i)\cdot\ve_k^i\Big|^2\Big]&\leq\Big |\sum_{k=0}^{\lfloor NT\rfloor-1}\textbf{E}\Big[\Big|\frac{1}{N^{1+\beta}}\sum_{i=1}^N\nabla f(W_k^i)\cdot\ve_k^i\Big|^4\Big]\Big |^{1/2}\\
&\leq\Big |\frac{1}{N^{4+4\beta}}C\|f\|^4_{\mathcal C^{2,\gamma_*}}N^5\Big |^{1/2} \leq C\|f\|_{\mathcal C^{2,\gamma_*}}^2\frac{\sqrt{N}}{N^{2\beta}}.
\end{align*}
Plugging all these previous bounds in~\eqref{cont eq1} implies \eqref{799}.  
\end{proof}
 

\subsubsection{Convergence to the limit equation (\ref{eq limite})}
\label{sec:conv to lim eq}

This section is devoted to prove Proposition~\ref{lem:convergence to
  lim eq} where we show that any limit point of
$(\mu^N)_{N\geq1}$ in   $ \mathcal D(\mathbf R_+,\mathcal P_{\gamma}(\mathbf R^{d}))$  satisfies a.s.~\eqref{eq limite}.

 For $t\in\mathbf{R}_+$ and
$f\in \mathcal C^{1,\gamma}(\mathbf{R}^d)$, we introduce the function
$\boldsymbol{\Lambda}_t[f]:  \mathcal D(\mathbf R_+,\mathcal P_{\gamma}(\mathbf R^{d})) \rightarrow\mathbf{R}$ defined by
\begin{align*} 
\boldsymbol{\Lambda}_t[f]:m\mapsto\left| \langle f,m_t\rangle-\langle f,\mu_0\rangle-\int_0^t\int_{\mathcal{X}\times\mathcal{Y}}\alpha(y-\langle\sigma_*(\cdot,x),m_s\rangle)\langle\nabla f\cdot\nabla\sigma_*(\cdot,x),m_s\rangle\pi(\di x,\di y)\di s\right|.
\end{align*}
To prove that any limit point of the sequence $(\mu^N)_{N\ge 1}$ in
the space $\mathcal D(\mathbf R_+,\mathcal P_{\gamma}(\mathbf R^{d}))$ satisfies~\eqref{eq limite}, we study the
continuity of the function $\boldsymbol{\Lambda}_t[f]$. This is the
purpose of Lemma~\ref{le.Fc}. 

\begin{lem}
  \label{le.Fc}
 For any $t\in\mathbf{R}_+$ and
$f\in \mathcal C^{1,\gamma}(\mathbf{R}^d)$,  the function
  $\boldsymbol{\Lambda}_t[f]$ is well defined. In  addition, let $(m^N)_{N\ge 1}$ be such that  $m^N\to m$ in $\mathcal D(\mathbf R_+,\mathcal P_{\gamma}(\mathbf R^{d}))$. Then, for all
 continuity points $t\in\mathbf{R}_+$ of $m$, 
 $\boldsymbol{\Lambda}_t[f](m^N)\rightarrow
\boldsymbol{\Lambda}_t[f](m)$ as $N\rightarrow+\infty$.
\end{lem}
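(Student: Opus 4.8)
The plan is to prove the two claims separately, and in fact they both reduce to the same kind of estimate: a bound showing that the map $m\mapsto \langle f,m_t\rangle$ and the map $m\mapsto \int_0^t\int_{\mathcal X\times\mathcal Y}\alpha(y-\langle\sigma_*(\cdot,x),m_s\rangle)\langle\nabla f\cdot\nabla\sigma_*(\cdot,x),m_s\rangle\,\pi(\di x,\di y)\,\di s$ are, respectively, continuous at continuity points $t$ of $m$ and continuous as functions on $\mathcal D(\mathbf R_+,\mathcal P_\gamma(\mathbf R^d))$.

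First I would check that $\boldsymbol\Lambda_t[f]$ is well defined, i.e. that the time integral converges for every $m\in\mathcal D(\mathbf R_+,\mathcal P_\gamma(\mathbf R^d))$. Using \ref{as:sigma} (so $\sigma_*$ and its derivatives are bounded) and \ref{as:data} (so $\mathbf E[|y|]<\infty$), one has $|y-\langle\sigma_*(\cdot,x),m_s\rangle|\le |y|+C$ and $|\langle\nabla f\cdot\nabla\sigma_*(\cdot,x),m_s\rangle|\le C\|f\|_{\mathcal C^{1,\gamma}}\langle 1+|\cdot|^\gamma,m_s\rangle$, with $C$ independent of $x$. Since $m\in\mathcal D(\mathbf R_+,\mathcal P_\gamma(\mathbf R^d))$, the map $s\mapsto\langle 1+|\cdot|^\gamma,m_s\rangle$ is càdlàg (by \cite[Theorem 6.9]{villani2009optimal}, as already used in the proof of Corollary~\ref{co.e}), hence locally bounded, so the integrand is bounded on $[0,t]$ and the integral is finite.

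Next, the convergence claim. Fix a continuity point $t$ of $m$. Since $m^N\to m$ in $\mathcal D(\mathbf R_+,\mathcal P_\gamma(\mathbf R^d))$ and $t$ is a continuity point, $m^N_t\to m_t$ in $\mathcal P_\gamma(\mathbf R^d)$, in particular in $\mathcal P_1(\mathbf R^d)$; since $f\in\mathcal C^{1,\gamma}(\mathbf R^d)$ has at most linear... more precisely polynomial growth of order $\gamma$, and $\mathsf W_\gamma$-convergence implies convergence of integrals of continuous functions with growth $o(|w|^\gamma)$, one gets $\langle f,m^N_t\rangle\to\langle f,m_t\rangle$; one should be slightly careful here and either invoke that $\mathsf W_\gamma$-convergence is equivalent to weak convergence plus convergence of $\gamma$-th moments and use uniform integrability, or approximate $f$ by compactly supported functions — I expect this to be the one genuinely delicate point, so I would spell it out. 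For the time-integral term, I would show that for fixed $s$ (a continuity point of $m$, which holds for a.e.\ $s$) the integrand $\int_{\mathcal X\times\mathcal Y}\alpha(y-\langle\sigma_*(\cdot,x),m^N_s\rangle)\langle\nabla f\cdot\nabla\sigma_*(\cdot,x),m^N_s\rangle\,\pi(\di x,\di y)$ converges to the corresponding expression with $m_s$: the factor $\langle\sigma_*(\cdot,x),m^N_s\rangle\to\langle\sigma_*(\cdot,x),m_s\rangle$ (bounded continuous test function, weak convergence) and $\langle\nabla f\cdot\nabla\sigma_*(\cdot,x),m^N_s\rangle\to\langle\nabla f\cdot\nabla\sigma_*(\cdot,x),m_s\rangle$ (test function of growth order $\le\gamma$, $\mathsf W_\gamma$-convergence), uniformly in $x$; then dominated convergence in $(x,y)$ using the integrable bound $\alpha(|y|+C)\cdot C\|f\|_{\mathcal C^{1,\gamma}}\langle 1+|\cdot|^\gamma,m^N_s\rangle$, where the last factor is bounded uniformly in $N$ on compacts of time because $\sup_N\sup_{s\le T}\langle 1+|\cdot|^\gamma,m^N_s\rangle<\infty$ follows from $\mathsf W_\gamma$-convergence of $m^N\to m$ (the $\gamma$-th moments converge, hence are bounded, uniformly on $[0,T]$ up to the jump times — more carefully, use that $s\mapsto\langle1+|\cdot|^\gamma,m_s\rangle$ is bounded on $[0,T]$ and the convergence of these real-valued càdlàg processes). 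Finally, another application of dominated convergence in $s\in[0,t]$ (the set of discontinuity points of $m$ in $[0,t]$ is countable, hence Lebesgue-null) yields convergence of the time integral, and combining the two pieces gives $\boldsymbol\Lambda_t[f](m^N)\to\boldsymbol\Lambda_t[f](m)$.

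The main obstacle I anticipate is making rigorous the passage "$\mathsf W_\gamma$-convergence of measures $\Rightarrow$ convergence of $\langle g,\cdot\rangle$ for $g$ of polynomial growth of order $\gamma$" together with the accompanying uniform-in-$N$, uniform-in-$s\in[0,T]$ moment bound: one has to exploit that $\mathsf W_\gamma$-convergence is precisely weak convergence plus convergence of the $\gamma$-th moments (\cite[Theorem 6.9]{villani2009optimal}), which gives uniform integrability of $|w|^{\gamma}$ and hence convergence of $\langle g,\cdot\rangle$ for $|g(w)|\le C(1+|w|^\gamma)$, and to combine this with the fact that, $t$ being a continuity point of the limit càdlàg path $m$, the real-valued processes $s\mapsto\langle 1+|\cdot|^\gamma,m^N_s\rangle$ converge to $s\mapsto\langle 1+|\cdot|^\gamma,m_s\rangle$ in a way that controls them uniformly on $[0,t]$. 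Everything else is a routine combination of boundedness of $\sigma_*$ and its derivatives (\ref{as:sigma}), the moment assumption on $y$ (\ref{as:data}), and dominated convergence.
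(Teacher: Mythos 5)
Your proposal is correct and follows essentially the same route as the paper: well-definedness via the bound $|\phi_s^{x,y}(m)|\le C(1+|y|)\sup_{u\in[0,t]}\langle 1+|\cdot|^\gamma,m_u\rangle\,\|f\|_{\mathcal C^{1,\gamma}}$ and the càdlàg property of $s\mapsto\langle 1+|\cdot|^\gamma,m_s\rangle$, then pointwise convergence of the integrand at continuity points (using \cite[Theorem 6.9]{villani2009optimal} for test functions of growth $O(1+|w|^\gamma)$, and \cite[Proposition 5.2, Chapter 3]{ethier2009markov}) followed by dominated convergence. The one delicate point you flag — the uniform-in-$N$, uniform-in-$s\in[0,t]$ bound on $\langle 1+|\cdot|^\gamma,m_s^N\rangle$ — is exactly where the paper invokes the Skorokhod time-change characterization (\cite[Item (b), Proposition 5.3, Chapter 3]{ethier2009markov}) and the triangle inequality for $\mathsf W_\gamma(\delta_0,\cdot)$, which is the precise form of the argument you sketch.
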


\begin{proof}  
  In the following $C>0$ is a constant
  independent of $f\in \mathcal C^{1,\gamma}(\mathbf{R}^d)$, $s\in [0,t]$,    $x\in \mathcal X$, and $y\in \mathcal Y$,
  which can change from one occurence to another.  By~\ref{as:moments
    initiaux} 
\begin{equation}\label{eq.mu00}
|\langle f,\mu_0\rangle |=\big|\int_{\mathbf{R}^d}\frac{f(w)}{1+|w|^\gamma}(1+|w|^\gamma)\mu_0(\di w)\big|\leq (1+\mathbf{E}[|W_0^1|^{\gamma}])\|f\|_{\mathcal C^{0,\gamma}} .
\end{equation}
The following result~\cite[Theorem 6.9]{villani2009optimal} will be used many times in the following: 
\begin{equation}
\label{eq.Vil}
\mu_n\to  \mu \text{ in  $\mathcal P_{\gamma}(\mathbf{R}^d)$ iff } \langle g,\mu_n\rangle\to \langle g,\mu\rangle \text{ for all $g: \mathbf{R}^d\to \mathbf R$ continuous s.t.  $\frac{g}{1+|\cdot |^{\gamma}}$ is bounded.}
\end{equation} In particular $u\ge 0\mapsto\langle 1+|\cdot |^{\gamma},m_u\rangle \in \mathcal D(\mathbf R_+, \mathbf R)$ and thus $\sup_{u\in [0,t]} |\langle 1+|\cdot |^{\gamma},m_u\rangle|<+\infty$ for all $t\ge 0$.    
Define the function $\phi_s^{x,y}(m)= \alpha(y-\langle\sigma_*(\cdot,x),m_s\rangle)\langle\nabla f\cdot\nabla\sigma_*(\cdot,x),m_s\rangle $.  
Using~\ref{as:sigma}, one has for $s\in [0,t]$:
\begin{equation} \label{eq.phi-b}
|\phi_s^{x,y}(m)  |\le C(1+|y|) \sup_{u\in [0,t]} |\langle 1+|\cdot |^{ \gamma}, m_u\rangle| \|f\|_{\mathcal C^{1,\gamma}}.
\end{equation} 
Using also~\ref{as:data}, this proves that   $\boldsymbol{\Lambda}_t[f]$ is well defined. 

Let us now consider $(m^N)_{N\ge 1}$   such that  $m^N\to m$ in $\mathcal D(\mathbf R_+,\mathcal P_{\gamma}(\mathbf R^{d}))$. 
Denote by 
  $\mathcal C(m)\subset \mathbf R_+$ the set of continuity points of
  $m$.    From \cite[Proposition 5.2 in Chapter
  3]{ethier2009markov}, we have that for all $t\in \mathcal C(m)$,
  $m_t^N\rightarrow m_t$ in $\mathcal P_{\gamma}(\mathbf{R}^d)$,
  and thus, for all $t\in \mathcal C(m)$, according to \eqref{eq.Vil},
$$\langle f,m_t^N\rangle\underset{N\rightarrow\infty}{\longrightarrow}\langle f,m_t\rangle.$$
For the same reasons, 
for all $s\in[0,t]\cap \mathcal C(m)$ and
$x\in\mathcal{X}$,
$$\langle\sigma_*(\cdot,x),m_s^N\rangle\underset{N\rightarrow\infty}{\longrightarrow}
\langle\sigma_*(\cdot,x),m_s\rangle \quad \text{and}\quad\langle\nabla
f\cdot\nabla\sigma_*(\cdot,x),m_s^N\rangle\underset{N\rightarrow\infty}{\longrightarrow}\langle\nabla
f\cdot\nabla\sigma_*(\cdot,x),m_s\rangle.$$
Since
$\mathbf{R}_+\backslash \mathcal C(m)$ is at most countable (see
\cite[Lemma 5.1 in Chapter 3]{ethier2009markov}), it holds a.e. on $[0,t]\times \mathcal X\times \mathcal Y$, $\phi_s^{x,y}(m^N)\to   \phi_s^{x,y}(m)$. 
Note that   using \cite[Item (b) in Proposition 5.3 in Chapter
  3]{ethier2009markov} together with the triangular inequality: 
  $$\langle |\cdot |^{ \gamma}, m_{u}^N\rangle=\mathsf W_\gamma(\delta_0,m_u^N)\le [\mathsf W_\gamma(\delta_0,m_{\lambda^N_u})+\mathsf W_\gamma(m_{\lambda^N_u},m_u^N)]^\gamma, \, \lambda^N:\mathbf R_+\to \mathbf R_+, \,  u\ge 0, $$  
  one deduces that
   there exists $C>0$, for all $N\ge 1$ and $s\in [0,t]$, $|\langle 1+|\cdot |^{ \gamma}, m_s^N\rangle |\le C$. Together with   \eqref{eq.phi-b}, one has using 
  the dominated
convergence theorem,   $\int_0^t\int_{\mathcal{X}\times\mathcal{Y}}\phi_s^{x,y}(m^N) \pi(\di x,\di y)\di s\to \int_0^t\int_{\mathcal{X}\times\mathcal{Y}}\phi_s^{x,y}(m) \pi(\di x,\di y)\di s$. This proves the desired result. 
\end{proof}

We are now in position to prove that any limit point of the sequence
$(\mu^N)_{N\ge 1}$  in the space $\mathcal D(\mathbf R_+,\mathcal P_{\gamma}(\mathbf R^{d}))$  satisfies~\eqref{eq
  limite}.

\begin{prop}
 \label{lem:convergence to lim eq}Let $\beta>1/2$ and
  assume~\ref{as:batch}-\ref{as:noise}.    Let
  $\mu^*$ be a limit point
  of $(\mu^N)_{N\ge 1}$ in
  $\mathcal D(\mathbf R_+,\mathcal P_{\gamma}(\mathbf R^{d}))$. Then, a.s., $ \mu^*$ satisfies~\eqref{eq limite}.
\end{prop}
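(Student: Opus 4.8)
The plan is to take a subsequence $(\mu^{N'})_{N'}$ converging in distribution to $\mu^*$ in $\mathcal D(\mathbf R_+,\mathcal P_{\gamma}(\mathbf R^{d}))$, and to show that for every $f\in\mathcal C_c^\infty(\mathbf R^d)$ and every $t\ge 0$, $\boldsymbol\Lambda_t[f](\mu^*)=0$ almost surely, which by a density argument (together with Proposition~\ref{p-limit in P}, so that $\mu^*\in\mathcal C(\mathbf R_+,\mathcal P_1(\mathbf R^d))$ a.s. and hence every $t$ is a.s. a continuity point) yields that $\mu^*$ satisfies~\eqref{eq limite}. First I would invoke the Skorokhod representation theorem to realize the convergence $\mu^{N'}\to\mu^*$ almost surely on a common probability space (this is harmless since we only care about distributions).

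\textbf{Step 1: the pre-limit equation controls $\boldsymbol\Lambda_t[f](\mu^N)$.} From the pre-limit equation~\eqref{eq: prelim mu}, for $f\in\mathcal C_c^\infty(\mathbf R^d)\subset\mathcal C^{2,\gamma_*}(\mathbf R^d)$ one has
\begin{equation*}
\boldsymbol\Lambda_t[f](\mu^N)\le |\langle f,\mu_0^N\rangle-\langle f,\mu_0\rangle|+|\langle f,M_t^N\rangle|+|\langle f,V_t^N\rangle|+\Big|\sum_{k=0}^{\lfloor Nt\rfloor-1}\langle f,R_k^N\rangle\Big|+\frac{1}{N^{1+\beta}}\Big|\sum_{k=0}^{\lfloor Nt\rfloor-1}\sum_{i=1}^N\nabla f(W_k^i)\cdot\ve_k^i\Big|.
\end{equation*}
By Lemma~\ref{lem:remainder terms} (items~\ref{borne E[V]}, \ref{borne E[f,M^N]}, \ref{borne E[noise]}) and the bound $\max_k\mathbf E[|\langle f,R_k^N\rangle|]\le C\|f\|_{\mathcal C^{2,\gamma_*}}(N^{-2}+N^{-2\beta})$ from item~\ref{borne E[R_k]} (so that $\mathbf E[|\sum_{k=0}^{\lfloor Nt\rfloor-1}\langle f,R_k^N\rangle|]\le CT\|f\|_{\mathcal C^{2,\gamma_*}}(N^{-1}+N^{1-2\beta})\to 0$ since $\beta>1/2$), and by the law of large numbers for the i.i.d.\ initial weights (using~\ref{as:moments initiaux}), each term on the right-hand side converges to $0$ in $L^1$, hence in probability. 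Therefore $\boldsymbol\Lambda_t[f](\mu^N)\to 0$ in probability as $N\to\infty$, and the same holds along the subsequence $N'$.

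\textbf{Step 2: passing to the limit using continuity of $\boldsymbol\Lambda_t[f]$.} By Lemma~\ref{le.Fc}, the map $m\mapsto\boldsymbol\Lambda_t[f](m)$ is continuous at $m=\mu^*$ whenever $t$ is a continuity point of $\mu^*$; since a.s.\ $\mu^*\in\mathcal C(\mathbf R_+,\mathcal P_1(\mathbf R^d))$ by Proposition~\ref{p-limit in P}, every $t\ge 0$ is a.s.\ a continuity point (also noting $\mathsf W_1\le\mathsf W_\gamma$, so $\mu^{N'}\to\mu^*$ in $\mathcal D(\mathbf R_+,\mathcal P_\gamma)$ gives continuity of the relevant functionals). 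Under the Skorokhod coupling, $\mu^{N'}\to\mu^*$ a.s.\ in $\mathcal D(\mathbf R_+,\mathcal P_\gamma(\mathbf R^d))$, so $\boldsymbol\Lambda_t[f](\mu^{N'})\to\boldsymbol\Lambda_t[f](\mu^*)$ a.s. Combining with Step~1 (which gives $\boldsymbol\Lambda_t[f](\mu^{N'})\to 0$ in probability, hence along a further subsequence a.s.), we conclude $\boldsymbol\Lambda_t[f](\mu^*)=0$ a.s., for each fixed $f\in\mathcal C_c^\infty(\mathbf R^d)$ and $t\ge 0$. Taking a countable dense set of times (e.g.\ rationals) and using continuity in $t$ of $s\mapsto\langle f,\mu^*_s\rangle$ and of the time integral, we get that a.s.\ $\boldsymbol\Lambda_t[f](\mu^*)=0$ for all $t\ge 0$ simultaneously; then a countable dense family of test functions $f$ in $\mathcal C_c^\infty(\mathbf R^d)$ (dense for $\|\cdot\|_{\mathcal C^{1,\gamma}}$ in the relevant sense) together with the bound~\eqref{eq.phi-b} extends this to all $f\in\mathcal C_b^\infty(\mathbf R^d)$. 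Hence $\mu^*$ satisfies~\eqref{eq limite} almost surely.

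\textbf{Main obstacle.} The delicate point is the interchange of limits in Step~2: $\boldsymbol\Lambda_t[f]$ is only continuous at continuity points of the limit, and the nonlinear term $\langle\sigma_*(\cdot,x),m_s\rangle\langle\nabla f\cdot\nabla\sigma_*(\cdot,x),m_s\rangle$ involves $\mathcal P_\gamma$-convergence of the time-marginals together with a dominated-convergence argument in $(s,x,y)$; this is exactly what Lemma~\ref{le.Fc} is designed to handle, so the work is in correctly quoting it and verifying that a.s.\ the limit point lies in $\mathcal C(\mathbf R_+,\mathcal P_1(\mathbf R^d))$ (Proposition~\ref{p-limit in P}). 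The rest is bookkeeping: extending from a fixed $(f,t)$ to all $(f,t)$ via countable dense sets and continuity.
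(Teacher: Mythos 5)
Your proof is correct and follows essentially the same route as the paper: the pre-limit equation~\eqref{eq: prelim mu} plus Lemma~\ref{lem:remainder terms} to show $\mathbf{E}[\boldsymbol{\Lambda}_t[f](\mu^N)]\to 0$, the continuity of $\boldsymbol{\Lambda}_t[f]$ from Lemma~\ref{le.Fc} to pass to the limit along the subsequence, and countable density in $(t,f)$ to conclude. The only substantive deviation is that you invoke Proposition~\ref{p-limit in P} so that every $t$ is a.s.\ a continuity point of $\mu^*$ (and use Skorokhod representation rather than the continuous mapping theorem plus uniqueness of distributional limits), whereas the paper deliberately avoids Proposition~\ref{p-limit in P} by combining the fact that $\{t\ge 0:\mathbf{P}(\mu^*_{t^-}=\mu^*_t)=1\}$ has countable complement with the right-continuity of $t\mapsto\boldsymbol{\Lambda}_t[f](\mu^*)$ — a minor but intentional point that the paper flags explicitly, and one that does not create any circularity in your version since Proposition~\ref{p-limit in P} is proved independently.
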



\begin{proof} 
  Up to extracting a subsequence, we assume that in distribution  $\mu^N\to \mu^*$  in $\mathcal D(\mathbf R_+,\mathcal P_{\gamma}(\mathbf R^{d}))$. 
    Let $t\in\mathbf{R}_+$ and $f\in\mathcal C_c^\infty(\mathbf R^d)$.  By~\eqref{eq:
    prelim mu} and Lemma~\ref{lem:remainder terms},  we have:
\begin{align*}
 &\textbf{E}\left[\boldsymbol{\Lambda}_t[f](\mu^{N})\right]\\
&=\textbf{E}\Big[\, \Big |\langle f,\mu_0^{N}\rangle-\langle f,\mu_0\rangle+\langle f,M_t^{N}\rangle+\langle f,V_t^N\rangle+\sum_{k=0}^{\lfloor Nt\rfloor-1}\langle f,R_k^{N}\rangle+\frac{1}{N^{1+\beta}}\sum_{k=0}^{\lfloor Nt\rfloor-1}\sum_{i=1}^N\nabla f(W_k^i)\cdot\ve_k^i\Big|\,  \Big ]\\
&\leq \textbf{E}[\, |\langle f,\mu_0^N\rangle-\langle f,\mu_0\rangle|\, ]+ \textbf{E}[|\langle f,V_t^N\rangle|]+\sqrt{\textbf{E}[\langle f,M_t^N\rangle^2]}+\textbf{E}\Big[\Big|\sum_{k=0}^{\lfloor Nt\rfloor -1} \langle f,R_k^{N}\rangle\Big|\Big]\\
&\quad+\sqrt{\textbf{E}\Big[\Big|\frac{1}{N^{1+\beta}}\sum_{k=0}^{\lfloor Nt\rfloor -1}\sum_{i=1}^N \nabla f(W_k^i)\cdot\ve_k^i\Big|^2\Big]} \leq C\|f\|_{\mathcal C^{2,\gamma_*}}\Big[\frac{1}{\sqrt{N}}+  \frac{1}{N}+\frac{1}{N^{2\beta-1}} +\frac{1}{N^{\beta}} \Big],
\end{align*}  
where the bound
$\textbf{E}[|\langle f,\mu_0^N\rangle-\langle f,\mu_0\rangle|]\leq
C\|f\|_{\mathcal C^{2,\gamma_*}}/\sqrt{N}$  follows
from~\eqref{eq.mu00} and the fact that
the initial coefficients are i.i.d. (see~\ref{as:moments
  initiaux}). Therefore, since $\beta>1/2$, for all $t\in\mathbf{R}_+$ and
$f\in\mathcal C_c^\infty(\mathbf R^d)$,
 \begin{equation}\label{eq44} \lim_{N\to
    +\infty}\textbf{E}\left[\boldsymbol{\Lambda}_t[f](\mu^{N})\right]=0.
\end{equation}
By~\cite[Lemma 7.7 in Chapter 3]{ethier2009markov}, the complementary of the set 
$$\mathcal C(\mu^*)=\{t\ge 0, \, \mathbf P(\mu^*_{t^-}=\mu^*_t)=1\}$$
is at most countable. Let $t_*\in \mathcal C(\mu^*)$. 
Denoting by $ \mathsf D({\boldsymbol{\Lambda}_{t_*}[f]})$ the set of
discontinuity points of $\boldsymbol{\Lambda}_{t_*}[f]$, we recall that  from
Lemma~\ref{le.Fc}, $m\notin \mathsf D({\boldsymbol{\Lambda}_{t_*}[f]})$ if
$ m$ is continuous at $t_*$.
Then,   we have:
\begin{align*}\label{eq.CF-P}
\textbf{P}\big(\mu^*\in \mathsf D({\boldsymbol{\Lambda}_{t_*}[f]})\big)=0.
\end{align*}
By~\cite[Theorem
2.7]{billingsley2013convergence}, it then holds:
\begin{equation}
\lim_{N'\to +\infty}\boldsymbol{\Lambda}_{t_*}[f](\mu^{N})=\boldsymbol{\Lambda}_{t_*}[f](\mu^*) \text{  in distribution}, \ \forall t_*\in \mathcal C(\mu^*).\label{eq45}
\end{equation}
By uniqueness of the limit in distribution,~\eqref{eq44}
and~\eqref{eq45} imply that for all $t_*\in \mathcal C(\mu^*)$ and
$f\in\mathcal C_c^\infty(\mathbf R^d)$, a.s.  $\boldsymbol{\Lambda}_{t_*}[f](\mu^*)=0$. 
It then remains to show that a.s. for all $t\in\mathbf{R}_+$ and $f \in\mathcal C_c^\infty(\mathbf R^d)$, 
 $\boldsymbol{\Lambda}_t[f](\mu^*)=0$.  To do so we use a standard continuity argument. 
 
 First of all, for all $m\in \mathcal D(\mathbf R_+,\mathcal P_{\gamma}(\mathbf R^{d}))$ and $f \in\mathcal C_c^\infty(\mathbf R^d)$, the function
  $t\in\mathbf{R}_+\mapsto \boldsymbol{\Lambda}_t[f](m)$ is
  right-continuous. Moreover, there exists a countable subset $\mathcal R_{\mu^*}$ of 
$\mathcal C(\mu^*)$ such that for all $t\ge 0$ and $\epsilon>0$, there exists $s\in \mathcal R_{\mu^*}$,  $s\in [t,t+\ve]$. Thus,  for all $f \in\mathcal C_c^\infty(\mathbf R^d)$, it holds a.s.  for all $t\in\mathbf{R}_+$
 $\boldsymbol{\Lambda}_t[f](\mu^*)=0$. 
 
 Secondly, for all $m\in \mathcal D(\mathbf R_+,\mathcal P_{\gamma}(\mathbf R^{d}))$ and $t\ge 0$, using the dominated convergence theorem, the function $f\in \mathcal H^{L,\gamma}(\mathbf R^{d})\mapsto \boldsymbol \Lambda_{t}[f]( m)$ 
 is continuous (because $\mathcal H^{L,\gamma}(\mathbf R^{d})\hookrightarrow \mathcal C^{1,\gamma}(\mathbf R^{d})$, by \eqref{eq.SE1}). Furthermore, $\mathcal H^{L,\gamma}(\mathbf R^{d})$    admits a dense and countable subset of elements in $\mathcal C_c^\infty(\mathbf R^{d})$. Thus,  a.s. for all $f \in\mathcal H^{L,\gamma}(\mathbf R^{d})$ and  all $t\in\mathbf{R}_+$
 $\boldsymbol{\Lambda}_t[f](\mu^*)=0$. 
 
 Note also that $\mathcal C^\infty_b(\mathbf R^{d})\subset \mathcal H^{L,\gamma}(\mathbf R^{d})$ since $2\gamma>d$. This ends the proof of the proposition. 
\end{proof}

  Note that we have not used Proposition \ref{p-limit in P} in the proof of Proposition \ref{lem:convergence to lim eq}. 

\subsection{Uniqueness \label{sec: uniqueness} of the limit equation
  in $\mathcal C(\mathbf R_+,\mathcal P_1(\mathbf R^d))$ and proof of
  Theorem~\ref{thm:lln}}

\subsubsection{Uniqueness \label{sec: uniqueness} of the limit
  equation in $\mathcal C(\mathbf R_+,\mathcal P_1(\mathbf R^d))$}

\begin{prop}
  \label{prop: existence uniqueness eq transport} There exists a
  unique solution to~\eqref{eq limite}  in the space
  $\mathcal C(\mathbf{R}_+,\mathcal{P}_1(\mathbf{R}^d))$.
\end{prop}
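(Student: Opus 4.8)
The plan is to treat existence and uniqueness separately, spending essentially all the work on uniqueness. Existence requires nothing new: by Proposition~\ref{lem:rel comp Sobolev} the sequence $(\mu^N)_{N\ge1}$ admits a limit point $\mu^*$ in $\mathcal D(\mathbf R_+,\mathcal P_\gamma(\mathbf R^d))$; by Proposition~\ref{lem:convergence to lim eq} this $\mu^*$ satisfies~\eqref{eq limite} a.s.; and by Proposition~\ref{p-limit in P} it lies a.s. in $\mathcal C(\mathbf R_+,\mathcal P_1(\mathbf R^d))$. Fixing one realization in the corresponding full-probability event produces a solution of~\eqref{eq limite} in $\mathcal C(\mathbf R_+,\mathcal P_1(\mathbf R^d))$. (Alternatively, one could build the solution directly by a Banach fixed point on $\mathcal C([0,T],\mathcal P_1(\mathbf R^d))$ for small $T$ and then iterate, but reusing the relative-compactness results is cheaper here.)

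For uniqueness I would take two solutions $\mu^1,\mu^2\in\mathcal C(\mathbf R_+,\mathcal P_1(\mathbf R^d))$ of~\eqref{eq limite} with the same initial datum $\mu_0$, fix $T>0$, and introduce for $j\in\{1,2\}$ the (now decoupled) time-dependent velocity field
$$ v^j_s(w)=\alpha\int_{\mathcal X\times\mathcal Y}\big(y-\langle\sigma_*(\cdot,x),\mu^j_s\rangle\big)\,\nabla_w\sigma_*(w,x)\,\pi(\di x,\di y). $$
Using~\ref{as:sigma} (boundedness of $\sigma_*$ and of $\nabla_w\sigma_*$, and Lipschitzness of $\nabla_w\sigma_*$ in $w$, uniformly in $x$) together with $\mathbf E_\pi[|y|]<\infty$ (from~\ref{as:data}), I would check that $(s,w)\mapsto v^j_s(w)$ is bounded on $[0,T]\times\mathbf R^d$ and Lipschitz in $w$ uniformly in $s\in[0,T]$. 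Testing~\eqref{eq limite} against $f\in\mathcal C_b^\infty(\mathbf R^d)$ is exactly the statement that $\mu^j$ is a distributional solution of the linear continuity equation $\partial_s\mu^j_s+\mathrm{div}(v^j_s\mu^j_s)=0$ with datum $\mu_0$. The crucial step is then to invoke the representation formula for such equations (\cite[Theorem 5.34]{villani2021topics}, together with the Wasserstein stability estimates of~\cite{piccoli2015control,piccoli2016properties}), which gives $\mu^j_t=(\Phi^j_{0,t})_\#\mu_0$ for $t\in[0,T]$, where $\Phi^j_{0,\cdot}$ is the globally defined, unique flow of $\dot x=v^j_s(x)$ with $\Phi^j_{0,0}=\mathrm{id}$.

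With this Lagrangian description the rest is a Grönwall loop. Set $g(t):=\int_{\mathbf R^d}|\Phi^1_{0,t}(w)-\Phi^2_{0,t}(w)|\,\mu_0(\di w)$, which is finite (indeed bounded by $2T\sup_j\|v^j\|_\infty$). For $\mu_0$-a.e.\ $w$, writing $\Phi^1_{0,t}(w)-\Phi^2_{0,t}(w)=\int_0^t\big(v^1_s(\Phi^1_{0,s}(w))-v^2_s(\Phi^2_{0,s}(w))\big)\di s$ and splitting off $v^1_s(\Phi^1_{0,s}(w))-v^1_s(\Phi^2_{0,s}(w))$ (bounded by $\mathrm{Lip}(v^1)\,|\Phi^1_{0,s}(w)-\Phi^2_{0,s}(w)|$) from $v^1_s(\Phi^2_{0,s}(w))-v^2_s(\Phi^2_{0,s}(w))$, I would observe that in the latter difference the $y$-terms cancel, so that $\sup_z|v^1_s(z)-v^2_s(z)|\le C\sup_{x\in\mathcal X}|\langle\sigma_*(\cdot,x),\mu^1_s-\mu^2_s\rangle|\le C\,\mathsf W_1(\mu^1_s,\mu^2_s)$, using the uniform Lipschitz bound on $\sigma_*(\cdot,x)$ and the Kantorovich--Rubinstein duality~\eqref{Kantorovitch Rubinstein}. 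Since $(\Phi^1_{0,s},\Phi^2_{0,s})_\#\mu_0$ is a coupling of $\mu^1_s$ and $\mu^2_s$, one has $\mathsf W_1(\mu^1_s,\mu^2_s)\le g(s)$; integrating the pointwise bound against $\mu_0$ (Fubini--Tonelli) therefore yields $g(t)\le(\mathrm{Lip}(v^1)+C)\int_0^t g(s)\,\di s$, and since $g(0)=0$, Grönwall's lemma forces $g\equiv0$ on $[0,T]$, hence $\mathsf W_1(\mu^1_t,\mu^2_t)=0$ for all $t\le T$; as $T$ is arbitrary, $\mu^1=\mu^2$.

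The main obstacle is the middle step: verifying carefully that the structure available here --- globally Lipschitz, bounded velocity fields, trajectories continuous into $\mathcal P_1(\mathbf R^d)$, and the test-function class $\mathcal C_b^\infty(\mathbf R^d)$ appearing in~\eqref{eq limite} --- does fit the hypotheses of the cited representation/superposition results, so that the Eulerian solution $\mu^j$ is genuinely transported by the ODE flow $\Phi^j$. Once that identification is granted, the remaining pieces (the boundedness and Lipschitz bounds on $v^j$, the $y$-cancellation, and the Grönwall estimate) are routine.
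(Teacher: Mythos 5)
Your proposal is correct and follows the same overall architecture as the paper's proof: existence is obtained for free from the LLN machinery (relative compactness, convergence to the limit equation, and continuity of the limit points), and uniqueness is reduced to a linear continuity equation $\partial_s\mu^j_s+\mathrm{div}(v^j_s\mu^j_s)=0$ with a decoupled, bounded, globally Lipschitz velocity field, identified in Lagrangian form via the representation formula of \cite[Theorem 5.34]{villani2021topics}. Where you diverge is in how you close the argument after the flow representation. The paper imports the Wasserstein stability estimate \cite[Proposition 4]{piccoli2016properties}, namely $\mathsf W_1(\phi^1_t\#\mu,\phi^2_t\#\nu)\leq e^{Lt}\mathsf W_1(\mu,\nu)+\frac{e^{Lt}-1}{L}\sup_{s\le t}\|v^1_s-v^2_s\|_\infty$, and then runs the $t_0:=\inf\{t:\mathsf W_1(\mu^1_t,\mu^2_t)\neq0\}$ bootstrap of \cite{piccoli2015control} over small time intervals. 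You instead perform a direct synchronous-coupling Gr\"onwall on $g(t)=\int|\Phi^1_{0,t}(w)-\Phi^2_{0,t}(w)|\,\mu_0(\di w)$, using $(\Phi^1_{0,s},\Phi^2_{0,s})_\#\mu_0$ as an explicit coupling to get $\mathsf W_1(\mu^1_s,\mu^2_s)\le g(s)$ and the cancellation of the $y$-terms to bound $\|v^1_s-v^2_s\|_\infty\le C\,\mathsf W_1(\mu^1_s,\mu^2_s)$ — exactly the estimate~\eqref{eq.VLip} of the paper. Your closing step is self-contained, avoids one external citation and the slightly delicate small-time iteration, at the cost of having to justify the Fubini--Tonelli exchange (harmless, since the integrand is nonnegative and jointly measurable). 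Both versions rest on the same crucial and nontrivial ingredient, which you correctly flag: the superposition/representation theorem identifying the Eulerian solution with the push-forward of $\mu_0$ by the ODE flow, for which the global Lipschitz and boundedness properties of $v^j$ (established in the paper's Step~2 from~\ref{as:sigma} and~\ref{as:data}) are what make the cited result applicable.
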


\begin{proof}
  We have already proved the existence.  Let us now prove that there is at most one solution to~\eqref{eq limite} in   
  $\mathcal C(\mathbf{R}_+,\mathcal{P}_1(\mathbf{R}^d))$. 
The proof of the uniqueness of~\eqref{eq limite}  relies on
arguments developed in~\cite{piccoli2016properties,piccoli2015control}
and is divided into several steps.  \medskip

\noindent
\textbf{Step 1.}  Preliminary considerations. 
\medskip

\noindent
If $\mu^\star$ is solution to~\eqref{eq limite}, then for all
$f\in \mathcal C^\infty_b(\mathbf{R}^d)$,
$s\ge0\mapsto \int_{\mathcal{X}\times\mathcal{Y}} \alpha(y-\langle
\sigma_*(\cdot, x),\mu^\star_s\rangle)\langle\nabla
f\cdot\nabla\sigma_*(\cdot,x),\mu^\star_s\rangle\pi(\di x,\di y) $ is
continuous (by the dominated convergence theorem). This implies that
for all $f\in \mathcal C^\infty_b(\mathbf{R}^d)$ and
$t\in\mathbf{R}_+$,
\begin{equation*}\label{unicite: eq lim2}
\frac{\di}{\di t} \langle f,\mu^\star_t\rangle= \int_{\mathbf R^d} \nabla f(w)\cdot \mathbf V[ \mu^\star_t] (w) \mu^\star_t(\di w),
\end{equation*}
where $\mathbf V:\mu\in\mathcal{P}(\mathbf{R}^d)\mapsto \mathbf \mathbf \mathbf V[\mu]$
is defined by 
\begin{equation*} 
\mathbf V[\mu]:w\in\mathbf{R}^d\mapsto\int_{\mathcal{X}\times\mathcal{Y}}\alpha(y-\langle\sigma_*(\cdot,x),\mu\rangle)\nabla\sigma_*(w,x)\pi(\di x,\di y)\in \mathbf{R}^d.
\end{equation*}  
Adopting the terminology of \cite[Section
4.1.2]{santambrogio2015optimal},
$\mu^\star$ is thus a \textit{weak solution}\footnote{We mention that
  according to \cite[Proposition 4.2]{santambrogio2015optimal}, the
  two notions of solutions of~\eqref{eq.measure} (namely the weak
  solution and the \textit{distributional} solution) are equivalent.}
of the measure-valued equation
\begin{align}\label{eq.measure}
\begin{cases} \partial_t \mu^\star_t=\text{div}\left(\mathbf V[ \mu^\star_t] \mu^\star_t\right)\\
 \mu^\star_0=\mu_0.
 \end{cases}
\end{align}
Therefore, to prove the uniqueness result in Proposition~\ref{prop:
  existence uniqueness eq transport}, it is enough to show
that~\eqref{eq.measure} has a unique weak solution in $\mathcal
C(\mathbf{R}_+,\mathcal{P}_1(\mathbf{R}^d))$.  To this end, we
consider two solutions $\mu^1=\lbrace t\mapsto\mu^1_t, t\ge 0
\rbrace$ and $\mu^2=\lbrace t\mapsto\mu^2_t, t\ge 0
\rbrace$ of~\eqref{eq.measure} in $\mathcal C(\mathbf
R_+,\mathcal{P}_1(\mathbf{R}^d))$,
%
%
%
%
and we introduce the following mappings
$$\mathbf v^1: (t,x)\in \mathbf R_+\times\mathbf{R}^d \mapsto \mathbf V[\mu^1_t](x)\quad\text{and}\quad \mathbf v^2: (t,x) \in \mathbf R_+\times\mathbf{R}^d \mapsto \mathbf V[\mu^2_t](x).$$

\noindent
\textbf{Step 2.}  In this step, we prove some basic regularity
properties of $\mathbf V$, $\mathbf v^1$, and $\mathbf v^2$.  \medskip

\noindent
Let us first prove that the velocity fields $\mathbf v^1$ and
$\mathbf v^2$ are globally Lipschitz continuous over
$\mathbf R_+\times\mathbf{R}^d $.  Let $\mu \in \{\mu^1, \mu^2\}$ and
set $v(t,x)=\mathbf V[\mu_t](x)$.  For $0\leq s\le t$ and
$ w_1,w_2\in\mathbf{R}^d$, we have
$$|v(t,w_1)-v(s,w_2)|\leq |v(t,w_1)-v(t,w_2)|+|v(t,w_2)-v(s,w_2)|.$$
By~\ref{as:sigma}, the function $w\mapsto \mathbf V[\mu](w)$ is smooth and
$|\nabla \mathbf V[\mu](w)|\le C$ for some $C>0$ independent of $\mu$
and $w$. Thus, it holds
\begin{align*} 
|v(t,w_1)-v(t,w_2)|=|\mathbf V[\mu_t](w_1)-\mathbf V[\mu_t](w_2)|\leq C|w_1-w_2|,
\end{align*}
for some $C>0$ independent of $t$, $w_1$, and $w_2$.  Secondly, for
any $x\in\mathcal{X}$, considering~\eqref{eq limite}  with
$f=\sigma_*(\cdot,x)$, we obtain
\begin{align*}
|\langle\sigma_*(\cdot,x),\mu_s-\mu_t\rangle|&\leq \int_s^t\int_{\mathcal{X}\times\mathcal{Y}}\left|\alpha(y-\langle\sigma_*(\cdot,x'),\mu_r\rangle)\langle\nabla\sigma_*(\cdot,x)\cdot\nabla\sigma_*(\cdot,x'),\mu_r\rangle\right|\pi(\di x',\di y)\di r\\
&\leq C|t-s|,
\end{align*}
leading to 
\begin{align*}
|v(t,w_2)-v(s,w_2)|=\left|\int_{\mathcal{X}\times\mathcal{Y}}\alpha\langle\sigma_*(\cdot,x),\mu_s-\mu_t\rangle\nabla\sigma_*(w_2,x)\pi(\di x,\di y)\right|
\leq C|t-s|.
\end{align*}
Thus, there exists $C>0$ such that for $0\le s\le t$ and
$w_1,w_2\in\mathbf{R}^d,$
$|v(t,w_1)-v(s,w_2)|\leq C(|t-s|+|w_1-w_2|)$, which proves that $v$ is
globally Lipschitz. Now we claim that there exists $L'>0$ such that
for every $\mu,\nu\in\mathcal{P}_1(\mathbf{R}^d)$,
\begin{align}\label{eq.VLip}
 \|\mathbf V[\mu]-\mathbf V[\nu]\|_\infty:=\sup_{w\in \mathbf R^d} |\mathbf V[\mu](w)-\mathbf V[\nu](w)| \leq L'\mathsf W_1(\mu,\nu).
\end{align} 
By~\ref{as:sigma}, there exists $C>0$ such that for all
$\mu,\nu\in\mathcal{P}_1(\mathbf{R}^d)$ and all $w\in\mathbf{R}^d$,
\begin{align*}
|\mathbf V[\mu](w)-\mathbf V[\nu](w)|&=\left|\int_{\mathcal{X}\times\mathcal{Y}}\alpha(\langle\sigma_*(\cdot,x),\nu\rangle-\langle\sigma_*(\cdot,x),\mu\rangle)\nabla_W\sigma_*(w,x)\pi(\di x,\di y)\right|\\
&\leq C\int_{\mathcal{X}\times\mathcal{Y}}\left|\langle  \sigma_*(\cdot,x),\nu\rangle-\langle\sigma_*(\cdot,x),\mu\rangle\right|\pi(\di x,\di y) \le C\mathsf W_1(\mu,\nu),
\end{align*}  
where the last inequality is obtained by the Lipschitz continuity of
$\sigma_*(\cdot,x)$ (which is uniform in $x\in \mathcal X$).  \medskip

\noindent
\textbf{Step 3.}  End of the proof of Proposition~\ref{prop: existence
  uniqueness eq transport}.  \medskip

\noindent
Since $v$ is globally Lipschitz, we can introduce the flows
$(\phi^1_t)_{t\in[0,T]}$ and $(\phi^2_t)_{t\in[0,T]}$ with respect to
$\mu^1$ and $\mu^2$. By \cite[Theorem 5.34]{villani2021topics}, one
has
\begin{equation}\label{unicite: forme des solutions}
\mu^1_t=\phi^1_t\#\mu_0,\quad \mu^2_t=\phi^2_t\#\mu_0,\quad \forall t\ge 0.
\end{equation} 
The symbol $\#$ stands for the pushforward of a measure.  Let $L>0$ be
a constant such that $|v_t^i(w_1)-v_t^i(w_2)|\leq L|w_1-w_2|$ for all
$i=1,2$, $t\in \mathbf R_+$ and $w_1,w_2\in\mathbf{R}^d$ (which exists
by the previous step). Then by \cite[Proposition~4]{piccoli2016properties}, it holds for all
$\mu,\nu\in\mathcal{P}_1(\mathbf{R}^d)$, \begin{align}\label{u2}
  \mathsf W_1(\phi^1_t\#\mu,\phi^2_t\#\nu)\leq e^{Lt}\mathsf
  W_1(\mu,\nu)+\frac{e^{Lt}-1}{L}\ \sup_{0\leq s\leq t} \
  \|v_s^1-v_s^2\|_{\infty}.
\end{align}
We are now in position to prove that $\mu^1=\mu^2$. We use the
techniques introduced in \cite{piccoli2015control}. Let us now
consider $T>0$, and introduce
$$t_0:=\inf\lbrace t\in[0,T], \, \mathsf W_1(\mu^1_t,\mu^2_t)\neq
0\rbrace.$$ We shall prove that $t_0=T$. Assume that $t_0<T$.
By~\eqref{unicite: forme des solutions} and~\eqref{u2}, we have, for
$0\leq s\leq T-t_0$,
$$\mathsf W_1(\mu^1_{t_0+s},\mu^2_{t_0+s})\leq e^{Ls}\mathsf W_1(\mu^1_{t_0},\mu^2_{t_0})+\frac{e^{Ls}-1}{L}\ \sup_{t_0\leq\tau\leq t_0+s}\ \|v_\tau^1-v_\tau^{ 2}\|_\infty$$
By continuity, $\mathsf W_1(\mu^1_{t_0},\mu^2_{t_0})=0$.  For $s$
small enough such that $e^{Ls}-1<2Ls$, we obtain,
using~\eqref{eq.VLip},
\begin{align*}
\mathsf W_1(\mu^1_{t_0+s},\mu^2_{t_0+s})\leq 2sL' \sup_{t_0\leq\tau\leq t_0+s}\ \mathsf W_1(\mu^1_\tau,\mu^2_\tau).
\end{align*}
Then, for $0\leq s'\leq s<\min(1/2L',T-t_0)$, applying the last
inequality for $s'$ gives
\begin{align*}
\mathsf W_1(\mu^1_{t_0+s'},\mu^2_{t_0+s'})<\sup_{t_0\leq\tau\leq t_0+s}\ \mathsf W_1(\mu^1_\tau,\mu^2_\tau), 
\end{align*} 
which is not possible. Hence, $t_0=T$, and again, by continuity, we
conclude that $ \mathsf W_1(\mu^1_t,\mu_t^2)=0$, $ \forall t\in[0,T]$.
Therefore, $\mu^1=\mu^2$. We have thus proved that~\eqref{eq limite}  admits a unique solution in
$\mathcal C(\mathbf R_+,\mathcal{P}_1(\mathbf{R}^d))$, which is the
desired result.
\end{proof}


\subsubsection{End of the proof of Theorem~\ref{thm:lln}\label{sec:
    proof thm 1}}

We can now prove  Theorem~\ref{thm:lln}. 
\begin{proof}[Proof of Theorem~\ref{thm:lln}]
  By Proposition~\ref{lem:rel comp Sobolev}, the sequence
  $(\mu^N)_{N\ge 1}$ is
  relatively compact in $\mathcal D(\rp,\mathcal P_{\gamma}(\mathbf{R}^d))$. Let $\bar \mu^1$ and $\bar \mu^2$ be two limit
  points of $(\mu^N)_{N\ge 1}$ in
  $\mathcal D(\rp,\mathcal P_{\gamma}(\mathbf{R}^d))$.  Let
  $j\in \{1,2\}$.  By Lemma~\ref{p-limit in P}, a.s.
  $\bar \mu^j\in \mathcal C(\rp,\mathcal P_{1}(\mathbf{R}^d))$. According to Proposition~\ref{lem:convergence
    to lim eq}, $\bar \mu^j$ satisfies a.s.~\eqref{eq limite}.   
Let
$ \mu^\star\in \mathcal C(\mathbf{R}_+,\mathcal{P}_1(\mathbf{R}^d))$
be the unique solution of~\eqref{eq limite} (see
Proposition~\ref{prop: existence uniqueness eq transport}).  Therefore, one has that   a.s.  $\bar \mu^j= \mu^\star$ in
$\mathcal C(\mathbf{R}_+,\mathcal{P}_1(\mathbf{R}^d))$.  
Note that this implies also that 
$ \mu^\star\in \mathcal D(\mathbf{R}_+,\mathcal{P}_{\gamma}(\mathbf{R}^d))$ and then that   a.s. $\bar \mu^j= \mu^\star$   in
$\mathcal D(\mathbf{R}_+,\mathcal{P}_{\gamma}(\mathbf{R}^d))$.
Therefore,  $(\mu^N)_{N\geq1}$   converges in distribution  to $ {\mu^\star}$ in
$\mathcal D(\mathbf{R}_+,\mathcal P_{\gamma}(\mathbf{R}^d))$ (and
then the convergence holds in probability).  This ends the proof of
Theorem~\ref{thm:lln}.
\end{proof}


\section{Proof of Theorem~\ref{thm:clt}}
\label{sec.TH2}
In this section, we prove Theorem~\ref{thm:clt}. Recall that
$\bar \mu \in \mathcal C(\mathbf R_+, \mathcal H^{-L,\gamma}(\mathbf
R^d))$ is given by Corollary~\ref{co.e} and that the fluctuation
process is defined by (see~\eqref{eq.etaN}):
$$
\eta^N=\sqrt{N}(\mu^N -\bar \mu), \ N\ge 1. 
$$
Throughout this section, we assume that~\ref{as:batch}-\ref{as:batch limite} hold. 

\subsection{Relative compactness of $(\eta^N)_{N\ge 1}$ in
$\mathcal D(\mathbf R_+,\mathcal H^{-J_0+1,j_0}(\mathbf R^d))$}

To prove the relative compactness of $(\eta^N)_{N\ge 1}$ in
$\mathcal D(\mathbf R_+,\mathcal H^{-J_0+1,j_0}(\mathbf R^d))$, we
will use Proposition~\ref{lem:note kurtz} with
$\mathcal H_1=\mathcal H^{J_0-1,j_0}(\mathbf R^d)$ and
$\mathcal H_2=\mathcal H^{J_2,j_2}(\mathbf R^d)$.  Mimicking the proof
of \cite[Theorem 1.1]{sznitman_topics_1991}, there exists a unique,
trajectorial and in law, solution of
\begin{equation}\begin{cases}\label{particle system_diff}
\di X_t=\alpha\int_{\mathcal{X}\times\mathcal{Y}}(y-\langle\sigma_*(\cdot,x),\hat \mu_t\rangle)\nabla_W\sigma_*(X_t,x)\pi(\di x,\di y)\di t, \\
X_0\sim\mu_0,\ \ \  \hat\mu_t=\text{Law}(X_t).
\end{cases}\end{equation} 
Denote by
$\hat\mu\in\mathcal P(\mathcal C(\mathbf R_+,\mathbf R^d))$ this
solution.
The mapping $t\ge 0\mapsto\hat\mu_t$ satisfies Equation~\eqref{eq
  limite}. In addition, using \ref{as:sigma}, it is straightforward
to show that the function $t\mapsto\widetilde\mu_t$ lies in
$\mathcal C(\mathbf R_+,\mathcal P_1(\mathbf R^d))$. Since $\bar\mu$
is the unique solution of~\eqref{eq limite} (see
Proposition~\ref{prop: existence uniqueness eq transport}), $\hat \mu=\bar\mu$.  Therefore, we introduce, as it is
customary, the particle system defined as follows: for $N\ge 1$, let
$\bar{X}^i=\lbrace t\mapsto\bar{X}^i_t, t\in\mathbf{R}_+\rbrace$
($i\in \{1,\ldots, N\}$) be the $N$ independent processes satisfying:
\begin{equation}\begin{cases}\label{particle system}
\bar{X}^i_t=W_0^i+\int_0^t\alpha\int_{\mathcal{X}\times\mathcal{Y}}(y-\langle\sigma_*(\cdot,x),\bar \mu_s\rangle)\nabla_W\sigma_*(\bar{X}_s^i,x)\pi(\di x,\di y)\di s, \ t\in \mathbf{R}_+ \\
\bar \mu_t=\text{Law}(\bar{X}_t^i).
\end{cases}\end{equation} 
We then introduce its empirical measure: 
\begin{equation}\label{eq.mubarparticule}
\bar \mu_t^N=\frac{1}{N}\sum_{i=1}^{N}\delta_{\bar{X}_t^i}, \ N\ge 1, \ t\in \mathbf R_+.
\end{equation}
By~\ref{as:sigma}, there exists $C_0>0$ such that a.s. for all
$0\le s\le t$, it holds for all $i\in \{1,\ldots,N\}$:
 \begin{equation}\label{eq.particule-Lip}
 |\bar{X}_t^i-\bar{X}_s^i|\le C_0(t-s).
 \end{equation}
 In particular, $t\in \mathbf R_+ \mapsto \bar{X}_t^i\in \mathbf R^d$
 is a.s. continuous for all $i\in \{1,\ldots,N\}$. Because $\mu_0$ is
 compactly supported (see indeed~\ref{as: mu0compact}),  one deduces that 
 there exists $C>0$ such that a.s. for all $T>0$ and  for all $i\in \{1,\ldots,N\}$:
\begin{equation}\label{tilde w borne}
\sup_{t\in[0,T]}|\bar{X}_t^i|\leq C(1+T).
\end{equation}
Thus, for any $\beta\ge 0$, a.s.
$ \bar \mu^N\in \mathcal C(\mathbf{R}_+,\mathcal
C^{1,\beta}(\mathbf{R}^d)')$. We now define, for $N\geq1$,
\begin{equation}\label{eq.dec-eta}
\Upsilon^N:=\sqrt{N}(\mu^N-\bar \mu^N) \ \text{ and } \ \Theta^N:=\sqrt{N}(\bar \mu^N-\bar \mu).
\end{equation}
It then holds:
\begin{equation}\label{eq.dec-eta2}
\eta^N=  \Upsilon^N +  \Theta^N.
\end{equation}
 For all
$N\ge 1$ and for any $\beta\ge 0$, since a.s.
$\mu^N\in \mathcal D(\mathbf{R}_+,\mathcal
C^{0,\beta}(\mathbf{R}^d)')$, one has
  $$\text{a.s. }  \Upsilon^N\in \mathcal D(\mathbf{R}_+,\mathcal C^{1,\beta}(\mathbf{R}^d)').$$
  In particular a.s.
  $\Upsilon^N\in \mathcal D(\mathbf{R}_+,\mathcal
  H^{1-J_1,j_1}(\mathbf{R}^d))$ because
  $\mathcal H^{J_1-1,j_1}(\mathbf R^d)\hookrightarrow \mathcal
  C^{1,j_1}(\mathbf R^d)$ (according to~\eqref{eq.SEC} and since $J_1-1>d/2+1$).  On the other hand,  since
  $\bar \mu \in \mathcal C(\mathbf R_+, \mathcal H^{-L,\gamma}(\mathbf
  R^d))$ (see Corollary \ref{co.e}) and since a.s.
  $\bar \mu^N \in \mathcal C(\mathbf R_+, \mathcal
  H^{-L,\gamma}(\mathbf R^d))$, it holds for all
  $N\ge 1$:
 \begin{equation}
 \label{eq.t-c}
 \text{a.s. } \Theta^N\in  \mathcal C(\mathbf{R}_+,\mathcal H^{-L,\gamma}(\mathbf R^d)).
  \end{equation}
We start with  the following lemma.

\begin{lem}\label{lem : unif bound xi and eta}
  Let $\beta\ge 3/4$ and assume~\ref{as:batch}-\ref{as:batch
    limite}. Then, for all $T>0$, we have
$$ 
\sup_{N\geq1} \sup_{t\in[0,T]} \mathbf{E}\big [\|\Theta_t^N\|_{\mathcal H^{-J_1,j_1}}^2 + \|\Upsilon_t^N\|_{\mathcal H^{-J_1,j_1}}^2\big ]<+\infty.
$$
In particular, $\sup_{N\geq1} \sup_{t\in[0,T]} \mathbf{E}[\|\eta_t^N\|_{\mathcal H^{-J_1,j_1}}^2]<+\infty$.
\end{lem}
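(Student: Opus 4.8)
The plan is to treat the two pieces $\Upsilon^N$ and $\Theta^N$ of $\eta^N$ (see \eqref{eq.dec-eta2}) separately, and in each case to estimate the $\mathcal H^{-J_1,j_1}$-norm through an orthonormal basis $(f_a)_{a\ge1}$ of $\mathcal H^{J_1,j_1}(\mathbf R^d)$, using $\|\cdot\|_{\mathcal H^{-J_1,j_1}}^2=\sum_a\langle f_a,\cdot\rangle^2$.

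The bound for $\Theta^N$ is direct. Since $\Theta_t^N=\frac1{\sqrt N}\sum_{i=1}^N(\delta_{\bar X_t^i}-\bar\mu_t)$ with the $\bar X_t^i$ i.i.d. of law $\bar\mu_t$ (by \eqref{particle system}), each $\langle f_a,\Theta_t^N\rangle$ is a normalised sum of i.i.d. centred variables, so $\mathbf E[\langle f_a,\Theta_t^N\rangle^2]=\Var(f_a(\bar X_t^1))\le\mathbf E[f_a(\bar X_t^1)^2]$. Summing over $a$ and applying Parseval, $\mathbf E[\|\Theta_t^N\|_{\mathcal H^{-J_1,j_1}}^2]\le\mathbf E[\,\|\delta_{\bar X_t^1}\|_{\mathcal H^{-J_1,j_1}}^2\,]$; as $\mathcal H^{J_1,j_1}\hookrightarrow\mathcal C^{0,j_1}$ by \eqref{eq.SEC} (since $J_1>d/2$), one has $\|\delta_w\|_{\mathcal H^{-J_1,j_1}}\le C(1+|w|^{j_1})$, and \eqref{tilde w borne} then gives $\mathbf E[\|\Theta_t^N\|_{\mathcal H^{-J_1,j_1}}^2]\le C(1+T)^{2j_1}$ for all $t\le T$, $N\ge1$.

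For $\Upsilon^N=\sqrt N(\mu^N-\bar\mu^N)$ I would subtract from the pre-limit equation \eqref{eq: prelim mu} the identity $\langle f,\bar\mu_t^N\rangle-\langle f,\mu_0^N\rangle=\int_0^t\int_{\mathcal X\times\mathcal Y}\alpha(y-\langle\sigma_*(\cdot,x),\bar\mu_s\rangle)\langle\nabla f\cdot\nabla\sigma_*(\cdot,x),\bar\mu_s^N\rangle\pi(\di x,\di y)\di s$ (obtained by differentiating \eqref{particle system}, averaging, and using $\bar\mu_0^N=\mu_0^N$), and use $\Upsilon_0^N=0$. Adding and subtracting $(y-\langle\sigma_*(\cdot,x),\mu_s^N\rangle)\langle\nabla f\cdot\nabla\sigma_*(\cdot,x),\bar\mu_s^N\rangle$ and then $\sqrt N(\mu_s^N-\bar\mu_s)=\eta_s^N$, the difference of the two integrands rewrites as $\alpha(y-\langle\sigma_*(\cdot,x),\mu_s^N\rangle)\langle\nabla f\cdot\nabla\sigma_*(\cdot,x),\Upsilon_s^N\rangle-\alpha\langle\sigma_*(\cdot,x),\eta_s^N\rangle\langle\nabla f\cdot\nabla\sigma_*(\cdot,x),\bar\mu_s^N\rangle$, so that $\langle f,\Upsilon_t^N\rangle$ equals this drift integral plus $\sqrt N\langle f,M_t^N\rangle+\sqrt N\langle f,V_t^N\rangle+\sqrt N\sum_{k=0}^{\lfloor Nt\rfloor-1}\langle f,R_k^N\rangle+N^{-1/2-\beta}\sum_{k=0}^{\lfloor Nt\rfloor-1}\sum_{i=1}^N\nabla f(W_k^i)\cdot\ve_k^i$. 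In the drift, $\langle\sigma_*(\cdot,x),\eta_s^N\rangle$ is controlled by $C\|\eta_s^N\|_{\mathcal H^{-J_1,j_1}}$ (uniformly in $x$, as $\sigma_*\in\mathcal C^\infty_b$ and $j_1>d/2$) while $f\mapsto\langle\nabla f\cdot\nabla\sigma_*(\cdot,x),\bar\mu_s^N\rangle$ is a functional on $\mathcal H^{J_1,j_1}$ of norm $\le C(1+\langle|\cdot|^{j_1},\bar\mu_s^N\rangle)\le C(1+T)^{j_1}$ \emph{deterministically} by \eqref{tilde w borne}; hence (Parseval) this part of $\sum_a\langle f_a,\Upsilon_t^N\rangle^2$ is $\le C_T\int_0^t\mathbf E[\|\eta_s^N\|_{\mathcal H^{-J_1,j_1}}^2]\,\di s$, absorbed into a Gronwall estimate together with the already controlled $\Theta^N$-contribution. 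The genuinely delicate term is $\alpha(y-\langle\sigma_*(\cdot,x),\mu_s^N\rangle)\langle\nabla f\cdot\nabla\sigma_*(\cdot,x),\Upsilon_s^N\rangle$: summing its square over $a$ only yields $\|\mathrm{div}(\nabla\sigma_*(\cdot,x)\Upsilon_s^N)\|_{\mathcal H^{-J_1,j_1}}^2$, i.e. it costs one derivative on $\Upsilon_s^N$. Here the smoothness of $\sigma_*$ must be fully exploited: working with the quadratic form $\|\Upsilon_t^N\|_{\mathcal H^{-J_1,j_1}}^2$ itself (rather than term by term), the extra derivative is removed by an integration‑by‑parts/commutator estimate, $\langle\Upsilon,\mathrm{div}(\nabla\sigma_*(\cdot,x)\Upsilon)\rangle_{\mathcal H^{-J_1,j_1}}\le C\|\Upsilon\|_{\mathcal H^{-J_1,j_1}}^2$ with $C$ depending only on $\|\sigma_*\|_{\mathcal C^{J_1+1}_b}$ — exactly as in the energy estimate underlying well-posedness of the transport part of \eqref{eq.CLT}. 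I expect this loss‑of‑derivative step to be the main obstacle.

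Finally, one must check that the four remainder blocks, scaled by $\sqrt N$ and summed over the basis, are uniformly bounded; this uses the Hilbert–Schmidt embedding $\mathcal H^{J_1,j_1}\hookrightarrow_{\mathrm{H.S.}}\mathcal H^{L,\gamma}$ of \eqref{eq.SE2} (so $\sum_a\|f_a\|_{\mathcal H^{L,\gamma}}^2<\infty$) together with the versions of Lemma~\ref{lem:remainder terms} with $\|f\|_{\mathcal C^{2,\gamma_*}}$ replaced by $\|f\|_{\mathcal H^{L,\gamma}}$ (legitimate since $\mathcal H^{L,\gamma}\hookrightarrow\mathcal C^{2,\gamma}\hookrightarrow\mathcal C^{2,\gamma_*}$ by \eqref{eq.SE1}). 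Concretely: $N\,\mathbf E[\langle f,M_t^N\rangle^2]\le Ct\|f\|_{\mathcal H^{L,\gamma}}^2$ by \eqref{eq.MTT}–\eqref{eq.Mk==}; $N\,\mathbf E[|\langle f,V_t^N\rangle|^2]\le C\|f\|_{\mathcal H^{L,\gamma}}^2/N$ by \eqref{borne v}; $N^{-1-2\beta}\,\mathbf E[|\sum_{k,i}\nabla f(W_k^i)\cdot\ve_k^i|^2]\le Ct\,N^{1-2\beta}\|f\|_{\mathcal H^{L,\gamma}}^2$ by \eqref{eq333}; and, crucially, $N\,\mathbf E[|\sum_k\langle f,R_k^N\rangle|^2]\le Ct^2\,(N^{-1}+N^{3-4\beta})\,\|f\|_{\mathcal H^{L,\gamma}}^2$ by \eqref{borneERk]}. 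The first three are bounded uniformly in $N$ for all $\beta\ge1/2$, while the last — coming from the second‑order Taylor term $R^N$ — is bounded precisely when $3-4\beta\le0$, i.e. $\beta\ge3/4$; this is where that hypothesis enters, consistently with the extra linear‑in‑$t$ term of Proposition~\ref{pr.beta34} at $\beta=3/4$. Summing over $a$ makes all four contributions finite and uniformly bounded in $N$ and $t\le T$; Gronwall's lemma applied to the running supremum of $t\mapsto\mathbf E[\|\Upsilon_t^N\|_{\mathcal H^{-J_1,j_1}}^2]$ then closes the bound for $\Upsilon^N$, and the statement for $\eta^N$ follows from $\|\eta_t^N\|^2\le2\|\Upsilon_t^N\|^2+2\|\Theta_t^N\|^2$.
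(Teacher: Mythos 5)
Your proposal follows essentially the same route as the paper: the decomposition $\eta^N=\Upsilon^N+\Theta^N$, the i.i.d.\ bound for $\Theta^N$, the equation \eqref{f contre xi } for $\Upsilon^N$, the energy estimate on $\|\Upsilon^N\|_{\mathcal H^{-J_1,j_1}}^2$ with the commutator bound $|\langle\Upsilon,\mathsf T_x^*\Upsilon\rangle_{-J_1,j_1}|\le C\|\Upsilon\|_{-J_1,j_1}^2$ (the paper's Lemma~\ref{lem : <xi,gxi>}) to absorb the lost derivative, the remainder bounds in which $\beta\ge 3/4$ enters through the $R_k^N$ block, and Gronwall. The only substantive step you leave implicit is that expanding the square of the piecewise-continuous path $t\mapsto\langle f,\Upsilon_t^N\rangle$ requires a discrete chain rule handling the jumps and the resulting cross terms $\langle f,\Upsilon_{(k+1)/N^-}^N\rangle\alpha_k$ (the paper's Lemma~\ref{le.CR} and Lemma~\ref{lem : additionnal terms}), but these are controlled by the same martingale-cancellation and Cauchy--Schwarz arguments you already invoke.
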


\begin{proof}
  Let $T>0$.  In all this proof,
  $f\in \mathcal H^{J_1,j_1}(\mathbf{R}^d)$ and
  $\lbrace f_a\rbrace_{a\geq 1}$ is an orthonormal basis of
  $\mathcal H^{J_1,j_1}(\mathbf{R}^d)$.  In the following, $C$ denotes
  a constant independent of $N\geq1$, $t\in[0,T]$,
  $(x,y)\in \mathcal X\times \mathcal Y$, and the test function $f$,
  which can change from one occurence to another.  The proof is
  divided into two steps.  \medskip

\noindent
\textbf{Step 1.} Upper bound on
$\textbf{E}\left[\|\Theta_t^N\|^2_{\mathcal H^{-J_1,j_1}}\right]$.
\medskip

\noindent
Since $\bar{X}_s^1,\dots,\bar{X}_s^N$ are i.i.d. with law $\bar \mu_s$
(see~\eqref{particle system}),   using~\eqref{tilde w borne},  the fact that  $\bar \mu \in \mathcal C(\mathbf R_+, \mathcal H^{-L,\gamma}(\mathbf
  R^d))$ (see Corollary~\ref{co.e}), and $\mathcal H^{L,\gamma}(\mathbf{R}^d)\hookrightarrow \mathcal C^{0,\gamma}(\mathbf{R}^d)$, it holds for all $t\in [0,T]$: 
\begin{align*}
\textbf{E}\left[\langle f,\Theta_t^N\rangle^2\right]= \textbf{E}\Big[\Big\langle f,\sqrt{N}(\bar \mu_t^N-\bar \mu_t)\Big\rangle^2\Big]&=\textbf{E}\Big[\Big|\frac{1}{\sqrt{N}}\sum_{i=1}^N\big [f(\bar{X}_t^i)-\langle f,\bar \mu_t\rangle\big ]\Big|^2\Big]\nonumber\\
&=\frac{1}{N}\sum_{i=1}^N\textbf{E}\Big[\Big|f(\bar{X}_t^i)-\langle f,\bar \mu_t\rangle\Big|^2\Big]\nonumber\\
&\leq\frac{2}{N}\sum_{i=1}^N\textbf{E}\big[|f(\bar{X}_t^i)|^2\big]+|\langle f,\bar \mu_t\rangle|^2 
 \leq C\|f\|_{\mathcal H^{L,\gamma}}^2. 
\end{align*}
Taking $f=f_a$ in the previous inequality, summing over
$a\in \mathbf N^*$, and using the fact that
$\mathcal H^{J_1,j_1}(\mathbf{R}^d)\hookrightarrow_{\text{H.S.}}
\mathcal H^{L,\gamma}(\mathbf{R}^d)$, one deduces that:
\begin{align}\label{borne Z}
\sup_{N\geq1} \sup_{t\in[0,T]}\textbf{E}\left[\|\Theta_t^N\|^2_{\mathcal H^{-J_1,j_1}}\right]\leq C.
\end{align}
  
\noindent
\textbf{Step 2.} Upper bound on
$\textbf{E}\left[\|\Upsilon_t^N\|^2_{\mathcal H^{-J_1,j_1}}\right]$.
\medskip

\noindent
Recall that a.s.
$\Upsilon^N\in \mathcal D(\mathbf{R}_+,\mathcal
H^{1-J_1,j_1}(\mathbf{R}^d))$. Thus, a.s. $\Upsilon^N\in\mathcal D(\mathbf{R}_+,\mathcal
H^{-J_1,j_1}(\mathbf{R}^d))$. We then have for all $t\in \mathbf R_+$,
\begin{align}\label{eq.formulefa}
\|\Upsilon_t^N\|_{\mathcal H^{-J_1,j_1}}^2=\sum_{a= 1}^{+\infty}\langle f_a,\Upsilon_t^N\rangle^2.
\end{align}
 For all $i\in \{1,\ldots,N\}$, a.s. $t\in \mathbf R_+\mapsto f(\bar X^i_t) \in \mathcal C^1(\mathbf R_+)$. Indeed, $f\in\mathcal C^1(\mathbf R^d)$ and a.s. $t\in  \mathbf R_+\mapsto  \bar X^i_t$ (see~\eqref{particle system}) is $\mathcal C^1$ (because  a.s. $s\in \mathbf R_+\mapsto  \alpha\int_{\mathcal{X}\times\mathcal{Y}}(y-\langle\sigma_*(\cdot,x),\bar \mu_s\rangle)\nabla_W\sigma_*(\bar{X}_s^i,x)\pi(\di x,\di y)$ is continuous by the dominated convergence theorem).
Therefore, it holds $\langle f,\bar \mu_t^N\rangle=\langle f,\bar \mu_0^N\rangle+\int_0^t\frac{\di }{\di s}\langle f,\bar \mu_s^N\rangle\di s$ and therefore, 
\begin{align}\label{formule f mu tilde}
\langle f,\bar \mu_t^N\rangle =\langle f,\bar \mu_0^N\rangle+\int_0^t\int_{\mathcal{X}\times\mathcal{Y}}\alpha(y-\langle\sigma_*(\cdot,x),\bar \mu_s\rangle)\langle\nabla f\cdot\nabla\sigma_*(\cdot,x),\bar \mu_s^N\rangle\pi(\di x,\di y)\di s.
\end{align}
Thus, by definition of $\Upsilon_t^N$ (see~\eqref{eq.dec-eta}) and
using also~\eqref{eq: prelim mu}, we have:
\begin{align}
\langle f,\Upsilon_t^N\rangle  &=  \sqrt N \underbrace{ \langle f,(\mu_0^N-\bar\mu_0^N) \rangle}_{=0}+\sqrt{N}\int_0^t\int_{\mathcal{X}\times\mathcal{Y}}\alpha(y-\langle\sigma_*(\cdot,x),\mu_s^N\rangle)\langle\nabla f\cdot\nabla\sigma_*(\cdot,x),\mu_s^N\rangle\pi(\di x,\di y)\di s\nonumber\\
&\quad -\sqrt{N}\int_0^t\int_{\mathcal{X}\times\mathcal{Y}}\alpha(y-\langle\sigma_*(\cdot,x),\bar \mu_s\rangle)\langle\nabla f\cdot\nabla\sigma_*(\cdot,x),\bar \mu_s^N\rangle\pi(\di x,\di y)\di s\nonumber\\
&\quad +\sqrt{N}\langle f,M_t^N\rangle+\sqrt{N}\langle f,V_t^N\rangle+\sqrt{N} \sum_{k=0}^{\lfloor Nt\rfloor-1}\langle f,R_k^{N}\rangle+\frac{\sqrt{N}}{N^{1+\beta}}\sum_{k=0}^{\lfloor Nt\rfloor -1}\sum_{i=1}^N\nabla f(W_k^i)\cdot\ve_k^i\label{f contre xi0 }.
\end{align}
Furthermore, it holds,
\begin{align}\label{trick decompo}
&\sqrt{N}(y-\langle\sigma_*(\cdot,x),\mu_s^N\rangle)\langle\nabla f\cdot\nabla\sigma_*(\cdot,x),\mu_s^N\rangle\nonumber\\
&=(y-\langle\sigma_*(\cdot,x),\mu_s^N\rangle)\langle\nabla f\cdot\nabla\sigma_*(\cdot,x),\Upsilon_s^N\rangle\nonumber-\langle\sigma_*(\cdot,x),\Upsilon_s^N\rangle\langle\nabla f\cdot\nabla\sigma_*(\cdot,x),\bar \mu_s^N\rangle\nonumber\\
&\quad+\sqrt{N}(y-\langle\sigma_*(\cdot,x),\bar \mu_s\rangle-\langle\sigma_*(\cdot,x),\bar \mu_s^N-\bar \mu_s\rangle)\langle\nabla f\cdot\nabla\sigma_*(\cdot,x),\bar \mu_s^N\rangle.
\end{align}
Consequently, plugging~\eqref{trick decompo} in~\eqref{f contre xi0 },
we obtain for $t\in \mathbf R_+$:
\begin{align}
\langle f,\Upsilon_t^N\rangle&=\int_0^t\int_{\mathcal{X}\times\mathcal{Y}}\alpha (y-\langle\sigma_*(\cdot,x),\mu_s^N\rangle)\langle\nabla f\cdot\nabla\sigma_*(\cdot,x),\Upsilon_s^N\rangle\pi(\di x,\di y)\di s\nonumber\\
&\quad-\int_0^t\int_{\mathcal{X}\times\mathcal{Y}}\alpha \langle\sigma_*(\cdot,x),\Upsilon_s^N)\rangle\langle\nabla f\cdot\nabla\sigma_*(\cdot,x),\bar \mu_s^N\rangle\pi(\di x,\di y)\di s\nonumber\\
&\quad-\int_0^t\int_{\mathcal{X}\times\mathcal{Y}}\alpha \langle\sigma_*(\cdot,x),\sqrt{N}(\bar \mu_s^N-\bar \mu_s)\rangle\langle\nabla f\cdot\nabla\sigma_*(\cdot,x),\bar \mu_s^N\rangle\pi(\di x,\di y)\di s\nonumber\\&\quad +\sqrt{N}\langle f,M_t^N\rangle+\sqrt{N}\langle f,V_t^N\rangle+\sqrt{N} \sum_{k=0}^{\lfloor Nt\rfloor-1}\langle f,R_k^{N}\rangle+\frac{\sqrt{N}}{N^{1+\beta}}\sum_{k=0}^{\lfloor Nt\rfloor -1}\sum_{i=1}^N\nabla f(W_k^i)\cdot\ve_k^i\label{f contre xi }.
\end{align}
By Lemma~\ref{le.CR} (in Appendix~\ref{app:B}, see also Remark~\ref{re.CR}), one then has for
all $t\in \mathbf R_+$:
\begin{align}\label{eq : ito + decompo}
\langle f,\Upsilon_t^N\rangle^2&\le  \textbf A^N_t[f]+ \textbf B^N_t[f],
\end{align}
where 
\begin{align*}
\textbf A^N_t[f]&=2\int_0^t\int_{\mathcal{X}\times\mathcal{Y}}\alpha\langle f,\Upsilon_s^N\rangle(y-\langle\sigma_*(\cdot,x),\mu_s^N\rangle)\langle\nabla f\cdot\nabla\sigma_*(\cdot,x),\Upsilon_s^N\rangle\pi(\di x,\di y)\di s\nonumber\\
&\quad-2\int_0^t\int_{\mathcal{X}\times\mathcal{Y}}\alpha\langle f,\Upsilon_s^N\rangle\langle\sigma_*(\cdot,x),\Upsilon_s^N)\rangle\langle\nabla f\cdot\nabla\sigma_*(\cdot,x),\bar \mu_s^N\rangle\pi(\di x,\di y)\di s\nonumber\\
&\quad-2\int_0^t\int_{\mathcal{X}\times\mathcal{Y}}\alpha\langle f,\Upsilon_s^N\rangle\langle\sigma_*(\cdot,x),\sqrt{N}(\bar \mu_s^N-\bar \mu_s)\rangle\langle\nabla f\cdot\nabla\sigma_*(\cdot,x),\bar \mu_s^N\rangle\pi(\di x,\di y)\di s\\
&  =:\textbf I_t^N[f]+ \textbf J_t^N[f]+\textbf K_t^N[f],
\end{align*}
and
\begin{align*}
\textbf B^N_t[f]&= \sum_{k=0}^{\lfloor Nt\rfloor-1}\Big[2\langle f,\Upsilon_{\frac{k+1}{N}^-}^N\rangle\sqrt{N}\langle f,M_k^N\rangle+4 N\langle f,M_k^N\rangle^2\Big] +\sum_{k=0}^{\lfloor Nt\rfloor-1}\Big[2\langle f,\Upsilon_{\frac{k+1}{N}^-}^N\rangle\sqrt{N}\langle f,R_k^{N}\rangle+4 N|\langle f,R_k^{N}\rangle|^2\Big]\nonumber\\
&\quad+\sum_{k=0}^{\lfloor Nt\rfloor-1}\Big[\frac{2\sqrt{N}}{N^{1+\beta}}\langle f,\Upsilon_{\frac{k+1}{N}^-}^N\rangle\sum_{i=1}^N\nabla f(W_k^i)\cdot\ve_k^i+\frac{4 }{N^{1+2\beta}}\big|\sum_{i=1}^N\nabla f(W_k^i)\cdot\ve_k^i\big|^2\Big]\nonumber\\
&\quad+\sum_{k=0}^{\lfloor Nt\rfloor-1}\Big[2\langle f,\Upsilon_{\frac{k+1}{N}^-}^N\rangle \mathbf a_k^N[f]+4 |\mathbf a_k^N[f]|^2\Big]-2\sqrt{N}\int_0^t\langle f,\Upsilon_s^N\rangle \mathbf L_s^N[f]\di s,
\end{align*}
with 
\begin{equation}\label{eq.akN}
\mathbf L_s^N[f]=\int_{\mathcal{X}\times\mathcal{Y}}\!\!\!\!\!  \alpha(y-\langle\sigma_*(\cdot,x),\mu_s^N\rangle)\langle\nabla f\cdot\nabla\sigma_*(\cdot,x),\mu_s^N\rangle\pi(\di x,\di y) \,  \text{ and } \,  \mathbf a_k^N[f]=\sqrt{N}\int_{\frac kN}^{\frac{k+1}{N}}\!\! \mathbf L_s^N[f]\di s.
\end{equation}
\noindent
Using~\eqref{eq : ito + decompo} and~\eqref{eq.formulefa},
\begin{equation}\label{eq.Xi-fa}
\|\Upsilon_t^N\|_{\mathcal H^{-J_1,j_1}}^2=\sum_{a=1}^{+\infty} \textbf A^N_t[f_a]+\sum_{a=1}^{+\infty} \textbf B^N_t[f_a].
\end{equation}
By Lemma~\ref{lem : additionnal terms}, detailed in Appendix~\ref{app:B}, and since $\beta \ge 3/4$, one
has for all $f\in \mathcal H^{J_1,j_1}(\mathbf{R}^d)$ and
$t\in \mathbf R_+$,
$$\mathbf{E}\big[\textbf B^N_t[f]\big]\le C\big (\| f\|_{\mathcal H^{L,\gamma}}^2+ \mathbf{E}\left[\int_0^t\langle f,\Upsilon_s^N\rangle^2\di s\right]\big).$$
Thus, recalling $\mathcal H^{J_1,j_1}(\mathbf{R}^d)\hookrightarrow_{\text{H.S.}}\mathcal H^{L,\gamma}(\mathbf{R}^d)$ and~\eqref{eq.formulefa}, we obtain,
\begin{equation}\label{eq.Bfa} \sum_{a=1}^{+\infty}\mathbf E[ \textbf
  B^N_t[f_a]]\le C+ C\int_0^t\textbf{E}\big[ \|\Upsilon_s^N
  \|_{\mathcal H^{-J_1,j_1}}^2\big] \di s.
\end{equation}
Let us now provide a similar upper bound on
$ \sum_{a=1}^{+\infty} \textbf A^N_t[f_a]$.  By~\eqref{tilde w borne}
and because
$\mathcal H^{L,\gamma}(\mathbf{R}^d)\hookrightarrow \mathcal
C^{2,\gamma_*}(\mathbf{R}^d)$ (see~\eqref{eq.SE1}),
\begin{align}
\label{31}
|\langle\nabla f\cdot\nabla\sigma_*(\cdot,x),\bar \mu_s^N\rangle| =\Big|\frac{1}{N}\sum_{i=1}^N\nabla f(\bar{X}_s^i)\cdot\nabla\sigma_*(\bar{X}_s^i,x)\Big|&\leq\frac{C\|f\|_{\mathcal C^{2,\gamma_*}}}{N}\sum_{i=1}^N(1+|\bar{X}_s^i|^{\gamma_*}) \leq C\|f\|_{\mathcal H^{L,\gamma}}.
\end{align}
Then, using~\ref{as:sigma} and since $j_1>d/2$, it holds:
we have using~\eqref{31}:
\begin{align*}
&-2\int_0^t\int_{\mathcal{X}\times\mathcal{Y}}\alpha\langle f,\Upsilon_s^N\rangle\langle\sigma_*(\cdot,x),\Upsilon_s^N)\rangle\langle\nabla f\cdot\nabla\sigma_*(\cdot,x),\bar \mu_s^N\rangle\pi(\di x,\di y)\di s\nonumber\\
&\leq 4\alpha \int_0^t\int_{\mathcal{X}\times\mathcal{Y}}\left(\langle f,\Upsilon_s^N\rangle^2+C\|f\|_{\mathcal H^{L,\gamma}}^2\|\sigma_*(W,x)\|_{\mathcal H^{J_1,j_1}}^2\|\Upsilon_s^N\|_{\mathcal H^{-J_1,j_1}}^2\right)\pi(\di x,\di y)\di s\nonumber\\
&\leq C\int_0^t \left(\langle f,\Upsilon_s^N\rangle^2+C\|f\|_{\mathcal H^{L,\gamma}}^2\|\Upsilon_s^N\|_{\mathcal H^{-J_1,j_1}}^2\right)\di s.
\end{align*}
Thus,
$ \sum_{a=1}^{+\infty} \textbf{E}[\textbf J^N_t[f_a]]\le
C\int_0^t\textbf{E}\big[ \|\Upsilon_s^N \|_{\mathcal
  H^{-J_1,j_1}}^2\big] \di s$.  Let us now study
$\sum_{a=1}^{+\infty} \mathbf K^N_t[f_a]$. Since
$\bar{X}_s^1,\ldots,\bar{X}_s^N$ are i.i.d. with law $\bar \mu_s$
(see~\eqref{particle system}) and because $\sigma_*$ is bounded
(see~\ref{as:sigma}), we have:
\begin{align}\label{eq : tilde(mu)-bar(mu)}
\textbf{E}\left[\left\langle\sigma_*(\cdot,x),\bar \mu_s^N-\bar \mu_s\right\rangle^2\right]&=\textbf{E}\Big[\Big|\frac{1}{N}\sum_{i=1}^N\sigma_*(\bar{X}_s^i,x)-\langle\sigma_*(\cdot,x),\bar \mu_s\rangle\Big|^2\Big]\nonumber\\
&=\frac{1}{N^2}\sum_{i=1}^N\textbf{E}\left[(\sigma_*(\bar{X}_s^i,x)-\langle\sigma_*(\cdot,x),\bar \mu_s\rangle)^2\right]\leq \frac{C}{N}.
\end{align}
 Hence, $\textbf{E}[\langle\sigma_*(\cdot,x),\sqrt{N}(\bar \mu_s^N-\bar \mu_s) \rangle^2]\leq C$.
%
 Thus, using in addition~\eqref{31}, one deduces that
\begin{align*}
&\textbf{E}\left[-2\int_0^t\int_{\mathcal{X}\times\mathcal{Y}}\alpha\langle f,\Upsilon_s^N\rangle\langle\sigma_*(\cdot,x),\sqrt{N}(\bar \mu_s^N-\bar \mu_s)\rangle\langle\nabla f\cdot\nabla\sigma_*(\cdot,x),\bar \mu_s^N\rangle\pi(\di x,\di y)\di s\right]\\
&\leq C\int_0^t\int_{\mathcal{X}\times\mathcal{Y}}\left\lbrace\textbf{E}[\langle f,\Upsilon_s^N\rangle^2]+\textbf{E}\left[\langle\sigma_*(\cdot,x),\sqrt{N}(\bar \mu_s^N-\bar \mu_s)\rangle^2\langle\nabla f\cdot\nabla\sigma_*(\cdot,x),\bar \mu_s^N\rangle^2\right]\right\rbrace\pi(\di x,\di y)\di s\\
&\leq C\int_0^t\int_{\mathcal{X}\times\mathcal{Y}}\left\lbrace\textbf{E}[\langle f,\Upsilon_s^N\rangle^2]+\textbf{E}\left[\langle\sigma_*(\cdot,x),\sqrt{N}(\bar \mu_s^N-\bar \mu_s)\rangle^2\right]\|f\|_{\mathcal H^{L,\gamma}}^2\right\rbrace  \pi(\di x,\di y)\di s\\
&\leq C\int_0^t\left\lbrace\textbf{E}[\langle f,\Upsilon_s^N\rangle^2]+\|f\|_{\mathcal H^{L,\gamma}}^2\right\rbrace\di s.
\end{align*}
Therefore,
$ \sum_{a=1}^{+\infty}\textbf{E}[ \mathbf K^N_t[f_a]]\le
C+C\int_0^t\textbf{E}\big[ \|\Upsilon_s^N \|_{\mathcal
  H^{-J_1,j_1}}^2\big] \di s$.  It remains to study
$\sum_{a=1}^{+\infty} \textbf I^N_t[f_a]$. To this end, for
$x\in \mathcal X$, introduce the bounded linear operator
\begin{equation}\label{eq.Tx}
\mathsf T_x: f\in \mathcal H^{J_1,j_1}(\mathbf{R}^d)\mapsto \nabla f\cdot \nabla\sigma_*(\cdot,x)\in \mathcal H^{J_1-1,j_1}(\mathbf{R}^d).
\end{equation}
Then, one has: 
\begin{align*}
\sum_{a=1}^{+\infty} \textbf I_t^N[f_a]= &\sum_{a=1}^{+\infty} 2\int_0^t\int_{\mathcal{X}\times\mathcal{Y}}\alpha\langle f_a,\Upsilon_s^N\rangle(y-\langle\sigma_*(\cdot,x),\mu_s^N\rangle)\langle\mathsf T_xf_a,\Upsilon_s^N\rangle\pi(\di x,\di y)\di s\\
&=2\int_0^t\int_{\mathcal{X}\times\mathcal{Y}}\alpha(y-\langle\sigma_*(\cdot,x),\mu_s^N\rangle)\sum_{a=1}^{+\infty} \langle f_a,\Upsilon_s^N\rangle\langle\mathsf T_xf_a,\Upsilon_s^N\rangle\pi(\di x,\di y)\di s\\
&=2\int_0^t\int_{\mathcal{X}\times\mathcal{Y}}\alpha(y-\langle\sigma_*(\cdot,x),\mu_s^N\rangle)\left\langle \Upsilon_s^N,\mathsf T_x^*\Upsilon_s^N\right\rangle_{-J_1,j_1}\pi(\di x,\di y)\di s.
\end{align*}
Since $\sigma_*$ and $\mathcal Y$ are bounded, this implies that:
\begin{align*}
  \sum_{a=1}^{+\infty} \textbf{E}[\textbf I^N_t[f_a]]&\le C\int_0^t \textbf{E}\Big[ \int_{\mathcal{X}\times\mathcal{Y}}  |y-\langle\sigma_*(\cdot,x),\mu_s^N\rangle| \,   \big | \langle\Upsilon_s^N,\mathsf T_x^*\Upsilon_s^N \rangle_{-J_1,j_1} \big | \pi(\di x,\di y) \Big] \di s\\
                                                     &\le C\int_0^t \textbf{E}\Big[ \int_{\mathcal{X}\times\mathcal{Y}}(  |y|+C)    \, | \langle\Upsilon_s^N,\mathsf T_x^*\Upsilon_s^N \rangle_{-J_1,j_1}  | \pi(\di x,\di y) \Big] \di s
\end{align*}
By Lemma~\ref{lem : <xi,gxi>}, detailed in Appendix~\ref{app:B}, and
since a.s.
$\Upsilon^N\in \mathcal D(\mathbf{R}_+,\mathcal
H^{1-J_1,j_1}(\mathbf{R}^d))$, there exists $C>0$ such that for all
$x\in\mathcal{X}$,
$|\langle\Upsilon_s^N,\mathsf T_x^*\Upsilon_s^N\rangle_{\mathcal
  H^{-J_1,j_1}}|\leq C\|\Upsilon_s^N\|_{\mathcal
  H^{-J_1,j_1}}^2$. Hence, since $\mathbf E[|y|]<+\infty$, we deduce
that:
$$\sum_{a=1}^{+\infty} \textbf{E}[\textbf I^N_t[f_a]]\le C\int_0^t\textbf{E}\left[\left\|\Upsilon_s^N\right\|_{\mathcal H^{-J_1,j_1}}^2\right]\di s.$$
We have thus proved that
$\sum_{a=1}^{+\infty} \textbf{E}[\textbf A^N_t[f_a]]\le
C+C\int_0^t\textbf{E} [ \|\Upsilon_s^N \|_{\mathcal H^{-J_1,j_1}}^2
]\di s$. In conclusion, using also~\eqref{eq.Bfa}
and~\eqref{eq.Xi-fa}, we have
$\textbf{E}[\|\Upsilon_t^N\|_{\mathcal H^{-J_1,j_1}}^2]\le
C+C\int_0^t\textbf{E}[\|\Upsilon_s^N \|_{\mathcal H^{-J_1,j_1}}^2 ]\di
s$.
%
%
By Gronwall's Lemma, we get:
$$
\sup_{N\geq1}\sup_{t\in[0,T]} \mathbf{E}[\|\Upsilon_t^N\|_{\mathcal H^{-J_1,j_1}}^2]<+\infty.
$$
Together with the first step, this ends the proof of the Lemma (recall
the decomposition $\eta^N=\Upsilon^N+\Theta^N$,
see~\eqref{eq.dec-eta2}).
\end{proof}


\begin{lem} 
\label{le.martingale}
Assume~\ref{as:batch}-\ref{as:batch limite}.  Introduce the following
$\boldsymbol{\sigma}$-algebra (see~\eqref{eq.filtration}):
\begin{equation*}
\mathfrak{F}_t^N:=\mathcal{F}_{\lfloor Nt\rfloor}^N, \  t\in \mathbf R_+.
\end{equation*}
Then, for all $f\in \mathcal C^{2,\gamma_*}(\mathbf R^d)$, the two processes   
\begin{equation}\label{eq.proc}
 \Big \{t \mapsto \sum_{k=0}^{\lfloor Nt\rfloor-1}\sum_{i=1}^N\nabla f(W_k^i)\cdot\ve_k^i,\ t\in \mathbf R_+\Big \} \text{ and } \big\{t \mapsto \langle f, M_t^N\rangle,t\in \mathbf R_+\big\} \text{ are $\mathfrak{F}^N_t$-martingale.}
\end{equation}
\end{lem}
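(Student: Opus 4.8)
The plan is to verify, for each of the two processes, the two defining properties of a discrete-time $\mathfrak{F}^N_t$-martingale (reindexed by $t$): adaptedness together with integrability, and the martingale increment identity. Since both processes are piecewise constant in $t$, jumping only at the points $k/N$, it suffices to work at these times, i.e. to show that the discrete sequences $S^N_n := \sum_{k=0}^{n-1}\sum_{i=1}^N \nabla f(W_k^i)\cdot\ve_k^i$ and $(\langle f, M_{n/N}^N\rangle)_n = (\sum_{k=0}^{n-1}\langle f, M_k^N\rangle)_n$ are martingales with respect to the filtration $(\mathcal F_n^N)_{n\ge 0}$; the statement for $\mathfrak F^N_t = \mathcal F^N_{\lfloor Nt\rfloor}$ then follows immediately because $t\mapsto \lfloor Nt\rfloor$ is nondecreasing and the summand indexed by $k$ is $\mathcal F^N_{k+1}$-measurable.

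First I would record measurability and integrability. For fixed $k$, $\nabla f(W_k^i)$ is $\mathcal F_k^N$-measurable (hence $\mathcal F^N_{k+1}$-measurable) since $W_k^i$ is, and $\ve_k^i$ is $\mathcal F^N_{k+1}$-measurable by the definition of the filtration \eqref{eq.filtration}; similarly $\langle f, M_k^N\rangle$ is $\mathcal F^N_{k+1}$-measurable because it is built from $\{W_k^i\}_i$, $B_k$ and $\langle f,D_k^N\rangle$ (see \eqref{def Mk}, \eqref{eq.QQ-d}). Thus both $S^N_n$ and $\sum_{k<n}\langle f,M^N_k\rangle$ are $\mathcal F^N_n$-adapted. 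Integrability of each increment follows from Lemma~\ref{le.W}: indeed $|\nabla f(W_k^i)\cdot\ve_k^i|\le C\|f\|_{\mathcal C^{2,\gamma_*}}(1+|W_k^i|^{\gamma_*})|\ve_k^i|$ is in $L^1$ by Cauchy--Schwarz, \ref{as:noise}, and Lemma~\ref{le.W}, and $\mathbf E[|\langle f,M_k^N\rangle|]\le \mathbf E[|\langle f,M_k^N\rangle|^2]^{1/2}<\infty$ by \eqref{eq.MTT} in Lemma~\ref{lem:remainder terms}; summing over $k<n$ keeps these finite.

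Next I would establish the increment identities. For the noise process, $\mathbf E[\nabla f(W_k^i)\cdot\ve_k^i \mid \mathcal F_k^N] = \nabla f(W_k^i)\cdot \mathbf E[\ve_k^i\mid\mathcal F_k^N] = 0$, using that $W_k^i$ is $\mathcal F_k^N$-measurable and that $\ve_k^i\indep\mathcal F_k^N$ with $\ve_k^i\sim\mathcal N(0,I_d)$ (see \ref{as:noise}); summing over $i$ gives $\mathbf E[S^N_{n+1}-S^N_n\mid\mathcal F^N_n]=0$. For the $M$-process, the identity $\mathbf E[\langle f,M_k^N\rangle\mid\mathcal F_k^N]=0$ is exactly \eqref{eq.MkE=0}, already proved in Step 2 of the proof of Lemma~\ref{lem:remainder terms}; hence $\mathbf E[\langle f,M^N_{(n+1)/N}\rangle - \langle f,M^N_{n/N}\rangle\mid\mathcal F^N_n]=\mathbf E[\langle f,M_n^N\rangle\mid\mathcal F_n^N]=0$. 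Re-expressing these statements in the continuous-time index $t$, with $\mathfrak F^N_t=\mathcal F^N_{\lfloor Nt\rfloor}$ and noting that the processes are constant on $[k/N,(k+1)/N)$, yields that both processes in \eqref{eq.proc} are $\mathfrak F^N_t$-martingales.

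The only mild subtlety — and the one step worth spelling out carefully rather than the routine conditioning computations — is the bookkeeping relating the continuous-time filtration $\mathfrak F^N_t$ and the $\lfloor Nt\rfloor$-jump structure: one must check that for $s\le t$ all increments accumulated on $(s,t]$ are measurable at the right times and that the tower property applies across possibly several jumps, which follows by iterating the one-step identities above. No genuinely new estimate is required beyond Lemma~\ref{le.W}, \eqref{eq.MTT}, and \eqref{eq.MkE=0}.
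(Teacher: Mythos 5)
Your proposal is correct and follows essentially the same route as the paper: both arguments reduce the claim to the one-step identities $\mathbf E[\nabla f(W_k^i)\cdot\ve_k^i\mid\mathcal F_k^N]=0$ (from \ref{as:noise}) and $\mathbf E[\langle f,M_k^N\rangle\mid\mathcal F_k^N]=0$ (i.e.\ \eqref{eq.MkE=0}), combined with the $\mathcal F_{k+1}^N$-measurability of the increments and the integrability supplied by Lemma~\ref{le.W} and \eqref{eq.MTT}. The paper simply carries out the discrete-to-continuous bookkeeping explicitly by conditioning on $\mathcal F_{\lfloor Ns\rfloor}^N$ and showing the remainder sum over $\lfloor Ns\rfloor\le k<\lfloor Nt\rfloor$ vanishes, which is exactly the iteration of the tower property you defer to at the end.
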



\begin{proof}  
  Recall that by Lemma~\ref{le.W}, the first process
  in~\eqref{eq.proc} is integrable.  By~\eqref{eq.MTT}
  and~\eqref{eq.def-M}, the second process in~\eqref{eq.proc} is
  integrable.  For $0\le s<t$, we write
\begin{align}\label{eq 1670}
\textbf{E}\Big[\sum_{k=0}^{\lfloor Nt\rfloor-1}\sum_{i=1}^N\nabla f(W_k^i)\cdot\ve_k^i|\mathfrak{F}_s^N\Big]=\sum_{k=0}^{\lfloor Ns\rfloor-1}\sum_{i=1}^N\textbf{E}\big[\nabla f(W_k^i)\cdot\ve_k^i|\mathcal{F}_{\lfloor Ns\rfloor}^N\big]+\mathfrak R^N_{t,s}[f],
\end{align}
where
 $$\text{if $\lfloor Nt\rfloor=\lfloor Ns\rfloor$: $\, \mathfrak R^N_{t,s}[f]=0$,    
   and if $\lfloor Nt\rfloor>\lfloor Ns\rfloor$: } \, \mathfrak
 R^N_{t,s}[f]=\sum_{k=\lfloor Ns\rfloor}^{\lfloor
   Nt\rfloor-1}\sum_{i=1}^N\textbf{E}[\nabla
 f(W_k^i)\cdot\ve_k^i|\mathcal{F}_{\lfloor Ns\rfloor}^N].
 $$
 When $\lfloor Nt\rfloor>\lfloor Ns\rfloor$, since the $W_k^i$'s are
 $\mathcal{F}_k^N$-measurable (see~\eqref{algorithm}
 and~\eqref{eq.filtration}) and the $\ve_k^i$'s are centered and
 independent of $\mathcal{F}_k^N$ (see~\ref{as:noise}), we have, for
 $ k\ge \lfloor Ns\rfloor$ and $i\in \{1,\dots,N\}$:
$$\textbf{E}\big [\nabla f(W_k^i)\cdot\ve_k^i|\mathcal{F}_{\lfloor Ns\rfloor}^N\big ]=\textbf{E}\Big [\nabla f(W_k^i)\cdot \textbf{E}[ \ve_k^i|\mathcal{F}_k^N]\Big |\mathcal{F}_{\lfloor Ns\rfloor}^N\Big ]=0.$$
Hence, for all $0\leq s<t$, $\mathfrak R^N_{t,s}[f]=0$.  Furthermore,
for $0\le k\le \lfloor Ns\rfloor-1$ and $i\in \{1,\dots,N\}$,
$\nabla f(W_k^i)\cdot\ve _k^i$ is $\mathcal{F}_{k+1}^N$-measurable and
thus is $\mathcal{F}_{\lfloor
  Ns\rfloor}^N$-measurable. Therefore,~\eqref{eq 1670} reduces to
$$\textbf{E}\Big [\sum_{k=0}^{\lfloor Nt\rfloor-1}\sum_{i=1}^N\nabla f(W_k^i)\cdot\ve_k^i \Big |\mathfrak{F}_s^N\Big ]=\sum_{k=0}^{\lfloor Ns\rfloor-1}\sum_{i=1}^N\nabla f(W_k^i)\cdot\ve_k^i.$$
This proves that
$\{t \mapsto \sum_{k=0}^{\lfloor Nt\rfloor-1}\sum_{i=1}^N\nabla
f(W_k^i)\cdot\ve_k^i,\ t\in \mathbf R_+\}$ is a
$\mathfrak{F}^N_t$-martingale. Let us now prove that the process
$\big\{t \mapsto \langle f, M_t^N\rangle,t\in \mathbf R_+\big\}$ is a
$\mathfrak{F}^N_t$-martingale (see~\eqref{eq.def-M}).
We have, for $0\leq s<t$, 
\begin{align*} 
\textbf{E}\big [\langle f, M_t^N\rangle|\mathfrak{F}_s^N\big ]&=\sum_{k=0}^{\lfloor Ns\rfloor-1} \textbf{E}\big[  \langle f,M_k^N\rangle\big|\mathcal{F}_{\lfloor Ns\rfloor}^N\big] +\mathfrak E^N_{t,s}[f],
\end{align*}
where
 $$\text{if $\lfloor Nt\rfloor=\lfloor Ns\rfloor$: $\, \mathfrak E^N_{t,s}[f]=0$,    
   and if $\lfloor Nt\rfloor>\lfloor Ns\rfloor$: } \, \mathfrak
 E^N_{t,s}[f]=\sum_{k=\lfloor Ns\rfloor}^{\lfloor Nt\rfloor-1}
 \textbf{E}\big [ \langle f,M_k^N\rangle|\mathcal{F}_{\lfloor
   Ns\rfloor}^N\big ].
 $$
 When $\lfloor Nt\rfloor>\lfloor Ns\rfloor$, we have for
 $k\ge \lfloor Ns\rfloor$, by~\eqref{eq.MkE=0}:
$$\textbf{E}\big [ \langle f,M_k^N\rangle|\mathcal{F}_{\lfloor Ns\rfloor}^N\big ]=\textbf{E}\Big[ \textbf{E}\big [ \langle f,M_k^N\rangle |\mathcal{F}_k^N\big ] \Big |\mathcal{F}_{\lfloor Ns\rfloor}^N\Big]=0.$$
Hence, for all $0\leq s<t$, $\mathfrak E^N_{t,s}[f]=0$. In addition,
for $k\le \lfloor Ns\rfloor-1$, $\langle f,M_k^N\rangle$ is
$\mathcal{F}_{k+1}^N$-measurable and thus is
$\mathcal{F}_{\lfloor Ns\rfloor}^N$-measurable. In conclusion,
$\big\{t \mapsto \langle f, M_t^N\rangle,t\in \mathbf R_+\big\}$ is a
$\mathfrak{F}^N_t$-martingale. This ends the proof of
Lemma~\ref{le.martingale}. \end{proof}

The following lemma provides the compact containment condition needed
to prove that $(\eta^N)_{N\ge 1}$ is relatively compact in
$\mathcal D(\mathbf R_+,\mathcal H^{-J_0+1,j_0}(\mathbf R^d))$.

\begin{lem}\label{clt lem CC of eta^N}
Let $\beta\ge 3/4$ and assume~\ref{as:batch}-\ref{as:batch limite}.  Then, for all $T>0$, 
\begin{equation}\label{equation de lemme cc eta^N}
 \sup_{N\geq1} \mathbf{E}\Big[ \sup_{t\in[0,T]} \|\eta_t^N \|_{\mathcal H^{-J_2,j_2}}^2\Big]<+\infty.
\end{equation}
\end{lem}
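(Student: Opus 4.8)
I would prove \eqref{equation de lemme cc eta^N} by exploiting the decomposition $\eta^N=\Upsilon^N+\Theta^N$ from \eqref{eq.dec-eta2}, treating the two pieces separately and using the Hilbert--Schmidt embedding $\mathcal H^{J_1,j_1}(\mathbf R^d)\hookrightarrow_{\text{H.S.}}\mathcal H^{L,\gamma}(\mathbf R^d)$ together with $\mathcal H^{J_2,j_2}(\mathbf R^d)\hookrightarrow_{\text{H.S.}}\mathcal H^{J_1+1,j_1}(\mathbf R^d)$ from \eqref{eq.SE2} to convert pointwise-in-$f$ bounds into bounds on the Hilbert norm. Throughout, I fix an orthonormal basis $\{f_a\}_{a\ge 1}$ of $\mathcal H^{J_2,j_2}(\mathbf R^d)$, so that $\|\eta_t^N\|_{\mathcal H^{-J_2,j_2}}^2=\sum_a\langle f_a,\eta_t^N\rangle^2$, and similarly for $\Upsilon^N,\Theta^N$.

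For the $\Theta^N$ term, since $\Theta_t^N=\frac{1}{\sqrt N}\sum_i(\delta_{\bar X_t^i}-\bar\mu_t)$ and the $\bar X^i$ are i.i.d., the process $t\mapsto\sum_i(f(\bar X_t^i)-\langle f,\bar\mu_t\rangle)$ has, for each fixed $f$, independent mean-zero summands; but to control the supremum in $t$ I would instead use the a.s.\ Lipschitz bound \eqref{eq.particule-Lip} on $t\mapsto\bar X_t^i$ and the a priori bound \eqref{tilde w borne} (valid since $\mu_0$ is compactly supported by \ref{as: mu0compact}), which give $\sup_{t\in[0,T]}|f(\bar X_t^i)|+\sup_{t\in[0,T]}|\langle f,\bar\mu_t\rangle|\le C\|f\|_{\mathcal H^{L,\gamma}}$ deterministically. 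Then $\mathbf E[\sup_{t\in[0,T]}\langle f,\Theta_t^N\rangle^2]$ is handled either by a chaining/Doob argument on the martingale-like structure, or more simply by covering $[0,T]$ by a fine grid of mesh $1/N$ and using that between grid points $|\langle f,\Theta_t^N\rangle-\langle f,\Theta_s^N\rangle|\le C\sqrt N|t-s|\,\|f\|_{\mathcal H^{L,\gamma}}$ while at grid points one has the i.i.d.\ variance bound $\le C\|f\|_{\mathcal H^{L,\gamma}}^2$; summing over the $O(NT)$ grid points after taking expectations of the maximum via an $L^2$ (or $L^4$, using \ref{as:sigma} boundedness) bound yields $\mathbf E[\sup_{t\in[0,T]}\langle f,\Theta_t^N\rangle^2]\le C\|f\|_{\mathcal H^{L,\gamma}}^2$ uniformly in $N$. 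Summing over $f=f_a$ and using the Hilbert--Schmidt embedding $\mathcal H^{J_2,j_2}\hookrightarrow\mathcal H^{L,\gamma}$ gives $\sup_N\mathbf E[\sup_{t\in[0,T]}\|\Theta_t^N\|_{\mathcal H^{-J_2,j_2}}^2]<+\infty$.

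For the $\Upsilon^N$ term I would reuse the decomposition \eqref{f contre xi } of $\langle f,\Upsilon_t^N\rangle$ into the three drift integrals plus $\sqrt N\langle f,M_t^N\rangle$, $\sqrt N\langle f,V_t^N\rangle$, $\sqrt N\sum_k\langle f,R_k^N\rangle$, and the noise term $\frac{1}{\sqrt N\,N^{\beta}}\sum_{k}\sum_i\nabla f(W_k^i)\cdot\ve_k^i$. Taking $\sup_{t\in[0,T]}$ and then squaring, each drift integral is bounded by $C\int_0^T(\langle f,\Upsilon_s^N\rangle^2+\|f\|_{\mathcal H^{L,\gamma}}^2\|\Upsilon_s^N\|_{\mathcal H^{-J_1,j_1}}^2+\|f\|_{\mathcal H^{L,\gamma}}^2)\di s$ exactly as in the proof of Lemma~\ref{lem : unif bound xi and eta} (using \eqref{31}, \eqref{eq : tilde(mu)-bar(mu)}, the operator $\mathsf T_x$ of \eqref{eq.Tx} and Lemma~\ref{lem : <xi,gxi>}); the supremum of the martingale $t\mapsto\sqrt N\langle f,M_t^N\rangle$ is controlled by Doob's $L^2$ inequality together with \eqref{eq.MTT}, giving $\mathbf E[\sup_t N\langle f,M_t^N\rangle^2]\le 4\,\mathbf E[N\langle f,M_T^N\rangle^2]\le C\|f\|_{\mathcal C^{2,\gamma_*}}^2$; the supremum of the noise partial sums is likewise a $\mathfrak F^N_t$-martingale by Lemma~\ref{le.martingale}, so Doob plus \eqref{eq333} gives $\mathbf E[\sup_t|\cdots|^2/N^{1+2\beta}]\le C\|f\|_{\mathcal C^{2,\gamma_*}}^2/N^{2\beta-1}\le C\|f\|^2$ since $\beta\ge 3/4$; and the $V$ and $R$ contributions are bounded using \eqref{borne v}, \eqref{borneERk]}, Lemma~\ref{le.W} and $\beta\ge 3/4$, as in \eqref{eq.supVt} and \eqref{eq.borneERk2}. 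Summing over $f=f_a$ (orthonormal basis of $\mathcal H^{J_2,j_2}$, using the H.S.\ embedding into $\mathcal H^{L,\gamma}$ to make $\sum_a\|f_a\|_{\mathcal H^{L,\gamma}}^2<\infty$), one arrives at
\[
\mathbf E\Big[\sup_{t\in[0,T]}\|\Upsilon_t^N\|_{\mathcal H^{-J_2,j_2}}^2\Big]\le C+C\int_0^T\mathbf E\big[\|\Upsilon_s^N\|_{\mathcal H^{-J_1,j_1}}^2\big]\di s,
\]
and the integral on the right is already bounded uniformly in $N$ and $s\in[0,T]$ by Lemma~\ref{lem : unif bound xi and eta}. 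Hence $\sup_N\mathbf E[\sup_{t\in[0,T]}\|\Upsilon_t^N\|_{\mathcal H^{-J_2,j_2}}^2]<+\infty$, and combining with the $\Theta^N$ bound via $\|\eta_t^N\|^2\le 2\|\Upsilon_t^N\|^2+2\|\Theta_t^N\|^2$ finishes the proof.

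\emph{Main obstacle.} The delicate point is the exchange of $\sup_{t\in[0,T]}$ with the series $\sum_a$ and with the various martingale/Doob estimates: one must ensure each supremum-of-square bound comes with a constant that is a genuine $\mathcal H^{L,\gamma}$-norm of $f$ (not merely $\mathcal H^{J_2,j_2}$) so that the Hilbert--Schmidt embedding makes $\sum_a\|f_a\|_{\mathcal H^{L,\gamma}}^2$ finite; this is exactly why the chain of embeddings in \eqref{eq.SE2} is set up with the extra derivative to spare, and why the proof lands in $\mathcal H^{-J_2,j_2}$ rather than $\mathcal H^{-J_1,j_1}$. The Grönwall-type circularity is avoided because the right-hand side involves the weaker norm $\|\Upsilon_s^N\|_{\mathcal H^{-J_1,j_1}}$, which is already controlled by the previously established Lemma~\ref{lem : unif bound xi and eta}.
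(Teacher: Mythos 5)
Your treatment of $\Upsilon^N$ is essentially the paper's: same starting point \eqref{f contre xi }, same bounds \eqref{eq 1st term}--\eqref{eq 3rd term} on the drift integrals via Lemma~\ref{lem : unif bound xi and eta} and Lemma~\ref{lem : <xi,gxi>}, Doob's inequality through Lemma~\ref{le.martingale} for the martingale and noise terms, and \eqref{eq.supVt}, \eqref{eq.borneERk2} for $V$ and $R$; your observation that no Gr\"onwall loop is needed because the right-hand side only involves the weaker norm $\|\Upsilon_s^N\|_{\mathcal H^{-J_1,j_1}}$, already controlled, is exactly the paper's logic.

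The gap is in the $\Theta^N$ part. There is no ``martingale-like structure'' in $t$ to which Doob could apply: conditionally on $\mathcal F_0^N$ the particles $\bar X^i_t$ evolve deterministically, so $t\mapsto\langle f,\Theta_t^N\rangle$ is a deterministic function of the initial data and is in no sense a martingale. The grid argument also fails quantitatively: at each of the $O(NT)$ grid points you only have a fixed-time moment bound, and $\mathbf E[\max_{j\le M}Z_j^2]\le\sum_j\mathbf E[Z_j^2]=O(M)$ with second moments, or $\le(\sum_j\mathbf E[Z_j^{2p}])^{1/p}=O(M^{1/p})$ with $2p$-th moments, which for $M=O(NT)$ is never uniform in $N$ for any fixed $p$. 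Your pathwise increment bound $|\langle f,\Theta_t^N\rangle-\langle f,\Theta_s^N\rangle|\le C\sqrt N|t-s|\|f\|$ is too crude because it ignores the cancellation between $\bar\mu^N$ and $\bar\mu$. The paper's fix is to use the integral equation \eqref{eq.Theta_tf}, i.e.
$\langle f,\Theta_t^N\rangle=\langle f,\Theta_0^N\rangle+\int_0^t\int_{\mathcal X\times\mathcal Y}\alpha(y-\langle\sigma_*(\cdot,x),\bar\mu_s\rangle)\langle\nabla f\cdot\nabla\sigma_*(\cdot,x),\Theta_s^N\rangle\,\pi(\di x,\di y)\,\di s$,
whose integrand is itself an evaluation of $\Theta_s^N$ against the test function $\nabla f\cdot\nabla\sigma_*(\cdot,x)\in\mathcal H^{J_1,j_1}(\mathbf R^d)$; then $\sup_{t\in[0,T]}\langle f,\Theta_t^N\rangle^2$ is bounded by Jensen in terms of $\int_0^T\|\Theta_s^N\|^2_{\mathcal H^{-J_1,j_1}}\di s$, and the fixed-time bound \eqref{borne Z} gives \eqref{34} with the norm $\|f\|_{\mathcal H^{J_1+1,j_1}}$ needed for the Hilbert--Schmidt summation. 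With this replacement your argument closes.
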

\begin{proof} Let $T>0$. All along the proof, $C>0$ denotes a constant
  indendent of $t\in[0,T]$ and $N\geq1$, which can change from one
  occurence to another.  Recall that $\eta^N=\Upsilon^N+\Theta^N$,
  see~\eqref{eq.dec-eta2}.  By~\eqref{f contre xi } and Jensen's
  inequality, it holds for $f\in \mathcal H^{J_2,j_2}(\mathbf{R}^d)$
  (recall
  $\mathcal H^{J_2,j_2}(\mathbf{R}^d)\hookrightarrow \mathcal
  H^{J_1,j_1}(\mathbf{R}^d)$, see~\eqref{eq.SE2}),
\begin{equation}\label{217}\begin{split}
\sup_{t\in[0,T]}\langle f,\Upsilon_t^N\rangle^2&\leq C\Bigg[\int_0^T\int_{\mathcal{X}\times\mathcal{Y}}\left|(y-\langle\sigma_*(\cdot,x),\mu_s^N\rangle)\langle\nabla f\cdot\nabla\sigma_*(\cdot,x),\Upsilon_s^N\rangle\right|^2\pi(\di s,\di y)\di s\\
&\quad+\int_0^T\int_{\mathcal{X}\times\mathcal{Y}}\left|\langle\sigma_*(\cdot,x),\Upsilon_s^N\rangle\langle\nabla f\cdot\nabla\sigma_*(\cdot,x),\bar \mu_s^N\rangle\right|^2\pi(\di x,\di y)\di s\\
&\quad+\int_0^T\int_{\mathcal{X}\times\mathcal{Y}}\left|\langle\sigma_*(\cdot,x),\sqrt{N}(\bar \mu_s^N-\bar \mu_s)\rangle\langle\nabla f\cdot\nabla\sigma_*(\cdot,x),\bar \mu_s^N\rangle\right|^2\pi(\di x,\di y)\di s\\
&\quad+  {N\sup_{t\in[0,T]}|\langle f,M_t^N\rangle|^2}  + N\sup_{t\in[0,T]}\left|\langle f, V_t ^N\rangle\right|^2+N \sup_{t\in[0,T]}\Big|\sum_{k=0}^{\lfloor Nt\rfloor-1}\langle f,R_k^{N}\rangle\Big|^2\\
&\quad +\frac{1}{N^{1+2\beta}}\sup_{t\in[0,T]} \Big|\sum_{k=0}^{\lfloor Nt\rfloor -1}\sum_{i=1}^N\nabla f(W_k^i)\cdot\ve_k^i\Big|^2\Bigg].
\end{split}\end{equation}
We now consider successively each term in the right-hand-side
of~\eqref{217}. By Lemma~\ref{lem : unif bound xi and eta}, for
$0\leq s\leq T$, we have using also~\ref{as:sigma} and
$\mathbf E[|y|^2]<+\infty$ (see~\ref{as:data}):
\begin{align}
\nonumber
&\textbf{E} \Big [\int_{\mathcal{X}\times\mathcal{Y}}  |y-\langle\sigma_*(\cdot,x),\mu_s^N\rangle|^2\langle\nabla f\cdot\nabla\sigma_*(\cdot,x),\Upsilon_s^N\rangle^2 \pi(\di x,\di y)\Big ]\\
&\leq C\textbf{E} \Big [\int_{\mathcal{X}\times\mathcal{Y}} (|y|^2+1)\|\nabla f\cdot\nabla\sigma_*(\cdot,x)\|_{\mathcal H^{J_1,j_1}}\|\Upsilon_s^N\|_{\mathcal H^{-J_1,j_1}}^2 \pi(\di x,\di y)\Big ]\nonumber\\
\label{eq 1st term}
&\leq C\|f\|_{\mathcal H^{J_1+1,j_1}}^2\textbf{E}[\|\Upsilon_s^N\|_{\mathcal H^{-J_1,j_1}}^2]\leq C\|f\|_{\mathcal H^{J_1+1,j_1}}^2.
\end{align}
Let us now study the second term in~\eqref{217}. By~\eqref{31} and
since $ \sup_{x\in \mathcal X}\Vert \sigma_*(\cdot,x)\Vert<+\infty$
(because $j_1>d/2$ and
$\sigma_*\in \mathcal C^\infty_b(\mathbf{R}^d\times \mathcal{X})$
by~\ref{as:sigma}), for $0\leq s\leq T$,
\begin{align}\label{eq 2nd term}
\textbf{E}\left[\left|\langle\sigma_*(\cdot,x),\Upsilon_s^N\rangle\langle\nabla f\cdot\nabla\sigma_*(\cdot,x),\bar \mu_s^N\rangle\right|^2\right]\leq C\|f\|_{\mathcal H^{L,\gamma}}^2\textbf{E}\left[\|\Upsilon_s^N\|_{\mathcal H^{-J_1,j_1}}^2\right]\leq C\|f\|_{\mathcal H^{L,\gamma}}^2.
\end{align}
Let us now consider the third term in~\eqref{217}. By~\eqref{31} and
~\eqref{eq : tilde(mu)-bar(mu)}, we have for $0\leq s\leq T$,
\begin{align}\label{eq 3rd term}
  \textbf{E} \Big [\langle\sigma_*(\cdot,x),\sqrt{N}(\bar \mu_s^N-\bar \mu_s)\rangle^2\langle\nabla f\cdot\nabla\sigma_*(\cdot,x),\bar \mu_s^N\rangle^2 \Big ]\leq C\|f\|_{\mathcal H^{L,\gamma}}^2.
\end{align} 
Let us now deal with the fourth term in~\eqref{217}.  By
Lemma~\ref{le.martingale}, we have using Doob's maximal inequality,
$\textbf{E} [\sup_{t\in[0,T]}\langle f, M_t^N\rangle^2 ]\leq
C\textbf{E} [\langle f, M_T^N\rangle^2 ]$.  Then by
Lemma~\ref{lem:remainder terms} and since
$\mathcal H^{L,\gamma} (\mathbf R^d)\hookrightarrow \mathcal
C^{2,\gamma_*}(\mathbf R^d)$ (recall indeed~\eqref{eq.SE1}), we obtain
\begin{align}\label{eq finale cc M}
N\textbf{E} \Big [\sup_{t\in[0,T]}\langle f, M_t^N\rangle^2 \Big ]\leq C\|f\|^2_{\mathcal C^{2,\gamma_*}} \le C \|f\|^2_{\mathcal H^{L,\gamma}} .
\end{align} 
Using~\eqref{eq.supVt} and again~\eqref{eq.SE1},   the fifth term in~\eqref{217} satisfies:
\begin{equation}\label{eq 4th term}
N\textbf{E} \Big [\sup_ {t\in[0,T]} |\langle f,V_t^N\rangle|^2  \Big ]\leq CN^{-1/2}\|f\|_{\mathcal H^{L,\gamma}}^2.
\end{equation}
Using~\eqref{eq.borneERk2}, the sixth term in~\eqref{217} satisfies: 
\begin{align*}
\textbf{E}\Big[N {\sup}_{t\in[0,T]}\Big|\sum_{k=0}^{\lfloor Nt\rfloor-1}\langle f,R_k^{N}\rangle\Big|^2\Big]\leq C\|f\|^2_{\mathcal H^{L,\gamma}}(1/N+N^3/N^{4\beta}).
\end{align*}
Let us deal with the last term in the right-hand side of~\eqref{217}
for which we need a more accurate upper bound
than~\eqref{eq.boundRk1}.  By Lemma~\ref{le.martingale} and Doob's
maximal inequality, we have using~\eqref{eq333} and
$\mathcal H^{L,\gamma} (\mathbf R^d)\hookrightarrow \mathcal
C^{2,\gamma_*}(\mathbf R^d)$,
\begin{align*}
\nonumber
\frac{1}{N^{1+2\beta}}\textbf{E}\Big[\sup_{t\in[0,T]}\! \Big|\sum_{k=0}^{\lfloor Nt\rfloor -1}\!\sum_{i=1}^N\nabla f(W_k^i)\cdot\ve_k^i\Big|^2\Big]&\le\!\! \frac{C}{N^{1+2\beta}}\textbf{E}\Big[\Big|\sum_{k=0}^{\lfloor NT\rfloor -1}\!\sum_{i=1}^N\nabla f(W_k^i)\cdot\ve_k^i\Big|^2\Big]\\
\label{eq.Eps-k}
&\leq C\|f\|^2_{\mathcal H^{L,\gamma}}N^{1-2\beta}.
\end{align*}
Collecting these bounds, we obtain, from~\eqref{217}, for $f\in \mathcal H^{J_2,j_2}(\mathbf{R}^d)$,
\begin{equation}\begin{split}\label{32}
    \textbf{E}\Big[\sup_{t\in[0,T]}\langle
    f,\Upsilon_t^N\rangle^2\Big]&\leq C\left(\|f\|_{\mathcal
        H^{J_1+1,j_1}}^2 +\|f\|_{\mathcal H^{L,\gamma}}^2\right).
\end{split}\end{equation}
We now turn to the study of
$\textbf{E} [\sup_{t\in[0,T]}\langle f,\Theta^N_t\rangle^2]$ for
$f\in \mathcal H^{J_2,j_2}(\mathbf{R}^d)$.   By~\eqref{formule f mu
  tilde} and  Corollary \ref{co.e}, we have, for all
$t\in[0,T]$ (recall that
$\Theta_t^N=\sqrt{N}(\bar \mu_t^N-\bar \mu_t)$,
see~\eqref{eq.dec-eta}),
\begin{align}\label{eq.Theta_tf}
\langle f,\Theta_t^N\rangle= \langle f,\Theta_0^N\rangle+ \int_0^t\int_{\mathcal{X}\times\mathcal{Y}}\alpha(y-\langle\sigma_*(\cdot,x),\bar \mu_s\rangle)\langle\nabla f\cdot\nabla\sigma_*(\cdot,x),\Theta_s^N\rangle\pi(\di x,\di y)\di s.
\end{align}
By Jensen's inequality together with~\ref{as:sigma} and~\eqref{borne Z},
one has:
\begin{align} 
\nonumber
\textbf{E}\Big[\sup_{t\in[0,T]} \langle f,\Theta_t^N\rangle^2\Big]&\leq C \textbf{E}\big [|\langle f,\Theta_0^N\rangle|^2\big ]+C\int_0^T\int_{\mathcal{X}\times\mathcal{Y}} (|y|^2+1)\textbf{E}\left[\langle\nabla f\cdot\nabla\sigma_*(\cdot,x),\Theta_s^N\rangle^2\right]\pi(\di x,\di y)\di s\\
\nonumber 
&\leq C \|f\|_{\mathcal H^{J_1,j_1}}^2 \textbf{E}\big [\Vert \Theta_0^N\Vert_{\mathcal H^{-J_1,j_1}}^2\big ]\\
\nonumber
&\quad+C\int_0^T\int_{\mathcal{X}\times\mathcal{Y}} (|y|^2+1)\, \|\nabla f\cdot\nabla\sigma_*(\cdot,x)\|_{\mathcal H^{J_1,j_1}}^2\,  \textbf{E}\left[\|\Theta_t^N\|^2_{\mathcal H^{-J_1,j_1}}\right]\pi(\di x,\di y)\di s\\ 
\label{34}
&\leq C \|f\|_{\mathcal H^{J_1+1,j_1}}^2 .
\end{align}
Let $\{f_a\}_{a\geq1}$ be an orthonormal basis of
$\mathcal H^{J_2,j_2}(\mathbf{R}^d)$. 
 Let us recall that (see~\eqref{eq.SE2}) 
$\mathcal H^{J_2,j_2}(\mathbf{R}^d)\hookrightarrow_{\text{H.S.}}
\mathcal H^{J_1+1,j_1}(\mathbf{R}^d)$ and $\mathcal H^{J_1+1,j_1}(\mathbf{R}^d)\hookrightarrow \mathcal H^{L,\gamma}(\mathbf{R}^d)$.  Then, by~\eqref{32}
and~\eqref{34}, we obtain, since $\beta\ge 3/4$,
\begin{align*}
\textbf{E}\Big[\sup_{t\in[0,T]} \|\eta_t^N\|^2_{\mathcal H^{-J_2,j_2}}\Big] \!\!&=\textbf{E}\Big[\sup_{t\in[0,T]} \sum_{a\geq1}\langle f_a,\eta_t^N\rangle^2\Big]\leq \sum_{a\geq1}\textbf{E}\Big[\sup_{t\in[0,T]} \langle f_a,\eta_t^N\rangle^2\Big]  \\
&\leq 2\sum_{a\geq 1}\Big(\textbf{E}\Big[\sup_{t\in[0,T]}\langle f_a,\Upsilon_t^N\rangle^2\Big]+\textbf{E}\Big[\sup_{t\in[0,T]}\langle f_a,\Theta_t^N\rangle^2\Big]\Big)\\
&\leq C\sum_{a\geq1}\left(\|f_a\|_{\mathcal H^{J_1+1,j_1}}^2+\|f_a\|_{\mathcal H^{L,\gamma}}^2 \right) \leq C.
\end{align*}
This concludes the proof of the lemma. 
\end{proof}

The following result provides the regularity condition needed to prove
that $(\eta^N)_{N\ge 1}$ is relatively compact in
$\mathcal D(\mathbf R_+,\mathcal H^{-J_0+1,j_0}(\mathbf R^d))$.

\begin{lem}\label{clt lem reg cond eta}
  Let $\beta\ge 3/4$ and assume~\ref{as:batch}-\ref{as:batch limite}.
  Let $T>0$. Then, there exists $C>0$ such that for all $\delta>0$ and
  $0\leq r<t\leq T$ such that $t-r\leq \delta$, one
  has 
\begin{align}\label{eq reg eta HJ2}
\mathbf{E}\left[\left\|\eta_t^N-\eta_r^N\right\|_{\mathcal H^{-J_2,j_2}}^2\right]\leq C\left[\delta^2+\frac{N\delta+1}{N}+\frac{1}{\sqrt N}+ (N\delta+1)^2 \Big(\frac 1{N^3}+\frac 1{N^{4\beta-1}}\Big)+ \frac{N\delta+1}{N^{2\beta}}\right].
\end{align}
\end{lem}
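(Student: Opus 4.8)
The plan is to use the decomposition $\eta^N=\Upsilon^N+\Theta^N$ from~\eqref{eq.dec-eta2} and to bound $\mathbf E[\|\eta_t^N-\eta_r^N\|_{\mathcal H^{-J_2,j_2}}^2]$ by $2\mathbf E[\|\Upsilon_t^N-\Upsilon_r^N\|_{\mathcal H^{-J_2,j_2}}^2]+2\mathbf E[\|\Theta_t^N-\Theta_r^N\|_{\mathcal H^{-J_2,j_2}}^2]$, and to expand each squared norm along an orthonormal basis $\{f_a\}_{a\ge 1}$ of $\mathcal H^{J_2,j_2}(\mathbf R^d)$ via $\|\cdot\|_{\mathcal H^{-J_2,j_2}}^2=\sum_a\langle f_a,\cdot\rangle^2$. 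The key input is that for each test function $f$ we have the explicit representations \eqref{f contre xi } for $\langle f,\Upsilon_t^N\rangle$ and \eqref{eq.Theta_tf} for $\langle f,\Theta_t^N\rangle$; subtracting the values at $t$ and $r$ leaves, in each case, integrals over $[r,t]$ plus increments of the martingale/remainder terms over the index range $\lfloor Nr\rfloor,\dots,\lfloor Nt\rfloor-1$. Thus the whole proof reduces to estimating, term by term and with an extra factor $\|f\|_{\mathcal H^{J_1+1,j_1}}^2$ or $\|f\|_{\mathcal H^{L,\gamma}}^2$ pulled out, the $L^2$-norm of each contribution, then summing over $a$ using the Hilbert–Schmidt embeddings \eqref{eq.SE2} (namely $\mathcal H^{J_2,j_2}\hookrightarrow_{\mathrm{H.S.}}\mathcal H^{J_1+1,j_1}\hookrightarrow\mathcal H^{L,\gamma}$).

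First I would treat the $\Theta^N$ part, which is the easy one: from~\eqref{eq.Theta_tf}, $\langle f,\Theta_t^N-\Theta_r^N\rangle=\int_r^t\int_{\mathcal X\times\mathcal Y}\alpha(y-\langle\sigma_*(\cdot,x),\bar\mu_s\rangle)\langle\nabla f\cdot\nabla\sigma_*(\cdot,x),\Theta_s^N\rangle\pi(\di x,\di y)\di s$. Applying Jensen's inequality in $s$, using \ref{as:sigma} to bound $\|\nabla f\cdot\nabla\sigma_*(\cdot,x)\|_{\mathcal H^{J_1,j_1}}\le C\|f\|_{\mathcal H^{J_1+1,j_1}}$ uniformly in $x$, $\mathbf E[|y|^2]<\infty$ from~\ref{as:data}, and the uniform bound $\sup_{N,s\le T}\mathbf E[\|\Theta_s^N\|_{\mathcal H^{-J_1,j_1}}^2]\le C$ from Lemma~\ref{lem : unif bound xi and eta} (its Step~1, inequality~\eqref{borne Z}), gives $\mathbf E[\langle f,\Theta_t^N-\Theta_r^N\rangle^2]\le C(t-r)^2\|f\|_{\mathcal H^{J_1+1,j_1}}^2\le C\delta^2\|f\|_{\mathcal H^{J_1+1,j_1}}^2$; summing over the basis using $\mathcal H^{J_2,j_2}\hookrightarrow_{\mathrm{H.S.}}\mathcal H^{J_1+1,j_1}$ yields the $\delta^2$ term.

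Next, the $\Upsilon^N$ part: writing $\langle f,\Upsilon_t^N-\Upsilon_r^N\rangle$ from~\eqref{f contre xi } (or, for the martingale/remainder terms, directly from~\eqref{eq: prelim mu} multiplied by $\sqrt N$) produces three $\int_r^t$ integrals of the same structure as those handled in Lemma~\ref{clt lem CC of eta^N} — each is bounded in $L^2$ by $C\delta^2(\|f\|_{\mathcal H^{J_1+1,j_1}}^2+\|f\|_{\mathcal H^{L,\gamma}}^2)$ using Jensen in $s$, \eqref{31}, \eqref{eq : tilde(mu)-bar(mu)}, and Lemma~\ref{lem : unif bound xi and eta} — together with the four ``remainder'' terms: $\sqrt N(\langle f,M_t^N\rangle-\langle f,M_r^N\rangle)$, $\sqrt N(\langle f,V_t^N\rangle-\langle f,V_r^N\rangle)$, $\sqrt N\sum_{k=\lfloor Nr\rfloor}^{\lfloor Nt\rfloor-1}\langle f,R_k^N\rangle$, and $\frac{1}{N^{1/2+\beta}}\sum_{k=\lfloor Nr\rfloor}^{\lfloor Nt\rfloor-1}\sum_i\nabla f(W_k^i)\cdot\ve_k^i$. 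For the martingale term, $\mathbf E[N(\langle f,M_t^N\rangle-\langle f,M_r^N\rangle)^2]=N\sum_{k=\lfloor Nr\rfloor}^{\lfloor Nt\rfloor-1}\mathbf E[\langle f,M_k^N\rangle^2]\le C(\lfloor Nt\rfloor-\lfloor Nr\rfloor)\|f\|^2/N\le C(N\delta+1)/N\cdot\|f\|^2$ by~\eqref{eq.MTT} and orthogonality~\eqref{eq.Mk==}; the noise term is handled identically using~\eqref{eq.ekekj}, \eqref{eq.ekekj-0} and Lemma~\ref{le.W}, giving $C(N\delta+1)/N^{2\beta}\cdot\|f\|^2$; the $R_k^N$ term by Jensen (a sum of $\lfloor Nt\rfloor-\lfloor Nr\rfloor$ terms) and~\eqref{borneERk]} gives $C(N\delta+1)^2(1/N^3+1/N^{4\beta-1})\|f\|^2$; and the $V^N$ term by~\eqref{borne v} and Lemma~\ref{le.W} gives $C\|f\|^2/\sqrt N$ (as in~\eqref{eq.supVt}, noting the $N$ prefactor cancels against the $N^{-3}$ there to leave $N^{-1/2}$). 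Summing all of these over the orthonormal basis $\{f_a\}$ — using the Hilbert–Schmidt embeddings to turn each $\sum_a\|f_a\|_{\mathcal H^{J_1+1,j_1}}^2$ and $\sum_a\|f_a\|_{\mathcal H^{L,\gamma}}^2$ into finite constants, which also requires noting $\mathcal C^{2,\gamma_*}\hookleftarrow\mathcal H^{L,\gamma}$ from~\eqref{eq.SE1} so the $\mathcal C^{2,\gamma_*}$-norms appearing in Lemma~\ref{lem:remainder terms} are controlled — assembles exactly the right-hand side of~\eqref{eq reg eta HJ2}. The main obstacle is purely bookkeeping: keeping the powers of $N$ and $\delta$ straight across the many terms, and making sure the $\sqrt N$ prefactor on the remainder terms is absorbed correctly (in particular that $\beta\ge 3/4$ is exactly what is needed for the $R_k^N$ contribution $(N\delta+1)^2/N^{4\beta-1}$ to stay bounded as $\delta\to 0$, $N\to\infty$ on the relevant regime); there is no conceptual difficulty beyond what was already done in Lemmas~\ref{lem:rc sobo}, \ref{lem:remainder terms} and~\ref{clt lem CC of eta^N}.
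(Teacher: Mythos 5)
Your proposal is correct and follows essentially the same route as the paper: the decomposition $\eta^N=\Upsilon^N+\Theta^N$, the representation \eqref{f contre xi } (resp.\ \eqref{eq.Theta_tf}) for the increments, term-by-term $L^2$ bounds using Jensen's inequality, \eqref{eq.MTT}, \eqref{eq.Mk==}, \eqref{borneERk]}, \eqref{borne v}, the orthogonality of the noise terms, and Lemma~\ref{lem : unif bound xi and eta}, followed by summation over an orthonormal basis of $\mathcal H^{J_2,j_2}(\mathbf R^d)$ via the Hilbert--Schmidt embeddings \eqref{eq.SE2}. The powers of $N$ and $\delta$ you assign to each contribution match those in the paper's proof.
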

\begin{proof} 
 Let $T>0$.  In the following, $C>0$
  is a constant independent of $\delta>0$, $0\leq r<t\leq T$, $N\ge1$, and $f\in \mathcal H^{J_2,j_2}(\mathbf{R}^d)$ which can
  change from one occurence to another. In what follows $t-r\le \delta$. Recall  $\eta^N=\Upsilon^N+\Theta^N$,
  see~\eqref{eq.dec-eta2}. Using~\eqref{f contre xi } and
the Jensen's inequality, one has:
\begin{align} 
\nonumber
\left|\langle f,\Upsilon_t^N\rangle-\langle f,\Upsilon_r^N\rangle\right|^2&\leq C\Big[(t-r)\int_r^t\int_{\mathcal{X}\times\mathcal{Y}}\left|(y-\langle\sigma_*(\cdot,x),\mu_s^N\rangle)\langle\nabla f\cdot\nabla\sigma_*(\cdot,x),\Upsilon_s^N\rangle\right|^2\pi(\di s,\di y)\di s \\
\nonumber
&+(t-r)\int_r^t\int_{\mathcal{X}\times\mathcal{Y}}\left|\langle\sigma_*(\cdot,x),\Upsilon_s^N\rangle\langle\nabla f\cdot\nabla\sigma_*(\cdot,x),\bar \mu_s^N\rangle\right|^2\pi(\di x,\di y)\di s\\
\nonumber
&+(t-r)\int_r^t\int_{\mathcal{X}\times\mathcal{Y}}\left|\langle\sigma_*(\cdot,x),\sqrt{N}(\bar \mu_s^N-\bar \mu_s)\rangle\langle\nabla f\cdot\nabla\sigma_*(\cdot,x),\bar \mu_s^N\rangle\right|^2\pi(\di x,\di y)\di s\\
\nonumber
&+ {N|\langle f,M_t^N-M_r^N\rangle|^2} + N\left|\langle f,V_t^N\rangle-\langle f,V_r^N\rangle\right|^2+N \Big|\sum_{k=\lfloor Nr\rfloor}^{\lfloor Nt\rfloor-1}\langle f,R_k^{N}\rangle\Big|^2\\
\label{197}
&\quad +\frac{1}{N^{1+2\beta}}\Big|\sum_{k=\lfloor Nr\rfloor}^{\lfloor Nt\rfloor -1}\sum_{i=1}^N\nabla f(W_k^i)\cdot\ve_k^i\Big|^2\, \Big].
 \end{align}
We now study each term of the right-hand side of~\eqref{197}.
%
By~\eqref{eq 1st term}, we bound the first term in~\eqref{197} as follows:
\begin{align*}
\textbf{E}\Big[(t-r)\int_r^t\int_{\mathcal{X}\times\mathcal{Y}}\left|(y-\langle\sigma_*(\cdot,x),\mu_s^N\rangle)\langle\nabla f\cdot\nabla\sigma_*(\cdot,x),\Upsilon_s^N\rangle\right|^2\pi(\di s,\di y)\di s\Big]\leq C\delta^2\|f\|_{\mathcal H^{J_1+1,j_1}}^2.
\end{align*}
Using~\eqref{eq 2nd term}, we bound the second term of~\eqref{197} as follows:
\begin{align*}
\textbf{E}\Big[(t-r)\int_r^t\int_{\mathcal{X}\times\mathcal{Y}}\left|\langle\sigma_*(\cdot,x),\Upsilon_s^N\rangle\langle\nabla f\cdot\nabla\sigma_*(\cdot,x),\bar \mu_s^N\rangle\right|^2\pi(\di x,\di y)\di s\Big]\leq C\delta^2\|f\|_{\mathcal H^{L,\gamma}}^2.
\end{align*}
Using~\eqref{eq 3rd term}, we have the following bound on the third term of~\eqref{197}:
\begin{align*}
\textbf{E}\Big[(t-r)\int_r^t\int_{\mathcal{X}\times\mathcal{Y}}\left|\langle\sigma_*(\cdot,x),\sqrt{N}(\bar \mu_s^N-\bar \mu_s)\rangle\langle\nabla f\cdot\nabla\sigma_*(\cdot,x),\bar \mu_s^N\rangle\right|^2\pi(\di x,\di y)\di s\Big]\leq C\delta^2\|f\|_{\mathcal H^{L,\gamma}}^2.
\end{align*}
In addition, we have, using~\eqref{eq.Mk==},~\eqref{eq.MTT}, and
$\mathcal H^{L,\gamma} (\mathbf R^d)\hookrightarrow \mathcal
C^{2,\gamma_*}(\mathbf R^d)$ (see~\eqref{eq.SE1}),
\begin{align}
\nonumber
N\textbf{E}[|\langle f,M_t^N-M_r^N\rangle|^2]&=N\textbf{E}\Big[\Big|\sum_{k=\lfloor Nr\rfloor}^{\lfloor Nt\rfloor-1}\langle f,M_k^N\rangle\Big|^2\Big]=N\sum_{k=\lfloor Nr\rfloor}^{\lfloor Nt\rfloor-1}\textbf{E}\Big[\langle f,M_k^N\rangle^2\Big]\\
\label{eq.Mk-rt}
&\leq CN(\lfloor Nt\rfloor-\lfloor Nr\rfloor)\|f\|_{\mathcal{C}^{2,\gamma_*}}^2/N^2\leq C(N\delta+1)\|f\|_{\mathcal H^{L,\gamma}}^2/N. 
\end{align}
The fifth term of~\eqref{197} is bounded as follows using~\eqref{eq
  4th term}:
$$\textbf{E}\left[N|\langle f,V_t^N\rangle-\langle f,V_r^N\rangle|^2\right]\leq 2N\textbf{E}[|\langle f,V_t^N\rangle|^2]+2N\textbf{E}[|V_r^N[f]|^2]\leq {C\|f\|_{\mathcal H^{L,\gamma}}^2}/{\sqrt N}.$$
Let us consider the sixth term in the right-hand side of~\eqref{197}.
By~\eqref{convexity inequality},~\eqref{borneERk]}, and because
$\mathcal H^{L,\gamma} (\mathbf R^d)\hookrightarrow \mathcal
C^{2,\gamma_*}(\mathbf R^d)$, we have that:
\begin{align*}
\textbf{E}\Big[N \Big|\sum_{k=\lfloor Nr\rfloor}^{\lfloor Nt\rfloor-1}\langle f,R_k^{N}\rangle\Big|^2\Big]\leq N(\lfloor Nt\rfloor-\lfloor Nr\rfloor)\!\!\sum_{k=\lfloor Nr\rfloor}^{\lfloor Nt\rfloor-1}\!\!\!\!\textbf{E}\left[|\langle f,R_k^{N}\rangle|^2\right]\leq CN(N\delta+1)^2 \|f\|^2_{\mathcal H^{L,\gamma}}(1/N^4+1/N^{4\beta}).
\end{align*}
Let us consider the last term in the right-hand side
of~\eqref{197}. Using~\eqref{eq.ekekj-0} and~\eqref{eq.ekekj}, we
have:
\begin{align*}
\textbf{E}\Big[\frac{1}{N^{1+2\beta}}\Big|\sum_{k=\lfloor Nr\rfloor}^{\lfloor Nt\rfloor -1}\sum_{i=1}^N\nabla f(W_k^i)\cdot\ve_k^i\Big|^2\Big]=\frac{1}{N^{1+2\beta}}\sum_{k=\lfloor Nr\rfloor}^{\lfloor Nt\rfloor -1}\sum_{i=1}^N\textbf{E}\left[|\nabla f(W_k^i)\cdot\ve_k^i|^2\right]\\
\leq\frac{1}{N^{1+2\beta}}\times C\|f\|^2_{\mathcal H^{L,\gamma}}N(N\delta+1)=\frac{C\|f\|^2_{\mathcal H^{L,\gamma}}}{N^{2\beta}}(N\delta+1).
\end{align*}
Let $\{f_a\}_{a\geq1}$ be an orthonormal basis of
$\mathcal H^{J_2,j_2}(\mathbf{R}^d)$. Gathering the previous bounds,
we obtain, using also~\eqref{eq.SE2},
\begin{align}\label{eq reg xi}
\textbf{E}\big [\|\Upsilon_t^N-\Upsilon_r^N\|_{\mathcal H^{-J_2,j_2}}^2\big ]&=\sum_{a=1}^{+\infty}\textbf{E}\left[\left|\langle f_a,\Upsilon_t^N\rangle-\langle f_a,\Upsilon_r^N\rangle\right|^2\right]\nonumber\\
&\leq C\left[\delta^2+\frac{N\delta+1}{N}+\frac{1}{\sqrt N}+ (N\delta+1)^2 \Big(\frac 1{N^3}+\frac 1{N^{4\beta-1}}\Big)+ \frac{N\delta+1}{N^{2\beta}}\right].
\end{align}
By~\eqref{eq.Theta_tf} and using the same arguments leading
to~\eqref{34}, we obtain for
$f\in \mathcal H^{J_2,j_2}(\mathbf{R}^d)$,
\begin{align*}
\textbf{E}\left[|\langle f,\Theta_t^N\rangle-\langle f,\Theta_r^N\rangle|^2\right]\leq C\delta\int_r^t\textbf{E}\left[\|\nabla f\|_{\mathcal H^{J_1,j_1}}^2\|\Theta_t^N\|^2_{\mathcal H^{-J_1,j_1}}\right] \di s\leq C\delta^2 \|f\|^2_{\mathcal H^{J_1+1,j_1}}.
\end{align*}
Considering an orthonormal basis of
$\mathcal H^{J_2,j_2}(\mathbf{R}^d)$, and using the fact that
$\mathcal H^{J_2,j_2}(\mathbf{R}^d)\hookrightarrow_{\text{H.S.}}
\mathcal H^{J_1+1,j_1}(\mathbf{R}^d)$, we obtain
$\textbf{E}[\|\Theta_t^N-\Theta_r^N\|_{\mathcal H^{-J_2,j_2}}^2]\leq
C\delta^2$. Combining this result with~\eqref{eq reg xi}, we
obtain~\eqref{eq reg eta HJ2}. This concludes the proof of the lemma.
\end{proof}
Now, we collect the results of Lemmata~\ref{clt lem CC of eta^N}
and~\ref{clt lem reg cond eta} to prove the following result.

\begin{prop}\label{prop relative compactness}
  Let $\beta\ge 3/4$ and assume~\ref{as:batch}-\ref{as:batch
    limite}. Then, the sequence $(\eta^N)_{N\geq1}$ is relatively
  compact
in $\mathcal D(\mathbf{R}_+,\mathcal H^{-J_0+1,j_0}(\mathbf{R}^d))$. 
\end{prop}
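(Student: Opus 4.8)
The plan is to apply the Kurtz--Jakubowski-type tightness criterion for processes valued in the dual of a nuclear (or more precisely a countably-Hilbertian) space; this is precisely \cite[Theorem 4.6]{jakubowski1986skorokhod}, which is the same tool already used to establish relative compactness of $(\mu^N)_{N\ge 1}$ in Proposition~\ref{lem:rel comp Sobolev}, but now in the Hilbert-space scale. I would invoke Proposition~\ref{lem:note kurtz} (stated later, in Appendix~\ref{app:B}) with the chain of Hilbert-Schmidt embeddings
\begin{equation*}
\mathcal H^{J_0-1,j_0}(\mathbf{R}^d)\hookrightarrow_{\text{H.S.}}\mathcal H^{J_2,j_2}(\mathbf{R}^d)\hookrightarrow_{\text{H.S.}}\mathcal H^{J_1+1,j_1}(\mathbf{R}^d)
\end{equation*}
from~\eqref{eq.SE2}, so that $\mathcal H_1=\mathcal H^{J_0-1,j_0}(\mathbf{R}^d)$ and $\mathcal H_2=\mathcal H^{J_2,j_2}(\mathbf{R}^d)$. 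The point of this embedding hierarchy is that tightness in the weaker topology $\mathcal D(\mathbf R_+,\mathcal H^{-J_0+1,j_0}(\mathbf R^d))$ follows once one controls the $\mathcal H^{-J_2,j_2}$-norm, because the inclusion $\mathcal H^{-J_2,j_2}(\mathbf{R}^d)\hookrightarrow_{\text{H.S.}}\mathcal H^{-J_0+1,j_0}(\mathbf{R}^d)$ is compact.

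The two hypotheses of the criterion are exactly what Lemmata~\ref{clt lem CC of eta^N} and~\ref{clt lem reg cond eta} deliver. First, the compact containment condition: for every $T>0$ and every $\varepsilon>0$ there is a compact set $K\subset \mathcal H^{-J_0+1,j_0}(\mathbf{R}^d)$ with $\inf_N \mathbf P(\eta_t^N\in K\ \forall t\in[0,T])\ge 1-\varepsilon$. This is obtained from the uniform second-moment bound $\sup_N \mathbf E[\sup_{t\in[0,T]}\|\eta_t^N\|_{\mathcal H^{-J_2,j_2}}^2]<+\infty$ of Lemma~\ref{clt lem CC of eta^N}, together with Markov's inequality and the fact that closed balls of $\mathcal H^{-J_2,j_2}(\mathbf{R}^d)$ are compact in $\mathcal H^{-J_0+1,j_0}(\mathbf{R}^d)$ (by the Hilbert--Schmidt, hence compact, embedding from~\eqref{eq.SE2}). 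Second, the regularity/modulus-of-continuity condition: for a fixed orthonormal basis $\{f_a\}_{a\ge 1}$ of $\mathcal H^{J_2,j_2}(\mathbf{R}^d)$, one must check that each real-valued process $(\langle f_a,\eta^N\rangle)_{N\ge 1}$ is tight in $\mathcal D(\mathbf R_+,\mathbf R)$ in a uniform (summable) way; this follows from the increment bound~\eqref{eq reg eta HJ2} of Lemma~\ref{clt lem reg cond eta}, since for $\beta\ge 3/4$ all the terms on its right-hand side vanish as $N\to\infty$ when $\delta=\delta(N)\to 0$ suitably, and applying Aldous's criterion (or directly the real-valued case of Proposition~\ref{lem:note kurtz}).

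Concretely, the proof is: fix $T>0$; Lemma~\ref{clt lem CC of eta^N} gives (4.8) of \cite{jakubowski1986skorokhod} via Markov's inequality and the compact embedding $\mathcal H^{-J_2,j_2}\hookrightarrow \mathcal H^{-J_0+1,j_0}$; Lemma~\ref{clt lem reg cond eta} together with Proposition~\ref{lem:note kurtz} (applied with $\mathcal H_1,\mathcal H_2$ as above, which verifies that $(\langle f,\eta^N\rangle)_{N}$ is relatively compact in $\mathcal D(\mathbf R_+,\mathbf R)$ for $f$ in a dense family, with the required uniformity across the basis) gives condition (4.9); hence by \cite[Theorem 4.6]{jakubowski1986skorokhod} the sequence $(\eta^N)_{N\ge 1}$ is relatively compact in $\mathcal D(\mathbf R_+,\mathcal H^{-J_0+1,j_0}(\mathbf{R}^d))$. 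The main obstacle is not the structure of the argument — which mirrors Proposition~\ref{lem:rel comp Sobolev} — but the verification that the quantitative bounds in the two preceding lemmata are of the right form to feed Proposition~\ref{lem:note kurtz}, i.e.\ that the $\delta$-increment bound~\eqref{eq reg eta HJ2} can be combined with a choice $\delta\sim N^{-\theta}$ (for suitable $\theta\in(0,1)$ depending on $\beta\ge 3/4$) so that it tends to $0$; the borderline case $\beta=3/4$ is exactly where the term $(N\delta+1)^2 N^{-(4\beta-1)}=(N\delta+1)^2 N^{-2}$ is most delicate and forces the argument to be done at the level of the weaker space $\mathcal H^{-J_0+1,j_0}$ rather than $\mathcal H^{-J_0,j_0}$, which is why the loss of one derivative (from $J_0$ to $J_0-1$, equivalently $-J_0$ to $-J_0+1$) in the statement is essential.
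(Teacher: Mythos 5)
Your proof is correct and is essentially the paper's own argument: one applies Proposition~\ref{lem:note kurtz} with $\mathcal H_1=\mathcal H^{J_0-1,j_0}(\mathbf{R}^d)$ and $\mathcal H_2=\mathcal H^{J_2,j_2}(\mathbf{R}^d)$, the compact containment condition coming from Lemma~\ref{clt lem CC of eta^N} together with Markov's inequality and the compactness of $\mathcal H^{-J_2,j_2}$-balls in $\mathcal H^{-J_0+1,j_0}(\mathbf{R}^d)$, and the regularity condition coming from the increment bound~\eqref{eq reg eta HJ2} of Lemma~\ref{clt lem reg cond eta} (the Jakubowski route you sketch is only mentioned in the paper as an alternative). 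Two of your side remarks are inaccurate but inessential: the regularity condition requires $\lim_{\delta\to 0}\limsup_{N}F_N(\delta)=0$, so no coupling $\delta=\delta(N)$ is involved (for fixed $\delta$ and $\beta\ge 3/4$ every term in~\eqref{eq reg eta HJ2} is $O(\delta^2+\delta)$ uniformly in large $N$); and the passage from $-J_0$ to $-J_0+1$ is dictated by the Hilbert--Schmidt embedding chain~\eqref{eq.SE2} needed to sum over an orthonormal basis, not by the borderline term $(N\delta+1)^2N^{-(4\beta-1)}$ at $\beta=3/4$, which tends to $\delta^2$ and causes no difficulty.
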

\begin{proof} 
Recall
  $\mathcal H^{J_0-1,j_0}(\mathbf{R}^d)\hookrightarrow_{\text{H.S.}}
  \mathcal H^{J_2,j_2} (\mathbf{R}^d)$ (by~\eqref{eq.SE2}).  Using
  Markov's inequality, Lemma~\ref{clt lem CC of eta^N} implies item~1
  in Proposition~\ref{lem:note kurtz}. In addition, according to
  Lemma~\ref{clt lem reg cond eta}, item~2 in
  Proposition~\ref{lem:note kurtz} is satisfied. Consequently,
  $(\eta^N)_{N\geq1}$ is relatively compact in
  $\mathcal D(\mathbf{R}_+,\mathcal H^{-J_0+1,j_0}(\mathbf{R}^d))$. The result follows from Proposition~\ref{lem:note kurtz}.   
\end{proof}

 To prove Proposition \ref{prop relative compactness}, we mention that one could also have used  \cite[Theorem 4.6]{jakubowski1986skorokhod} with $E= \mathcal H^{-J_0+1,j_0}(\mathbf{R}^d)$  and $\mathbb F=\{\mathsf L_f, f\in \mathcal C^\infty_c(\mathbf R^{d})\}$ where $\mathsf  L_f: \Phi \in\mathcal H^{-J_0+1,j_0}(\mathbf{R}^d)\mapsto \langle f, \Phi \rangle_{J_0-1,j_0}$.

\subsection{Relative compactness of $(\sqrt{N}M^N)_{N\ge 1}$ in 
  $\mathcal D(\mathbf{R}_+,\mathcal H^{-J_0,j_0}(\mathbf{R}^d))$}


We begin this section with the compact containment condition on the
sequence
$\lbrace t\mapsto\sqrt NM_t^N,t\in\mathbf{R}_+\rbrace_{N\geq1}$
(see~\eqref{def Mk} and~\eqref{eq.def-M}).

\begin{lem}
\label{cc of M_t^N}
Let $\beta\ge 3/4$ and assume~\ref{as:batch}-\ref{as:batch
  limite}. Then, for all $T>0$,
\begin{equation*}
\sup_{N\geq1}\mathbf{E}\Big[\sup_{t\in[0,T]}\big\|\sqrt{N}M_t^N\big\|^2_{\mathcal H^{-J_1,j_1}}\Big]<+\infty.
\end{equation*}
\end{lem}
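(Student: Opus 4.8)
The plan is to reduce the claimed uniform bound to a summable estimate along an orthonormal basis, exploiting the Hilbert--Schmidt embedding $\mathcal H^{J_1,j_1}(\mathbf R^d)\hookrightarrow_{\text{H.S.}}\mathcal H^{L,\gamma}(\mathbf R^d)$ from~\eqref{eq.SE2}. Fix $T>0$ and an orthonormal basis $\{f_a\}_{a\ge1}$ of $\mathcal H^{J_1,j_1}(\mathbf R^d)$. Since $\mathcal H^{J_1,j_1}(\mathbf R^d)\hookrightarrow\mathcal H^{L,\gamma}(\mathbf R^d)\hookrightarrow\mathcal C^{2,\gamma_*}(\mathbf R^d)$ (by~\eqref{eq.SE2} and~\eqref{eq.SE1}), the deterministic bound on $\langle f,M_k^N\rangle$ underlying the proof of Lemma~\ref{lem:remainder terms}, combined with the moment estimate of Lemma~\ref{le.W}, shows that a.s.\ $\sqrt N M_t^N$ extends to an element of $\mathcal H^{-J_1,j_1}(\mathbf R^d)$, so that
$$\|\sqrt N M_t^N\|_{\mathcal H^{-J_1,j_1}}^2=\sum_{a\ge1}N\,\langle f_a,M_t^N\rangle^2,\qquad t\in\mathbf R_+.$$
It then suffices to control $\sum_{a\ge1}N\,\mathbf E\big[\sup_{t\in[0,T]}\langle f_a,M_t^N\rangle^2\big]$ uniformly in $N$.

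For the per-test-function estimate I would use the martingale structure. By Lemma~\ref{le.martingale}, for each fixed $f$ the process $\{t\mapsto\langle f,M_t^N\rangle\}$ (see~\eqref{eq.def-M}) is an $\mathfrak F_t^N$-martingale; it is piecewise constant with jumps only at the points $k/N$, so that $\sup_{t\in[0,T]}\langle f,M_t^N\rangle^2=\max_{0\le k\le\lfloor NT\rfloor}\big(\sum_{j=0}^{k-1}\langle f,M_j^N\rangle\big)^2$, and Doob's $L^2$ maximal inequality yields $\mathbf E[\sup_{t\in[0,T]}\langle f,M_t^N\rangle^2]\le 4\,\mathbf E[\langle f,M_T^N\rangle^2]$. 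Item~\ref{borne E[f,M^N]} of Lemma~\ref{lem:remainder terms} bounds the right-hand side by $C\|f\|_{\mathcal C^{2,\gamma_*}}^2/N$, and using $\mathcal H^{J_1,j_1}(\mathbf R^d)\hookrightarrow\mathcal H^{L,\gamma}(\mathbf R^d)\hookrightarrow\mathcal C^{2,\gamma_*}(\mathbf R^d)$ one gets $\mathbf E[\langle f,M_T^N\rangle^2]\le C\|f\|_{\mathcal H^{L,\gamma}}^2/N$, hence $N\,\mathbf E[\sup_{t\in[0,T]}\langle f,M_t^N\rangle^2]\le C\|f\|_{\mathcal H^{L,\gamma}}^2$ with $C$ independent of $f$ and $N$.

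Taking $f=f_a$, summing over $a$ (the interchange of $\sup_t$, $\mathbf E$ and $\sum_a$ being legitimate since every term is nonnegative, by monotone convergence), and using that the embedding in~\eqref{eq.SE2} is Hilbert--Schmidt, I would conclude
$$\sup_{N\ge1}\mathbf E\Big[\sup_{t\in[0,T]}\|\sqrt N M_t^N\|_{\mathcal H^{-J_1,j_1}}^2\Big]\le C\sum_{a\ge1}\|f_a\|_{\mathcal H^{L,\gamma}}^2<+\infty,$$
the last sum being the squared Hilbert--Schmidt norm of the inclusion $\mathcal H^{J_1,j_1}(\mathbf R^d)\hookrightarrow\mathcal H^{L,\gamma}(\mathbf R^d)$, which is finite and independent of $N$. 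This is exactly~\eqref{equation de lemme cc eta^N}.

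There is no serious obstacle: the whole argument is a reorganization of estimates already established. The only points requiring a little care are that $\sqrt N M_t^N$ genuinely defines an element of the dual Hilbert space $\mathcal H^{-J_1,j_1}(\mathbf R^d)$, not merely of $\mathcal C^{2,\gamma_*}(\mathbf R^d)'$, and the reduction of the continuous-time supremum to a finite maximum before invoking Doob's inequality. The key mechanism is the Hilbert--Schmidt trick, which converts the bound $N\,\mathbf E[\sup_t\langle f,M_t^N\rangle^2]\lesssim\|f\|_{\mathcal H^{L,\gamma}}^2$, phrased with the \emph{weaker} $\mathcal H^{L,\gamma}$-norm rather than the $\mathcal H^{J_1,j_1}$-norm, into a summable estimate over the orthonormal basis.
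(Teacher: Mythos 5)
Your proof is correct and follows essentially the same route as the paper: Doob's $L^2$ maximal inequality for the $\mathfrak F_t^N$-martingale $\{t\mapsto\langle f,M_t^N\rangle\}$ from Lemma~\ref{le.martingale}, the bound $\mathbf E[\langle f,M_T^N\rangle^2]\le C\|f\|^2_{\mathcal C^{2,\gamma_*}}/N$ from item~\ref{borne E[f,M^N]} of Lemma~\ref{lem:remainder terms}, and summation over an orthonormal basis via the Hilbert--Schmidt embedding $\mathcal H^{J_1,j_1}(\mathbf R^d)\hookrightarrow_{\text{H.S.}}\mathcal H^{L,\gamma}(\mathbf R^d)$ of~\eqref{eq.SE2}, exactly as in the derivation of~\eqref{eq finale cc M}. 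The only slip is cosmetic: your concluding display is the statement of the present lemma, not~\eqref{equation de lemme cc eta^N}, which concerns $\eta^N$ in $\mathcal H^{-J_2,j_2}(\mathbf R^d)$.
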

\begin{proof} Let $T>0$ and $f\in \mathcal H^{J_1,j_1}(\mathbf{R}^d)$.
Then, according to~\eqref{eq finale cc M}, we have: 
$$
\textbf{E} \Big [\sup_{t\in[0,T]}\langle f,\sqrt{N}M_t^N\rangle^2 \Big
]\leq C\|f\|^2_{\mathcal H^{L,\gamma}}.
$$
The proof of the lemma is complete considering an orthonormal basis
$\{f_a\}_{a\geq1}$ of
$\mathcal
H^{J_1,j_1}(\mathbf{R}^d)\hookrightarrow_{\text{H.S.}}\mathcal
H^{L,\gamma}(\mathbf{R}^d)$ (see~\eqref{eq.SE2}).
%
\end{proof}

Let us now turn to the regularity condition on the process
$\lbrace t\mapsto\sqrt NM_t^N,t\in\mathbf{R}_+\rbrace_{N\geq1}$.

\begin{lem}\label{lem: reg of M^N}Let $\beta\ge 3/4$ and
  assume~\ref{as:batch}-\ref{as:batch limite}. Fix $T>0$. Then, there
  exists $C>0$ such that for all $\delta>0$ and $0\leq r<t\leq T$ such
  that $t-r\leq\delta$, one has
$$
\mathbf{E}\Big[\Big\|\sqrt{N}M_t^N-\sqrt{N}M_r^N\Big\|_{\mathcal H^{-J_1,j_1}}^2\Big]\leq C\frac{N\delta+1}{N}.
$$

\end{lem}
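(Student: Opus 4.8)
The plan is to reduce everything to the scalar martingale estimate \eqref{eq.MTT} and the Hilbert--Schmidt embedding \eqref{eq.SE2}, since $\{t\mapsto \langle f,M^N_t\rangle\}$ is already known to be an $\mathfrak F^N_t$-martingale (Lemma~\ref{le.martingale}) with orthogonal increments. First I would fix an orthonormal basis $\{f_a\}_{a\ge 1}$ of $\mathcal H^{J_1,j_1}(\mathbf R^d)$ and write, using the definition \eqref{eq.def-M} of $M^N_t$,
$$
\big\|\sqrt N M_t^N-\sqrt N M_r^N\big\|_{\mathcal H^{-J_1,j_1}}^2=N\sum_{a\ge 1}\Big(\sum_{k=\lfloor Nr\rfloor}^{\lfloor Nt\rfloor-1}\langle f_a,M_k^N\rangle\Big)^{2},
$$
which is legitimate because $\sqrt N M^N$ is $\mathcal H^{-J_1,j_1}$-valued (as established in Lemma~\ref{cc of M_t^N}, noting $\mathcal H^{J_1,j_1}(\mathbf R^d)\hookrightarrow\mathcal C^{2,\gamma_*}(\mathbf R^d)$ so that the operators $M^N_k$ act on $\mathcal H^{J_1,j_1}(\mathbf R^d)$).

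Next I would take expectations and exchange $\sum_a$ with $\mathbf E$ (all terms being nonnegative, by Tonelli). For each fixed $a$, the cross terms vanish by the martingale orthogonality already recorded in \eqref{eq.Mk==}, so
$$
\mathbf E\Big[\Big(\sum_{k=\lfloor Nr\rfloor}^{\lfloor Nt\rfloor-1}\langle f_a,M_k^N\rangle\Big)^{2}\Big]=\sum_{k=\lfloor Nr\rfloor}^{\lfloor Nt\rfloor-1}\mathbf E\big[\langle f_a,M_k^N\rangle^{2}\big].
$$
Then I would invoke \eqref{eq.MTT}, which gives $\mathbf E[\langle f_a,M_k^N\rangle^{2}]\le C\|f_a\|_{\mathcal C^{2,\gamma_*}}^{2}/N^{2}\le C\|f_a\|_{\mathcal H^{L,\gamma}}^{2}/N^{2}$ after applying the continuous embedding $\mathcal H^{L,\gamma}(\mathbf R^d)\hookrightarrow\mathcal C^{2,\gamma_*}(\mathbf R^d)$ from \eqref{eq.SE1}. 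The number of summands is $\lfloor Nt\rfloor-\lfloor Nr\rfloor\le N(t-r)+1\le N\delta+1$.

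Finally I would sum over $a$: since $\mathcal H^{J_1,j_1}(\mathbf R^d)\hookrightarrow_{\text{H.S.}}\mathcal H^{L,\gamma}(\mathbf R^d)$ by \eqref{eq.SE2}, the quantity $\sum_{a\ge1}\|f_a\|_{\mathcal H^{L,\gamma}}^{2}$ is the squared Hilbert--Schmidt norm of that embedding, hence finite. Combining,
$$
\mathbf E\Big[\big\|\sqrt N M_t^N-\sqrt N M_r^N\big\|_{\mathcal H^{-J_1,j_1}}^2\Big]\le N\cdot\frac{C(N\delta+1)}{N^{2}}\sum_{a\ge1}\|f_a\|_{\mathcal H^{L,\gamma}}^{2}\le C\,\frac{N\delta+1}{N},
$$
which is the claimed bound. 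There is no genuine obstacle here: the only points requiring care are justifying the basis expansion (handled by Lemma~\ref{cc of M_t^N}), the interchange of sum and expectation (Tonelli), and checking that the chain of embeddings $\mathcal H^{J_1,j_1}\hookrightarrow\mathcal C^{2,\gamma_*}$ and $\mathcal H^{J_1,j_1}\hookrightarrow_{\text{H.S.}}\mathcal H^{L,\gamma}$ is available — all of which follow from \eqref{eq.Lgamma}, \eqref{eq.J1} and the Sobolev embeddings \eqref{eq.Sobolev-embH}--\eqref{eq.SEC}.
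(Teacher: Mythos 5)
Your proof is correct and follows essentially the same route as the paper, which simply invokes the identity \eqref{eq.Mk-rt} (itself obtained from the martingale orthogonality \eqref{eq.Mk==} and the per-increment bound \eqref{eq.MTT}) and then sums over an orthonormal basis using the Hilbert--Schmidt embedding $\mathcal H^{J_1,j_1}(\mathbf R^d)\hookrightarrow_{\text{H.S.}}\mathcal H^{L,\gamma}(\mathbf R^d)$. You have merely written out in full the steps the paper compresses into a one-line citation.
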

\begin{proof} 
  This lemma is a direct consequence of~\eqref{eq.Mk-rt} (which also
  holds for $f\in \mathcal H^{J_1,j_1}(\mathbf{R}^d)$) together with
  the embedding
  $\mathcal
  H^{J_1,j_1}(\mathbf{R}^d)\hookrightarrow_{\text{H.S.}}\mathcal
  H^{L,\gamma}(\mathbf{R}^d)$ (see~\eqref{eq.SE2}).
\end{proof}


\begin{prop}\label{prop relative compactness M}
  Let $\beta\ge 3/4$ and assume~\ref{as:batch}-\ref{as:batch
    limite}. Then, the sequence
  $\lbrace t\mapsto\sqrt NM_t^N,t\in\mathbf{R}_+\rbrace_{N\geq1}$ is
  relatively compact in
  $\mathcal D(\mathbf{R}_+,\mathcal H^{-J_0,j_0}(\mathbf{R}^d))$.
\end{prop}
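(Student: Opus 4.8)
The plan is to prove Proposition~\ref{prop relative compactness M} by mimicking exactly the argument used for Proposition~\ref{prop relative compactness}, applying the abstract relative-compactness criterion stated in Proposition~\ref{lem:note kurtz} with the triple of Hilbert spaces $\mathcal H^{J_0-1,j_0}(\mathbf R^d)\hookrightarrow_{\text{H.S.}}\mathcal H^{J_2,j_2}(\mathbf R^d)\hookrightarrow_{\text{H.S.}}\mathcal H^{J_1+1,j_1}(\mathbf R^d)\hookrightarrow\mathcal H^{L,\gamma}(\mathbf R^d)$, but here the relevant intermediate space is $\mathcal H^{-J_1,j_1}(\mathbf R^d)$ rather than $\mathcal H^{-J_2,j_2}(\mathbf R^d)$, since Lemmata~\ref{cc of M_t^N} and~\ref{lem: reg of M^N} are already phrased with that norm. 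The two inputs to the criterion are exactly the content of those two lemmata: Lemma~\ref{cc of M_t^N} gives the compact containment condition $\sup_{N\ge1}\mathbf E[\sup_{t\in[0,T]}\|\sqrt N M_t^N\|_{\mathcal H^{-J_1,j_1}}^2]<+\infty$, and Lemma~\ref{lem: reg of M^N} gives the uniform modulus-of-continuity estimate $\mathbf E[\|\sqrt N M_t^N-\sqrt N M_r^N\|_{\mathcal H^{-J_1,j_1}}^2]\le C(N\delta+1)/N$ for $t-r\le\delta$.

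First I would note that $\lbrace t\mapsto\sqrt N M_t^N\rbrace_{N\ge1}$ is a.s.\ an element of $\mathcal D(\mathbf R_+,\mathcal H^{-J_0,j_0}(\mathbf R^d))$ (indeed $M_t^N$ is a finite sum of point-mass functionals, so it lies in $\mathcal C^{2,\gamma_*}(\mathbf R^d)'$, and the Sobolev embedding together with $J_0>d/2$ places it in the desired dual Sobolev space). Then I would invoke Markov's inequality on the bound of Lemma~\ref{cc of M_t^N} to obtain, for every $T>0$ and $\varepsilon>0$, a constant $K=K(T,\varepsilon)$ with $\sup_{N\ge1}\mathbf P\big(\sup_{t\in[0,T]}\|\sqrt N M_t^N\|_{\mathcal H^{-J_1,j_1}}>K\big)\le\varepsilon$; since the ball of radius $K$ in $\mathcal H^{-J_1,j_1}(\mathbf R^d)$ is relatively compact in $\mathcal H^{-J_0,j_0}(\mathbf R^d)$ by the Hilbert--Schmidt embedding $\mathcal H^{J_0,j_0}(\mathbf R^d)\hookrightarrow_{\text{H.S.}}\mathcal H^{J_1,j_1}(\mathbf R^d)$ (which follows from~\eqref{eq.SE2} after chaining the listed embeddings), this is precisely item~1 of Proposition~\ref{lem:note kurtz}. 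Next, Lemma~\ref{lem: reg of M^N} yields item~2 of Proposition~\ref{lem:note kurtz}: the right-hand side $C(N\delta+1)/N\to C\delta$ as $N\to\infty$ and then $\to0$ as $\delta\to0$, which is the required smallness of the modulus of continuity uniformly in $N$.

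Having verified both hypotheses, Proposition~\ref{lem:note kurtz} applies directly and gives that $\lbrace t\mapsto\sqrt N M_t^N\rbrace_{N\ge1}$ is relatively compact in $\mathcal D(\mathbf R_+,\mathcal H^{-J_0,j_0}(\mathbf R^d))$, which is the assertion. There is essentially no obstacle here beyond bookkeeping: the genuinely substantive estimates (the fourth-moment and martingale computations behind Lemmata~\ref{cc of M_t^N} and~\ref{lem: reg of M^N}, and the chain of Hilbert--Schmidt embeddings~\eqref{eq.SE2}) have already been established, so the proof is a short assembly step. The one point requiring a line of care is confirming that the $\beta\ge3/4$ hypothesis is used only through those two lemmata (it is), and that the index shift from $J_2$ to $J_1$ is harmless because $\mathcal H^{-J_1,j_1}(\mathbf R^d)$ still embeds Hilbert--Schmidt into $\mathcal H^{-J_0,j_0}(\mathbf R^d)$ via the embeddings recorded in~\eqref{eq.SE2}.

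\begin{proof}[Proof of Proposition~\ref{prop relative compactness M}]
Fix $T>0$. For each $N\ge1$, the process $t\mapsto\sqrt N M_t^N$ is a.s.\ an element of $\mathcal D(\mathbf R_+,\mathcal H^{-J_0,j_0}(\mathbf R^d))$: indeed by~\eqref{def Mk} and~\eqref{eq.def-M}, $\langle f,M_t^N\rangle$ is a finite combination of evaluations of $\nabla f\cdot\nabla\sigma_*(\cdot,x)$ and of $\sigma_*(\cdot,x)$ at the points $W_k^i$, so $M_t^N\in\mathcal C^{2,\gamma_*}(\mathbf R^d)'$, and since $J_0>d/2$ the embedding~\eqref{eq.SEC} gives $M_t^N\in\mathcal H^{-J_0,j_0}(\mathbf R^d)$, with càdlàg dependence on $t$ inherited from the piecewise-constant structure.

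We apply Proposition~\ref{lem:note kurtz} with $\mathcal H_1=\mathcal H^{J_0-1,j_0}(\mathbf R^d)$ and $\mathcal H_2=\mathcal H^{J_1,j_1}(\mathbf R^d)$. By chaining the Hilbert--Schmidt embeddings in~\eqref{eq.SE2}, one has $\mathcal H^{J_0-1,j_0}(\mathbf R^d)\hookrightarrow_{\text{H.S.}}\mathcal H^{J_1,j_1}(\mathbf R^d)$, and consequently bounded subsets of $\mathcal H^{-J_1,j_1}(\mathbf R^d)$ are relatively compact in $\mathcal H^{-J_0,j_0}(\mathbf R^d)$.

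\emph{Compact containment.} By Lemma~\ref{cc of M_t^N}, there is a constant $C=C(T)<+\infty$ such that $\sup_{N\ge1}\mathbf E\big[\sup_{t\in[0,T]}\|\sqrt N M_t^N\|_{\mathcal H^{-J_1,j_1}}^2\big]\le C$. For $\varepsilon>0$, set $K=\sqrt{C/\varepsilon}$. Markov's inequality gives $\sup_{N\ge1}\mathbf P\big(\sup_{t\in[0,T]}\|\sqrt N M_t^N\|_{\mathcal H^{-J_1,j_1}}>K\big)\le\varepsilon$. Since the closed ball of radius $K$ in $\mathcal H^{-J_1,j_1}(\mathbf R^d)$ is a compact subset of $\mathcal H^{-J_0,j_0}(\mathbf R^d)$, this yields item~1 of Proposition~\ref{lem:note kurtz}.

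\emph{Regularity.} By Lemma~\ref{lem: reg of M^N}, there is $C=C(T)>0$ such that for all $\delta>0$ and all $0\le r<t\le T$ with $t-r\le\delta$,
\begin{equation*}
\mathbf E\big[\|\sqrt N M_t^N-\sqrt N M_r^N\|_{\mathcal H^{-J_1,j_1}}^2\big]\le C\,\frac{N\delta+1}{N}.
\end{equation*}
As $N\to+\infty$ the right-hand side converges to $C\delta$, which tends to $0$ as $\delta\to0$. This is item~2 of Proposition~\ref{lem:note kurtz}.

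Both hypotheses of Proposition~\ref{lem:note kurtz} being verified, we conclude that $\lbrace t\mapsto\sqrt N M_t^N,\ t\in\mathbf R_+\rbrace_{N\ge1}$ is relatively compact in $\mathcal D(\mathbf R_+,\mathcal H^{-J_0,j_0}(\mathbf R^d))$.
\end{proof}
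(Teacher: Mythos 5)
Your proof is correct and follows essentially the same route as the paper: Proposition~\ref{lem:note kurtz} fed with Lemmata~\ref{cc of M_t^N} and~\ref{lem: reg of M^N} plus the Hilbert--Schmidt chain from~\eqref{eq.SE2}. The only (harmless) deviation is your choice $\mathcal H_1=\mathcal H^{J_0-1,j_0}(\mathbf R^d)$ instead of $\mathcal H^{J_0,j_0}(\mathbf R^d)$, which actually yields relative compactness in the smaller space $\mathcal D(\mathbf R_+,\mathcal H^{-J_0+1,j_0}(\mathbf R^d))$ and hence a fortiori the stated conclusion.
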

\begin{proof} It is a direct consequence of Proposition~\ref{lem:note
    kurtz}, Lemmata~\ref{cc of M_t^N} and~\ref{lem: reg of M^N},
  together with the embedding
  $\mathcal H^{J_0,j_0}(\mathbf{R}^d)\hookrightarrow_{\text{H.S.}}
  \mathcal H^{J_1,j_1}(\mathbf{R}^d)$ (see~\eqref{eq.SE2}).
%
%
\end{proof}

\subsection{Regularity of the limit points}

\begin{lem}
\label{lem: continuity prop eta}
Let $\beta>3/4$ and assume~\ref{as:batch}-\ref{as:batch limite}. Then, for all $T>0$, 
\begin{align}\label{eq continuity prop}
 \lim_{N\to +\infty} \mathbf{E}\Big[\sup_{t\in [0,T]}\big\|\eta_t^N-\eta_{t^-}^N\big\|_{\mathcal H^{-J_0+1,j_0}}^2\Big]=0.
\end{align}
In particular, any limit point of $(\eta^N)_{N\geq1}$ in
$\mathcal D(\mathbf R_+,\mathcal H^{-J_0+1,j_0}(\mathbf R^d))$ belongs
a.s. to
$ \mathcal C(\mathbf R_+,\mathcal H^{-J_0+1,j_0}(\mathbf{R}^d))$.
\end{lem}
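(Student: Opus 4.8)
The plan is to reduce the $\mathcal H^{-J_0+1,j_0}$-norm of the jump of $\eta^N$ to the scalar jumps $\langle f_a, \eta^N_t - \eta^N_{t^-}\rangle$ along an orthonormal basis $\{f_a\}_{a\ge 1}$ of $\mathcal H^{J_0-1,j_0}(\mathbf R^d)$, and then to exploit the Hilbert--Schmidt embedding $\mathcal H^{J_0-1,j_0}(\mathbf R^d)\hookrightarrow_{\text{H.S.}}\mathcal H^{J_2,j_2}(\mathbf R^d)$ (see~\eqref{eq.SE2}) together with an estimate on $\mathbf E[\sup_{t\in[0,T]}\langle f,\eta^N_t-\eta^N_{t^-}\rangle^2]$ that is linear in $\|f\|^2_{\mathcal H^{L,\gamma}}$. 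First I would recall the decomposition $\eta^N=\Upsilon^N+\Theta^N$ from~\eqref{eq.dec-eta2}; since $\Theta^N\in\mathcal C(\mathbf R_+,\mathcal H^{-L,\gamma}(\mathbf R^d))$ is continuous in time (see~\eqref{eq.t-c} and~\eqref{eq.particule-Lip}), its contribution to the jump is zero and it suffices to bound the jumps of $\Upsilon^N=\sqrt N(\mu^N-\bar\mu^N)$, which by~\eqref{formule f mu tilde} equals $\sqrt N\mu^N$ minus the continuous process $\sqrt N\bar\mu^N$. Hence the jumps of $\langle f,\Upsilon^N\rangle$ coincide with $\sqrt N$ times the jumps of $\langle f,\mu^N\rangle$, which are exactly the quantities $\sqrt N\,\mathbf d_k^N[f]$ studied in Lemma~\ref{lem:mu bar is continuous} (see~\eqref{eq.disc}).

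The key step is therefore to apply Lemma~\ref{lem:mu bar is continuous}: it gives, for $f\in\mathcal C^{2,\gamma_*}(\mathbf R^d)$,
\begin{align*}
\mathbf E\Big[\sup_{t\in[0,T]}\langle f,\mu^N_t-\mu^N_{t^-}\rangle^2\Big]\le C\|f\|^2_{\mathcal C^{2,\gamma_*}}\Big[\frac{1}{N^{3/2}}+\sqrt{\frac{1}{N^7}+\frac{1}{N^{8\beta-1}}}+\frac{\sqrt N}{N^{2\beta}}\Big],
\end{align*}
so, multiplying by $N$ and using $\mathcal H^{L,\gamma}(\mathbf R^d)\hookrightarrow\mathcal C^{2,\gamma_*}(\mathbf R^d)$ (see~\eqref{eq.SE1}),
\begin{align*}
\mathbf E\Big[\sup_{t\in[0,T]}\langle f,\eta^N_t-\eta^N_{t^-}\rangle^2\Big]\le C\|f\|^2_{\mathcal H^{L,\gamma}}\Big[\frac{1}{\sqrt N}+N\sqrt{\frac{1}{N^7}+\frac{1}{N^{8\beta-1}}}+\frac{N}{N^{2\beta-1/2}}\Big],
\end{align*}
and since $\beta>3/4$ each bracketed term tends to $0$ as $N\to\infty$. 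Now I would pick an orthonormal basis $\{f_a\}_{a\ge 1}$ of $\mathcal H^{J_0-1,j_0}(\mathbf R^d)$, write $\|\eta^N_t-\eta^N_{t^-}\|^2_{\mathcal H^{-J_0+1,j_0}}=\sum_a\langle f_a,\eta^N_t-\eta^N_{t^-}\rangle^2$, interchange $\sup_t$ and $\sum_a$ (monotone convergence / $\sup$ of a sum bounded by sum of $\sup$'s), take expectations, and bound $\sum_a\|f_a\|^2_{\mathcal H^{L,\gamma}}<\infty$ by the Hilbert--Schmidt embedding $\mathcal H^{J_0-1,j_0}(\mathbf R^d)\hookrightarrow_{\text{H.S.}}\mathcal H^{J_2,j_2}(\mathbf R^d)\hookrightarrow_{\text{H.S.}}\mathcal H^{J_1+1,j_1}(\mathbf R^d)\hookrightarrow\mathcal H^{L,\gamma}(\mathbf R^d)$ chained through~\eqref{eq.SE2}. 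This yields~\eqref{eq continuity prop}.

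For the final assertion, any limit point $\eta$ of $(\eta^N)_{N\ge 1}$ in $\mathcal D(\mathbf R_+,\mathcal H^{-J_0+1,j_0}(\mathbf R^d))$ lies a.s. in $\mathcal C(\mathbf R_+,\mathcal H^{-J_0+1,j_0}(\mathbf R^d))$ by the standard criterion \cite[Proposition 3.26 in Chapter VI]{jacod2003skorokhod}: if $\eta^{N'}\to\eta$ in distribution and $\lim_{N\to\infty}\mathbf E[\sup_{t\in[0,T]}\|\eta^N_t-\eta^N_{t^-}\|_{\mathcal H^{-J_0+1,j_0}}\wedge 1]=0$ for all $T>0$, then the limit has continuous paths; this expectation condition follows from~\eqref{eq continuity prop} and Jensen's (or Cauchy--Schwarz) inequality. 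I expect the only mildly delicate point to be the bookkeeping in passing from the scalar jump bound to the Hilbert space norm — specifically justifying the interchange of the supremum over $t$ with the infinite sum over the basis, which is handled by noting $\sup_t\sum_a(\cdot)\le\sum_a\sup_t(\cdot)$ and then applying Tonelli — and making sure the weight exponents $L,\gamma$ and $J_0-1,j_0$ are compatible with the embedding chain, which they are by construction (see~\eqref{eq.J0},~\eqref{eq.J1},~\eqref{eq.SE2}); everything else is a direct invocation of Lemma~\ref{lem:mu bar is continuous} and the continuity of $\Theta^N$.
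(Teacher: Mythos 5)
Your proposal is correct and follows essentially the same route as the paper: decompose $\eta^N=\Upsilon^N+\Theta^N$, kill the $\Theta^N$ jumps by continuity of $\bar\mu^N$, reduce the $\Upsilon^N$ jumps to $N$ times the bound of Lemma~\ref{lem:mu bar is continuous} via $\mathcal H^{L,\gamma}(\mathbf R^d)\hookrightarrow\mathcal C^{2,\gamma_*}(\mathbf R^d)$, sum over an orthonormal basis using the Hilbert--Schmidt embedding $\mathcal H^{J_0-1,j_0}(\mathbf R^d)\hookrightarrow_{\text{H.S.}}\mathcal H^{L,\gamma}(\mathbf R^d)$, and conclude continuity of limit points by \cite[Proposition 3.26, Chapter VI]{jacod2003skorokhod}. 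The rate bookkeeping ($N^{3/2-2\beta}\to 0$ for $\beta>3/4$, etc.) matches the paper's.
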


\begin{proof} 
  Let $T>0$ and $f\in \mathcal H^{J_0-1,j_0}(\mathbf{R}^d)$.
%
%
  We have (see~\eqref{eq.dec-eta2}):
$$\sup_{t\in [0,T]}\|\eta_t^N-\eta_{t^-}^N\|_{\mathcal H^{-J_0+1,j_0}}^2\leq 2\ \sup_{t\in [0,T]}\|\Upsilon_t^N-\Upsilon_{t^-}^N\|_{\mathcal H^{-J_0+1,j_0}}^2+2\ \sup_{t\in [0,T]}\|\Theta_t^N-\Theta_{t^-}^N  \|_{\mathcal H^{-J_0+1,j_0}}^2.$$
 According to \eqref{eq.t-c} and since $\mathcal H^{J_0-1,j_0}(\mathbf{R}^d)\hookrightarrow  \mathcal H^{L,\gamma}(\mathbf{R}^d)$ (see \eqref{eq.SE2}), one has 
a.s. for all $t\in \mathbf R_+$ and $N\ge 1$,
$$\|\Theta_t^N-\Theta_{t^-}^N\|_{\mathcal H^{-J_0+1,j_0}}=0.$$ 
Since a.s. $\bar \mu^N \in \mathcal C(\mathbf R_+, \mathcal
  H^{-L,\gamma}(\mathbf R^d))$, by definition of $\Upsilon^N$ (see \eqref{eq.dec-eta}), it follows that a.s. for all $N\ge 1$, 
\begin{align*}
\sup_{t\in [0,T]}\langle f,\Upsilon_t^N-\Upsilon_{t^-}^N\rangle^2&= N \sup_{t\in [0,T]}\langle f,\mu_t^N-\mu_{t^-}^N\rangle^2
\end{align*}
From~\eqref{799}
and   the fact that
$\mathcal H^{L,\gamma} (\mathbf R^d)\hookrightarrow \mathcal
C^{2,\gamma_*}(\mathbf R^d)$, we obtain
\begin{align*}
\textbf{E}\Big[\sup_{t\in[0,T]}\langle f,\Upsilon_t^N-\Upsilon_{t^-}^N\rangle^2\Big]\leq C\|f\|^2_{\mathcal H^{L,\gamma}}\Big[\frac{1}{\sqrt{N}}+\sqrt{\frac{1}{N^5}+\frac{1}{N^{8\beta-3}}}+N^{\frac{3}{2}-2\beta}\Big].
\end{align*}
Using
$\mathcal H^{J_0-1,j_0}(\mathbf{R}^d)\hookrightarrow_{\text{H.S.}}
\mathcal H^{L,\gamma}(\mathbf{R}^d)$ (see~\eqref{eq.SE2}), we deduce
that
\begin{align*}
\textbf{E}\Big[\sup_{t\in[0,T]}\|\Upsilon_t^N-\Upsilon_{t^-}^N\|^2_{\mathcal H^{-J_0+1,j_0}}\Big] \leq C\Big[\frac{1}{\sqrt{N}}+\sqrt{\frac{1}{N^5}+\frac{1}{N^{8\beta-3}}}+N^{\frac{3}{2}-2\beta}\Big].
\end{align*}
Because $\beta >3/4$, this ends the proof of~\eqref{eq continuity
  prop}.
The second statement in Lemma~\ref{lem: continuity prop eta} follows
from Proposition~\ref{prop relative compactness},~\eqref{eq continuity
  prop}, and~\cite[Condition~3.28 in Proposition
3.26]{jacod2003skorokhod}. The proof of Lemma~\ref{lem: continuity
  prop eta} is complete.
\end{proof}

\begin{lem}\label{le.conVMM}
Let $\beta>3/4$ and assume~\ref{as:batch}-\ref{as:batch limite}. Then, for all $T>0$: 
\begin{align}\label{eq reg limit of NM}
\lim_{N\to +\infty} \mathbf{E}\Big[\sup_{t\in[0,T]}\|\sqrt{N}M_t^N-\sqrt{N}M_{t^-}^N\|_{\mathcal H^{-J_0,j_0}}^2\Big]=0. 
\end{align}
In particular, any limit point of 
$(\sqrt{N}M^N)_{N\ge 1}$ in
$\mathcal D(\mathbf R_+,\mathcal H^{-J_0,j_0}(\mathbf R^d))$ belongs
a.s. to $\mathcal C(\mathbf R_+,\mathcal H^{-J_0,j_0}(\mathbf R^d))$.
\end{lem}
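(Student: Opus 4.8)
The plan is to proceed exactly as in the proof of Lemma~\ref{lem: continuity prop eta}, which is the analogous statement for $\eta^N$, except that the argument here is simpler since $t\mapsto \sqrt N M_t^N$ is a pure-jump process whose jumps have already been estimated in Section~\ref{sec.PREL}. Fix $T>0$ and $f\in\mathcal H^{J_0,j_0}(\mathbf R^d)$; by~\eqref{eq.SE2} and~\eqref{eq.SE1} one has $\mathcal H^{J_0,j_0}(\mathbf R^d)\hookrightarrow\mathcal H^{L,\gamma}(\mathbf R^d)\hookrightarrow\mathcal C^{2,\gamma_*}(\mathbf R^d)$, so $\langle f,M_k^N\rangle$ is well defined and the bounds of Lemma~\ref{lem:remainder terms} apply. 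Since $\langle f,\sqrt N M_t^N\rangle=\sqrt N\sum_{k=0}^{\lfloor Nt\rfloor-1}\langle f,M_k^N\rangle$, the function $t\in[0,T]\mapsto\langle f,\sqrt N M_t^N\rangle$ has its discontinuities exactly at $\tfrac1N,\dots,\tfrac{\lfloor NT\rfloor}{N}$, the $m$-th of which equals $\sqrt N\langle f,M_{m-1}^N\rangle$, and hence
\begin{align*}
\sup_{t\in[0,T]}\big|\langle f,\sqrt N(M_t^N-M_{t^-}^N)\rangle\big|^2\le N\max_{0\le k<\lfloor NT\rfloor}\langle f,M_k^N\rangle^2 .
\end{align*}

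Next I would use the fourth-moment bound~\eqref{2.40}, namely $\mathbf{E}\big[|\langle f,M_k^N\rangle|^4\big]\le C\|f\|^4_{\mathcal C^{2,\gamma_*}}/N^4$, together with $\mathbf{E}\big[\max_{0\le k<\lfloor NT\rfloor}\langle f,M_k^N\rangle^2\big]\le\big(\sum_{k=0}^{\lfloor NT\rfloor-1}\mathbf{E}[\langle f,M_k^N\rangle^4]\big)^{1/2}$ as in~\eqref{E max f M_k}, to obtain $\mathbf{E}\big[\max_{0\le k<\lfloor NT\rfloor}\langle f,M_k^N\rangle^2\big]\le C\|f\|^2_{\mathcal C^{2,\gamma_*}}/N^{3/2}$ and therefore $\mathbf{E}\big[\sup_{t\in[0,T]}|\langle f,\sqrt N(M_t^N-M_{t^-}^N)\rangle|^2\big]\le C\|f\|^2_{\mathcal C^{2,\gamma_*}}/\sqrt N$. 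Applying this with $f=f_a$, where $\{f_a\}_{a\ge1}$ is an orthonormal basis of $\mathcal H^{J_0,j_0}(\mathbf R^d)$, interchanging $\mathbf{E}\sup_t$ with $\sum_a$ (using $\sup_t\sum_a(\cdot)\le\sum_a\sup_t(\cdot)$ and Tonelli, all terms being nonnegative), and then invoking the embedding $\mathcal H^{L,\gamma}(\mathbf R^d)\hookrightarrow\mathcal C^{2,\gamma_*}(\mathbf R^d)$ together with the Hilbert--Schmidt embedding $\mathcal H^{J_0,j_0}(\mathbf R^d)\hookrightarrow_{\text{H.S.}}\mathcal H^{L,\gamma}(\mathbf R^d)$ obtained by composing the embeddings in~\eqref{eq.SE2}, I get
\begin{align*}
\mathbf{E}\Big[\sup_{t\in[0,T]}\big\|\sqrt N(M_t^N-M_{t^-}^N)\big\|^2_{\mathcal H^{-J_0,j_0}}\Big]\le\frac{C}{\sqrt N}\sum_{a\ge1}\|f_a\|^2_{\mathcal H^{L,\gamma}}\le\frac{C}{\sqrt N},
\end{align*}
which tends to $0$ as $N\to+\infty$ and is precisely~\eqref{eq reg limit of NM}.

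For the ``in particular'' assertion I would repeat verbatim the last paragraph of the proof of Lemma~\ref{lem: continuity prop eta}: the relative compactness of $(\sqrt N M^N)_{N\ge1}$ in $\mathcal D(\mathbf R_+,\mathcal H^{-J_0,j_0}(\mathbf R^d))$ provided by Proposition~\ref{prop relative compactness M}, combined with~\eqref{eq reg limit of NM} and~\cite[Condition~3.28 in Proposition~3.26]{jacod2003skorokhod}, forces every limit point to lie a.s.\ in $\mathcal C(\mathbf R_+,\mathcal H^{-J_0,j_0}(\mathbf R^d))$. I do not foresee any genuine obstacle: the only mildly delicate points are the interchange of $\mathbf{E}\sup_t$ with the series over the basis (legitimate by Tonelli since all summands are nonnegative) and the verification that the chain of embeddings in~\eqref{eq.SE2} indeed composes to a Hilbert--Schmidt embedding $\mathcal H^{J_0,j_0}(\mathbf R^d)\hookrightarrow_{\text{H.S.}}\mathcal H^{L,\gamma}(\mathbf R^d)$, which is what guarantees $\sum_{a\ge1}\|f_a\|^2_{\mathcal H^{L,\gamma}}<+\infty$; note also that the decay rate $C/\sqrt N$ here does not even require $\beta>3/4$, only $\beta\ge1/2$ (through Lemma~\ref{le.W}).
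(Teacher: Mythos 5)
Your proposal is correct and follows essentially the same route as the paper: the identification of the jumps of $t\mapsto\langle f,\sqrt N M_t^N\rangle$ as $\sqrt N\langle f,M_{k-1}^N\rangle$, the bound $\mathbf{E}[\max_k\langle f,M_k^N\rangle^2]\le C\|f\|^2_{\mathcal C^{2,\gamma_*}}/N^{3/2}$ via~\eqref{2.40} and~\eqref{E max f M_k}, the summation over an orthonormal basis using the Hilbert--Schmidt embedding $\mathcal H^{J_0,j_0}(\mathbf R^d)\hookrightarrow_{\text{H.S.}}\mathcal H^{L,\gamma}(\mathbf R^d)$, and the conclusion via Proposition~\ref{prop relative compactness M} and \cite[Condition~3.28 in Proposition~3.26]{jacod2003skorokhod}. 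Your side remark that the rate $C/\sqrt N$ only uses $\beta\ge 1/2$ is accurate but does not change the argument.
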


\begin{proof} Let $T>0$ and $f\in \mathcal H^{J_0,j_0}(\mathbf{R}^d)$.
  The function
  $t\in[0,T]\mapsto\langle f,\sqrt{N}M_t^N\rangle\in\mathbf{R}$ has
  $\lfloor NT\rfloor$ discontinuities, located at the times
  $\frac{1}{N},\frac{2}{N},\dots,\frac{\lfloor NT\rfloor}{N}$. For
  $k\in\{1,\dots,\lfloor NT\rfloor\}$, its $k$-th discontinuity is
  equal to $\sqrt{N}\langle f,M_{k-1}^N\rangle$. Thus,
 $$\sup_{t\in[0,T]}\langle f,\sqrt{N}M_{t^-}^N-\sqrt{N}M_t^N\rangle^2=N \max_{0\leq k<\lfloor NT\rfloor}\langle f,M_k^N\rangle^2.$$
 Then, using~\eqref{E max f M_k} and because
 $\mathcal H^{L,\gamma} (\mathbf R^d)\hookrightarrow \mathcal
 C^{2,\gamma_*}(\mathbf R^d)$ (see indeed~\eqref{eq.SE1}),
\begin{equation}\label{eq.Mk-convD}
\textbf{E}[\sup_{t\in[0,T]}\langle f,\sqrt{N}M_{t^-}^N-\sqrt{N}M_t^N\rangle^2]\leq C\|f\|^2_{\mathcal C^{2,\gamma_*}}/\sqrt{N}\le C\|f\|^2_{\mathcal H^{L,\gamma}}/\sqrt{N}.
\end{equation}
Considering an orthonormal basis of
$\mathcal H^{J_0,j_0}(\mathbf{R}^d)$ and
$\mathcal H^{J_0,j_0}(\mathbf{R}^d)\hookrightarrow_{\text{H.S.}}
\mathcal H^{L,\gamma}(\mathbf{R}^d)$ (by~\eqref{eq.SE2}), we obtain
$$\mathbf{E}\Big[\sup_{t\in[0,T]}\|\sqrt{N}M_t^N-\sqrt{N}M_{t^-}^N\|_{\mathcal H^{-J_0,j_0}}^2\Big]\leq C/\sqrt N,$$
for some $C>0$ independent of $N\geq1$ and $f$. This proves~\eqref{eq
  reg limit of NM}. The second statement in Lemma~\ref{le.conVMM} is a
consequence of Proposition~\ref{prop relative compactness
  M},~\eqref{eq reg limit of NM}, and condition 3.28 of
\cite[Proposition 3.26]{jacod2003skorokhod}. The proof of
Lemma~\ref{le.conVMM} is complete.
\end{proof}

\subsection{Convergence of $(\sqrt{N}M^N)_{N\ge 1}$ to a G-process}

The aim of this section is to prove Proposition~\ref{prop.CVMN} below
which states that $\{ t\mapsto\sqrt NM_t^N,t\in\mathbf{R}_+\}_{N\ge1}$
(see~\eqref{def Mk} and~\eqref{eq.def-M}) converges towards a
G-process (see Definition~\ref{de.gaussian}). To this end, we first
show the convergence of this process against test functions.

\begin{prop}\label{prop:convergence to martingale0}
  Let $\beta>3/4$ and assume~\ref{as:batch}-\ref{as:batch
    limite}. Then, for every $f\in\mathcal C^{2,\gamma} (\mathbf R^d)$ the sequence
  $\{ t\mapsto \sqrt{N}\langle f,M_t^N\rangle,
  t\in\mathbf{R}_+\}_{N\geq1}$ (see~\eqref{eq.def-M}) converges in
  distribution in $\mathcal D(\mathbf{R}_+,\mathbf{R})$ towards a
  process $X^f \in \mathcal C(\mathbf{R}_+,\mathbf{R})$ that has
  independent Gaussian increments. Moreover, for all
  $t\in\mathbf{R}_+$,
$$\mathbf{E}[{X}^f_t]=0 \ \text{ and } \ \Var(X^f _t )=\alpha^2\mathbf{E}\left[\frac{1}{|B_{\infty}|}\right] \int_0^t \Var (\mathrm Q_s[f](x,y))\di s,$$
where we recall
$\mathrm Q_s[f](x,y)=(y-\langle \sigma_*(\cdot, x),\bar \mu_s
\rangle)\langle\nabla f\cdot\nabla\sigma_*(\cdot,x),\bar \mu_s\rangle$
(see Definition~\ref{de.gaussian}).
\end{prop}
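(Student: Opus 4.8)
The plan is to prove Proposition~\ref{prop:convergence to martingale0} by applying a martingale central limit theorem (in the form of~\cite[Chapter 7, Theorem 1.4]{ethier2009markov}) to the $\mathfrak F^N_t$-martingale $\{t\mapsto \sqrt N\langle f,M_t^N\rangle, t\in\mathbf R_+\}$ (recall this is a martingale by Lemma~\ref{le.martingale}). First I would record the jump decomposition: since $\langle f,M_t^N\rangle=\sum_{k=0}^{\lfloor Nt\rfloor-1}\langle f,M_k^N\rangle$, the martingale is piecewise constant with jumps $\sqrt N\langle f,M_k^N\rangle$ at times $(k+1)/N$, so its predictable quadratic variation at time $t$ is $\langle \sqrt N M^N[f]\rangle_t=N\sum_{k=0}^{\lfloor Nt\rfloor-1}\mathbf E[\langle f,M_k^N\rangle^2\mid\mathcal F_k^N]$. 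The three hypotheses to check are: (i) the jumps go to zero, i.e. $\mathbf E[\sup_{s\le t}|\sqrt N\langle f,M_s^N\rangle-\sqrt N\langle f,M_{s^-}^N\rangle|^2]\to 0$, which is exactly~\eqref{eq.Mk-convD} (or~\eqref{E max f M_k}) from Lemma~\ref{le.conVMM}'s proof; (ii) the predictable quadratic variation converges in probability, for each fixed $t$, to the deterministic limit $\alpha^2\mathbf E[1/|B_\infty|]\int_0^t\Var(\mathrm Q_s[f](x,y))\,\di s$; (iii) a conditional Lindeberg-type / uniform integrability condition, which follows from a uniform bound on $N\sum_k\mathbf E[\langle f,M_k^N\rangle^2]$ (available from Step~1 of Lemma~\ref{lem:remainder terms} summed over $k$, giving an $O(1)$ bound) together with the vanishing-jumps estimate. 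Then the martingale CLT yields convergence in $\mathcal D(\mathbf R_+,\mathbf R)$ to a continuous Gaussian martingale with the stated variance function, hence with independent Gaussian increments, zero mean, and $\Var(X^f_t)$ equal to the limit of the quadratic variation.

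The heart of the argument, and the step I expect to be the main obstacle, is (ii): identifying the limit of $N\sum_{k=0}^{\lfloor Nt\rfloor-1}\mathbf E[\langle f,M_k^N\rangle^2\mid\mathcal F_k^N]$. I would first compute $\mathbf E[\langle f,M_k^N\rangle^2\mid\mathcal F_k^N]$ explicitly. Writing $\mathrm Q^N[f](x,y)=\mathrm Q^N[f](x,y,\{W_k^i\}_i)$ as in~\eqref{eq.QQ-d}, we have $\langle f,M_k^N\rangle=\frac{\alpha}{N|B_k|}\sum_{(x,y)\in B_k}\mathrm Q^N[f](x,y)-\langle f,D_k^N\rangle$, and conditioning on $\mathcal F_k^N$ and using~\ref{as:batch},~\ref{as:data} (so that, given $|B_k|=q$, the $(x_k^n,y_k^n)$ are i.i.d.\ $\sim\pi$ and independent of $\mathcal F_k^N$), a direct variance-of-an-empirical-mean computation gives
\begin{equation*}
\mathbf E\big[\langle f,M_k^N\rangle^2\mid\mathcal F_k^N\big]=\frac{\alpha^2}{N^2}\,\mathbf E\Big[\frac{1}{|B_k|}\Big]\,\Var_\pi\big(\mathrm Q^N[f](x,y,\{W_k^i\}_i)\big),
\end{equation*}
where $\Var_\pi$ denotes variance in $(x,y)\sim\pi$ with the $\{W_k^i\}_i$ frozen. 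Hence $N\sum_{k=0}^{\lfloor Nt\rfloor-1}\mathbf E[\langle f,M_k^N\rangle^2\mid\mathcal F_k^N]=\frac1N\sum_{k=0}^{\lfloor Nt\rfloor-1}\alpha^2\,\mathbf E[1/|B_k|]\,\Var_\pi(\mathrm Q^N[f](x,y,\{W_k^i\}_i))$. Now I would argue the convergence of this Riemann-type sum in two moves: $\mathbf E[1/|B_k|]\to\mathbf E[1/|B_\infty|]$ by~\ref{as:batch limite} and dominated convergence (since $1/|B_k|\le 1$), and $\Var_\pi(\mathrm Q^N[f](x,y,\{W_{\lfloor Ns\rfloor}^i\}_i))\to\Var_\pi(\mathrm Q_s[f](x,y))$ in probability, uniformly enough in $s\in[0,t]$, which is where the law of large numbers Theorem~\ref{thm:lln} enters: $\mu_s^N\to\bar\mu_s$ (so $\langle\sigma_*(\cdot,x),\nu_{\lfloor Ns\rfloor}^N\rangle\to\langle\sigma_*(\cdot,x),\bar\mu_s\rangle$ and $\langle\nabla f\cdot\nabla\sigma_*(\cdot,x),\nu_{\lfloor Ns\rfloor}^N\rangle\to\langle\nabla f\cdot\nabla\sigma_*(\cdot,x),\bar\mu_s\rangle$), combined with the uniform moment bounds of Lemma~\ref{le.W} to justify passing to the limit inside the $\pi$-integral (using boundedness of $\sigma_*$ and its derivatives from~\ref{as:sigma}, $\mathbf E[|y|^2]<\infty$, and $\|f\|$-dependent polynomial bounds on $\nabla f$). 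Then $\frac1N\sum_{k=0}^{\lfloor Nt\rfloor-1}\mapsto\int_0^t\cdot\,\di s$ as a Riemann sum, giving the claimed limit $\alpha^2\mathbf E[1/|B_\infty|]\int_0^t\Var(\mathrm Q_s[f](x,y))\,\di s$.

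Some care is needed to upgrade the pointwise-in-$s$ convergence to convergence in probability of the integrated quantity; I would do this by dominated convergence together with the fact that the integrand is bounded by a deterministic constant times $(1+|y|^2)$-type quantities with uniformly bounded expectation (so the family is uniformly integrable), and by noting that $s\mapsto\bar\mu_s$ is continuous (Corollary~\ref{co.e}), which makes $s\mapsto\Var(\mathrm Q_s[f](x,y))$ continuous and hence the Riemann-sum convergence legitimate. For the Lindeberg condition (iii), the vanishing-jumps bound~\eqref{E max f M_k} gives $\mathbf E[\max_k\langle f,M_k^N\rangle^2]\le C\|f\|^2_{\mathcal C^{2,\gamma_*}}N^{-3/2}$, so $\mathbf E[\sup_{s\le t}|\sqrt N\langle f,M_s^N\rangle-\sqrt N\langle f,M_{s^-}^N\rangle|^2]\le C N^{-1/2}\to 0$; together with the $L^1$-bound $\mathbf E[N\sum_k\langle f,M_k^N\rangle^2]\le C$ (summing~\eqref{eq.MTT}), this yields the required uniform-integrability/Lindeberg hypothesis. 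Assembling (i)--(iii) and invoking the functional martingale CLT gives convergence in $\mathcal D(\mathbf R_+,\mathbf R)$ to a continuous process with independent Gaussian increments, zero mean, and the stated variance, completing the proof.
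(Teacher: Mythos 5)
Your proposal is correct and follows essentially the same strategy as the paper: apply the martingale central limit theorem of Ethier--Kurtz to $\{t\mapsto\sqrt N\langle f,M_t^N\rangle\}$, compute the conditional second moment $\mathbf E[\langle f,M_k^N\rangle^2\mid\mathcal F_k^N]=\frac{\alpha^2}{N^2}\mathbf E[1/|B_k|]\Var_\pi(\mathrm Q^N[f](x,y,\{W_k^i\}_i))$ exactly as in the paper, and identify the limit of the resulting Riemann sum via~\ref{as:batch limite}, Theorem~\ref{thm:lln} and dominated convergence. The one structural difference is the choice of quadratic-variation process fed into the theorem: you use the predictable one $N\sum_k\mathbf E[\langle f,M_k^N\rangle^2\mid\mathcal F_k^N]$ directly, whereas the paper takes the optional one $N\sum_k\langle f,M_k^N\rangle^2$ and must therefore additionally show that the martingale-difference remainder $N\sum_k\big(\langle f,M_k^N\rangle^2-\mathbf E[\langle f,M_k^N\rangle^2\mid\mathcal F_k^N]\big)$ vanishes in $L^2$, which it does via the fourth-moment bound $\mathbf E[\langle f,M_k^N\rangle^4]\le C/N^4$. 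Your choice is admissible (both processes compensate $M^2$ in the sense required by the theorem) and spares you that extra step; the only thing to add is the verification that the jumps of the quadratic-variation process itself vanish uniformly, which is immediate here since its increments are bounded by $C\|f\|^2_{\mathcal C^{2,\gamma_*}}/N$.
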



\begin{proof}   
 Let   $f\in\mathcal C^{2,\gamma} (\mathbf R^d)$. Set for ease of notation
$$\mathfrak{m}_t^N[f]=\sqrt{N}\langle f,M_t^N\rangle.$$
To prove Proposition~\ref{prop:convergence to martingale} we apply the
martingale central limit theorem \cite[Theorem
7.1.4]{ethier2009markov} to the sequence
$\{ t\mapsto \mathfrak{m}_t^N[f] , t\in\mathbf{R}_+\}_{N\geq1}$. To
this end, let $T>0$. Let us first show that Condition (a) in
\cite[Theorem 7.1.4]{ethier2009markov} holds. First of all, by
\cite[Remark 7.1.5]{ethier2009markov} and~\eqref{eq.def-M}, the
covariation matrix of $ \mathfrak{m}_t^N[f]$ is
$\mathfrak a^N_t[f]=N\sum_{k=0}^{\lfloor Nt\rfloor -1}\langle
f,M_k^N\rangle^2$ and therefore
$\mathfrak a^N_t[f]-\mathfrak a^N_s[f]\ge 0$ if $t\ge s$. On the other
hand, by~\eqref{eq.Mk-convD} and \eqref{eq.SE1}, we have:
\begin{equation}\label{1.14 EK}
\lim_{N\to\infty}\textbf{E}\Big[\sup_{t\in[0,T]}\big |\mathfrak{m}_t^N[f]-\mathfrak{m}_{t^-}^N[f] \big |\Big]=0.
\end{equation}
Thus Condition (a) in \cite[Theorem 7.1.4]{ethier2009markov} holds.
%
Let us now prove the last required condition in \cite[Theorem
7.1.4]{ethier2009markov}, namely that for all $t\in\mathbf{R}_+$,
$\mathfrak a^N_t[f]\overset{p}{\to} \mathfrak c_t[f]$ where
$\mathfrak c$ satisfies the assumptions of \cite[Theorem
7.1.1]{ethier2009markov}, i.e.,
$t\in \mathbf R_+\mapsto \mathfrak c_t[f]$ continuous,
$\mathfrak c_0[f]=0$, and $\mathfrak c_t[f]-\mathfrak c_s[f]\ge 0$ if
$t\ge s$. Recall the definition of the $\boldsymbol{\sigma}$-algebra
$\mathcal{F}_k^N$ in~\eqref{eq.filtration}. For $t\in\mathbf{R}_+$,
\begin{align}\label{q variation}
\mathfrak a^N_t[f] 
&=N\sum_{k=0}^{\lfloor Nt\rfloor -1}\textbf{E}[\langle f,M_k^N\rangle^2|\mathcal{F}_k^N]+N\sum_{k=0}^{\lfloor Nt\rfloor -1}\left(\langle f,M_k^N\rangle^2-\textbf{E}[\langle f,M_k^N\rangle^2|\mathcal{F}_k^N]\right).
\end{align} 
We start by studying the first term in the right-hand side of~\eqref{q
  variation}. Recall that (see~\eqref{eq.QQ-d})
$$\mathrm Q^N[f](x,y,\{W_k^i\}_i)= (y-\langle\sigma_*(\cdot,x),\nu_k^N\rangle)\langle\nabla f\cdot \nabla\sigma_*(\cdot,x),\nu_k^N\rangle,$$   and set
 (see~\eqref{def Dk})
$$ \bar{\mathrm Q} ^N[f]( \{W_k^i\}_i) : =\int_{\mathcal{X}\times\mathcal{Y}} \mathrm Q^N[f](x,y,\{W_k^i\}_i)\pi(\di x,\di y) =\frac N \alpha\langle f,D_k^N\rangle.$$  
Using that $\big(|B_k|, ((x_k^n,y_k^n))_{n\ge 1}\big)\indep \mathcal  {F}_k^N$,    $|B_k|\indep ((x_k^n,y_k^n))_{n\ge 1}$ (see~\ref{as:batch}), and   $(W_k^1,\dots,W_k^N)$ is $\mathcal{F}_k^N$-measurable, it holds:  
\begin{align*}
 &\textbf{E}\Big[\frac{1}{|B_k|^2}\!\!\!\!\sum_{1\leq n<m\leq |B_k|}\!\!\!\!\!\!\Big( \mathrm Q^N[f](x_k^n,y_k^n,\{W_k^i\}_i)-\bar{\mathrm Q} ^N[f]( \{W_k^i\}_i) \Big)\Big( \mathrm Q^N[f](x_k^m,y_k^m,\{W_k^i\}_i)-\bar{\mathrm Q} ^N[f]( \{W_k^i\}_i) \Big)\Big| {\mathcal F}_k^N\Big]\\
 &= \sum_{q\ge 1} \frac{1}{q^2} \sum_{1\leq n<m\leq q} \textbf{E}\Big[  \mathbf 1_{|B_k|=q} \Big( \mathrm Q_k^N[f](x_k^n,y_k^n,\{W_k^i\}_i)-\bar{\mathrm Q} ^N[f]( \{W_k^i\}_i) \Big)\\
 &\quad \times \Big(\mathrm Q^N[f](x_k^m,y_k^m,\{W_k^i\}_i)-\bar{\mathrm Q} ^N[f]( \{W_k^i\}_i) \Big)\Big|  {\mathcal F}_k^N\Big]\\
 &=  \sum_{q\ge 1} \frac{1}{q^2}   \sum_{1\leq n<m\leq q} \textbf{E}\Big[  \mathbf 1_{|B_k|=q}  \Big( \mathrm Q^N[f](x_k^n,y_k^n,\{w_k^i\}_i)-\bar{\mathrm Q} ^N[f]( \{w_k^i\}_i) \Big)\\
 &\quad \times\Big( \mathrm Q_k^N[f](x_k^m,y_k^m,\{w_k^i\}_i)-\bar{\mathrm Q} ^N[f]( \{w_k^i\}_i) \Big) \Big]\Big|_{\{w_k^i\}_i=\{W_k^i\}_i}\\
 &= \sum_{q\ge 1} \frac{1}{q^2}    \sum_{1\leq n<m\leq q}  \textbf{E} [  \mathbf 1_{|B_k|=q}] \, \\
 &\quad \times \textbf{E}\Big[   \Big( \mathrm Q^N[f](x_k^n,y_k^n,\{w_k^i\}_i)-\bar{\mathrm Q} ^N[f]( \{w_k^i\}_i) \Big)\Big( \mathrm Q^N[f](x_k^m,y_k^m,\{w_k^i\}_i)-\bar{\mathrm Q} ^N[f]( \{w_k^i\}_i) \Big) \Big]\Big|_{\{w_k^i\}_i=\{W_k^i\}_i}\\
 &=\sum_{q\ge 1} \frac{1}{q^2}    \sum_{1\leq n<m\leq q}  \textbf{E} [  \mathbf 1_{|B_k|=q}]\\
 &\quad \times \textbf{E} \Big[   \mathrm Q^N[f](x_k^n,y_k^n,\{w_k^i\}_i)-\bar{\mathrm Q} ^N[f]( \{w_k^i\}_i) \Big]\Big|_{\{w_k^i\}_i=\{W_k^i\}_i} \textbf{E} \Big[   \mathrm Q^N[f](x_k^m,y_k^m,\{w_k^i\}_i)-\bar{\mathrm Q} ^N[f]( \{w_k^i\}_i) \Big]\Big|_{\{w_k^i\}_i=\{W_k^i\}_i}\\
 &=0.
 \end{align*}
 where we have used~\ref{as:data} at the two last equalities. We also have with the same arguments: 
 \begin{align*}
&\textbf{E}  \Big[\frac{1}{|B_k|^2}\sum_{n=1}^{|B_k|} \big| \mathrm Q_k^N[f](x_k^n,y_k^n)-\bar{\mathrm Q} ^N[f]( \{W_k^i\}_i) \big|^2\Big |\mathcal{F}_k^N \Big]\\
&=\sum_{q\ge 1} \frac{1}{q^2}    \sum_{n=1}^q  \textbf{E} [  \mathbf 1_{|B_k|=q}] \,  \textbf{E} \Big[ \big|  \mathrm Q^N[f](x_k^n,y_k^n,\{w_k^i\}_i)-\bar{\mathrm Q} ^N[f]( \{w_k^i\}_i) \big|^2\Big]\Big|_{\{w_k^i\}_i=\{W_k^i\}_i} \\
&=\sum_{q\ge 1} \frac{1}{q^2}    \sum_{n=1}^q  \textbf{E} [  \mathbf 1_{|B_k|=q}] \textbf{E} \Big[ \big|  \mathrm Q^N[f](x_1^1,y_1^1,\{w_k^i\}_i)-\bar{\mathrm Q} ^N[f]( \{w_k^i\}_i) \big|^2\Big]\Big|_{\{w_k^i\}_i=\{W_k^i\}_i}\\
&= \textbf{E}\left[\frac{1}{|B_k|}\right]\Var_\pi\left( \mathrm Q^N[f](x,y,\{W_k^i\}_i) \right).
 \end{align*} 
 The notation $\Cov_\pi$ means that we consider the expectation only
 w.r.t.  $(x,y)\sim \pi$ (see~\ref{as:data}) .  We then have, for
 $k\ge 0$ (see~\eqref{def Mk}),
\begin{align*}
\textbf{E}[\langle f,M_k^N\rangle^2|\mathcal{F}_k^N]&=\frac{\alpha^2}{N^2}\textbf{E} \Big[ \Big|\frac{1}{|B_k|}\sum_{(x,y)\in B_k} \big[\mathrm Q_k^N[f](x,y)-\bar{\mathrm Q} ^N[f]( \{W_k^i\}_i) \big] \Big|^2 \Big|\mathcal{F}_k^N \Big]\\
&=\frac{\alpha^2}{N^2}\textbf{E}  \Big[\frac{1}{|B_k|^2}\sum_{n=1}^{|B_k|} \big| \mathrm Q_k^N[f](x_k^n,y_k^n)-\bar{\mathrm Q} ^N[f]( \{W_k^i\}_i) \big|^2\Big |\mathcal{F}_k^N \Big]\\
&=\frac{\alpha^2}{N^2}\textbf{E}\left[\frac{1}{|B_k|}\right]\Var_\pi\left( \mathrm Q_k^N[f](x,y,\{W_k^i\}_i) ) \right).
\end{align*} 
Then, one has:   
\begin{align} 
\nonumber
N\sum_{k=0}^{\lfloor Nt\rfloor-1}\textbf{E}[\langle f,M_k^N\rangle^2|\mathcal{F}_k^N]&=\alpha^2\sum_{k=0}^{\lfloor Nt\rfloor-1}\int_{\frac{k}{N}}^{\frac{k+1}{N}}\textbf{E}\left[\frac{1}{|B_{\lfloor Ns\rfloor}|}\right]\Var_\pi\big( \mathrm Q^N[f](x,y,\{W_{\lfloor Ns\rfloor}^i\}_i) \big)\di s\\
\nonumber
&=\alpha^2\int_0^t\textbf{E}\left[\frac{1}{|B_{\lfloor Ns\rfloor}|}\right]\Var_\pi\big( \mathrm Q ^N[f](x,y,\{W_{\lfloor Ns\rfloor}^i\}_i) \big)\di s\\
\label{79}
&\quad -\alpha^2\int_{\frac{\lfloor Nt\rfloor}{N}}^t\textbf{E}\left[\frac{1}{|B_{\lfloor Ns\rfloor}|}\right]\Var_\pi\big( \mathrm Q ^N[f](x,y,\{W_{\lfloor Ns\rfloor}^i\}_i)\big)\di s.
\end{align} 
Using~\ref{as:batch limite}, a dominated convergence
theorem, and the same arguments as those used in the proof of
Lemma~\ref{le.Fc},  we prove that if $m^N\to m$ in  $ \mathcal
D(\mathbf{R}_+,\mathcal P_{\gamma}(\mathbf{R}^d))$, we have for all
$f\in\mathcal C^{2,\gamma}(\mathbf R^d)$ and $t\in \mathbf R_+$, as $N\to +\infty$,
\begin{align*}
&\int_0^t\int_{\mathcal{X}\times\mathcal{Y}}\textbf{E}\left[\frac{1}{|B_{\lfloor Ns\rfloor}|}\right] \Big[(y-\langle \sigma_*(\cdot, x),m_s^N\rangle)\langle\nabla f\cdot\nabla\sigma_*(\cdot,x),m_s^N\rangle  \\
&\quad-\int_{\mathcal{X}\times\mathcal{Y}}(y'-\langle \sigma_*(\cdot, x'),m_s^N\rangle)\langle\nabla f\cdot\nabla\sigma_*(\cdot,x'),m_s^N\rangle\pi(\di x',\di y')\Big]^2\pi(\di x,\di y)\di s \\
&\to \int_0^t\int_{\mathcal{X}\times\mathcal{Y}}\textbf{E}\left[\frac{1}{|B_{\infty}|}\right]\Big[(y-\langle \sigma_*(\cdot, x),m_s\rangle)\langle\nabla f\cdot\nabla\sigma_*(\cdot,x), m_s\rangle\\
&\quad-\int_{\mathcal{X}\times\mathcal{Y}}(y'-\langle \sigma_*(\cdot , x'),  m_s\rangle)\langle\nabla f\cdot\nabla\sigma_*(\cdot,x'),m_s\rangle\pi(\di x',\di y')\Big]^2\pi(\di x,\di y) \di s.
\end{align*}
 Recall that by Theorem~\ref{thm:lln}, $\mu^N\overset{p}{\to}\bar \mu$ in  $ \mathcal
D(\mathbf{R}_+,\mathcal P_{\gamma}(\mathbf{R}^d))$. 
mapping theorem, we have for all $t\in \mathbf R_+$ and  
$f\in\mathcal C^{2,\gamma}(\mathbf R^d)$:
\begin{align*}
\alpha^2\int_0^t\textbf{E}\left[\frac{1}{|B_{\lfloor Ns\rfloor}|}\right]\Var_\pi( \mathrm Q ^N[f](x,y,\{W_{\lfloor Ns\rfloor}^i\}_i ))\di s\ \overset{p}{\to}\ \mathfrak c_t[f]:=\alpha^2\int_0^t\textbf{E}\left[\frac{1}{|B_{\infty}|}\right]\Var (\mathrm Q_s[f](x,y))\di s.
\end{align*}  
Note that $t\in \mathbf R_+\mapsto \mathfrak c_t[f]$ is locally
Lipschitz continuous  since for all $s\in [0,t]$,
$\Var (\mathrm Q_s[f](x,y))\le C \int_{\mathcal X\times Y}
(|y|^2+1)\pi(\di x,\di y) \Vert f\Vert^2 _{\mathcal C^{1,\gamma}}
\sup_{s\in [0,t]} |\langle (1+|\cdot |^{ \gamma}), \bar \mu_s\rangle|^2$.
Let us now consider the second term in the right-hand side
of~\eqref{79}. Using \eqref{ineg <sigma, nu_k>} and Lemma~\ref{le.W},
  $\mathbf{E}[|\mathrm Q^N[f](x,y, \{W_k^i\}_i) |^2]\leq C\|f\|_{\mathcal C^{2,\gamma}}^2$. Consequently, it holds:
$$\textbf{E}\Big[\Big|\alpha^2\int_{\frac{\lfloor Nt\rfloor}{N}}^t\textbf{E}\Big[\frac{1}{|B_{\lfloor Ns\rfloor}|}\Big]\Var_\pi\big( \mathrm Q ^N[f](x,y,\{W_{\lfloor Ns\rfloor}^i\}_i)\big)\di s\Big|\Big]\xrightarrow[N\to\infty]{} 0.$$
We have thus shown   that 
\begin{align*}
N\sum_{k=0}^{\lfloor Nt\rfloor-1}\textbf{E}[\langle f,M_k^N\rangle^2|\mathcal{F}_k^N]\xrightarrow[N\to\infty]{p} \alpha^2\int_0^t\textbf{E}\left[\frac{1}{|B_{\infty}|}\right]\Var (\mathrm Q_s[f](x,y))\di s.
\end{align*}
At this point, the study of the first term in the right-hand side
of~\eqref{q variation} is complete. It remains to study the second
term in the right-hand side of~\eqref{q
  variation}. Using~\eqref{2.40}, we obtain:
\begin{align*}
N^2\textbf{E}\Big[\Big|\sum_{k=0}^{\lfloor Nt\rfloor -1}\langle f,M_k^N\rangle^2-\textbf{E}[\langle f,M_k^N\rangle^2|\mathcal{F}_k^N]\Big|^2\Big]= N^2\sum_{k=0}^{\lfloor Nt\rfloor -1}\textbf{E}\Big[\Big|\langle f,M_k^N\rangle^2-\textbf{E}[\langle f,M_k^N\rangle^2|\mathcal{F}_k^N]\Big|^2\Big]\\
\leq CN^2\sum_{k=0}^{\lfloor Nt\rfloor -1}\textbf{E}[\langle f,M_k^N\rangle^4]\leq CN^2\|f\|^4_{\mathcal C^{2,\gamma_*}}/N^3 \to 0.
\end{align*}
In conclusion, we have proved that for every $t\in\mathbf{R}_+$,
  \begin{equation}\label{eq.convAt}
 \text{$\mathfrak a^N_t[f] \overset{p}{\to} \mathfrak c_t[f]$ as $N\to +\infty$. }
   \end{equation}
By \cite[Theorem 7.1.4]{ethier2009markov}, the proof of    Proposition~\ref{prop:convergence to martingale0} is complete. 
 
\end{proof}


\begin{prop}\label{prop:convergence to martingale}
  Let $\beta>3/4$ and assume~\ref{as:batch}-\ref{as:batch limite}.
  Consider a  family $\mathscr F=\{f_a\}_{a\ge1}$ of elements of
  $\mathcal C^{2,\gamma}(\mathbf{R}^d)$.  Then, for $k\ge 1$, the
  sequence
$$\lbrace t\mapsto (\sqrt N\langle f_1,M_t^N\rangle,\dots,\sqrt N\langle f_{k},M_t^N\rangle)^T,t\in\mathbf R_+\}_{N\ge1}$$
converges in distribution in $\mathcal D(\mathbf R_+,\mathbf R^{k})$
towards a process
$Y_k^{\mathscr F}=\{t\mapsto ({Y}_t^1,\dots,{Y}_t^k)^T
,t\in\mathbf{R}_+\}\in \mathcal C(\mathbf{R}_+,\mathbf{R}^{k})$ with
zero-mean and independent Gaussian increments (which is thus a
martingale). In addition, for all $0\le s\le t$,
\begin{equation}\label{eq.cov}
\Cov({Y}_t^i,{Y}_s^j)=\alpha^2\mathbf{E}\left[\frac{1}{|B_{\infty}|}\right] \int_0^s \Cov (\mathrm Q_v[f_i](x,y),\mathrm Q_v[f_j ](x,y))\di v,\quad 1\leq i,j\leq {k}.
\end{equation}
\end{prop}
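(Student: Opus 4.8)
The plan is to upgrade the one-dimensional convergence of Proposition~\ref{prop:convergence to martingale0} to a joint (multivariate) statement by exploiting linearity and the Cramér--Wold device, together with the martingale central limit theorem \cite[Theorem 7.1.4]{ethier2009markov} in its vector-valued form. First I would record that for any finite collection $f_1,\dots,f_k\in\mathcal C^{2,\gamma}(\mathbf R^d)$ and any $\lambda=(\lambda_1,\dots,\lambda_k)^T\in\mathbf R^k$, the linear combination $\sum_{a=1}^k\lambda_a\sqrt N\langle f_a,M_t^N\rangle=\sqrt N\langle \sum_{a=1}^k\lambda_a f_a,M_t^N\rangle$ is again of the form treated in Proposition~\ref{prop:convergence to martingale0}, since $\sum_a\lambda_a f_a\in\mathcal C^{2,\gamma}(\mathbf R^d)$ and $f\mapsto\langle f,M_t^N\rangle$ is linear (clear from~\eqref{def Mk} and~\eqref{eq.def-M}). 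Hence each such scalar process converges in distribution in $\mathcal D(\mathbf R_+,\mathbf R)$ to a zero-mean Gaussian martingale with variance $\alpha^2\mathbf E[1/|B_\infty|]\int_0^t\Var(\mathrm Q_s[\sum_a\lambda_a f_a](x,y))\,\di s$, and, since $f\mapsto \mathrm Q_s[f]$ is linear in $f$, this variance equals $\sum_{a,b}\lambda_a\lambda_b\,\alpha^2\mathbf E[1/|B_\infty|]\int_0^t\Cov(\mathrm Q_s[f_a](x,y),\mathrm Q_s[f_b](x,y))\,\di s$.

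The second step is to run the vector martingale CLT directly. I would check Condition (a) in \cite[Theorem 7.1.4]{ethier2009markov}: the jump bound $\mathbf E[\sup_{t\le T}|\sqrt N\langle f_a,M_t^N\rangle-\sqrt N\langle f_a,M_{t^-}^N\rangle|]\to0$ holds for each $a$ by~\eqref{eq.Mk-convD}, hence for the $\mathbf R^k$-valued process by summing. For the quadratic-covariation condition I need that the matrix process $\mathfrak A^N_t$ with entries $N\sum_{\ell=0}^{\lfloor Nt\rfloor-1}\langle f_a,M_\ell^N\rangle\langle f_b,M_\ell^N\rangle$ converges in probability, for each fixed $t$, to $\mathfrak C_t$ with entries $\mathfrak c_t[f_a,f_b]:=\alpha^2\mathbf E[1/|B_\infty|]\int_0^t\Cov(\mathrm Q_s[f_a](x,y),\mathrm Q_s[f_b](x,y))\,\di s$; this matrix is continuous in $t$, vanishes at $0$, and is nondecreasing in the positive-semidefinite order (its increments are covariance matrices). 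The diagonal entries are exactly~\eqref{eq.convAt}; the off-diagonal entries follow by polarization, $\langle f_a,M_\ell^N\rangle\langle f_b,M_\ell^N\rangle=\tfrac14(\langle f_a{+}f_b,M_\ell^N\rangle^2-\langle f_a{-}f_b,M_\ell^N\rangle^2)$, combined with~\eqref{eq.convAt} applied to $f_a\pm f_b$ and the linearity of $\mathrm Q_s$. Then \cite[Theorem 7.1.4]{ethier2009markov} gives convergence in distribution in $\mathcal D(\mathbf R_+,\mathbf R^k)$ to a continuous process $Y_k^{\mathscr F}$ with independent Gaussian increments, zero mean, and covariation $\mathfrak C_t$; evaluating the covariance of a martingale with independent increments at times $s\le t$ via $\Cov(Y_t^i,Y_s^j)=\Cov(Y_s^i,Y_s^j)=\mathfrak c_s[f_i,f_j]$ yields~\eqref{eq.cov}.

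I expect the main obstacle, modest as it is, to be the verification of the joint quadratic-variation convergence in probability to a deterministic limit, i.e.\ transferring the scalar statement~\eqref{eq.convAt} to every linear combination uniformly enough to conclude the matrix convergence; this is handled cleanly by the polarization identity above, since each of $f_a+f_b$ and $f_a-f_b$ is again in $\mathcal C^{2,\gamma}(\mathbf R^d)$ and Proposition~\ref{prop:convergence to martingale0} applies verbatim, and convergence in probability of finitely many scalars upgrades to convergence in probability of the $\mathbf R^{k\times k}$-valued matrix. One small point to double-check is that $\mathfrak C_t$ is genuinely a bona fide covariation matrix function (continuity and monotonicity), which is immediate since for $s\le t$, $\mathfrak C_t-\mathfrak C_s=\alpha^2\mathbf E[1/|B_\infty|]\int_s^t\big(\Cov(\mathrm Q_v[f_i],\mathrm Q_v[f_j])\big)_{i,j}\,\di v$ is an integral of positive-semidefinite matrices, hence positive semidefinite, and continuity in $t$ follows from the local bound $\Var(\mathrm Q_v[f](x,y))\le C\|f\|_{\mathcal C^{1,\gamma}}^2$ used at the end of the proof of Proposition~\ref{prop:convergence to martingale0}. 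Assembling these observations completes the proof.
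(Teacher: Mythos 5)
Your proposal is correct and follows essentially the same route as the paper: both verify the hypotheses of the vector-valued martingale CLT of Ethier--Kurtz, reduce the convergence of the off-diagonal entries of the covariation matrix to the scalar statement~\eqref{eq.convAt} via a polarization identity (you use $\tfrac14[(f_a+f_b)^2-(f_a-f_b)^2]$ where the paper uses $\tfrac12[(f_i+f_j)^2-f_i^2-f_j^2]$, an immaterial difference), and check that $\mathfrak C_t$ has continuous, positive-semidefinite increments. The Cram\'er--Wold remarks in your opening paragraph are not needed for (and would not by themselves deliver) the process-level convergence, but your actual argument does not rely on them.
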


Notice that~\eqref{eq.cov} is exactly~\eqref{eq.cov2}. 
\begin{proof}
  Set for ease of notation,
  $ \mathscr M^N_t=(\sqrt N\langle f_1,M_t^N\rangle,\dots,\sqrt
  N\langle f_{k},M_t^N\rangle)^T$, $t\in \mathbf R_+$. We have
  $ \mathscr M^N_t = \sum_{q=0}^{\lfloor Nt\rfloor -1} \xi_q^N$, where
  $ \xi_q^N =(\sqrt N\langle f_1,M_q^N\rangle,\dots,\sqrt N\langle
  f_{k},M_q^N\rangle)^T$ (see indeed~\eqref{eq.def-M}).  From
  \cite[Remark 7.1.5]{ethier2009markov}, the covariation matrix of
  $ \mathscr M^N_t $ is
$$  \mathscr A^N_t [f_1,\ldots,f_k]:=N\sum_{q=0}^{\lfloor Nt\rfloor -1} \xi_q^N(\xi_q^N)^T =N\sum_{q=0}^{\lfloor Nt\rfloor -1} (\langle f_i,M_q^N\rangle\langle f_j,M_q^N\rangle)_{i,j=1,\ldots,k} .$$ 
If $t\ge s$, we have
$ \mathscr A^N_t[f_1,\ldots,f_k]- \mathscr
A^N_s[f_1,\ldots,f_k]=N\sum_{q=\lfloor Ns\rfloor}^{\lfloor Nt\rfloor
  -1} \xi_q^T\xi_q \ge 0$.  By~\eqref{1.14 EK}, Condition (a) in
\cite[Theorem 7.1.4]{ethier2009markov} is satisfied for
$\mathscr M^N$.  Secondly, condition (1.19) in \cite[Theorem
7.1.4]{ethier2009markov} is satisfied, using the decomposition
\begin{align*}
\mathscr  A^N_t[f_1,\ldots,f_k]_{i,j}=  N\sum_{q=0}^{\lfloor Nt\rfloor -1} \langle f_i,M_q^N\rangle\langle f_j,M_q^N\rangle&=N\sum_{q=0}^{\lfloor Nt\rfloor -1}\frac{1}{2}(\langle f_i+f_j,M_q^N\rangle^2-\langle f_i,M_q^N\rangle^2-\langle f_j,M_q^N\rangle^2)\\
&=\frac 12 \big(\mathfrak a^N_t[f_i+f_j]-\mathfrak a^N_t[f_i]-\mathfrak a^N_t[f_j]\big)\\
&\overset{p}{\to} \frac 12 \big(\mathfrak c_t[f_i+f_j]-\mathfrak c_t[f_i]-\mathfrak c_t[f_j]\big)  \text{ (by~\eqref{eq.convAt})}\\
&=\alpha^2\int_0^t\mathbf{E}\left[\frac{1}{|B_{\infty}|}\right]\Cov (\mathrm Q_v[f_i](x,y),\mathrm Q_v[f_j ](x,y))\di v:= (\mathfrak C_t)_{i,j}.
\end{align*}
It remains to check that $\mathfrak C$ satisfies the assumptions of
\cite[Theorem 7.1.1]{ethier2009markov}.  Clearly $\mathfrak C(0)=0$
and $t\in \mathbf R_+\mapsto \mathfrak C_t$ is continuous. In
addition, if $0\le s\le t$,
\begin{align*}
\mathfrak C_t-\mathfrak C_s&=\alpha^2\Big(\int_s^t\mathbf{E}\left[\frac{1}{|B_{\infty}|}\right]\Cov (\mathrm Q_v[f_i](x,y),\mathrm Q_v[f_j ](x,y))\di v\Big)_{i,j=1,\ldots,k}\\
&=\alpha^2 \int_s^t\mathbf{E}\left[\frac{1}{|B_{\infty}|}\right]\mathbf E [\Xi_v (x,y) \Xi_v^T(x,y)]\di v,
\end{align*}
where
$\Xi_v(x,y)_i=\mathrm Q_v[f_i ](x,y) - \mathbf E [\mathrm Q_v[f_i
](x,y)]$, $i\in \{1,\ldots,k\}$. Thus, $\mathfrak C_t-\mathfrak C_s$
is symmetric and non negative definite.  The proof of
Proposition~\ref{prop:convergence to martingale} is complete.
\end{proof}


\begin{prop}\label{prop.CVMN}
\begin{sloppypar}
  Let $\beta>3/4$ and assume that~\ref{as:batch}-\ref{as:batch limite}
  hold. Then, the sequence
  $(\sqrt{N}M^N)_{N\ge1}$ converges in
  distribution in
  $\mathcal D(\mathbf R_+,\mathcal H^{-J_0,j_0}(\mathbf R^d))$ to a
  {\rm G}-process
  $\mathscr G\in \mathcal
  C(\mathbf R_+,\mathcal H^{-J_0,j_0}(\mathbf{R}^d))$ (see
  Definition~\ref{de.gaussian}).
\end{sloppypar}
\end{prop}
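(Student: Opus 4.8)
The plan is to combine the relative compactness of $(\sqrt N M^N)_{N\ge 1}$ in $\mathcal D(\mathbf R_+,\mathcal H^{-J_0,j_0}(\mathbf R^d))$ (Proposition~\ref{prop relative compactness M}) with the finite-dimensional convergence of Proposition~\ref{prop:convergence to martingale} and the fact (announced after Definition~\ref{de.gaussian}) that the law of an $\mathcal H^{-J,b}$-valued process is determined by the joint laws of its coordinate processes against an orthonormal basis. First I would fix an orthonormal basis $\{f_a\}_{a\ge 1}$ of $\mathcal H^{J_0,j_0}(\mathbf R^d)$ consisting of elements of $\mathcal C_c^\infty(\mathbf R^d)\subset\mathcal C^{2,\gamma}(\mathbf R^d)$ (such a basis exists by density of $\mathcal C_c^\infty$ and since $2\gamma>d$, as used repeatedly in the paper). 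By Proposition~\ref{prop relative compactness M}, $(\sqrt N M^N)_{N\ge 1}$ is relatively compact, so it suffices to show that every limit point has the same law, namely that of a G-process, and then conclude that the whole sequence converges.

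Let $\mathscr G$ be a limit point, obtained as the distributional limit of a subsequence $\sqrt{N'}M^{N'}\to\mathscr G$ in $\mathcal D(\mathbf R_+,\mathcal H^{-J_0,j_0}(\mathbf R^d))$. By Lemma~\ref{le.conVMM}, $\mathscr G\in\mathcal C(\mathbf R_+,\mathcal H^{-J_0,j_0}(\mathbf R^d))$ a.s. For each fixed $k\ge 1$, the evaluation map $\Phi\in\mathcal D(\mathbf R_+,\mathcal H^{-J_0,j_0}(\mathbf R^d))\mapsto (\langle f_1,\Phi\rangle,\dots,\langle f_k,\Phi\rangle)\in\mathcal D(\mathbf R_+,\mathbf R^k)$ is continuous, so $(\sqrt{N'}\langle f_1,M^{N'}\rangle,\dots,\sqrt{N'}\langle f_k,M^{N'}\rangle)\to(\langle f_1,\mathscr G\rangle,\dots,\langle f_k,\mathscr G\rangle)$ in distribution. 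On the other hand, Proposition~\ref{prop:convergence to martingale} (applied with $\mathscr F=\{f_1,\dots,f_k\}$) identifies this limit as the process $Y_k^{\mathscr F}$ with zero-mean independent Gaussian increments and covariance~\eqref{eq.cov}, which is exactly~\eqref{eq.cov2}. Hence, for every $k$ and every choice of basis elements, the finite-dimensional coordinate process $(\langle f_1,\mathscr G\rangle,\dots,\langle f_k,\mathscr G\rangle)$ has the law prescribed in Definition~\ref{de.gaussian}; by the determination-of-law statement (Proposition~\ref{prop P Q coincide}), $\mathscr G$ is a G-process.

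Since every limit point of $(\sqrt N M^N)_{N\ge 1}$ is a G-process and, by Definition~\ref{de.gaussian} together with~\eqref{eq.cov2}, the law of a G-process is uniquely determined (the covariance structure fixes all finite-dimensional distributions of a zero-mean Gaussian process, and these in turn fix the law on $\mathcal H^{-J_0,j_0}(\mathbf R^d)$), all limit points coincide in law. Combined with relative compactness (Proposition~\ref{prop relative compactness M}), this yields convergence in distribution of the whole sequence $(\sqrt N M^N)_{N\ge 1}$ to a G-process $\mathscr G\in\mathcal C(\mathbf R_+,\mathcal H^{-J_0,j_0}(\mathbf R^d))$, which is the claim. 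The only mildly delicate point is to make sure that Proposition~\ref{prop:convergence to martingale} is applicable for an arbitrary finite sub-family of the fixed orthonormal basis — which it is, since each $f_a\in\mathcal C_c^\infty(\mathbf R^d)\subset\mathcal C^{2,\gamma}(\mathbf R^d)$ — and that the identification of the joint law through coordinates against a countable dense basis suffices, which is precisely the content of the cited Proposition~\ref{prop P Q coincide}; no genuinely new estimate is needed here.
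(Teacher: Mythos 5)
Your proposal is correct and follows essentially the same route as the paper: relative compactness (Proposition~\ref{prop relative compactness M}), identification of the finite-dimensional coordinate laws via Proposition~\ref{prop:convergence to martingale}, uniqueness of the limit point through the separating-class result Proposition~\ref{prop P Q coincide}, and path continuity from Lemma~\ref{le.conVMM}. The only detail the paper adds is a final step checking the covariance structure~\eqref{eq.cov2} against \emph{arbitrary} finite families $f_1,\dots,f_k\in\mathcal H^{J_0,j_0}(\mathbf R^d)$ (not only basis elements), as Definition~\ref{de.gaussian} requires; this follows by the same continuous-mapping argument you already use.
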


\noindent
To prove Proposition~\ref{prop.CVMN}, we will prove that there is a
unique limit point of the sequence
$(\sqrt{N}M^N)_{N\ge1}$ in
$\mathcal D(\mathbf R_+,\mathcal H^{-J_0,j_0}(\mathbf R^d))$ (recall
that this sequence is relatively compact in this space, see
Proposition~\ref{prop relative compactness M}), so that the whole
sequence converges in distribution.  Proposition~\ref{prop:convergence
  to martingale} will then imply that this unique limit point is a
G-process. Before, we need to introduce some definitions.  For a
family $\mathscr F=\{f_a\}_{a\ge1}$ of elements of
$\mathcal H^{J_0,j_0}(\mathbf{R}^d)$, we define, for $k\ge1$, the
projection
$$
\pi_k^{\mathscr F}:\mathcal D(\mathbf{R}_+,\mathcal H^{-J_0,j_0}(\mathbf{R}^d))\to \mathcal D(\mathbf{R}_+,\mathbf{R})^k,\quad m\mapsto (\langle f_1,m\rangle,\dots, \langle f_k,m\rangle)^T  .
$$
The function $\pi_k^{\mathscr F}$ is continuous.  In the following,
$\mathscr H=\{h_a\}_{a\ge1}$ is an orthonormal basis of
$\mathcal H^{J_0,j_0}(\mathbf{R}^d)$.  Let $d_R$ be a metric for the
Skorohod topology on $\mathcal D(\mathbf{R}_+,\mathbf{R})$.  Introduce
the space $\mathcal D(\mathbf{R}_+,\mathbf{R})^\infty$ defined as the
set of sequences taking values in
$\mathcal D(\mathbf{R}_+,\mathbf{R})$. We endow
$\mathcal D(\mathbf{R}_+,\mathbf{R})^\infty$ with the metric
$\rho(u,v)=\sum_{a\ge1}2^{-a}\min(1,d_R(u_a,v_a))$. We consider on
$\mathcal D(\mathbf{R}_+,\mathbf{R})^\infty$ the topology associated
with $\rho$. We have that $\rho(u^N,u)\to 0$ if and only if
$d_R(u_a^N,u_a)\to 0$ for all $a\ge1$. Notice with that with this
metric $\rho$, $\mathcal D(\mathbf{R}_+,\mathbf{R})^\infty$ is
separable, since $\mathcal D(\mathbf{R}_+,\mathbf{R})$ is
separable~\cite[Theorem 3.5.6]{ethier2009markov}.  We now define the
map
$$\Pi:  \mathcal D(\mathbf{R}_+,\mathcal H^{-J_0,j_0}(\mathbf{R}^d))\to  \mathcal D(\mathbf{R}_+,\mathbf{R})^\infty, \quad m\mapsto(\langle h_a,m\rangle) ^T_{a\ge 1}.$$
This map is injective (because ${\mathscr H}$ is a basis of
$\mathcal H^{J_0,j_0}(\mathbf{R}^d)$) and continuous.  The map $\Pi$
depends on the orthonormal basis ${\mathscr H}$ but, for ease of
notation, we have omitted to write it.
Finally, we introduce the continuous function
$$p_k: \mathcal D(\mathbf{R}_+,\mathbf{R})^\infty\to  \mathcal D(\mathbf{R}_+,\mathbf{R})^k, \quad (m_a) ^T_{a\ge 1}\mapsto (m_1,\dots,m_k)^T.$$
It holds 
$$\pi_k^{\mathscr H}=p_k\circ\Pi.$$ 
We now introduce the set
$$\mathcal{C}:= \big\{p_k^{-1}(H),\quad H\in \mathcal{B}( \mathcal D(\mathbf{R}_+,\mathbf{R})^k),\ k\ge1 \big\}\subset  \mathcal D(\mathbf{R}_+,\mathbf{R})^\infty,$$
where $\mathcal{B}( \mathcal D(\mathbf{R}_+,\mathbf{R})^k)$ denotes
the Borel $\boldsymbol \sigma$-algebra of
$ \mathcal D(\mathbf{R}_+,\mathbf{R})^k$.  The continuity of $p_k$
implies that
$\mathcal{C}\subset\mathcal{B}(\mathcal
D(\mathbf{R}_+,\mathbf{R})^\infty)$. The following result shows that
$\mathcal{C}$ is a separating class of
$(\mathcal D(\mathbf{R}_+,\mathbf{R})^\infty,\mathcal{B}(\mathcal
D(\mathbf{R}_+,\mathbf{R})^\infty))$, where we recall that this means
by definition that two probability measures on
$\mathcal{B}(\mathcal D(\mathbf{R}_+,\mathbf{R})^\infty)$ which agree
on $\mathcal C$ necessarily agree on
$\mathcal{B}(\mathcal D(\mathbf{R}_+,\mathbf{R})^\infty)$.

 
\begin{lem}\label{lem separating class}
  The set $\mathcal{C}$ is a separating class of
  $(\mathcal D(\mathbf{R}_+,\mathbf{R})^\infty,\mathcal{B}(\mathcal
  D(\mathbf{R}_+,\mathbf{R})^\infty))$.
\end{lem}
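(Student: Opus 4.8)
The plan is to show that $\mathcal C$ is a separating class by reducing the question to a standard fact about cylinder sets on a countable product of Polish spaces. First I would recall that $\mathcal D(\mathbf R_+,\mathbf R)$, equipped with the Skorohod metric $d_R$, is a separable metric space (see \cite[Theorem 3.5.6]{ethier2009markov}); consequently $\mathcal D(\mathbf R_+,\mathbf R)^\infty$ with the product metric $\rho$ is also separable and metric. The key structural observation is that $\mathcal C$ is exactly the collection of finite-dimensional cylinder sets: for each $k\ge 1$, $p_k^{-1}(\mathcal B(\mathcal D(\mathbf R_+,\mathbf R)^k))$ is the $\boldsymbol\sigma$-algebra of sets depending only on the first $k$ coordinates, and as $k$ ranges over $\mathbf N^*$ these form an increasing family of $\boldsymbol\sigma$-algebras whose union is $\mathcal C$.

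Next I would verify that $\mathcal C$ generates the Borel $\boldsymbol\sigma$-algebra of $\mathcal D(\mathbf R_+,\mathbf R)^\infty$. This follows from separability: since each factor is separable metric, the Borel $\boldsymbol\sigma$-algebra of the countable product coincides with the product $\boldsymbol\sigma$-algebra, i.e. $\mathcal B(\mathcal D(\mathbf R_+,\mathbf R)^\infty)=\boldsymbol\sigma(\mathcal C)$. Indeed, a basis of the product topology is given by cylinders $\prod_{a=1}^k U_a\times\prod_{a>k}\mathcal D(\mathbf R_+,\mathbf R)$ with $U_a$ open, which belong to $\mathcal C$; and a separable metric space has a countable basis, so every open set is a countable union of such cylinders, hence lies in $\boldsymbol\sigma(\mathcal C)$. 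The reverse inclusion $\boldsymbol\sigma(\mathcal C)\subset\mathcal B(\mathcal D(\mathbf R_+,\mathbf R)^\infty)$ was already noted in the excerpt from the continuity of each $p_k$.

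Finally I would invoke the fact that if two probability measures $P_1,P_2$ on a measurable space agree on a collection $\mathcal C$ that is closed under finite intersections (a $\pi$-system) and generates the $\boldsymbol\sigma$-algebra, then they agree on the whole $\boldsymbol\sigma$-algebra (Dynkin's $\pi$--$\lambda$ theorem). So it remains to check that $\mathcal C$ is a $\pi$-system: given $p_{k_1}^{-1}(H_1)$ and $p_{k_2}^{-1}(H_2)$ with $k_1\le k_2$, their intersection equals $p_{k_2}^{-1}\big((H_1\times\mathcal D(\mathbf R_+,\mathbf R)^{k_2-k_1})\cap H_2\big)$, which is again of the required form since the set in parentheses is Borel in $\mathcal D(\mathbf R_+,\mathbf R)^{k_2}$. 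Combining these three points — $\mathcal C$ is a $\pi$-system, $\mathcal C$ generates $\mathcal B(\mathcal D(\mathbf R_+,\mathbf R)^\infty)$, and $\mathcal C\subset\mathcal B(\mathcal D(\mathbf R_+,\mathbf R)^\infty)$ — yields that $\mathcal C$ is a separating class, as claimed. The only mildly delicate point is the identification $\mathcal B(\mathcal D(\mathbf R_+,\mathbf R)^\infty)=\boldsymbol\sigma(\mathcal C)$, which is where separability of $\mathcal D(\mathbf R_+,\mathbf R)$ is essential; everything else is routine bookkeeping with cylinder sets.
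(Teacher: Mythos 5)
Your proposal is correct and follows the same overall architecture as the paper's proof: show that $\mathcal C$ is a $\pi$-system, show that it generates $\mathcal B(\mathcal D(\mathbf R_+,\mathbf R)^\infty)$, and conclude via the standard fact that a generating $\pi$-system is a separating class. The $\pi$-system step is identical. Where you diverge is in the generation step, which occupies most of the paper's proof: the paper argues by hand, introducing the sets $\mathcal N_{k,\epsilon}(x)=\{y:\ d_R(x_a,y_a)<\epsilon,\ 1\le a\le k\}$, checking that they are open, that each is contained in a $\rho$-ball, and then using a dense countable subset to write every open set of $(\mathcal D(\mathbf R_+,\mathbf R)^\infty,\rho)$ as a countable union of such sets. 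You instead observe that $\rho$ metrizes the product topology and invoke the standard theorem that for a countable product of separable metric spaces the Borel $\sigma$-algebra coincides with the product $\sigma$-algebra, combined with second countability to write open sets as countable unions of open cylinders (which lie in $\mathcal C$). Both arguments rest on exactly the same ingredients (separability of $\mathcal D(\mathbf R_+,\mathbf R)$ and the cylinder structure of $\mathcal C$); yours is shorter and cleaner at the cost of citing a textbook fact, while the paper's is self-contained. Your version is perfectly valid, the sets $\mathcal N_{k,\epsilon}(x)$ of the paper being essentially the basis cylinders you describe.
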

\begin{proof} We recall that any subset of
  $\mathcal{B}( \mathcal D(\mathbf{R}_+,\mathbf{R})^\infty)$ which is
  a $\pi$-system (i.e. closed under finite intersection) and which
  generates the $\boldsymbol \sigma$-algebra
  $\mathcal{B}(\mathcal D(\mathbf{R}_+,\mathbf{R})^\infty)$ is a
  separating class (see \cite[Page 9]{billingsley2013convergence}).
  Let us first prove that $\mathcal{C}$ is a $\pi$-system.  Notice
  that it holds
  $p_k^{-1}(H)=p_{k+1}^{-1}(H\times \mathcal
  D(\mathbf{R}_+,\mathbf{R}))$.  Thus, if $A$ and $A'\in \mathcal{C}$
  (write $A=p_k^{-1}(H)$ and $A'=p_{k'}^{-1}(H')$, and assume that
  $k'\ge k$), then
  $A\cap A'=p_{k'}^{-1}((H\times \mathcal
  D(\mathbf{R}_+,\mathbf{R})\ldots \times \mathcal
  D(\mathbf{R}_+,\mathbf{R}))\cap H')\in \mathcal C$. Consequently
  $\mathcal{C}$ is a $\pi$-system.  It remains to show that the
  $\boldsymbol \sigma$-algebra generated by $\mathcal{C}$ is equal to
  $\mathcal{B}( \mathcal D(\mathbf{R}_+,\mathbf{R})^\infty)$.  To
  prove it, it is sufficient to prove that any open set of
  $\mathcal D(\mathbf{R}_+,\mathbf{R})^\infty$ is a countable union of
  sets in $\mathcal C$.  Introduce, for
  $x\in \mathcal D(\mathbf{R}_+,\mathbf{R})^\infty$, $k\ge1$ and $\epsilon>0$,
\begin{equation}\label{eq. N_ke}
\mathcal N_{k,\epsilon}(x):=\{y\in\mathcal  D(\mathbf{R}_+,\mathbf{R})^\infty; \ d_R(x_a,y_a)<\epsilon,\ 1\leq a\leq k\}  \subset  \mathcal C.
\end{equation}
By straightforward arguments $\mathcal N_{k,\epsilon}(x)$ is open in
$(\mathcal D(\mathbf{R}_+,\mathbf{R})^\infty,\rho)$.  Remark that for
all $y\in \mathcal N_{k,\epsilon}(x)$, it holds
$\rho(x,y)<\epsilon+2^{-k}$. Given $r>0$, choose $\epsilon>0$ and
$k\ge1$ such that $\epsilon+2^{-k}<r$. Then,
$\mathcal N_{k,\epsilon}(x)\subset B_\rho(x,r)$ where $B_\rho(x,r)$ is
the open ball of center $x$ and radius $r$ for the metric $\rho$ of
$\mathcal D(\mathbf{R}_+,\mathbf{R})^\infty$. The space
$\mathcal D(\mathbf{R}_+,\mathbf{R})^\infty$ is separable and we
consider $\mathrm D$ a dense and countable subset of
$(\mathcal D(\mathbf{R}_+,\mathbf{R})^\infty,\rho)$. Let $\mathcal O$
be an open subset of
$(\mathcal D(\mathbf{R}_+,\mathbf{R})^\infty,\rho)$.  We claim that
$$\mathcal O= \mathcal N_{\mathcal O} \text{ where } \mathcal N_{\mathcal O}:=\bigcup_{\substack{x\in \mathrm  D\cap {\mathcal O}, \ k\ge 1\\ \epsilon \in \mathbf Q\cap \mathbf R_+^*,\ \mathcal N_{k,\epsilon}(x)\subset \mathcal O}} \mathcal N_{k,\epsilon}(x).$$
We have $\mathcal N_{\mathcal O}\subset {\mathcal O}$. Let us now show
that ${\mathcal O}\subset \mathcal N_{\mathcal O}$.  To this end, pick
$y\in {\mathcal O}$ and $r_0>0$ such that
$B_\rho(y,r_0)\subset {\mathcal O}$.  Choose $k_0\ge 1$ such that
$$2^{-k_0}< \frac{r_0}{4}.$$
Consider, for $n\ge 1$,  $x^n\in 
\mathrm  D$ such that $\rho(y,x^n)<1/n$. 
Choose  $n\ge 1$ such that  
$$\frac 1n  + \frac{r_0}{2} <r_0 \text{ and } \frac {2^{k_0}}n < \frac{r_0}{4}.$$
Since for all $a\ge 1$, $d_R(y_a,x^n_a)\to 0$ as $n\to +\infty$ (by
definition of $\rho$), we choose if necessary $n\ge 1$ larger so that
$\min(1,d_R(y_a,x^n_a))=d_R(y_a,x^n_a)$ for all $a=1,\ldots,k_0$.
Finally, choose $\epsilon_{k_0,n} = {2^{k_0}}/n\in \mathbf Q$. We have
for all $a=1,\ldots,k_0$,
$d_R(x^a_n,y_a)\le \rho(x^n,y) 2^a\le\rho(x^n,y)
2^{k_0}<2^{k_0}/n=\epsilon_{k_0,n}$, so that
$y\in \mathcal N_{k_0,\epsilon_{k_0,n}}(x^n)$. It just remains to
check that $\mathcal N_{k_0,\epsilon_{k_0,n}}(x^n)\subset \mathcal O$
to ensure that
$\mathcal N_{k_0,\epsilon_{k_0,n}}(x^n)\subset \mathcal N_{\mathcal
  O}$. We have
$\rho(y,x^n)<\epsilon_{k_0,n}+2^{-k_0}<r_0/4+r_0/4=r_0/2$ and thus
$\mathcal N_{k_0,\epsilon_{k_0,n}}(x^n)\subset B_\rho(x^n,
r_0/2)$. Since $1/n + r_0/2 <r_0$, by triangular inequality,
$B_\rho(x^n, r_0/2)\subset B_\rho(y, r_0)\subset \mathcal O$. Thus,
$\mathcal N_{k_0,\epsilon_{k_0,n}}(x^n)\subset \mathcal O$, which
proves that
$\mathcal N_{k_0,\epsilon_{k_0,n}}(x^n)\subset \mathcal N_{\mathcal
  O}$.  Consequently, we have proved that
$y\in \mathcal N_{k_0,\epsilon_{k_0,n}}(x^n) \subset \mathcal
N_{\mathcal O}$, and then that
$\mathcal O\subset \mathcal N_{\mathcal O}$. Thus,
$\mathcal O=\mathcal N_{\mathcal O}$.  In conclusion, every open set
$\mathcal O$ is a countable union of sets of the
form~\eqref{eq. N_ke}. This implies that
$\sigma(\mathcal{C})=\mathcal{B}( \mathcal
D(\mathbf{R}_+,\mathbf{R})^\infty)$ and therefore $\mathcal{C}$ is a
separating class.
\end{proof}

\begin{prop}\label{prop P Q coincide}
  Let $P,Q$ be two probability measures on
  $\mathcal D(\mathbf{R}_+,\mathcal H^{-J_0,j_0}(\mathbf R^d))$ such
  that $\pi_k^{\mathscr H}P=\pi_k^{\mathscr H}Q$ for all $k\ge
  1$. Then, $P=Q$.
\end{prop}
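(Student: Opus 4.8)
The goal is to show that a probability measure on $\mathcal D(\mathbf R_+,\mathcal H^{-J_0,j_0}(\mathbf R^d))$ is determined by the finite-dimensional family $\{\pi_k^{\mathscr H}P\}_{k\ge 1}$. The natural route is to push everything forward through the injective continuous map $\Pi: m\mapsto (\langle h_a,m\rangle)_{a\ge 1}^T$ into $\mathcal D(\mathbf R_+,\mathbf R)^\infty$, where we have just built the separating class $\mathcal C=\{p_k^{-1}(H)\}$ and proven (Lemma~\ref{lem separating class}) that it is separating. First I would observe that, since $\Pi$ is injective and (Borel-)measurable, $P$ is recovered from $\Pi_* P$ (the pushforward): for this one needs that $\Pi$ is a measurable isomorphism onto its image, which follows because $\Pi$ is a continuous injection between Polish spaces, hence its inverse on the image $\Pi(\mathcal D(\mathbf R_+,\mathcal H^{-J_0,j_0}))$ is measurable by the Lusin--Souslin theorem, and the image is Borel by the same theorem. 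Therefore it suffices to prove $\Pi_* P=\Pi_* Q$ on $\mathcal B(\mathcal D(\mathbf R_+,\mathbf R)^\infty)$.

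**Key steps.** By Lemma~\ref{lem separating class}, $\mathcal C$ is a separating class, so $\Pi_* P=\Pi_* Q$ as soon as they agree on every set $p_k^{-1}(H)$, $H\in\mathcal B(\mathcal D(\mathbf R_+,\mathbf R)^k)$. Now for such a set,
\begin{align*}
\Pi_* P\big(p_k^{-1}(H)\big)=P\big(\Pi^{-1}p_k^{-1}(H)\big)=P\big((p_k\circ\Pi)^{-1}(H)\big)=P\big((\pi_k^{\mathscr H})^{-1}(H)\big)=\big(\pi_k^{\mathscr H}P\big)(H),
\end{align*}
using the identity $\pi_k^{\mathscr H}=p_k\circ\Pi$ recorded just before the statement. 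The same computation holds for $Q$, and by hypothesis $\pi_k^{\mathscr H}P=\pi_k^{\mathscr H}Q$, so the two measures agree on $\mathcal C$. Hence $\Pi_* P=\Pi_* Q$, and by the measurable-isomorphism argument above, $P=Q$.

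**Main obstacle.** The only genuinely delicate point is the step asserting that $P$ is determined by $\Pi_* P$ — i.e. that a continuous injection between these (separable, completely metrizable) spaces has a Borel image and a measurable inverse. One should check that $\mathcal D(\mathbf R_+,\mathcal H^{-J_0,j_0}(\mathbf R^d))$ and $\mathcal D(\mathbf R_+,\mathbf R)^\infty$ are Polish: the former because $\mathcal H^{-J_0,j_0}(\mathbf R^d)$ is separable (being the dual of a separable Hilbert space) so the Skorohod space over it is Polish by \cite[Theorem 3.5.6]{ethier2009markov}, and the latter is Polish as a countable product of Polish spaces with the metric $\rho$. Then the Lusin--Souslin theorem applies. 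Alternatively, and perhaps more cleanly for the paper, one can avoid invoking the image being Borel: it is enough that for every Borel $A\subset\mathcal D(\mathbf R_+,\mathcal H^{-J_0,j_0})$ one has $P(A)=\Pi_* P(\Pi(A))$ with $\Pi(A)$ universally measurable (an analytic set), which suffices to equate $P$ and $Q$ on all Borel $A$ once $\Pi_* P=\Pi_* Q$ — but the Lusin--Souslin formulation is the most transparent and is what I would write.
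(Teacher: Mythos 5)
Your argument is essentially the paper's own proof: both reduce the claim to $\Pi_* P=\Pi_* Q$ via the identity $\pi_k^{\mathscr H}=p_k\circ\Pi$ together with the separating-class property of $\mathcal C$ from Lemma~\ref{lem separating class}, and then conclude from the injectivity of $\Pi$. The only difference is that you spell out why injectivity suffices (Borel image and measurable inverse via Lusin--Souslin for a continuous injection between Polish spaces), a point the paper dispatches with the bare remark that $\Pi$ admits a left inverse; your added care is sound and arguably an improvement.
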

\begin{proof}
  The equality $\pi_k^{\mathscr H}P=\pi_k^{\mathscr H}Q$ for all
  $k\ge 1$, writes $p_k\circ\Pi P=p_k\circ \Pi Q$. By Lemma~\ref{lem
    separating class},
$\Pi P=\Pi Q$. Since $\Pi$ is injective {it admits a left inverse
  $\Pi^{-1}$}, and therefore $P = Q $. The proof is complete.
\end{proof}

We are now in position to prove  Proposition~\ref{prop.CVMN}.

\begin{proof}[Proof of Proposition~\ref{prop.CVMN}]
  By Proposition~\ref{prop relative compactness M},
  $(\sqrt{N}M^{N})_{N\geq1}$ is relatively compact in
  $\mathcal D(\mathbf{R}_+,\mathcal H^{-J_0,j_0}(\mathbf{R}^d))$. Let
  $\mathscr M^*$ be one of its limit point. Let us show that
  $\mathscr M^*$ is independent of the extracted subsequence, say of
  $N'$. Since $\pi_k^{\mathscr H}$ is continuous for all $k\ge1$, the
  continuous mapping theorem implies that in
  $\mathcal D(\mathbf{R}_+,\mathbf{R})^k$,
$$\pi_k^{\mathscr H}(\sqrt{N'}M^{N'})\to \pi_k^{\mathscr H}(\mathscr M^*)\text{ in distribution, as }   N'\to +\infty.$$
For $k\ge 1$, introduce the following continuous and bijective mapping
$\mathscr Q_k:\mathcal D(\mathbf{R}_+,\mathbf{R}^k)\to \mathcal
D(\mathbf{R}_+,\mathbf{R})^k$ defined by:
$m\mapsto (m\cdot e_1, \ldots, m\cdot e_k)^T$, where
$\{e_1, \ldots, e_k\}$ denotes the canonical basis of $\mathbf R^k$.
By Proposition~\ref{prop:convergence to martingale} applied with
$f_a=h_a$, $a\in \{1,\ldots,k\}$ (recall $\mathcal H^{J_0,j_0}(\mathbf R^d)\subset \mathcal C^{2,\gamma}(\mathbf R^d)$), 
 it holds in $\mathcal D(\mathbf{R}_+,\mathbf{R}^k)$,
 $$\forall k\ge 1, \ \mathscr Q_k^{-1}\circ \pi_k^{\mathscr H}(\sqrt{N'}M^{N'})\to  Y_k^{\mathscr H} \text{ in distribution, as }   N'\to +\infty.$$
 Since $\mathscr Q_k$ is continuous, one then has in $\mathcal D(\mathbf{R}_+,\mathbf{R})^k$, 
 $$\forall k\ge 1, \   \pi_k^{\mathscr H}(\sqrt{N'}M^{N'})\to  \mathscr Q_k(Y_k^{\mathscr H}) \text{ in distribution, as }   N'\to +\infty.$$
 It follows that
 $\pi_k^{\mathscr H}(\mathscr M^*)=\mathscr Q_k(Y_k^{\mathscr H})$ in
 distribution. By Proposition~\ref{prop P Q coincide}, the
 distribution of $\mathscr M^*$ is fully determined by the collection
 of distributions of the processes $\pi_k^{\mathscr H}(\mathscr M^*)$,
 for $k\ge 1$. Thus, $\mathscr M^*$ is independent of the subsequence,
 and therefore the whole sequence $(\sqrt N M^N)_{N\geq1}$
 convergences to $\mathscr M^*$ in
 $\mathcal D(\mathbf{R}_+,\mathcal H^{-J_0,j_0}(\mathbf{R}^d))$. By
 Lemma~\ref{le.conVMM},
 $\mathscr M^*\in \mathcal C(\mathbf R_+,\mathcal
 H^{-J_0,j_0}(\mathbf{R}^d))$. Let us now consider a family
 $\mathscr F=\{f_a\}_{a\ge1}$ of elements of
 $\mathcal H^{J_0,j_0}(\mathbf{R}^d)$.  Since $\mathscr D_k$ and
 $\pi_k^{\mathscr F}$ are continuous, and by
 Proposition~\ref{prop:convergence to martingale}, one has that
 $\mathscr Q_k^{-1}\circ \pi_k^{\mathscr F}(\mathscr M^*)=
 Y_k^{\mathscr F}\in \mathcal C(\mathbf{R}_+,\mathbf{R}^{k})$ in
 distribution.  The proof of Proposition~\ref{prop.CVMN} is complete.
\end{proof}


\subsection{Limit points of $(\eta^N)_{N\ge 1}$ and end of the proof
  of Theorem~\ref{thm:clt}}
 \label{sec.conTCL}

 \subsubsection{On the limit points of the sequence
   $(\eta^N,\sqrt NM^N)_{N\ge 1}$}


 \begin{lem}\label{le.eta0}
   Assume~\ref{as:batch}-\ref{as:batch limite}.  Then, the sequence
   $(\eta^N_0 )_{N\ge 1}$ converges in distribution in
   $\mathcal H^{-J_0+1,j_0}(\mathbf R^d)$ towards a variable $\nu_0$
   which is the unique (in distribution)
   $\mathcal H^{-J_0+1,j_0}(\mathbf R^d)$-valued random variable such
   that for all $k\ge 1$ and
   $f_1\dots,f_{k}\in \mathcal H^{J_0-1,j_0}(\mathbf R^d)$,
   $(\langle f_{1}, \nu_0\rangle,\dots, \langle f_{k}, \nu_0\rangle)^T
   \sim \mathcal N (0, \Gamma(f_{1},\ldots,f_{k}))$, where
   $\Gamma(f_{1},\ldots,f_{k})$ is the covariance matrix of the vector
   $( f_{1}(W_0^1),\dots, f_{k}(W_0^1))^T$.
 \end{lem}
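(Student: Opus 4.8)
The plan is to prove Lemma~\ref{le.eta0} in three steps: first establish the finite-dimensional convergence of $(\langle f_1,\eta_0^N\rangle,\dots,\langle f_k,\eta_0^N\rangle)^T$ via the classical multivariate CLT; second, upgrade this to convergence in distribution in the Hilbert space $\mathcal H^{-J_0+1,j_0}(\mathbf R^d)$ by combining the finite-dimensional limits with a tightness (relative compactness) argument; third, identify the limit and prove its uniqueness through Proposition~\ref{prop P Q coincide}-type reasoning.

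\medskip
\noindent
\textbf{Step 1: Finite-dimensional CLT.} Recall that $\eta_0^N = \sqrt N(\mu_0^N - \bar\mu_0)$ and $\mu_0^N = \frac1N\sum_{i=1}^N \delta_{W_0^i}$ with $\bar\mu_0 = \mu_0$, the $W_0^i$ being i.i.d. with law $\mu_0$ (see~\ref{as:moments initiaux}). Hence for $f\in \mathcal H^{J_0-1,j_0}(\mathbf R^d)$,
$$\langle f,\eta_0^N\rangle = \frac{1}{\sqrt N}\sum_{i=1}^N\big(f(W_0^i) - \mathbf E[f(W_0^1)]\big),$$
using that $\langle f,\mu_0\rangle = \mathbf E[f(W_0^1)]$. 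Since $\mu_0$ is compactly supported (see~\ref{as: mu0compact}) and $\mathcal H^{J_0-1,j_0}(\mathbf R^d)\hookrightarrow \mathcal C^{0,j_0}(\mathbf R^d)$ by~\eqref{eq.SEC} (as $J_0-1 > d/2$), each $f(W_0^i)$ is bounded, hence in $L^2$. The classical Cramér–Wold device together with the multivariate Lindeberg–Lévy CLT then gives, for any $k\ge 1$ and $f_1,\dots,f_k\in \mathcal H^{J_0-1,j_0}(\mathbf R^d)$,
$$(\langle f_1,\eta_0^N\rangle,\dots,\langle f_k,\eta_0^N\rangle)^T \xrightarrow[N\to\infty]{d} \mathcal N(0,\Gamma(f_1,\dots,f_k)),$$
where $\Gamma(f_1,\dots,f_k)$ is exactly the covariance matrix of $(f_1(W_0^1),\dots,f_k(W_0^1))^T$.

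\medskip
\noindent
\textbf{Step 2: Tightness in the Hilbert space and identification.} To promote the finite-dimensional convergence to convergence in $\mathcal H^{-J_0+1,j_0}(\mathbf R^d)$, I would first show $(\eta_0^N)_{N\ge1}$ is tight in $\mathcal H^{-J_0+1,j_0}(\mathbf R^d)$. Taking $\{f_a\}_{a\ge1}$ an orthonormal basis of $\mathcal H^{J_0-1,j_0}(\mathbf R^d)$, one has $\|\eta_0^N\|_{\mathcal H^{-J_0+1,j_0}}^2 = \sum_{a\ge1}\langle f_a,\eta_0^N\rangle^2$ provided this is finite; and
$$\mathbf E\big[\|\eta_0^N\|_{\mathcal H^{-J_0+1,j_0}}^2\big] = \sum_{a\ge1}\mathbf E\big[\langle f_a,\eta_0^N\rangle^2\big] = \sum_{a\ge1}\Var(f_a(W_0^1)) \le \sum_{a\ge1}\mathbf E[|f_a(W_0^1)|^2].$$
Using the embedding $\mathcal H^{J_0-1,j_0}(\mathbf R^d)\hookrightarrow_{\text{H.S.}}\mathcal H^{L,\gamma}(\mathbf R^d)$ (from~\eqref{eq.SE2}, since $\mathcal H^{J_0-1,j_0}\hookrightarrow_{\text{H.S.}}\mathcal H^{J_2,j_2}\hookrightarrow_{\text{H.S.}}\mathcal H^{J_1+1,j_1}\hookrightarrow\mathcal H^{L,\gamma}$) together with $\mathcal H^{L,\gamma}(\mathbf R^d)\hookrightarrow \mathcal C^{0,\gamma}(\mathbf R^d)$ and the compact support of $\mu_0$, one gets $\sum_{a\ge1}\mathbf E[|f_a(W_0^1)|^2] \le C\sum_{a\ge1}\|f_a\|_{\mathcal H^{L,\gamma}}^2 < \infty$. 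This bound, uniform in $N$, yields tightness of $(\eta_0^N)_{N\ge1}$ in the Hilbert space $\mathcal H^{-J_0+1,j_0}(\mathbf R^d)$ via Markov's inequality and the compactness of bounded sets under a Hilbert–Schmidt embedding (balls of $\mathcal H^{-J_0+1,j_0}$ with the weaker $\mathcal H^{-J_0,j_0}$ topology, or the standard Hilbertian tightness criterion as in~\cite{fernandez1997hilbertian}). Since the finite-dimensional projections $\pi_k^{\mathscr F}$ converge in distribution by Step 1, any limit point $\nu_0$ satisfies $(\langle f_1,\nu_0\rangle,\dots,\langle f_k,\nu_0\rangle)^T\sim \mathcal N(0,\Gamma(f_1,\dots,f_k))$ for all $k$ and all $f_1,\dots,f_k$.

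\medskip
\noindent
\textbf{Step 3: Uniqueness.} The Gaussian laws $\mathcal N(0,\Gamma(f_1,\dots,f_k))$ determine all finite-dimensional distributions of the limit against the orthonormal basis $\mathscr H = \{h_a\}_{a\ge1}$ of $\mathcal H^{J_0-1,j_0}(\mathbf R^d)$, hence, by the same separating-class argument as in Proposition~\ref{prop P Q coincide} (applied now to probability measures on $\mathcal H^{-J_0+1,j_0}(\mathbf R^d)$ rather than on $\mathcal D(\mathbf R_+,\mathcal H^{-J_0,j_0}(\mathbf R^d))$), the law of $\nu_0$ is uniquely determined. Therefore every limit point has the same law, the full sequence $(\eta_0^N)_{N\ge1}$ converges in distribution to $\nu_0$, and $\nu_0$ is the unique (in distribution) $\mathcal H^{-J_0+1,j_0}(\mathbf R^d)$-valued random variable with the stated Gaussian finite-dimensional marginals. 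The main obstacle I anticipate is the careful justification of tightness in the correct dual Hilbert space — in particular checking that the Hilbert–Schmidt embeddings in~\eqref{eq.SE2} give the required compact containment and that $\eta_0^N$ genuinely lives in $\mathcal H^{-J_0+1,j_0}(\mathbf R^d)$ (which follows since $\mu_0^N$ and $\mu_0$ are compactly supported measures, hence in $\mathcal C^{0,\beta}(\mathbf R^d)'\hookrightarrow \mathcal H^{-J_0+1,j_0}(\mathbf R^d)$); the rest is standard.
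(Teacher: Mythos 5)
Your overall route --- finite-dimensional CLT for the projections, tightness in the dual Hilbert space, then the separating-class argument of Proposition~\ref{prop P Q coincide} to identify a unique limit point --- is exactly the paper's (the paper merely asserts the tightness and invokes the standard vectorial CLT). Steps 1 and 3 are fine. The one place where your write-up, taken literally, does not work is the tightness argument in Step 2: a uniform bound on $\mathbf E\big[\|\eta_0^N\|^2_{\mathcal H^{-J_0+1,j_0}}\big]$ together with Markov's inequality only shows that $(\eta_0^N)_{N\ge 1}$ stays in balls of $\mathcal H^{-J_0+1,j_0}(\mathbf R^d)$ with high probability, and closed balls of an infinite-dimensional Hilbert space are not compact in that same space; your parenthetical even points to the wrong pair of spaces, since $\mathcal H^{-J_0,j_0}(\mathbf R^d)$ is the \emph{larger} dual, so compactness of balls of $\mathcal H^{-J_0+1,j_0}(\mathbf R^d)$ inside it would only yield tightness in $\mathcal H^{-J_0,j_0}(\mathbf R^d)$, not in $\mathcal H^{-J_0+1,j_0}(\mathbf R^d)$ as required. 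The fix is immediate and uses exactly your computation one embedding higher: run the same second-moment estimate with an orthonormal basis of $\mathcal H^{J_2,j_2}(\mathbf R^d)$ (which also embeds Hilbert--Schmidt into $\mathcal H^{L,\gamma}(\mathbf R^d)$ by~\eqref{eq.SE2}) to get $\sup_{N\ge 1}\mathbf E\big[\|\eta_0^N\|^2_{\mathcal H^{-J_2,j_2}}\big]<+\infty$; since $\mathcal H^{J_0-1,j_0}(\mathbf R^d)\hookrightarrow_{\text{H.S.}}\mathcal H^{J_2,j_2}(\mathbf R^d)$, Schauder's theorem makes closed balls of $\mathcal H^{-J_2,j_2}(\mathbf R^d)$ compact in $\mathcal H^{-J_0+1,j_0}(\mathbf R^d)$ (this is Step~1 of the proof of Proposition~\ref{lem:note kurtz}), and Markov's inequality then gives tightness in the correct space. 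With that substitution your proof is complete and coincides with the paper's.
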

 \begin{proof} 
 The sequence $(\eta^N_0 )_{N\ge 1}$ is tight in $\mathcal H^{-J_0+1,j_0}(\mathbf R^d)$.
 Let $\mathscr F=\{f_a\}_{a\ge 1}$ be a family of elements of
 $\mathcal H^{J_0-1,j_0}(\mathbf R^d)$.  Define, for $k\ge1$, the
 projection
$$
\mathscr P_k^{\mathscr F}:\mathcal H^{-J_0+1,j_0}(\mathbf{R}^d)\to
\mathbf{R}^k,\quad m\mapsto (\langle f_1,m\rangle,\dots, \langle
f_k,m\rangle).
$$
The map $\mathscr P_k^{\mathscr F}$ is continuous.  By the standard
vectorial central limit theorem, for $k\ge 1$,
$\mathscr P_k^{\mathscr F}(\eta^N_0)\to \mathcal N (0,
\Gamma(f_{1},\ldots,f_{k}))$ in distribution. In addition, we show
with the same arguments as those used to prove Lemma~\ref{lem
  separating class} and Proposition~\ref{prop P Q coincide}, that when
$\mathscr F$ is an orthonormal basis of
$\mathcal H^{J_0-1,j_0}(\mathbf R^d)$, the distribution of a
$\mathcal H^{-J_0+1,j_0}(\mathbf R^d)$-valued random variable $\nu$ is
fully determined by the collection of the distributions
$\{\mathscr P_k^{\mathscr F}(\nu),k\ge 1\}$. Hence,
$(\eta^N_0 )_{N\ge 1}$ has a unique limit point $\nu_0$ in
distribution which is the unique (in distribution)
$\mathcal H^{-J_0+1,j_0}(\mathbf R^d)$-valued random variable such
that for all $k\ge 1$,
$\mathscr P_k^{\mathscr F}(\nu_0)\sim \mathcal N (0,
\Gamma(f_{1},\ldots,f_{k}))$. In particular, the whole sequence
$(\eta^N_0 )_{N\ge 1}$ converges in distribution towards $\nu_0$.  The
proof of the lemma is complete.
   \end{proof}
\begin{sloppypar} 
\noindent
Set 
\begin{equation}\label{eq.LE-}
\mathscr E:=  \mathcal D(\mathbf R_+,\mathcal H^{-J_0+1,j_0}(\mathbf R^d))\times \mathcal D(\mathbf R_+,\mathcal H^{-J_0,j_0}(\mathbf R^d)).
\end{equation}
According to Propositions~\ref{prop relative compactness}
and~\ref{prop relative compactness M},
$( \eta^N,\sqrt N\, M^N)_{N\ge1}$ is tight in $\mathscr E$.  Let
$(\eta^*,\mathscr G^*) $ be one of its limit point in $\mathscr
E$. Along some subsequence, it holds:
$$
(\eta^{N'},\sqrt{N'}\, M^{N'})\to(\eta^*,\mathscr G^*), \text{ as } N'\to +\infty. 
$$
Considering the marginal distributions, and according to
Lemmata~\ref{lem: continuity prop eta} and~\ref{le.conVMM}, it holds
a.s.
\begin{equation}\label{eq.Ceta-}
  \eta^* \in \mathcal C(\mathbf{R}_+,\mathcal H^{-J_0+1,j_0}(\mathbf{R}^d)) \text{ and }   \mathscr G^* \in \mathcal C(\mathbf{R}_+,\mathcal H^{-J_0,j_0}(\mathbf{R}^d)).
\end{equation}
By uniqueness of the limit in distribution, using Lemma~\ref{le.eta0}
(together with the fact that the projection
$m\in \mathcal D(\mathbf R_+, \mathcal H^{-J_0+1,j_0}(\mathbf{R}^d))
\mapsto m_0\in \mathcal H^{-J_0+1,j_0}(\mathbf{R}^d)$ is continuous)
and Proposition~\ref{prop.CVMN}, it also holds:
\begin{equation}\label{eq.eta*0}
 \eta_0^*=\nu_0 \text{ and } \mathscr G^*=\mathscr{G},  \text{ in  distribution}.
\end{equation}
\end{sloppypar}

\begin{prop}\label{pr.LM-TCL}
  Let $\beta>3/4$ and assume~\ref{as:batch}-\ref{as:batch limite}.
  Then, $ \eta^*$ is a weak solution of~\eqref{eq.CLT} (see
  Definition~\ref{de.weak}) with initial distribution $\nu_0$ (see
  Lemma~\ref{le.eta0}).
\end{prop}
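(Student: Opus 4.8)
Proposition~\ref{pr.LM-TCL} states that any limit point $\eta^*$ of $(\eta^N)_{N\ge1}$ (paired with the corresponding limit of $\sqrt N M^N$, which by Proposition~\ref{prop.CVMN} must be the G-process $\mathscr G$) solves the linear equation~\eqref{eq.CLT}. Let me sketch how I would prove it.

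\medskip

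\noindent
\textbf{Approach.} The plan is to pass to the limit in the pre-limit identity~\eqref{f contre xi } for $\langle f,\Upsilon_t^N\rangle$, exploiting the decomposition $\eta^N=\Upsilon^N+\Theta^N$ (see~\eqref{eq.dec-eta2}) and the fact that, since $\beta>3/4$, all the ``remainder'' contributions (the $R_k^N$ terms, the $V_t^N$ terms, and the noise term $N^{-1-\beta}\sum\sum \nabla f(W_k^i)\cdot\ve_k^i$) vanish in $L^2$ uniformly on $[0,T]$, by Lemma~\ref{lem:remainder terms} and the sharper bounds~\eqref{eq.supVt},~\eqref{eq.borneERk2}. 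What survives of~\eqref{f contre xi } is: the drift term $\int_0^t\int \alpha(y-\langle\sigma_*(\cdot,x),\mu_s^N\rangle)\langle\nabla f\cdot\nabla\sigma_*(\cdot,x),\Upsilon_s^N\rangle\,\pi\,\di s$, the term $-\int_0^t\int \alpha\langle\sigma_*(\cdot,x),\Upsilon_s^N\rangle\langle\nabla f\cdot\nabla\sigma_*(\cdot,x),\bar\mu_s^N\rangle\,\pi\,\di s$, the term $-\int_0^t\int \alpha\langle\sigma_*(\cdot,x),\Theta_s^N\rangle\langle\nabla f\cdot\nabla\sigma_*(\cdot,x),\bar\mu_s^N\rangle\,\pi\,\di s$, and the martingale $\sqrt N\langle f,M_t^N\rangle$. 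Since $\eta^N=\Upsilon^N+\Theta^N$ and $\langle f,\Theta^N_t\rangle$ solves the closed linear ODE~\eqref{eq.Theta_tf}, adding~\eqref{eq.Theta_tf} to the limiting version of~\eqref{f contre xi } should exactly reconstitute~\eqref{eq.CLT} for $\eta^*$, with the G-process $\mathscr G^*=\mathscr G$ playing the role of $\langle f,\mathscr G_t\rangle$, provided one checks that $\langle\sigma_*(\cdot,x),\bar\mu_s^N\rangle\to\langle\sigma_*(\cdot,x),\bar\mu_s\rangle$ and $\langle\nabla f\cdot\nabla\sigma_*(\cdot,x),\bar\mu_s^N\rangle\to\langle\nabla f\cdot\nabla\sigma_*(\cdot,x),\bar\mu_s\rangle$ (this is a law-of-large-numbers statement on the i.i.d.\ particle system~\eqref{particle system}, following from~\eqref{eq : tilde(mu)-bar(mu)} and its analogue for $\nabla f\cdot\nabla\sigma_*$) and that the Skorohod convergence $(\eta^{N'},\sqrt{N'}M^{N'})\to(\eta^*,\mathscr G^*)$ together with the a.s.\ continuity~\eqref{eq.Ceta-} upgrades to locally uniform convergence.

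\medskip

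\noindent
\textbf{Key steps.} First, I would fix $f\in\mathcal H^{J_0,j_0}(\mathbf R^d)$ and a countable dense family to argue ``a.s.\ simultaneously for all test functions'' at the end, as was done in the proof of Proposition~\ref{lem:convergence to lim eq}. Second, I would define, for $(\eta,\mathscr G)\in\mathscr E$, the functional
$$
\Phi_t[f](\eta,\mathscr G):=\Big|\langle f,\eta_t\rangle-\langle f,\eta_0\rangle-\int_0^t\!\!\int_{\mathcal X\times\mathcal Y}\!\!\alpha(y-\langle\sigma_*(\cdot,x),\bar\mu_s\rangle)\langle\nabla f\cdot\nabla\sigma_*(\cdot,x),\eta_s\rangle\,\pi(\di x,\di y)\,\di s+\int_0^t\!\!\int_{\mathcal X\times\mathcal Y}\!\!\alpha\langle\sigma_*(\cdot,x),\eta_s\rangle\langle\nabla f\cdot\nabla\sigma_*(\cdot,x),\bar\mu_s\rangle\,\pi(\di x,\di y)\,\di s-\langle f,\mathscr G_t\rangle\Big|,
$$
and show (i) $\mathbf E[\Phi_t[f](\eta^N,\sqrt N M^N)]\to 0$ and (ii) $\Phi_t[f]$ is continuous at points of $\mathscr E$ at which $\eta$ and $\mathscr G$ are continuous at $t$, so that $\Phi_{t_*}[f](\eta^{N'},\sqrt{N'}M^{N'})\to\Phi_{t_*}[f](\eta^*,\mathscr G^*)$ in distribution for $t_*$ in a co-countable set of times, by~\cite[Theorem 2.7]{billingsley2013convergence}. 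For (i), I would write $\langle f,\eta^N_t\rangle=\langle f,\Upsilon^N_t\rangle+\langle f,\Theta^N_t\rangle$, insert~\eqref{f contre xi } and~\eqref{eq.Theta_tf}, and bound the $L^1$ norm of the difference between $\Phi_t[f](\eta^N,\sqrt N M^N)$ and the genuine remainders, controlling the two ``$\bar\mu^N$ vs $\bar\mu$'' discrepancies $\langle\nabla f\cdot\nabla\sigma_*(\cdot,x),\bar\mu^N_s-\bar\mu_s\rangle$ and $\langle\sigma_*(\cdot,x),\bar\mu^N_s-\bar\mu_s\rangle$ in $L^2$ via the $O(1/N)$ bound~\eqref{eq : tilde(mu)-bar(mu)} (and using Lemma~\ref{lem : unif bound xi and eta} to bound $\|\eta^N_s\|$, $\|\Upsilon^N_s\|$ in $L^2$), together with Lemma~\ref{lem:remainder terms} and~\eqref{eq.supVt},~\eqref{eq.borneERk2},~\eqref{eq333} for the remainder terms, whose $L^1$ norms are $O(N^{-\kappa})$ for some $\kappa>0$ when $\beta>3/4$. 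Third, having $\Phi_{t_*}[f](\eta^*,\mathscr G^*)=0$ a.s.\ for $t_*$ in a co-countable set, I would use right-continuity in $t$ of $t\mapsto\Phi_t[f](\eta,\mathscr G)$ and continuity in $f$ (both by dominated convergence, using that $\eta^*,\mathscr G^*$ are continuous paths in the relevant dual spaces and the embedding $\mathcal H^{J_0,j_0}\hookrightarrow\mathcal C^{2,\gamma}$) plus a countable dense subset of $\mathcal H^{J_0,j_0}(\mathbf R^d)$ to conclude that a.s.\ $\Phi_t[f](\eta^*,\mathscr G^*)=0$ for all $t\ge0$ and all $f$. Together with $\eta^*_0=\nu_0$ in distribution (from~\eqref{eq.eta*0}) and $\mathscr G^*=\mathscr G$ a G-process (from~\eqref{eq.eta*0} and Proposition~\ref{prop.CVMN}), this is exactly the statement that $\eta^*$ is a weak solution of~\eqref{eq.CLT} with initial distribution $\nu_0$ in the sense of Definition~\ref{de.weak}.

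\medskip

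\noindent
\textbf{Main obstacle.} The delicate point is the joint convergence and the identification of the martingale limit: one must work on the product space $\mathscr E$ (hence the joint tightness of $(\eta^N,\sqrt N M^N)$ invoked before the statement) so that the limiting martingale term in~\eqref{eq.CLT} really is the same object $\mathscr G^*$ whose law was pinned down in Proposition~\ref{prop.CVMN}, rather than an uncorrelated copy; passing to the limit in $\sqrt N\langle f,M^N_t\rangle$ inside $\Phi_t$ requires the continuous-mapping argument for the evaluation map $m\mapsto m_t$ at continuity times, which is why~\eqref{eq.Ceta-} (a.s.\ continuity of $\eta^*$ and $\mathscr G^*$) is needed. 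A secondary technical nuisance is that the integrands in~\eqref{f contre xi } involve $\langle\nabla f\cdot\nabla\sigma_*(\cdot,x),\Upsilon^N_s\rangle$, which is not a bounded-test-function pairing against $\mu^N_s$; here one uses the operator $\mathsf T_x$ of~\eqref{eq.Tx} and the Hilbert--Schmidt embeddings~\eqref{eq.SE2} to control these pairings uniformly in $x$, exactly as in the proof of Lemma~\ref{lem : unif bound xi and eta}. Modulo these points, the argument is a routine (if lengthy) adaptation of the scheme of Proposition~\ref{lem:convergence to lim eq}.
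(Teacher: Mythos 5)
Your proposal is correct and follows essentially the same route as the paper: the paper also passes to the limit in a functional $\mathbf B_t[f](m)-\langle f,m_0\rangle-\langle f,\mathscr G_t\rangle$ on the product space $\mathscr E$, using continuity at a.s.-continuity times plus the continuous mapping theorem, shows the remainder $\mathbf e^N_t[f]$ (including the quadratic term $N^{-1/2}\langle\sigma_*,\eta^N_s\rangle\langle\nabla f\cdot\nabla\sigma_*,\eta^N_s\rangle$) vanishes in $L^1$ for $\beta>3/4$, and concludes by the same separability/right-continuity argument. The only cosmetic difference is that the paper writes the pre-limit identity directly for $\eta^N$ (equation~\eqref{eq.CVL3}) rather than summing the $\Upsilon^N$ and $\Theta^N$ equations, but the resulting error terms are identical after recombination.
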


\begin{proof}
Let us introduce, for $\Phi\in \mathcal H^{-J_0+1,j_0}(\mathbf{R}^d)$, $f\in \mathcal H^{J_0,j_0}(\mathbf{R}^d)$, and $s\ge 0$:
 \begin{align}\label{eq.Gf} 
  \mathbf{U}_s[f](\Phi):=\int_{\mathcal{X}\times\mathcal{Y}}\alpha(y-\langle\sigma_*(\cdot,x),\bar \mu_s\rangle)\langle\nabla f\cdot\nabla\sigma_*(\cdot,x),\Phi\rangle\pi(\di x,\di y)
\end{align}
and \begin{align}\label{eq.Hf}
  \mathbf  V_s[f](\Phi):=\int_{\mathcal{X}\times\mathcal{Y}} \langle\sigma_*(\cdot,x),\Phi\rangle\langle\nabla f\cdot\nabla\sigma_*(\cdot,x),\bar \mu_s\rangle\pi(\di x,\di y).
\end{align}
 Recall $
\eta_t^N=\sqrt{N}(\mu_t^N -\bar \mu_t)$ (see~\eqref{eq.etaN}). 
Using~\eqref{eq: prelim mu} and Corollary~\ref{co.e},   it holds:
\begin{align}\label{eq.CVL3}
\langle f,\eta_t^N\rangle-\langle f,\eta_0^N\rangle-\int_0^t(   \mathbf U_s[f](\eta^N_s)-\mathbf V_s[f](\eta_s^N))\di s -\langle f,\sqrt{N}M^N_t\rangle=-\mathbf e^{N}_t[f],
\end{align} 
where 
\begin{align*}
\nonumber
\mathbf e^{N}_t [f]:&=\frac{1}{\sqrt{N}}\int_0^t\int_{\mathcal{X}\times\mathcal{Y}}\alpha\langle\sigma_*(\cdot,x),\eta_s^N\rangle\langle\nabla f\cdot\nabla\sigma_*(\cdot,x),\eta_s^N\rangle\pi(\di x,\di y)\di s\\
\label{eq.mathE}
&\quad -\sqrt{N}\langle f,V_t^N\rangle-\sqrt{N}\sum_{k=0}^{\lfloor Nt\rfloor-1}\langle f,R_k^{N}\rangle-\frac{\sqrt{N}}{N^{1+\beta}}\sum_{k=0}^{\lfloor Nt\rfloor-1}\sum_{i=1}^N\nabla f(W_k^i)\cdot\ve_k^i.
\end{align*}
In what follows, 
$f\in~\mathcal H^{J_0,j_0}(\mathbf{R}^d)$ and $t\in \mathbf R_+$ are
fixed.  \medskip
 
 \noindent
 \textbf{Step 1.}  In this step we study the continuity of the mapping
  $$\mathbf B_t[f]: m\in    \mathcal D(\mathbf{R}_+,\mathcal H^{-J_0+1,j_0}(\mathbf{R}^d))  \mapsto \langle f,m_t\rangle -\int_0^t( \mathbf{U}_s[f](m_s)- \mathbf V_s[f](m_s))\di s\in \mathbf R.$$  
  Let $(m^N)_{N\ge 1}$ such that $m^N\to m$  in
  $\mathcal D(\mathbf{R}_+,\mathcal H^{-J_0+1,j_0}(\mathbf{R}^d))$.  Recall that
  $\sup_{x\in \mathcal X}\Vert \sigma_*(\cdot,x)\Vert_{\mathcal
    H^{J_0-1,j_0}} <+\infty $ (by~\ref{as:sigma} and because $j_0>d/2$). Then, for all $N\ge 1$ and $s\in [0,t]$, it holds:
\begin{align*}
 \big | (y-\langle\sigma_*(\cdot,x),\bar \mu_s\rangle)\langle\nabla f\cdot\nabla\sigma_*(\cdot,x),m_s^N\rangle \big | &\le C( |y| +1) \Vert f\Vert_{\mathcal H^{J_0,j_0} }  \sup_{N\ge 1}\sup_{s\in[0,t]}  \Vert   m_s^N\Vert_{\mathcal H^{-J_0+1,j_0}}<+\infty  
 \end{align*} 
 and since $f\in  \mathcal H^{J_0,j_0}(\mathbf{R}^d)\hookrightarrow \mathcal C^{1,\gamma}(\mathbf R^d)$, 
 \begin{align*} 
 |\langle\sigma_*(\cdot,x),m_s^N\rangle\langle\nabla f\cdot\nabla\sigma_*(\cdot,x),\bar \mu_s\rangle\big| &\le  C   \sup_{N\ge 1}\sup_{s\in[0,t]} \Vert m_s^N\Vert_{\mathcal H^{-J_0+1,j_0}}\\
 &\quad \times \Vert f\Vert_{\mathcal H^{J_0,j_0} }    {\sup}_{s\in[0,t]}| \langle (1+|\cdot |^\gamma), \bar \mu_s\rangle |<+\infty,
 \end{align*} 
 for some $C>0$ independent of $N\ge 1$,
 $f\in \mathcal H^{J_0,j_0}(\mathbf R^d) $, $s\ge 0$, and
 $(x,y)\in \mathcal X\times \mathcal Y$.  With these two upper bounds,
 and using the same arguments as those used in the proof of
 Lemma~\ref{le.Fc}, one deduces that for all continuity points
 $t\in\mathbf{R}_+$ of $\lbrace t\mapsto m_t,t\in\mathbf{R}_+\rbrace$,
 we have $\mathbf B_t[f](m^N)\rightarrow \mathbf B_t[f](m)$ as
 $N\rightarrow+\infty$.  Consequently, using~\eqref{eq.Ceta-} and the
 continuous mapping
 theorem~\cite[Theorem~2.7]{billingsley2013convergence}, for all
 $f\in \mathcal H^{J_0,j_0}(\mathbf{R}^d)$ and $t\in \mathbf R_+$, it
 holds in distribution and as $N'\to +\infty$:
 \begin{equation}
 \label{eq.CVL1}
 \mathbf B_t[f](\eta^{N'})-\langle f,\eta_0^{N'}\rangle-\langle f,\sqrt{N'}M_t^{N'}\rangle\to \mathbf B_t[f](  \eta^*)-{\langle f,\eta_0^*\rangle}- \langle f,\mathscr G_t^*\rangle.
 \end{equation}

 \noindent
 \textbf{Step 2.}  In this step we prove that for any
 $t\in \mathbf R_+$ and $f\in \mathcal H^{J_0,j_0}(\mathbf R^d) $:
 \begin{equation}\label{eq.CVL2}
\mathbf E\big [ \big |\mathbf e^N_t[f] \big  |\big ]\to 0 \text{ as } N\to +\infty.
\end{equation}
By~\eqref{equation de lemme cc eta^N}, we have since
$f\in \mathcal H^{J_0,j_0}(\mathbf R^d)\hookrightarrow \mathcal
H^{J_2+1,j_2}(\mathbf R^d)$ (because $J_0\ge J_2+1$ and $j_2\ge j_0$,
see~\eqref{eq.J0} and~\eqref{eq.J1}),
\begin{align*}
\frac{1}{\sqrt{N}}\mathbf E\Big [\int_0^t\int_{\mathcal{X}\times\mathcal{Y}}\alpha|\langle\sigma_*(\cdot,x),\eta_s^N\rangle||\langle\nabla f\cdot\nabla\sigma_*(\cdot,x),\eta_s^N\rangle|\pi(\di x,\di y)\di s\Big ]&\le \frac{Ct \Vert \nabla f\Vert_{\mathcal H^{J_2,j_2}} }{\sqrt N}  \mathbf{E}\Big[ \sup_{t\in[0,T]} \|\eta_t^N \|_{\mathcal H^{-J_2,j_2}}^2\Big]\\
\le \frac{Ct \Vert   f\Vert_{\mathcal H^{J_2+1,j_2}} }{\sqrt N} \to 0 \text{ as } N\to +\infty. 
\end{align*}
Using Lemma~\ref{lem:remainder terms}, we also have since
$f\in \mathcal H^{J_0,j_0}(\mathbf R^d)\hookrightarrow \mathcal
C^{2,\gamma_*}(\mathbf R^d)$,
\begin{align*}
\mathbf E\Big [\Big|\sqrt{N}\langle f,V_t^N\rangle-\sqrt{N}\sum_{k=0}^{\lfloor Nt\rfloor-1}\langle f,R_k^{N}\rangle-\frac{\sqrt{N}}{N^{1+\beta}}\sum_{k=0}^{\lfloor Nt\rfloor-1}\sum_{i=1}^N\nabla f(W_k^i)\cdot\ve_k^i\Big|\Big]&\le C\|f\|_{\mathcal C^{2,\gamma_*}}\Big[ \frac{\sqrt N}{ N} \\
&\quad + \sqrt N N \big[\frac{1}{N^2}+\frac{1}{N^{2\beta}}\big]+ \frac{\sqrt N}{N^\beta}\Big]. 
\end{align*}
The right-hand-side of the previous term goes to $0$ as
$N\to +\infty$, since $\beta>3/4$. This proves~\eqref{eq.CVL2}.

  \medskip
 
 \noindent
 \textbf{Step 3.} End of the proof of Proposition~\ref{pr.LM-TCL}.
 By~\eqref{eq.CVL1},~\eqref{eq.CVL2}, and~\eqref{eq.CVL3}, we deduce
 that for all $f\in \mathcal H^{J_0,j_0}(\mathbf{R}^d)$ and
 $t\in \mathbf R^+$, it holds a.s.
 $\mathbf B_t[f]( \eta^*)-\langle f,\eta_0^*\rangle- \langle
 f,\mathscr G^*_t\rangle =0$. Since
 $\mathcal H^{J_0,j_0}(\mathbf{R}^d)$ and $\mathbf R_+$ are separable,
 we conclude by a standard continuity argument   that a.s. for
 all $f\in \mathcal H^{J_0,j_0}(\mathbf{R}^d)$ and $t\in \mathbf R^+$, 
 $\mathbf B_t[f]( \eta^*)-\langle f,\eta_0^*\rangle- \langle
 f,\mathscr G^*_t\rangle =0$. Hence, $\eta^*$ is a weak solution
 of~\eqref{eq.CLT} (see Definition~\ref{de.weak}) with initial
 distribution $\nu_0$ (see~\eqref{eq.eta*0}). This ends the proof of
 Proposition~\ref{pr.LM-TCL}.
\end{proof}

Inspired by the proof of \cite[Corollary 5.7]{delarue} (see also
\cite{kurtz2004stochastic}), to end the proof of
Theorem~\ref{thm:clt}, we will show that~\eqref{eq.CLT} has a unique
strong solution. This is the purpose of the next section, where we
also conclude the proof of Theorem~\ref{thm:clt}.

\subsubsection{Pathwise uniqueness}

\begin{prop}\label{prop:uniqueness clt} 
  Let $\beta>3/4$ and assume~\ref{as:batch}-\ref{as:batch limite}.
  Then strong uniqueness holds for~\eqref{eq.CLT}, namely, on a fixed
  probability space, given a
  $\mathcal H^{-J_0+1,j_0}(\mathbf{R}^d)$-valued random variable $\nu$
  and a {\rm G}-process
  $\mathscr G\in \mathcal C(\mathbf R_+,\mathcal
  H^{-J_0,j_0}(\mathbf{R}^d))$, there exists at most one
  $ \mathcal C(\mathbf{R}_+,\mathcal
  H^{-J_0+1,j_0}(\mathbf{R}^d))$-valued process $\eta$ solution
  to~\eqref{eq.CLT} with $\eta_0=\nu$ almost surely.
\end{prop}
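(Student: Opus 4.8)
The plan is to prove pathwise uniqueness by a standard Gronwall argument on the difference of two solutions. Suppose $\eta^1$ and $\eta^2$ are two $\mathcal C(\mathbf R_+,\mathcal H^{-J_0+1,j_0}(\mathbf R^d))$-valued solutions of~\eqref{eq.CLT} on the same probability space, driven by the same G-process $\mathscr G$ and with $\eta^1_0=\eta^2_0=\nu$ a.s. Set $\zeta=\eta^1-\eta^2$. Subtracting the two instances of~\eqref{eq.CLT}, the $\mathscr G_t$ terms and the initial terms cancel, and we obtain, a.s. for all $f\in\mathcal H^{J_0,j_0}(\mathbf R^d)$ and all $t$,
\begin{align}\label{eq.zeta-eq}
\langle f,\zeta_t\rangle=\int_0^t\int_{\mathcal X\times\mathcal Y}\alpha(y-\langle\sigma_*(\cdot,x),\bar\mu_s\rangle)\langle\nabla f\cdot\nabla\sigma_*(\cdot,x),\zeta_s\rangle\pi(\di x,\di y)\di s-\int_0^t\int_{\mathcal X\times\mathcal Y}\alpha\langle\sigma_*(\cdot,x),\zeta_s\rangle\langle\nabla f\cdot\nabla\sigma_*(\cdot,x),\bar\mu_s\rangle\pi(\di x,\di y)\di s.
\end{align}
Since $\zeta$ is a.s. continuous with values in $\mathcal H^{-J_0+1,j_0}(\mathbf R^d)$, the right-hand side is differentiable in $t$, so $t\mapsto\langle f,\zeta_t\rangle$ is a.s. $\mathcal C^1$. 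The goal is to show $\|\zeta_t\|_{\mathcal H^{-J_0,j_0}}^2=0$ for all $t$ (note: one works in the slightly larger space $\mathcal H^{-J_0,j_0}$, into which $\mathcal H^{-J_0+1,j_0}$ embeds Hilbert--Schmidt, so that $\sum_a\langle f_a,\zeta_t\rangle^2<\infty$ for an orthonormal basis $\{f_a\}$ of $\mathcal H^{J_0,j_0}$).

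First I would fix an orthonormal basis $\{f_a\}_{a\ge1}$ of $\mathcal H^{J_0,j_0}(\mathbf R^d)$, differentiate $\langle f_a,\zeta_t\rangle^2$ using~\eqref{eq.zeta-eq}, and sum over $a$. The first term produces, as in the proof of Lemma~\ref{lem : unif bound xi and eta}, a contribution
$2\int_0^t\int_{\mathcal X\times\mathcal Y}\alpha(y-\langle\sigma_*(\cdot,x),\bar\mu_s\rangle)\langle\zeta_s,\mathsf T_x^*\zeta_s\rangle_{-J_0,j_0}\pi(\di x,\di y)\di s$, where $\mathsf T_x:f\mapsto\nabla f\cdot\nabla\sigma_*(\cdot,x)$ is the bounded operator $\mathcal H^{J_0,j_0}\to\mathcal H^{J_0-1,j_0}$ (analogue of~\eqref{eq.Tx}); by the analogue of Lemma~\ref{lem : <xi,gxi>} (using that $\zeta_s\in\mathcal H^{-J_0+1,j_0}$ a.s.), $|\langle\zeta_s,\mathsf T_x^*\zeta_s\rangle_{-J_0,j_0}|\le C\|\zeta_s\|_{\mathcal H^{-J_0,j_0}}^2$ uniformly in $x$, and since $\mathcal Y$ is bounded and $\sigma_*$ is bounded (\ref{as:sigma}), this term is bounded by $C\int_0^t\|\zeta_s\|_{\mathcal H^{-J_0,j_0}}^2\di s$. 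The second term gives $-2\int_0^t\int_{\mathcal X\times\mathcal Y}\alpha\langle\sigma_*(\cdot,x),\zeta_s\rangle\langle\nabla f_a\cdot\nabla\sigma_*(\cdot,x),\bar\mu_s\rangle\di\pi\di s$ summed over $a$; using $|\langle\nabla f_a\cdot\nabla\sigma_*(\cdot,x),\bar\mu_s\rangle|\le C\|f_a\|_{\mathcal H^{J_0,j_0}}$ from $\bar\mu\in\mathcal C(\mathbf R_+,\mathcal H^{-L,\gamma}(\mathbf R^d))$ (Corollary~\ref{co.e}) and $\mathcal H^{J_0,j_0}\hookrightarrow\mathcal C^{2,\gamma_*}$, together with Cauchy--Schwarz and $\sup_x\|\sigma_*(\cdot,x)\|_{\mathcal H^{J_0,j_0}}<\infty$, this is also bounded by $C\int_0^t\|\zeta_s\|_{\mathcal H^{-J_0,j_0}}^2\di s$. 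Summing the two contributions yields $\|\zeta_t\|_{\mathcal H^{-J_0,j_0}}^2\le C\int_0^t\|\zeta_s\|_{\mathcal H^{-J_0,j_0}}^2\di s$ a.s. for all $t$ in a countable dense set, hence for all $t$ by continuity.

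The main obstacle is the justification that one may differentiate $\sum_a\langle f_a,\zeta_t\rangle^2$ term by term and exchange the sum with the time integral — i.e. a clean "Itô formula in Hilbert space" for $\|\zeta_t\|^2_{\mathcal H^{-J_0,j_0}}$. Here this is easier than in Lemma~\ref{lem : unif bound xi and eta} because $\zeta$ has no martingale part and no jumps: $t\mapsto\langle f_a,\zeta_t\rangle$ is $\mathcal C^1$ with derivative explicitly given by~\eqref{eq.zeta-eq}, and the partial sums $\sum_{a\le A}\langle f_a,\zeta_t\rangle^2$ are $\mathcal C^1$ with derivatives that are dominated (uniformly in $A$, using the HS embedding and Lemma~\ref{lem : <xi,gxi>}) by an integrable function; so dominated convergence and the fundamental theorem of calculus give the identity $\|\zeta_t\|_{\mathcal H^{-J_0,j_0}}^2=\int_0^t\frac{\di}{\di s}\|\zeta_s\|_{\mathcal H^{-J_0,j_0}}^2\di s$ with the integrand computed as above. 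Once this is in place, Gronwall's lemma gives $\zeta\equiv0$ a.s., i.e. $\eta^1=\eta^2$, completing the proof. I would remark that the same computation also underlies weak uniqueness for~\eqref{eq.CLT} once combined with the identification of the driving noise as a G-process.
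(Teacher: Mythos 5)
Your proof is correct and follows essentially the same route as the paper's: reduce by linearity to the homogeneous equation (equivalently, subtract two solutions driven by the same G-process), sum $\langle f_a,\zeta_t\rangle^2$ over an orthonormal basis of $\mathcal H^{J_0,j_0}(\mathbf R^d)$, control the transport term via the operator $\mathsf T_x$ and Lemma~\ref{lem : <xi,gxi>}, and conclude with Gronwall in $\mathcal H^{-J_0,j_0}(\mathbf R^d)$. One small correction: in the second term you should bound $|\langle\nabla f_a\cdot\nabla\sigma_*(\cdot,x),\bar\mu_s\rangle|$ by $C\|f_a\|_{\mathcal H^{L,\gamma}}$ rather than $C\|f_a\|_{\mathcal H^{J_0,j_0}}$ (the latter equals $C$ on an orthonormal basis and would not sum), since it is the Hilbert--Schmidt embedding $\mathcal H^{J_0,j_0}(\mathbf R^d)\hookrightarrow_{\text{H.S.}}\mathcal H^{L,\gamma}(\mathbf R^d)$ that makes $\sum_a\|f_a\|_{\mathcal H^{L,\gamma}}^2$ finite, exactly as in the paper's estimate~\eqref{76}.
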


\begin{proof}  
  By linearity of the involved operators in~\eqref{eq.CLT}, it is
  enough to consider a
  $ \mathcal C(\mathbf{R}_+,\mathcal
  H^{-J_0+1,j_0}(\mathbf{R}^d))$-valued process $\eta$ solution
  to~\eqref{eq.CLT} when a.s. $\nu=0$ and $\mathscr G=0$, i.e., for
  every $f\in \mathcal H^{J_0,j_0}(\mathbf R^d)$ and
  $t\in\mathbf{R}_+$:
\begin{align}\begin{cases}\label{systeme pour Phi}
\langle f,\eta_t\rangle-\int_0^t( \mathbf  U_s[f](\eta_s)- \mathbf  V_s[f](\eta_s))\di s=0, \\
\langle f,\eta_0\rangle=0,
\end{cases}\end{align}
where $\mathbf U$ and $\mathbf V$ are defined respectively
in~\eqref{eq.Gf} and~\eqref{eq.Hf}.  Pick $T>0$.  By~\eqref{systeme
  pour Phi}, a.s. for all $f\in \mathcal H^{J_0,j_0}(\mathbf{R}^d)$
and $t\in[0,T]$, we have
$\langle f,\eta_t\rangle^2=2\int_0^t( \mathbf U_s[f](\eta_s)- \mathbf
V_s[f](\eta_s))\langle f,\eta_s\rangle\di s$.  Recall
$ {\sup}_{s\in[0,T]}| \langle (1+|\cdot |^\gamma), \bar \mu_s\rangle |<+\infty$ (because
$\bar \mu\in \mathcal D(\mathbf{R}_+,\mathcal
P_{\gamma}(\mathbf{R}^d))$) and $\mathcal
H^{L,\gamma}(\mathbf{R}^d)\hookrightarrow \mathcal
C^{1,\gamma}(\mathbf{R}^d)$. Then, by Cauchy-Schwarz inequality,
a.s. for all $f\in \mathcal H^{J_0,j_0}(\mathbf{R}^d)$ and
$t\in[0,T]$:
\begin{align}
\nonumber
-\int_0^t \mathbf  V_s[f](\eta_s)\langle f,\eta_s\rangle\di s&\leq    \alpha\int_0^t\Big[\langle f,\eta_s\rangle^2+\int_{\mathcal{X}\times\mathcal{Y}}\langle\sigma_*(\cdot,x),\eta_s\rangle^2\langle\nabla f\cdot\nabla\sigma_*(\cdot,x),\bar \mu_s\rangle^2\pi(\di x,\di y)\Big]\di s \\
\label{76}
 &\leq C\int_0^t\Big[\langle f,\eta_s\rangle^2+\|\eta_s\|_{\mathcal H^{-J_0,j_0}}^2\| f\|_{ {\mathcal H^{L,\gamma}}}^2\Big]\di s.
\end{align}
Let $\{f_a\}_{a\ge1}$ be an orthonormal basis of
$\mathcal H^{J_0,j_0}(\mathbf{R}^d)$.  Using the operator
$\mathsf T_x:f\in \mathcal H^{J_0,j_0}(\mathbf{R}^d)\mapsto\nabla
f\cdot\nabla\sigma(\cdot,x)\in \mathcal H^{J_0-1,j_0}(\mathbf{R}^d)$
defined for all $x\in\mathcal{X}$ and Lemma~\ref{lem : <xi,gxi>}, we
have a.s.  for all $t\in[0,T]$:
\begin{align}\label{77}
\sum_{a\ge1}\int_0^t \mathbf  U_s[f_a](\eta_s)\langle f_a,\eta_s\rangle\di s&=\int_0^t\int_{\mathcal{X}\times\mathcal{Y}}(y-\langle\sigma_*(\cdot,x),\bar \mu_s\rangle)\Big(\sum_{a\ge1}\langle f_a,\eta_s\rangle\langle\mathsf T_xf_a,\eta_s\rangle\Big)\pi(\di x,\di y)\di s\nonumber\\
&= \int_0^t\int_{\mathcal{X}\times\mathcal{Y}}(y-\langle\sigma_*(\cdot,x),\bar \mu_s\rangle)\langle\eta_s,\mathsf T_x^*\eta_s\rangle_{\mathcal H^{-J_0,j_0}}\pi(\di x,\di y)\di s\nonumber\\
&\leq C\int_0^t\|\eta_s\|^2_{\mathcal H^{-J_0,j_0}}\di s.
\end{align} 
Therefore, using the bounds~\eqref{76} and~\eqref{77}, together with
$\mathcal H^{J_0,j_0}(\mathbf R^d)\hookrightarrow_{\text{H.S.}}
\mathcal H^{L,\gamma}(\mathbf R^d)$, we have a.s.  for all
$t\in[0,T]$:
\begin{align*}
\|\eta_t\|_{\mathcal H^{-J_0,j_0}}^2=\sum_{a\ge1}\langle f_a,\eta_t\rangle^2\leq C\int_0^t\|\eta_s\|^2_{\mathcal H^{-J_0,j_0}}\di s.
\end{align*}
By Gronwall's lemma, a.s. $\|\eta_t\|_{\mathcal H^{-J_0,j_0}}=0$ for
all $t\in[0,T]$. This concludes the proof of
Proposition~\ref{prop:uniqueness clt}.
\end{proof}

  \subsubsection{End of the proof of Theorem~\ref{thm:clt}}
  
  \begin{proof}[Proof of Theorem~\ref{thm:clt}] \begin{sloppypar} Let
      $\ell \in\{1,2\}$ and $N_\ell$ be such that in distribution
      $\eta^{N_\ell}\to \eta^{\ell}$ in
      $\mathcal D(\mathbf R_+,\mathcal H^{-J_0+1,j_0}(\mathbf R^d))$
      (see Proposition~\ref{prop relative compactness}). By
      Lemma~\ref{lem: continuity prop eta}, a.s.
      $ \eta^{\ell}\in \mathcal C(\mathbf{R}_+,\mathcal
      H^{-J_0+1,j_0}(\mathbf{R}^d))$.  Consider now
      $( \eta^{\ell,*},\mathscr G^{\ell,*}) $ a limit point of
      $( \eta^{N_\ell},\sqrt {N_\ell}\, M^{N_\ell})_{N_\ell\ge1}$ in
      $\mathscr E$. Up to extracting a subsequence from $N_\ell$, we
      assume that in distribution and as $N_\ell\to +\infty$,
 $$(  \eta^{N_\ell},\sqrt {N_\ell}\, M^{N_\ell})\to ( \eta^{\ell,*},\mathscr G^{\ell,*}) \text{ in } \mathscr E.$$
 Considering the marginal distributions, we then have by uniqueness of
 the limit in distribution, for $\ell=1,2$ (see also
 Proposition~\ref{prop.CVMN}):
\begin{equation}\label{eq.=law}
    \eta^{\ell,*}=\eta^{\ell},  \text{ and }  \mathscr G^{\ell,*}=\mathscr{G} \text{ in distribution}.
\end{equation} 
By Proposition~\ref{pr.LM-TCL}, $\eta^{1,*}$ and $\eta^{2,*}$ are two
weak solutions of~\eqref{eq.CLT} with initial distribution $\nu_0$
(see also Lemma~\ref{le.eta0}). Since strong uniqueness
for~\eqref{eq.CLT} (see Proposition~\ref{prop:uniqueness clt}) implies
weak uniqueness for~\eqref{eq.CLT}, we deduce that
$\eta^{1,*}=\eta^{2,*}$ in distribution. By~\eqref{eq.=law}, this
implies that $\eta^{1}=\eta^{2}$ in distribution and then, that the
whole sequence $(\eta^N)_{N\ge1}$ converges in distribution in
$\mathcal D(\mathbf R_+,\mathcal H^{-J_0+1,j_0}(\mathbf
R^d))$. Denoting by $\eta^*$ its limit, we have proved that $\eta^*$
has the same distribution as the unique weak solution $\eta^\star$
of~\eqref{eq.CLT} with initial distribution $\nu_0$.  This concludes
the proof of Theorem~\ref{thm:clt}.
\end{sloppypar}
 \end{proof}


%
%
%
%
%

\subsection{The case when $\beta=3/4$}
 
In this section, we assume that $d=1$. Recall
$\mathsf f_2:x\in \mathbf R\mapsto |x|^2$, which belongs to
$\mathcal H^{J_0,j_0}(\mathbf R)$ because $j_0-2=3-2>1/2$
(see~\eqref{eq.J0}).


\begin{proof}[Proof of Proposition \ref{pr.beta34}]
  Assume $\beta=3/4$.  The proof of Proposition~\ref{pr.beta34} is
  divided into two steps.  \medskip

\noindent
\textbf{Step 1.} Let $f\in \mathcal H^{J_0,j_0}(\mathbf{R})$.  When
$\beta=3/4$, it appears that for non affine test functions $f$, the
term $\langle f,R_k^{N}\rangle$ is not negligible any more.  In this
step we simply rewrite~\eqref{eq.CVL3} by decomposing the term
$\langle f,R_k^{N}\rangle$ into two terms:
$\langle f,R_k^{N}\rangle= \langle f,\mathscr R_k^N\rangle+\langle
f,\mathscr B_k^N\rangle$ where $\langle f,\mathscr R_k^N\rangle$ will
be negligible and $\langle f,\mathscr B_k^N\rangle$ will not be
negligible.  More precisely, by~\eqref{R_k^{2,N}}
and~\eqref{algorithm}, it holds
\begin{align*}
\langle f,R_k^{N}\rangle&=\frac{1}{2N}\sum_{i=1}^N(W_{k+1}^i-W_k^i)^2  f(\widehat{W}_k^i)\\
&=\frac{1}{2N}\sum_{i=1}^N\Big|\frac{\alpha}{N|B_k|}\sum_{(x,y)\in B_k}(y-g_{W_k}^N(x))\nabla_W\sigma_*(W_k^i,x)+\frac{\ve_k^i}{N^{3/4}}\Big|^2 f''(\widehat{W}_k^i)\\
&=\langle f,\mathscr R_k^N\rangle +\langle f,\mathscr B_k^N\rangle,
\end{align*}
where 
\begin{align*}
\langle f,\mathscr R_k^N\rangle&=\frac{1}{2N}\sum_{i=1}^N\Big|\frac{\alpha}{N|B_k|}\sum_{(x,y)\in B_k}(y-g_{W_k}^N(x))\nabla_W\sigma_*(W_k^i,x)\Big|^2f''(\widehat{W}_k^i)\\
&\quad+\frac{1}{N}\sum_{i=1}^N\frac{\alpha}{N|B_k|}\sum_{(x,y)\in B_k}(y-g_{W_k}^N(x))\nabla_W\sigma_*(W_k^i,x)\frac{\ve_k^i}{N^{3/4}} f''(\widehat{W}_k^i)
\end{align*}
and 
$$
\langle f,\mathscr B_k^N\rangle= \frac{1}{2N^{5/2}}\sum_{i=1}^N|\ve_k^i|^2 f''(\widehat{W}_k^i).
$$
From~\eqref{eq.CVL3}, one then has:
\begin{align}\label{eq.CLT-3/4}
\langle f,\eta_t^N\rangle-\langle f,\eta_0^N\rangle-\int_0^t(   \mathbf U_s[f](\eta^N_s)-\mathbf V_s[f](\eta_s^N))\di s -\langle f,\sqrt{N}M^N_t\rangle=-\mathbf{\tilde e}^{N}_t[f]+\sqrt{N}\sum_{k=0}^{\lfloor Nt\rfloor-1}\langle f,\mathscr B_k^N\rangle,
\end{align} 
where 
\begin{align*}
\nonumber
\mathbf{\tilde e}^{N}_t [f]:&=\frac{1}{\sqrt{N}}\int_0^t\int_{\mathcal{X}\times\mathcal{Y}}\alpha\langle\sigma_*(\cdot,x),\eta_s^N\rangle\langle\nabla f\cdot\nabla\sigma_*(\cdot,x),\eta_s^N\rangle\pi(\di x,\di y)\di s\\
&\quad -\sqrt{N}\langle f,V_t^N\rangle-\sqrt{N}\sum_{k=0}^{\lfloor Nt\rfloor-1}\langle f,\mathscr R_k^{N}\rangle-\frac{\sqrt{N}}{N^{1+\beta}}\sum_{k=0}^{\lfloor Nt\rfloor-1}\sum_{i=1}^N\nabla f(W_k^i)\, \ve_k^i.
\end{align*}

\noi \textbf{Step 2.}  Let $\eta$ be a limit point of
$(\eta^N)_{N\ge 1}$ in
$\mathcal D(\mathbf R_+,\mathcal H^{-J_0+1,j_0}(\mathbf R))$. Let $N'$
be such that in distribution $\eta^{N'}\to \eta$ as $N'\to
+\infty$. In this step, we pass to the limit in~\eqref{eq.CLT-3/4}
with the test function $\mathsf f_2:x\in \mathbf R\mapsto |x|^2$.  By
Propositions~\ref{prop relative compactness} and~\ref{prop relative
  compactness M}, the sequence $(\eta^{N'},\sqrt {N'}M^{N'})_{N'\ge1}$
is tight in $\mathscr E$ (see~\eqref{eq.LE-}).  Let
$(\eta^*,\mathscr G^*) $ be one of its limit point in~$\mathscr E$. Up
to extracting a subsequence from $N'$, it holds:
$$
(\eta^{N'},\sqrt{N'}\, M^{N'})\to(\eta^*,\mathscr G^*), \text{ as } N'\to +\infty. 
$$
  Considering the marginal distributions, 
  it holds  in distribution, 
 $$
    \eta^*=\eta  \text{ and }  \mathscr G^*=\mathscr{G} \in \mathcal C(\mathbf{R}_+,\mathcal H^{-J_0,j_0}(\mathbf{R})).
$$
Introduce $\mathcal C(\eta^*)\subset \mathbf R_+$, whose complementary
in $\mathbf R_+$ is at most countable, such that for all
$u\in \mathcal C(\eta^*)$ ,
$s\in \mathbf R_+\mapsto \eta^*_s\in \mathcal H^{-J_0+1,j_0}(\mathbf
R)$ is a.s. continuous at $u$. Then, with the same arguments as those
used to derive~\eqref{eq.CVL1} and using also the fact that
$0\in\mathcal C(\eta^*)$, one has for all $t\in \mathcal C(\eta^*)$ and
in distribution,
  \begin{equation}\label{beta34 eq1}
\mathbf B_t[f](\eta^{N'})-\langle f,\eta_0^{N'}\rangle-\langle f,\sqrt{N'}M_t^{N'}\rangle\to \mathbf B_t[f](  \eta^*)-\langle f,\eta_0^*\rangle- \langle f,\mathscr G_t^*\rangle \text{ as } N'\to +\infty.
\end{equation}
Let us now deal with the two terms in the right-hand side
of~\eqref{eq.CLT-3/4}.  Using~\eqref{convexity
  inequality},~\eqref{eq.conditio} and~\ref{as:data},
\begin{align*}
\mathbf E\Big[\frac{1}{|B_k|}\Big|\sum_{(x,y)\in B_k}(y-g_{W_k}^N(x))\nabla_W\sigma_*(W_k^i,x)\ve_k^i\Big|\Big]&\leq 2\mathbf E\Big[\frac{1}{|B_k|}\sum_{(x,y)\in B_k}\left|(y-g_{W_k}^N(x))\nabla_W\sigma_*(W_k^i,x)\right|^2\Big]\\
&\quad+2\mathbf E\Big[|\ve_k^i|^2\Big]\\
&\leq C\Big[\mathbf E\Big[\frac{1}{|B_k|}\sum_{(x,y)\in B_k}(|y|^2+1)\Big]+1\Big]\leq C.
\end{align*}
We now set $f=\mathsf f_2$.  Then, we have
$\mathbf E\big [|\langle\mathsf f_2,\mathscr R_k^N\rangle|\big ]\leq
C(N^{-2}+N^{-7/4})$.  Using also the lines below~\eqref{eq.CVL2} and
Lemma~\ref{lem:remainder terms}, it holds:
\begin{align}\label{beta34 eq2}
\mathbf E\big [|\mathbf{\tilde e}^{N}_t [\mathsf  f_2]|\big ]\leq C\Big[\frac{1}{\sqrt N}+N^{3/2}\Big(\frac{1}{N^2}+\frac{1}{N^{7/4}}\Big)+\frac{1}{N^{1/4}}\Big].
\end{align}
On the other hand, using~\ref{as:noise} and the law of large number,
it holds a.s. as $N\to+\infty$,
\begin{align}\label{beta34 eq3}
\sqrt{N}\sum_{k=0}^{\lfloor Nt\rfloor-1}\langle \mathsf f_2,\mathscr B_k^N\rangle =\frac{1}{2N^2}\sum_{k=0}^{\lfloor Nt\rfloor-1}\sum_{i=1}^N|\ve_k^i|^2\mathsf f_2''(\widehat{W}_k^i)=\frac{1}{N^2}\sum_{k=0}^{\lfloor Nt\rfloor-1}\sum_{i=1}^N|\ve_k^i|^2\to t\textbf{E}[|\ve_1^1|^2].
\end{align}
Therefore, using~\eqref{beta34 eq1},\eqref{beta34 eq2}
and~\eqref{beta34 eq3}, it holds for all $t\in \mathcal C(\eta^*)$,
a.s.
$\mathbf B_t[\mathsf f_2]( \eta^*)-\langle \mathsf
f_2,\eta_0^*\rangle- \langle \mathsf f_2,\mathscr G^*_t\rangle
=t\textbf{E}[|\ve_1^1|^2]$. 
 The mapping\footnote{For all
  $m\in\mathcal D(\mathbf R_+,\mathcal H^{-J_0+1,j_0}(\mathbf R^d))$
  and $f\in \mathcal H^{J_0,j_0}(\mathbf R^d)$,
  $t\in\mathbf R_+\mapsto \mathbf B_t[f](m)$ is right-continuous. This
  is clear since $t\mapsto \langle f,m_t\rangle$ is right-continuous,
  and because
  $s\mapsto\mathbf U_s[f](m_s)-\mathbf V_s[f](m_s) \in
  L^\infty_{loc}(\mathbf R_+)$.}  
$s\in\mathbf R_+\mapsto \mathbf B_s[\mathsf f_2](\eta^*)$ is right continuous and
  $s\mapsto \langle\mathsf f_2,\mathscr G^*_s\rangle$  is  continuous.  By a standard continuity argument (the same as the one used in Proposition \ref{lem:convergence to lim eq}), it holds a.s. for all $t\in \mathbf R_+$,
$\mathbf B_t[\mathsf f_2]( \eta^*)-\langle \mathsf
f_2,\eta_0^*\rangle- \langle\mathsf f_2,\mathscr G^*_t\rangle
=t\textbf{E}[|\ve_1^1|^2]$.  The proof of Proposition~\ref{pr.beta34}
is complete.
%
%
%
\end{proof}


\bigskip
\noindent
\textbf{Acknowledgment}.  The authors are grateful to Benoît Bonnet
for fruitful discussions about the proof of Proposition~\ref{prop:
  existence uniqueness eq transport}.  A.D. is grateful for the
support received from the Agence Nationale de la Recherche (ANR) of
the French government through the program "Investissements d'Avenir"
(16-IDEX-0001 CAP 20-25).  A.G. is supported by the French ANR under
the grant ANR-17-CE40-0030 (project \emph{EFI}) and the Institut
Universtaire de France. M.M. acknowledges the support of the the
French ANR under the grant ANR-20-CE46-0007 (\emph{SuSa} project).
B.N. is supported by the grant IA20Nectoux from the Projet I-SITE
Clermont CAP 20-25.

\bibliography{biblio}
 
\appendix

\section{A note on relative compactness}
\label{app:A}

In this section, we prove, in our Hilbert setting, that the condition
(4.21) in \cite[Theorem (4.20)]{kurtz1975semigroups} can be replaced
by the slightly modified condition, namely the regularity condition of
item 2 in Proposition~\ref{lem:note kurtz} below.

In the following $\mathcal H_1$ and $\mathcal H_2$ are two Hilbert
spaces (whose duals are respectively denoted by $\mathcal H_1^{-1}$
and $\mathcal H_2^{-1}$) such that
$\mathcal H_1\hookrightarrow_{\text{H.S.}} \mathcal H_2$.

\begin{prop}
\label{lem:note kurtz}
Let
$(\mu^N)_{N\ge 1}\subset\mathcal D(\mathbf R_+, \mathcal
H_2^{-1})$ 
be a sequence of processes satisfying the  following two   conditions:
\begin{enumerate}
\item\label{kurtz item cc} Compact containment condition : for every $T>0$ and $\eta>0$, there exists $C>0$ such that 
$$\sup_{N\geq1}\ \mathbf{P}\big({\sup}_{t\in[0,T]} \|\mu_t^N\|_{\mathcal H_2^{-1}}^2>C\big)\leq\eta.$$  
\item\label{kurtz item reg} Regularity condition : for every
  $\delta>0$, $N\geq1,\ 0\leq t\leq T$ and
  $0\leq u\leq (T-t)\wedge\delta$, there exists $F_N(\delta)<\infty$
  such that
$$\mathbf{E}\left[ \|\mu_{t+u}^N-\mu_t^N\|_{\mathcal H_2^{-1}}^2\right]\leq F_N(\delta),$$
with $\underset{\delta\rightarrow 0}{\lim} \ \underset{N\ge 1}{\limsup} \ F_N(\delta)=0$. 
\end{enumerate}
Then, the sequence $(\mu^N)_{N\ge 1}\subset\mathcal D(\mathbf R_+, \mathcal H_1^{-1})$ is relatively compact. 
\end{prop}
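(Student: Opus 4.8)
The plan is to deduce Proposition~\ref{lem:note kurtz} from the Kurtz--Jakubowski type criterion \cite[Theorem (4.20)]{kurtz1975semigroups} (or equivalently the Hilbertian criterion used in the CLT literature \cite{fernandez1997hilbertian, jourdainAIHP}), by checking the two hypotheses of that theorem: a compact containment condition in $\mathcal H_1^{-1}$ and a modulus-of-continuity condition on the real-valued processes $\langle f_a, \mu^N\rangle$ for $\{f_a\}_{a\ge 1}$ an orthonormal basis of $\mathcal H_1$. The only nontrivial point is that our item~\ref{kurtz item reg} is phrased as an $L^2$-increment bound \emph{in the weaker norm} $\|\cdot\|_{\mathcal H_2^{-1}}$, whereas the reference requires information in $\mathcal H_1^{-1}$; the Hilbert--Schmidt embedding $\mathcal H_1\hookrightarrow_{\text{H.S.}}\mathcal H_2$ is exactly what bridges this gap.

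First I would fix an orthonormal basis $\{e_a\}_{a\ge 1}$ of $\mathcal H_1$; since the embedding $\iota:\mathcal H_1\hookrightarrow \mathcal H_2$ is Hilbert--Schmidt, $\sum_{a\ge 1}\|e_a\|_{\mathcal H_2}^2=\|\iota\|_{\text{H.S.}}^2<\infty$. For any $\Phi\in\mathcal H_2^{-1}\subset\mathcal H_1^{-1}$ one has $\langle e_a,\Phi\rangle = \Phi[e_a]$, hence $|\langle e_a,\Phi\rangle|\le \|\Phi\|_{\mathcal H_2^{-1}}\|e_a\|_{\mathcal H_2}$, and therefore
\begin{align}\label{eq.app.HS}
\|\Phi\|_{\mathcal H_1^{-1}}^2=\sum_{a\ge 1}\langle e_a,\Phi\rangle^2\le \|\iota\|_{\text{H.S.}}^2\,\|\Phi\|_{\mathcal H_2^{-1}}^2.
\end{align}
Applying \eqref{eq.app.HS} pointwise in $t$ to $\Phi=\mu_t^N$ immediately upgrades item~\ref{kurtz item cc} to the compact containment condition in $\mathcal H_1^{-1}$: for $T,\eta>0$ pick $C$ from item~\ref{kurtz item cc}, then $\sup_N \mathbf P(\sup_{t\in[0,T]}\|\mu_t^N\|_{\mathcal H_1^{-1}}^2 > \|\iota\|_{\text{H.S.}}^2 C)\le \eta$, and since closed balls of $\mathcal H_1^{-1}$ are compact in $\mathcal H_2^{-1}$ (again by Hilbert--Schmidtness of the dual embedding $\mathcal H_2^{-1}\hookrightarrow_{\text{H.S.}}\mathcal H_1^{-1}$, which is the transpose of $\iota$), this is the required compact containment.

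Next I would verify the regularity/oscillation hypothesis. Applying \eqref{eq.app.HS} to the increment $\Phi=\mu_{t+u}^N-\mu_t^N$ and taking expectations, item~\ref{kurtz item reg} yields
\begin{align*}
\mathbf E\big[\|\mu_{t+u}^N-\mu_t^N\|_{\mathcal H_1^{-1}}^2\big]\le \|\iota\|_{\text{H.S.}}^2\, F_N(\delta)=:\widetilde F_N(\delta),
\end{align*}
for all $0\le t\le T$ and $0\le u\le (T-t)\wedge\delta$, with $\lim_{\delta\to 0}\limsup_N \widetilde F_N(\delta)=0$. This is precisely condition (4.21) of \cite[Theorem (4.20)]{kurtz1975semigroups} \emph{stated directly at the level of the dual Hilbert space} $\mathcal H_1^{-1}$; in particular, testing against any fixed $f\in\mathcal H_1$ and using $|\langle f,\Phi\rangle|\le \|f\|_{\mathcal H_1}\|\Phi\|_{\mathcal H_1^{-1}}$ gives the one-dimensional oscillation bound $\mathbf E[|\langle f,\mu_{t+u}^N\rangle-\langle f,\mu_t^N\rangle|^2]\le \|f\|_{\mathcal H_1}^2\widetilde F_N(\delta)$, so that each coordinate process $\{t\mapsto\langle f_a,\mu_t^N\rangle\}_N$ satisfies Aldous' tightness criterion in $\mathcal D(\mathbf R_+,\mathbf R)$. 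Combining the compact containment in $\mathcal H_1^{-1}$ with these coordinatewise tightness statements, the Hilbertian relative compactness criterion (as in \cite[Theorem (4.20)]{kurtz1975semigroups}, see also \cite[Théorème 3.1]{fernandez1997hilbertian}) gives that $(\mu^N)_{N\ge 1}$ is relatively compact in $\mathcal D(\mathbf R_+,\mathcal H_1^{-1})$, which is the assertion.

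I expect the main obstacle to be purely expository rather than mathematical: one must make sure that the version of the Kurtz criterion being invoked is stated (or can be restated) with the oscillation condition expressed in the dual-space norm rather than against individual test functions with a uniform Lipschitz-type constant, and that condition (4.21) there — which in the original is a bound of the form $\mathbf E[q(\mu^N_{t+u},\mu^N_t)\wedge 1]$ for a suitable metric $q$ — is indeed implied by our $L^2$-in-$\mathcal H_1^{-1}$ bound via Markov's inequality and $\min(x,1)\le x$. Once that identification is made, \eqref{eq.app.HS} does all the real work and the rest is a direct citation.
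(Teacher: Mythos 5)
Your Hilbert--Schmidt inequality $\|\Phi\|_{\mathcal H_1^{-1}}^2=\sum_a\langle e_a,\Phi\rangle^2\le\|\iota\|_{\text{H.S.}}^2\,\|\Phi\|_{\mathcal H_2^{-1}}^2$ is correct, but the proof has a genuine gap at its central step. The unconditional, deterministic-time bound $\mathbf{E}[\|\mu^N_{t+u}-\mu^N_t\|^2]\leq F_N(\delta)$ of item~\ref{kurtz item reg} is \emph{not} ``precisely condition (4.21)'' of \cite[Theorem (4.20)]{kurtz1975semigroups}: that condition is a \emph{conditional} estimate, of the form $\mathbf{E}\big[q(\mu^N_{t+u},\mu^N_t)^\beta\wedge 1\,\big|\,\mathcal F^N_t\big]\leq\mathbf{E}[\gamma_N(\delta)\,|\,\mathcal F^N_t]$, and it is exactly this conditional (equivalently, stopping-time/Aldous-type) structure that controls the Skorokhod modulus $w'$. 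An unconditional moment bound at deterministic times does not by itself exclude oscillations of fixed size concentrated on a window of vanishing length at a random location (such a window contributes only $O(1/N)$ to every fixed-time increment moment, yet is exactly what $w'$ must rule out); the same objection applies to your claim that the coordinate processes ``satisfy Aldous' tightness criterion,'' since Aldous' criterion is a statement about stopping times. The opening sentence of the paper's appendix announces that its purpose is precisely to show that (4.21) \emph{can be replaced by} item~\ref{kurtz item reg}, i.e.\ that the two conditions are not the same and a proof is required. That proof is the entire Step~2 there: one constructs the $\ve$-oscillation pure-jump approximations $\mu^{N,\ve}$ via the stopping times $\tau_k^{N,\ve}$ and $s_k^{N,\ve}$, invokes \cite[Lemmas 4.4 and 4.5]{kurtz1975semigroups}, and transfers the deterministic-time bound to the stopping times $\tau_1^{N,\ve}\wedge\delta$ and $\tau_{k+1}^{N,\ve}\wedge(\tau_k^{N,\ve}+\delta)$ by approximating them by decreasing discrete stopping times. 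None of this appears in your argument, and it cannot be replaced by ``Markov's inequality and $\min(x,1)\le x$.''

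A secondary slip occurs in the compact containment step. You weaken the hypothesis to $\sup_t\|\mu^N_t\|_{\mathcal H_1^{-1}}^2\le\|\iota\|_{\text{H.S.}}^2C$ and then assert that ``closed balls of $\mathcal H_1^{-1}$ are compact in $\mathcal H_2^{-1}$.'' The direction is reversed: since $\mathcal H_1\hookrightarrow_{\text{H.S.}}\mathcal H_2$, it is $\mathcal H_2^{-1}$ that embeds compactly into $\mathcal H_1^{-1}$ (Schauder), so the compact subsets of $\mathcal H_1^{-1}$ you need are the closed balls $\{\|\phi\|_{\mathcal H_2^{-1}}\le C\}$ --- and for that you should keep item~\ref{kurtz item cc} as stated rather than pass to an $\mathcal H_1^{-1}$ bound, since a ball of the infinite-dimensional space $\mathcal H_1^{-1}$ is not compact in $\mathcal H_1^{-1}$. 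This is how Step~1 of the paper's proof proceeds.
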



\begin{proof}
  To prove this result, we follow the proof of \cite[Theorem
  (4.20)]{kurtz1975semigroups}.  More precisely, we show that the
  assumptions of \cite[Theorem 1]{kurtz1975semigroups} are satisfied
  (namely conditions (4.2) and (4.3) there), when
  $E= \mathcal H_1^{-1}$
  there.  
\medskip

\noindent
\textbf{Step 1.}  The condition (4.2) in \cite[Theorem
1]{kurtz1975semigroups} is satisfied (when $E= \mathcal H_1^{-1}$
there).  \medskip

\noindent
We have that $\mathcal H_1$ is compactly embedded in $\mathcal H_2$
(since a Hilbert-Schmidt embedding is compact).  By Schauder's
theorem, $\mathcal H_2^{-1}$ is compactly embedded in
$\mathcal H_1^{-1}$.  Thus, for all $C>0$, the set
$\lbrace\phi\in \mathcal H_1^{-1},\ \|\phi\|_{\mathcal H_2^{-1}}\leq
C\rbrace$ is compact. Therefore, the condition (4.2) in
\cite{kurtz1975semigroups} is satisfied.  \medskip

\noindent
\textbf{Step 2.} The condition (4.3) in \cite[Theorem
1]{kurtz1975semigroups} is satisfied (when $E= \mathcal H_1^{-1}$
there).  \medskip

\noindent 
By \cite[Lemma 4.4]{kurtz1975semigroups},
$\lbrace t\mapsto\mu_t^N,t\in\mathbf{R}_+\rbrace_{N\geq1}$ is
relatively compact in $\mathcal D(\mathbf R_+, \mathcal H_1^{-1})$ if
for all $\ve >0$, there exists a tight sequence
$\lbrace
t\mapsto\mu_t^{N,\ve},t\in\mathbf{R}_+\rbrace_{N\geq1}\subset\mathcal
D(\mathbf R_+, \mathcal H_1^{-1})$ which is $\ve$-close to
$\lbrace t\mapsto\mu_t^N,t\in\mathbf{R}_+\rbrace_{N\geq1}$.  Following
\cite{kurtz1975semigroups}, we define, for $\ve>0$, the sequence
$\lbrace
t\mapsto\mu_t^{N,\ve},t\in\mathbf{R}_+\rbrace_{N\geq1}\subset\mathcal
D(\mathbf R_+, \mathcal H_1^{-1})$ in
$\mathcal D(\mathbf R_+, \mathcal H_2^{-1})$ of pure jump processes as
follows. Let us first introduce, for $N\geq1$ and $\ve>0$,
$\tau_0^{N,\ve}:=0$ and, for $k>0 :$
\begin{equation*}\begin{split}
\tau_k^{N,\ve}&:=\inf\lbrace t>\tau_{k-1}^{N,\ve} :  \|\mu_t^N-\mu^N_{\tau_{k-1}^{N,\ve}}\|_{\mathcal H_2^{-1}}>\ve\rbrace,\\
s_k^{N,\ve}&:=\sup\lbrace t<\tau_{k}^{N,\ve} :   \|\mu^N_t-\mu^N_{\tau_{k}^{N,\ve}}\|_{\mathcal H_2^{-1}}\geq\ve\rbrace.
\end{split}\end{equation*}
Then we define, for $\ve>0$, 
$$\mu_t^{N,\ve}:=\left\{
\begin{array}{ll}
\mu^N_0 & \mbox{for} \quad t<\frac{1}{2}(s_1^{N,\ve}+\tau_1^{N,\ve})\\
\mu_{\tau_k^{N,\ve}}^N & \text{for} \ \frac{1}{2}(s_k^{N,\ve}+\tau_k^{N,\ve})\leq t<\frac{1}{2}(s_{k+1}^{N,\ve}+\tau_{k+1}^{N,\ve}).\\
\end{array}
\right.$$ We claim that for any $\ve>0$, the sequence
$\lbrace t\mapsto\mu_t^{N,\ve},t\in \mathbf R_+\rbrace_{N\geq1}$ verifies
condition (4.2) of \cite[Theorem 4.1]{kurtz1975semigroups} when
$E= \mathcal H_1^{-1}$ there. Indeed, by the discussion in the first
step above, this follows from the compact containment condition
verified by
$\lbrace t\mapsto\mu_t^{N},t\in\mathbf{R}_+\rbrace_{N\geq1}$ in
$\mathcal D(\mathbf R_+, \mathcal H_2^{-1})$ (see item 1 in
Proposition~\ref{lem:note kurtz}) together with the fact that
$ {\sup}_{t\in\mathbf{R}_+} \|\mu_t^N-\mu_t^{N,\ve}\|_{\mathcal
  H_2^{-1}}\leq C_0\ve$, where the constant $C_0>0$ is independent
of~$N$ and $\ve$.

It remains to prove that for any $\ve>0$,
$\lbrace t\mapsto\mu_t^{N,\ve},t\in\mathbf{R}_+\rbrace_{N\geq1}$
satisfies the condition (4.3) in \cite[Theorem
4.1]{kurtz1975semigroups} when $E= \mathcal H_1^{-1}$ there (so that
it will be tight for each $\ve >0$).  Since
$\mathcal H_2^{-1} \hookrightarrow \mathcal H_1^{-1}$, it is enough to
show that for any $\ve>0$,
$\lbrace t\mapsto\mu_t^{N,\ve},t\in\mathbf{R}_+\rbrace_{N\geq1}$
satisfies the condition (4.3) in \cite[Theorem
4.1]{kurtz1975semigroups} when $E=\mathcal H_2^{-1}$ there.  By
\cite[Lemma 4.5]{kurtz1975semigroups} (and its note) and the
construction of
$\lbrace t\mapsto\mu_t^{N,\ve},t\in\mathbf{R}_+\rbrace_{N\geq1}$, it
is sufficient to bound
$\delta\mapsto\textbf{P} (\tau_1^{N,\ve}\leq\delta )$ and
$\delta\mapsto\textbf{P} (\tau_{k+1}^{N,\ve}-s_k^{N,\ve}\leq\delta )$
by a function $\delta\mapsto G_N^\ve(\delta)$ such that for every
$\ve>0,$
\begin{equation}\label{eq.ConFN}
\underset{\delta\rightarrow 0}{\lim} \ \underset{N}{\limsup} \ G_N^\ve(\delta)=0.
\end{equation}
Let us prove that we can indeed bound these two terms by such
$ G_N^\ve(\delta)$ satisfying~\eqref{eq.ConFN}.  Recall that by item 2
in Proposition~\ref{lem:note kurtz}, for every $\delta>0$,
$N\geq1,\ 0\leq t\leq T$ and $0\leq u\leq (T-t)\wedge\delta$,
$$\textbf{E}\left[ \|\mu_{t+u}^N-\mu^N_t\|_{\mathcal H_2^{-1}}^2 \right]\leq F_N(\delta), \text{ with $\underset{\delta\rightarrow 0}{\lim} \ \underset{N\ge 1}{\limsup} \ F_N(\delta)=0$. }$$
Now, introduce as in \cite{kurtz1975semigroups}, the distance $r$ on $\mathcal H_2^{-1}$ defined by $r(\varphi^1,\varphi^2)=\min(1, \|\varphi^1-\varphi^2\|_{\mathcal H^{-1}_2})$.  
We thus have: 
 \begin{align}\label{kurtz eq3}
\textbf{E}\left[ r\left(\mu^N_{t+u},\mu^N_t\right)^2\right]\leq F_N(\delta),
\end{align}
On the one hand, we have, for $0<\ve<1$,  
\begin{align}\label{kurtz eq1}
\textbf{P} (\tau_1^{N,\ve}\leq \delta )&=\textbf{P}\big( \|\mu^N_{\tau_1^{N,\ve}\wedge\delta}-\mu^N_0\|_{\mathcal H_2^{-1}} >\ve\big)=\textbf{P}\big(r \big(\mu^N_{\tau_1^{N,\ve}\wedge\delta},\mu^N_0\big)>\ve\big) \leq\frac{1}{\ve^2}\textbf{E}\big[r\big(\mu^N_{\tau_1^{N,\ve}\wedge\delta},\mu^N_0\big)^2\big].
\end{align}
On the other hand, we have, for $k\geq1$ and $0<\ve<1$, 
\begin{align}\label{kurtz eq2}
\textbf{P} (\tau_{k+1}^{N,\ve}-s_k^{N,\ve}\leq \delta )\leq\textbf{P} (\tau_{k+1}^{N,\ve}-\tau_k^{N,\ve}\leq \delta )&=\textbf{P}\big(r (\mu^N_{\tau_{k+1}^{N,\ve}\wedge(\tau_k^{N,\ve}+\delta)},\mu^N_{\tau_k^{N,\ve}} )>\ve\big)\nonumber\\
&\leq \frac{1}{\ve^2}\textbf{E}\big[r\big(\mu^N_{\tau_{k+1}^{N,\ve}\wedge(\tau_k^{N,\ve}+\delta)},\mu^N_{\tau_k^{N,\ve}}\big)^2\big].
\end{align}
From~\eqref{kurtz eq3}, and because the stopping times appearing
in~\eqref{kurtz eq1} and~\eqref{kurtz eq2} can be approximated by
sequences of decreasing discrete stopping times, we can indeed bound
$\delta\mapsto\textbf{P} (\tau_1^{N,\ve}\leq\delta )$ and
$\delta\mapsto\textbf{P} (\tau_{k+1}^{N,\ve}-s_k^{N,\ve}\leq\delta )$
by $F_N(\delta)$ which satisfies~\eqref{eq.ConFN}. Consequently, for
each $0<\ve<1$, the condition (4.3) in \cite[Theorem
4.1]{kurtz1975semigroups} is satisfied for the sequence
$\lbrace t\mapsto\mu_t^{N,\ve},t\in\mathbf{R}_+\rbrace_{N\geq1}$ when
$E= \mathcal H_2^{-1}$ there. We can thus apply \cite[Theorem
4.1]{kurtz1975semigroups} to
$\lbrace t\mapsto\mu_t^{N,\ve},t\in\mathbf{R}_+\rbrace_{N\geq1}$,
which is therefore tight in
$\mathcal D(\mathbf R_+, \mathcal H_1^{-1})$ for each $0<\ve<1$.
Using~\cite[Lemma~4.4]{kurtz1975semigroups} (with
$E= \mathcal H_1^{-1}$ there), this concludes the proof of the lemma.
\end{proof}

 \medskip

 \noindent
 \section{Technical lemmata}
 \label{app:B}
In this section we state and prove Lemma~\ref{lem : additionnal terms}, Lemma~\ref{lem : <xi,gxi>} and Lemma~\ref{le.CR}.

\begin{lem}\label{lem : additionnal terms}
  Let $\beta\ge 1/2$ and assume~\ref{as:batch}-\ref{as:batch
    limite}. Recall
  $\mathcal H^{J_1,j_1}(\mathbf{R}^d)\hookrightarrow \sob$
  (see~\eqref{eq.SE2}). Then, for all $T>0$, there exists $C>0$ such
  that for all $f\in \mathcal H^{J_1,j_1}(\mathbf{R}^d)$, $N\ge 1$,
  and $t\in[0,T]$, it holds:
\begin{enumerate}[label=(\roman*), ref=\textit{(\roman*)}]
\item\label{add terms item i} $\mathbf{E}\left[\sum_{k=0}^{\lfloor Nt\rfloor-1}2\langle f,\Upsilon_{\frac{k+1}{N}^-}^N\rangle\sqrt{N}\langle f,M_k^N\rangle+4 N\langle f,M_k^N\rangle^2\right]\leq C\| f\|_{\mathcal H^{L,\gamma}}^2$.
\item\label{add terms item ii} $\mathbf{E}\left[\sum_{k=0}^{\lfloor Nt\rfloor-1}\frac{2\sqrt{N}}{N^{1+\beta}}\langle f,\Upsilon_{\frac{k+1}{N}^-}^N\rangle\sum_{i=1}^N\nabla f(W_k^i)\cdot\ve_k^i+\frac{4 }{N^{1+2\beta}}\Big|\sum_{i=1}^N\nabla f(W_k^i)\cdot\ve_k^i\Big|^2\right]\leq \frac{C}{N^{2\beta-1}}\| f\|_{\mathcal H^{L,\gamma}}^2$.
\item\label{add terms item iii} $\mathbf{E}\left[\sum_{k=0}^{\lfloor Nt\rfloor-1}N|\langle f,R_k^{N}\rangle|^2\right]\leq C\| f\|_{\mathcal H^{L,\gamma}}^2\left[\frac{1}{N^2}+\frac{N^2}{N^{4\beta}}\right]$.
\item\label{add terms item iv} $\mathbf{E}\left[\sum_{k=0}^{\lfloor Nt\rfloor-1}\langle f,\Upsilon_{\frac{k+1}{N}^-}\rangle\sqrt{N}\langle f,R_k^{N}\rangle\right]\leq C\| f\|_{\mathcal H^{L,\gamma}}^2\left[1+\frac{1}{N}+\frac{N^3}{N^{4\beta}}\right]+\mathbf{E}\left[\int_0^t\langle f,\Upsilon_s^N\rangle^2\di s\right]$.
\item\label{add terms item v} $\mathbf{E}\left[\big|\sum_{k=0}^{\lfloor Nt\rfloor-1}\langle f,\Upsilon_{\frac{k+1}{N}^-}^N\rangle \mathbf  a_k^N[f]-\sqrt{N}\int_0^t\langle f,\Upsilon_s^N\rangle \mathbf L_s^N[f]\di s\big|\right]\leq C\| f\|_{\mathcal H^{L,\gamma}}^2$.
\item\label{add terms item vi} $\mathbf{E}\left[\sum_{k=0}^{\lfloor Nt\rfloor-1}|\mathbf   a_k^N[f]|^2\right]\leq C\| f\|_{\mathcal H^{L,\gamma}}^2$.
\end{enumerate}
\end{lem}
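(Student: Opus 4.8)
\textbf{Proof strategy for Lemma~\ref{lem : additionnal terms}.}
The plan is to estimate each of the six terms separately, relying throughout on the moment bound of Lemma~\ref{le.W}, on the martingale-increment estimate \eqref{eq.MTT} (and its fourth-moment refinement \eqref{2.40}), on the remainder estimate \eqref{borneERk]}, on the noise computations \eqref{eq.ekekj-0}--\eqref{eq.ekekj}, and on the a priori bound $\sup_{N}\sup_{t\le T}\mathbf E[\|\Upsilon_t^N\|_{\mathcal H^{-J_1,j_1}}^2]<+\infty$ from Lemma~\ref{lem : unif bound xi and eta}, together with the Hilbert--Schmidt embedding $\mathcal H^{J_1,j_1}(\mathbf R^d)\hookrightarrow_{\mathrm{H.S.}}\mathcal H^{L,\gamma}(\mathbf R^d)$ from \eqref{eq.SE2}, which controls $\sum_a\|f_a\|_{\mathcal H^{L,\gamma}}^2$ after one plugs in an orthonormal basis. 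A recurring device, used for all the cross terms $\langle f,\Upsilon^N_{(k+1)/N^-}\rangle\times(\cdots)$, is the inequality $2ab\le a^2+b^2$ (or, when summing over $k$, $2\langle f,\Upsilon^N_{(k+1)/N^-}\rangle Z_k\le \frac1N\langle f,\Upsilon^N_{(k+1)/N^-}\rangle^2+NZ_k^2$ to absorb the $\sqrt N$), so that the cross terms are reduced to either a pure-square term already controlled elsewhere in the lemma, or to $\mathbf E\big[\int_0^t\langle f,\Upsilon_s^N\rangle^2\di s\big]$, which is exactly the quantity allowed to appear on the right-hand side of items \ref{add terms item iv} and \ref{add terms item v} (and, after summing against an orthonormal basis, in the Gronwall argument of Lemma~\ref{lem : unif bound xi and eta}).

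Concretely: for item \ref{add terms item i}, the square term $\sum_{k}4N\langle f,M_k^N\rangle^2$ is bounded by $C\|f\|_{\mathcal C^{2,\gamma_*}}^2\le C\|f\|_{\mathcal H^{L,\gamma}}^2$ directly from \eqref{eq.MTT} (there are $\lfloor Nt\rfloor\le NT$ terms each of size $C\|f\|^2/N^2$, with the extra $N$ giving $O(1/N)$, hence $O(1)$); for the cross term, split $2\langle f,\Upsilon^N_{(k+1)/N^-}\rangle\sqrt N\langle f,M_k^N\rangle$ using $2\langle f,\Upsilon^N_{(k+1)/N^-}\rangle\sqrt N\langle f,M_k^N\rangle\le \frac 1{NT}\langle f,\Upsilon^N_{(k+1)/N^-}\rangle^2+ NT\cdot N\langle f,M_k^N\rangle^2$ and use $\sup_{k<\lfloor NT\rfloor}\mathbf E[\langle f,\Upsilon^N_{(k+1)/N^-}\rangle^2]\le C\|f\|_{\mathcal H^{L,\gamma}}^2$ from Lemma~\ref{lem : unif bound xi and eta} (after summing the $1/(NT)$ prefactor over $\le NT$ indices) plus \eqref{eq.MTT} again for the other piece. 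Item \ref{add terms item iii} is immediate from \eqref{borneERk]}: summing $N\cdot|\langle f,R_k^N\rangle|^2\le CN\|f\|_{\mathcal C^{2,\gamma_*}}^2(N^{-4}+N^{-4\beta})$ over $\lfloor Nt\rfloor\le NT$ terms gives $C\|f\|_{\mathcal H^{L,\gamma}}^2(N^{-2}+N^{2-4\beta})$. Item \ref{add terms item ii} follows from \eqref{eq.ekekj-0}--\eqref{eq.ekekj}: the square term has expectation $\le \frac{C}{N^{1+2\beta}}\sum_{k}\sum_i\mathbf E[|\nabla f(W_k^i)\cdot\ve_k^i|^2]\le C\|f\|^2 N^{1-2\beta}$ (Cauchy--Schwarz, Lemma~\ref{le.W}, \ref{as:noise}); the cross term is handled as in item \ref{add terms item i} by $2ab\le a^2+b^2$, using that $\mathbf E[\langle f,\Upsilon^N_{(k+1)/N^-}\rangle^2]$ is bounded and the noise part carries the decaying prefactor. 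Item \ref{add terms item vi}: since $\mathbf a_k^N[f]=\sqrt N\int_{k/N}^{(k+1)/N}\mathbf L_s^N[f]\di s$ and, by \eqref{eq.boundLs}, $|\mathbf L_s^N[f]|\le \frac{C\|f\|_{\mathcal C^{2,\gamma_*}}}{N}\sum_i(1+|W^i_{\lfloor Ns\rfloor}|^{\gamma_*})$, Jensen gives $|\mathbf a_k^N[f]|^2\le \frac{C\|f\|^2}{N^3}\big(\sum_i(1+|W_k^i|^{\gamma_*})\big)^2$; by \eqref{convexity inequality} and Lemma~\ref{le.W}, $\mathbf E[|\mathbf a_k^N[f]|^2]\le C\|f\|^2/N^2$, and summing over $\le NT$ terms gives $O(1)$.

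Items \ref{add terms item iv} and \ref{add terms item v} are the main obstacle and require a little more care. For \ref{add terms item iv}, use $\langle f,\Upsilon^N_{(k+1)/N^-}\rangle\sqrt N\langle f,R_k^N\rangle\le \langle f,\Upsilon^N_{(k+1)/N^-}\rangle^2+N\langle f,R_k^N\rangle^2$; the second piece is exactly item \ref{add terms item iii}, giving $C\|f\|^2(1+N^{-2}+N^{2}\cdot N^{2-4\beta})$ — wait, more precisely $C\|f\|^2(1+N^{-1}+N^3N^{-4\beta})$ after re-tracking the constants; the first piece, $\sum_k\langle f,\Upsilon^N_{(k+1)/N^-}\rangle^2$, must be rewritten as an integral: since $\Upsilon^N$ is piecewise constant on the intervals $[k/N,(k+1)/N)$ with $\Upsilon^N_{(k+1)/N^-}=\Upsilon^N_{k/N}$ restricted appropriately, one has $\frac1N\sum_{k=0}^{\lfloor Nt\rfloor-1}\langle f,\Upsilon^N_{(k+1)/N^-}\rangle^2=\int_0^{\lfloor Nt\rfloor/N}\langle f,\Upsilon_s^N\rangle^2\di s\le \int_0^t\langle f,\Upsilon_s^N\rangle^2\di s$ — but the statement has no $1/N$ in front of that sum, so actually one keeps $\sum_k\langle f,\Upsilon^N_{(k+1)/N^-}\rangle^2$ and bounds it by $N\int_0^t\langle f,\Upsilon_s^N\rangle^2\di s$; then combine with the factor so that, after absorbing, the term $\mathbf E[\int_0^t\langle f,\Upsilon_s^N\rangle^2\di s]$ appears with coefficient $1$ as in the statement (here one uses that in item \ref{add terms item iv} the allowed right-hand side explicitly contains $\mathbf E[\int_0^t\langle f,\Upsilon_s^N\rangle^2\di s]$ without a constant, so the bookkeeping must be done to land on coefficient exactly $1$; achieving this requires choosing the Young-inequality weights appropriately, e.g. $\langle f,\Upsilon^N_{(k+1)/N^-}\rangle\sqrt N\langle f,R_k^N\rangle\le \varepsilon_k\langle f,\Upsilon^N_{(k+1)/N^-}\rangle^2+\frac{N}{4\varepsilon_k}\langle f,R_k^N\rangle^2$ with $\varepsilon_k$ summing suitably). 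For item \ref{add terms item v}, write $\sum_{k=0}^{\lfloor Nt\rfloor-1}\langle f,\Upsilon^N_{(k+1)/N^-}\rangle\mathbf a_k^N[f]-\sqrt N\int_0^t\langle f,\Upsilon_s^N\rangle\mathbf L_s^N[f]\di s$, observe that $\mathbf a_k^N[f]=\sqrt N\int_{k/N}^{(k+1)/N}\mathbf L_s^N[f]\di s$ so that the difference telescopes into $\sqrt N\sum_k\int_{k/N}^{(k+1)/N}\big(\langle f,\Upsilon^N_{(k+1)/N^-}\rangle-\langle f,\Upsilon_s^N\rangle\big)\mathbf L_s^N[f]\di s$ plus the leftover boundary integral $-\sqrt N\int_{\lfloor Nt\rfloor/N}^t\langle f,\Upsilon_s^N\rangle\mathbf L_s^N[f]\di s$; the increment $\langle f,\Upsilon^N_{(k+1)/N^-}\rangle-\langle f,\Upsilon_s^N\rangle$ for $s\in[k/N,(k+1)/N)$ is controlled by the jump of $\langle f,\Upsilon^N\rangle$ over one step, i.e. by $\sqrt N|\langle f,\mu^N_{(k+1)/N^-}-\mu^N_{k/N}\rangle|$, whose $L^2$-norm is $O(N^{-1/2}+N^{-\beta})$ times $\|f\|$ by the single-step estimate underlying \eqref{799}; combined with $\mathbf E[|\mathbf L_s^N[f]|^2]\le C\|f\|^2$ and Cauchy--Schwarz over the $O(N)$ intervals each of length $1/N$, the total $\sqrt N$ prefactor is compensated and one obtains $O(1)\cdot\|f\|_{\mathcal H^{L,\gamma}}^2$; the boundary term is $O(N^{-1/2})$ by the same bounds. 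Finally, note that every occurrence of $\|f\|_{\mathcal C^{2,\gamma_*}}$ is replaced by $\|f\|_{\mathcal H^{L,\gamma}}$ via \eqref{eq.SE1}, and since $\beta\ge 1/2$ all the displayed $N$-powers are bounded, so each right-hand side reduces to the stated form; this completes the proof.
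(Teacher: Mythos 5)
There is a genuine gap: your treatment of the cross terms in items \ref{add terms item i} and \ref{add terms item ii} is circular. You bound $2\langle f,\Upsilon^N_{\frac{k+1}{N}^-}\rangle\sqrt N\langle f,M_k^N\rangle$ by Young's inequality and then control $\mathbf E[\langle f,\Upsilon^N_{\frac{k+1}{N}^-}\rangle^2]$ by invoking Lemma~\ref{lem : unif bound xi and eta}; but that lemma is itself proved by feeding the estimates of Lemma~\ref{lem : additionnal terms} into a Gronwall argument, so you may not use it here. Moreover the right-hand side of item \ref{add terms item i} does not allow a term $\mathbf E[\int_0^t\langle f,\Upsilon_s^N\rangle^2\,\di s]$, so the Young-inequality route cannot yield the stated bound without such an a priori control. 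The paper avoids this entirely: the cross terms in \ref{add terms item i} and \ref{add terms item ii} have expectation exactly zero, because $\Upsilon^N_{\frac{k+1}{N}^-}=\sqrt N(\nu_k^N-\bar\mu^N_{\frac{k+1}{N}})$ is built from an $\mathcal F_k^N$-measurable and an $\mathcal F_0^N$-measurable quantity, while $\mathbf E[\langle f,M_k^N\rangle\,|\,\mathcal F_k^N]=0$ by \eqref{eq.MkE=0} and $\ve_k^i$ is centered and independent of $\mathcal F_k^N$ by \ref{as:noise}. This conditioning argument is the missing idea, and it is what makes the right-hand sides of \ref{add terms item i}--\ref{add terms item ii} free of any $\Upsilon$-dependent term.

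A second, more local problem concerns items \ref{add terms item iv} and \ref{add terms item v}. You assert that $\Upsilon^N$ is piecewise constant on $[\frac kN,\frac{k+1}{N})$, and in item \ref{add terms item v} you control the increment $\langle f,\Upsilon^N_{\frac{k+1}{N}^-}\rangle-\langle f,\Upsilon^N_s\rangle$ by the one-step jump $\sqrt N|\langle f,\mu^N_{\frac{k+1}{N}^-}-\mu^N_{\frac kN}\rangle|$. Both claims are wrong: on such an interval $\mu^N_s=\nu_k^N$ does not move at all (that jump is literally zero), and it is the continuous part $\bar\mu^N_s=\frac1N\sum_i\delta_{\bar X^i_s}$ that varies. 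The correct control comes from the Lipschitz bound $|\bar X^i_{\frac{k+1}{N}}-\bar X^i_s|\le C/N$ of \eqref{eq.particule-Lip}, which gives $|\langle f,\bar\mu^N_{\frac{k+1}{N}}\rangle-\langle f,\bar\mu^N_s\rangle|\le C\|f\|_{\mathcal H^{L,\gamma}}/N$ as in \eqref{equation borne interieur integrale}; this is also what is needed to compare $\frac1N\sum_k\langle f,\Upsilon^N_{\frac{k+1}{N}^-}\rangle^2$ with $\int_0^t\langle f,\Upsilon_s^N\rangle^2\di s$ in item \ref{add terms item iv}. The resulting orders of magnitude happen to coincide with yours, and items \ref{add terms item iii} and \ref{add terms item vi} are fine, so the argument is repairable (with the Young weights $\frac1N a^2+Nb^2$ you eventually identify for item \ref{add terms item iv}), but as written the justification of \ref{add terms item iv} and \ref{add terms item v} does not stand.
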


\begin{proof}
  Let $T>0$ and $f\in \mathcal H^{J_1,j_1}(\mathbf{R}^d)$. In what
  follows, $C>0$ is a constant, independent of $N\ge1$, $t\in [0,T]$,
  $f$, and $k\in\{0,\dots,\lfloor NT\rfloor-1\}$ which can change from
  one occurence to another.  We recall that for $N\geq 1$ and
  $k\geq 1$, $\mathcal{F}_k^N$ is the $\boldsymbol \sigma$-algebra
  generated by $\{W_0^i\}_{i=1}^N$, $B_j$ and $(\ve_j^i)_{i=1}^N$ for
  $j=0,\dots,k-1$, and that
  $\mathcal{F}_0^N:=\boldsymbol{\sigma}\{\{W_0^i\}_{i=1}^N\}$,
  see~\eqref{eq.filtration}. Recall also the definitions of $M_k^N$
  and $R_k^N$ in~\eqref{def Mk} and~\eqref{R_k^{2,N}} respectively, of
  $\mathbf a_s^N$ and $\mathbf L_s^N$ in~\eqref{eq.akN}, and that for
  $N\geq1$ and $t\in\mathbf{R}_+$,
  $\Upsilon_t^N=\sqrt{N}(\mu_t^N-\bar \mu_t^N)$ (see
  also~\eqref{eq.mubarparticule}).  We start by proving item~\ref{add
    terms item i} in Lemma~\ref{lem : additionnal terms}.  For all
  $t\in[0,T]$, because for all $a\in \mathbf N$ and
  $b\in \{1,\ldots,N\}$, $W_a^b$ is $\mathcal{F}_a^N$-measurable and
  $\ve_a^b\indep \mathcal F_a^N$ (see~\ref{as:noise}) together with
  the fact that $\bar X_s^b$ is $\mathcal{F}_0^N$-measurable (for all
  $s\ge 0$), one has using also~\eqref{eq.MkE=0}:
\begin{align}\label{86}
\mathbf{E}\Big[\sum_{k=0}^{\lfloor Nt\rfloor-1}\langle f,\Upsilon_{\frac{k+1}{N}^-}^N\rangle\sqrt{N}\langle f,M_k^N\rangle\Big]&=N\sum_{k=0}^{\lfloor Nt\rfloor-1}\textbf{E}\Big[\big\langle f,\mu_{\frac{k+1}{N}^-}^N-\bar \mu_{\frac{k+1}{N}}^N\big\rangle\langle f,M_k^N\rangle\Big]\nonumber\\
&=N\sum_{k=0}^{\lfloor Nt\rfloor-1}\textbf{E}\big[\langle f,\nu_k^N\rangle\textbf{E}\left[\left.\langle f,M_k^N\rangle\right|\mathcal{F}_k^N\right]\big]\nonumber\\
&\quad -N\sum_{k=0}^{\lfloor Nt\rfloor-1} \textbf{E}\Big[  \langle f,\bar \mu_{\frac{k+1}{N}}^N \rangle\textbf{E}\Big[ \langle f,M_k^N\rangle \big |\mathcal{F}_k^N\Big]\Big]  =0.
\end{align}
By~\eqref{eq.MTT} $\textbf{E}\left[\langle f,M_k^N\rangle^2\right]\leq C\|f\|_{\mathcal H^{L,\gamma}}^2/N^2$. Together with~\eqref{86}, we deduce  item~\ref{add terms item i}.

Let us now prove item~\ref{add terms item ii}.  
We have,   using~\ref{as:noise}, 
\begin{equation*}\begin{split}
\textbf{E}\big[\langle f,\Upsilon_{\frac{k+1}{N}^-}^N\rangle\sum_{i=1}^N\nabla f(W_k^i)\cdot \ve_k^i\big]&=\sqrt{N}\, \textbf{E}\Big[(\langle f,\nu_k^N\rangle-\langle f,\bar \mu_{\frac{k+1}{N}}^N\rangle)\sum_{i=1}^N\nabla f(W_k^i)\cdot \ve_k^i\Big]\\
&=\sqrt{N}\, \sum_{i=1}^N\textbf{E}\left[(\langle f,\nu_k^N\rangle-\langle f,\bar \mu_{\frac{k+1}{N}}^N\rangle)  \nabla f(W_k^i)  \right] \cdot\textbf{E}[\ve_k^i] =0.
\end{split}\end{equation*} 
On the other hand, using~\eqref{eq.ekekj},
$\sob\hookrightarrow \mathcal C^{2,\gamma_*}(\mathbf{R}^d)$
(see~\eqref{eq.SE1}), and the same arguments as those used
in~\eqref{eq333}, it holds:
$$\textbf{E}\big[\big|\sum_{i=1}^N\nabla f(W_k^i)\cdot\ve_k^i\big|^2\big]=\sum_{i=1}^N \textbf{E}\big[\big| \nabla f(W_k^i)\cdot\ve_k^i\big|^2\big]  \leq CN\|f\|_{\mathcal C^{2,\gamma_*}}^2 \le CN\|f\|_{\mathcal H^{L,\gamma}}^2.$$
This ends the proof of item~\ref{add terms item ii}.  Item~\ref{add
  terms item iii} is a direct consequence of~\eqref{borneERk]} and
$\sob\hookrightarrow \mathcal C^{2,\gamma_*}(\mathbf{R}^d)$.
 
Let us now prove item~\ref{add terms item iv}.  We have that
\begin{equation}\label{eq.la-ici}
\sum_{k=0}^{\lfloor Nt\rfloor-1} \langle f,\Upsilon_{\frac{k+1}{N}^-}\rangle\sqrt{N}\langle f,R_k^{N}\rangle\leq\sum_{k=0}^{\lfloor Nt\rfloor-1}\frac{1}{N}\langle f,\Upsilon_{\frac{k+1}{N}^-}\rangle^2+\sum_{k=0}^{\lfloor Nt\rfloor-1}N^2|\langle f,R_k^{N}\rangle|^2.
\end{equation}
 On the one hand, by item~\ref{add terms item iii}, 
\begin{equation}\label{eq.la}
\mathbf{E}\Big[\sum_{k=0}^{\lfloor Nt\rfloor-1}N^2|\langle f,R_k^{N}\rangle|^2\Big]\leq C\|f\|_{\mathcal H^{L,\gamma}}^2\Big[\frac{1}{N}+\frac{N^3}{N^{4\beta}}\Big].
\end{equation}
On the other hand, we have 
\begin{equation*}\begin{split}
\Big|\int_{0}^{\frac{\lfloor Nt\rfloor}{N}}\langle f,\Upsilon_s^N\rangle^2\di s-\sum_{k=0}^{\lfloor Nt\rfloor-1}\frac{1}{N}\langle f,\Upsilon_{\frac{k+1}{N}^-}^N\rangle^2\Big|&=\Big|\sum_{k=0}^{\lfloor Nt\rfloor-1}\int_{\frac{k}{N}}^{\frac{k+1}{N}}\Big(\langle f,\Upsilon_s^N\rangle^2-\langle f,\Upsilon_{\frac{k+1}{N}^-}^N\rangle^2\Big)\di s\Big|\\
&\leq \sum_{k=0}^{\lfloor Nt\rfloor-1}\int_{\frac{k}{N}}^{\frac{k+1}{N}}\Big|\langle f,\Upsilon_s^N\rangle^2-\langle f,\Upsilon_{\frac{k+1}{N}^-}^N\rangle^2\Big|\di s.
\end{split}\end{equation*}
Let   $0\leq k<\lfloor Nt\rfloor$ and  $s\in\left(\frac{k}{N},\frac{k+1}{N}\right)$. 
We have  
\begin{align}
\langle f,\Upsilon_s^N\rangle^2-\langle f,\Upsilon_{\frac{k+1}{N}^-}^N\rangle^2&=N\left[\left(\langle f,\nu_k^N\rangle-\langle f,\bar \mu_s^N\rangle\right)^2-\big(\langle f,\nu_k^N\rangle-\langle f,\bar \mu_{\frac{k+1}{N}}^N\rangle\big)^2\right]\nonumber\\
&=N\left[2\langle f,\nu_k^N\rangle(\langle f,\bar \mu_{\frac{k+1}{N}}^N\rangle-\langle f,\bar \mu_s^N\rangle)+\langle f,\bar \mu_s^N\rangle^2-\langle f,\bar \mu_{\frac{k+1}{N}}^N\rangle^2\right]\nonumber\\
&=N\Big[2\langle f,\nu_k^N\rangle\left(\langle f,\bar \mu_{\frac{k+1}{N}}^N\rangle-\langle f,\bar \mu_s^N\rangle\right)\nonumber\\
&\quad+\big(\langle f,\bar \mu_s^N\rangle-\langle f,\bar \mu_{\frac{k+1}{N}}^N\rangle\big)\big(\langle f,\bar \mu_s^N\rangle+\langle f,\bar \mu_{\frac{k+1}{N}}^N\rangle\big)\Big].\label{eq, lem approx integrale}
\end{align}
It holds using~\eqref{tilde w borne} and that
$|\bar{X}_{\frac{k+1}{N}}-\bar{X}_s|\leq C/N$
(by~\eqref{eq.particule-Lip}):
\begin{align}\label{equation borne interieur integrale}
&|\langle f,\bar \mu_{\frac{k+1}{N}}^N\rangle-\langle f,\bar \mu_s^N\rangle|=\Big|\frac{1}{N}\sum_{i=1}^Nf(\bar{X}_{\frac{k+1}{N}}^i)-f(\bar{X}^i_s)\Big|\nonumber\\
&\leq\frac{1}{N}\Big[\sum_{i=1}^N|\bar{X}^i_{\frac{k+1}{N}}-\bar{X}^i_s||\nabla f(\bar{X}^i_s)|+C|\bar{X}^i_{\frac{k+1}{N}}-\bar{X}^i_s|^2\, \sup_{t\in (0,1)}|\nabla^2f|(t\bar{X}^i_{\frac{k+1}{N}}+(1-t)\bar{X}^i_s)\Big]\nonumber\\
&\leq\frac{C}{N}\|f\|_{\mathcal C^{2,\gamma_*}}  \sum_{i=1}^N|\bar{X}^i_{\frac{k+1}{N}}-\bar{X}^i_s|+|\bar{X}^i_{\frac{k+1}{N}}-\bar{X}^i_s|^2\leq\frac{C}{N} \|f\|_{\mathcal H^{L,\gamma}}. 
\end{align}
Going back to~\eqref{eq, lem approx integrale}, and using
also~\eqref{tilde w borne}, we have:
\begin{equation*}\begin{split}
\Big |\langle f,\Upsilon_s^N\rangle^2-\langle f,\Upsilon_{\frac{k+1}{N}^-}^N\rangle^2\Big |&\leq C\|f\|_{\mathcal H^{L,\gamma}}\left(|\langle f,\nu_k^N\rangle|+\left|\langle f,\bar \mu_s^N\rangle+\langle f,\bar \mu_{\frac{k+1}{N}}^N\rangle\right|\right)\\
&\leq C\|f\|_{\mathcal H^{L,\gamma}}\Big ( \frac{\|f\|_{\mathcal C^{2,\gamma_*}}}{N}\sum_{i=1}^N(1+|W_k^i|^{\gamma_*})+C\|f\|_{\mathcal C^{2,\gamma_*}}\Big ).
\end{split}\end{equation*}
Therefore,   using Lemma~\ref{le.W}, we have shown that 
\begin{equation*}\begin{split}
\textbf{E}\Big[\Big|\int_{0}^{\frac{\lfloor Nt\rfloor}{N}}\langle f,\Upsilon_s^N\rangle^2\di s-\sum_{k=0}^{\lfloor Nt\rfloor-1}\frac{1}{N}\langle f,\Upsilon_{\frac{k+1}{N}^-}^N\rangle^2\Big|\Big]&\leq \sum_{k=0}^{\lfloor Nt\rfloor-1}\int_{\frac{k}{N}}^{\frac{k+1}{N}}\textbf{E}\Big[\Big|\langle f,\Upsilon_s^N\rangle^2-\langle f,\Upsilon_{\frac{k+1}{N}^-}^N\rangle^2\Big|\Big]\di s\\
&\leq C\|f\|_{\mathcal H^{L,\gamma}}^2,
\end{split}\end{equation*}
so that
$$
\textbf{E}\Big[ \sum_{k=0}^{\lfloor Nt\rfloor-1}\frac{1}{N}\langle
f,\Upsilon_{\frac{k+1}{N}^-}^N\rangle^2 \Big]\le C\|f\|_{\mathcal
  H^{L,\gamma}}^2 + \textbf{E}\Big[ \int_{0}^{\frac{\lfloor
    Nt\rfloor}{N}}\langle f,\Upsilon_s^N\rangle^2\di s \Big].$$ On the
other hand, using~\eqref{cc eq obj},~\eqref{tilde w borne}, and
$\sob\hookrightarrow \mathcal C^{2,\gamma_*}(\mathbf{R}^d)$,
\begin{align*}
\mathbf E[\langle f,\Upsilon_s^N\rangle^2]\le CN\Big[\mathbf E\big[ \langle f,  \mu_s^N\rangle^2 \big]+ \mathbf E\big[ \langle f,\bar \mu_{s}^N\rangle^2 \big]\Big] &\le CN\Big[\|f\|_{\mathcal C^{2,\gamma_*}}^2    +   \frac{1}{N^2} \mathbf E\big[\big |\sum_{i=1}^{N}f(\bar{X}_t^i) \big|^2 \big]\Big]\\
&\le CN\Big[\|f\|_{\mathcal C^{2,\gamma_*}}^2    + \frac{1}{N}  \mathbf E\big[  \sum_{i=1}^{N}|f(\bar{X}_t^i) |^2 \big]\Big]\\
&\le CN \|f\|_{\mathcal H^{L,\gamma}}^2. 
\end{align*}
Therefore, it holds:
$\textbf{E[}\int_{\frac{\lfloor Nt\rfloor}{N}}^t\langle f,\Upsilon_s^N\rangle^2\di s]\leq C\|f\|_{\mathcal H^{L,\gamma}}^2$.     
Hence,
\begin{equation}\label{ici}
\textbf{E}\Big[ \sum_{k=0}^{\lfloor Nt\rfloor-1}\frac{1}{N}\langle f,\Upsilon_{\frac{k+1}{N}^-}^N\rangle^2 \Big]\le C\|f\|_{\mathcal H^{L,\gamma}}^2 + \textbf{E}\Big[ \int_{0}^{t}\langle f,\Upsilon_s^N\rangle^2\di s \Big]
\end{equation}
Item~\ref{add terms item iv} is then a consequence
of~\eqref{eq.la-ici},~\eqref{eq.la}, and~\eqref{ici}.

Let us now prove item~\ref{add terms item v}.
We have (see~\eqref{eq.akN})
\begin{equation*}\begin{split}
\mathbf{E}\Big[\sum_{k=0}^{\lfloor Nt\rfloor-1}\langle f,\Upsilon_{\frac{k+1}{N}^-}^N\rangle \mathbf  a_k^N[f]&-\sqrt{N}\int_0^{\frac{\lfloor Nt\rfloor}{N}}\!\!\!\langle f,\Upsilon_s^N\rangle \mathbf  L_s^N[f]\di s\Big]\\
&=\sqrt{N}\sum_{k=0}^{\lfloor Nt\rfloor-1}\int_{\frac{k}{N}}^{\frac{k+1}{N}}\!\!\!\textbf{E}\Big[\Big(\langle f,\Upsilon_{\frac{k+1}{N}^-}^N\rangle-\langle f,\Upsilon_s^N\rangle\Big)\mathbf  L_s^N[f]\Big]\di s.
\end{split}\end{equation*}
Using~\eqref{equation borne interieur integrale}, for
$s\in\left(\frac{k}{N},\frac{k+1}{N}\right)$, it holds:
$$
\big|\langle f,\Upsilon_{\frac{k+1}{N}^-}^N\rangle-\langle
f,\Upsilon_s^N\rangle\big|=\sqrt N\, \big|\langle f,\bar
\mu_s^N\rangle-\langle f,\bar \mu_{\frac{k+1}N}^N\rangle\big|\leq
C\frac{\|f\|_{\mathcal H^{L,\gamma}}}{\sqrt{N}},$$ and
using~\eqref{eq.boundLs}, Lemma~\ref{le.W}, and
$\sob\hookrightarrow \mathcal C^{2,\gamma_*}(\mathbf{R}^d)$, one
deduces that:
 $$ 
\ \textbf{E}\big[\left|\mathbf  L_s^N[f]\right|^2\big]\leq C\|f\|^2_{\mathcal H^{L,\gamma}}.
$$
Thus, 
\begin{equation*}\begin{split}
\mathbf{E}\Big[\Big |\sum_{k=0}^{\lfloor Nt\rfloor-1}\langle f,\Upsilon_{\frac{k+1}{N}^-}^N\rangle \mathbf  a_k^N[f]-\sqrt{N}\int_0^{\frac{\lfloor Nt\rfloor}{N}}\langle f,\Upsilon_s^N\rangle\mathbf   L_s^N[f]\di s\Big |\Big]
\leq\sqrt{N}\sum_{k=0}^{\lfloor Nt\rfloor-1}\int_{\frac{k}{N}}^{\frac{k+1}{N}}C\frac{\|f\|_{\mathcal H^{L,\gamma}}^2}{\sqrt{N}}\di s\leq C\|f\|_{\mathcal H^{L,\gamma}}^2.
\end{split}\end{equation*}
In addition we have:
\begin{align*}
\textbf{E}\Big[\sqrt{N}\Big |\int_{\frac{\lfloor Nt\rfloor}{N}}^t\langle f,\Upsilon_s^N\rangle\mathbf   L_s^N[f]\di s\Big |\Big] &\le \ \sqrt{N}\int_{\frac{\lfloor Nt\rfloor}{N}}^t\sqrt{\textbf{E} [|\langle f,\Upsilon_s^N\rangle|^2\big]}\sqrt{\textbf{E} [|\mathbf   L_s^N[f]|^2\big]}\di s \\
&\leq C\|f\|_{\mathcal H^{L,\gamma}}^2.
\end{align*}
We have thus proved item~\ref{add terms item v}.

Finally, 
\begin{equation*}\begin{split}
\textbf{E}\Big[\sum_{k=0}^{\lfloor Nt\rfloor-1}|\mathbf  a_k^N[f]|^2\Big]=N\textbf{E}\Big[\sum_{k=0}^{\lfloor Nt\rfloor-1}\Big|\int_{\frac{k}{N}}^{\frac{k+1}{N}}\mathbf  L_s^N[f]\di s\Big|^2\Big]\leq \sum_{k=0}^{\lfloor Nt\rfloor-1}\int_{\frac{k}{N}}^{\frac{k+1}{N}}\textbf{E} [ |\mathbf  L_s^N[f]|^2 ]\di s\leq C\|f\|_{\mathcal H^{L,\gamma}}^2,
\end{split}\end{equation*}
which proves item~\ref{add terms item vi}. This ends the proof of the
lemma.
\end{proof}

\begin{lem}\label{lem : <xi,gxi>}
  Let $J\ge1$ and $\gamma\ge0$. For $x\in\mathcal{X}$, recall the
  definition of $\mathsf T_x$ (see~\eqref{eq.Tx}),
  $\mathsf T_x:f\in \mathcal H^{J,\gamma}(\mathbf R^d) \mapsto \nabla
  f\cdot\nabla\sigma_*(\cdot,x)\in \mathcal H^{J-1,\gamma}(\mathbf R
  ^d)$. Then, there exists $C>0$ such that for any
  $\Upsilon\in \mathcal H^{-J+1,\gamma}(\mathbf{R}^d)$ and
  $x\in\mathcal{X}$,
\begin{equation}\label{eq du lemme 4.4}
|\langle\Upsilon,\mathsf T_x^*\Upsilon\rangle_{\mathcal H^{-J,\gamma}}|\leq C\|\Upsilon\|_{\mathcal H^{-J,\gamma}}^2. 
\end{equation}
\end{lem}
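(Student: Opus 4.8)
The plan is to estimate the bilinear form $\langle \Upsilon, \mathsf T_x^* \Upsilon\rangle$ by rewriting it in terms of an orthonormal basis and reducing it to a bound on the operator norm of $\mathsf T_x$ between the relevant dual Sobolev spaces. First I would recall that $\mathsf T_x : \mathcal H^{J,\gamma}(\mathbf R^d) \to \mathcal H^{J-1,\gamma}(\mathbf R^d)$ is bounded (this follows from \ref{as:sigma} since $\nabla \sigma_*(\cdot,x)$ and all its derivatives are bounded uniformly in $x\in \mathcal X$, so that the Leibniz rule gives $\|\nabla f\cdot \nabla\sigma_*(\cdot,x)\|_{\mathcal H^{J-1,\gamma}} \le C\|f\|_{\mathcal H^{J,\gamma}}$ with $C$ independent of $x$). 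Composing with the inclusion $\mathcal H^{J-1,\gamma}(\mathbf R^d)\hookrightarrow \mathcal H^{J,\gamma}(\mathbf R^d)$ — which has norm at most $1$ — we get a bounded operator $\mathcal H^{J,\gamma}(\mathbf R^d)\to \mathcal H^{J,\gamma}(\mathbf R^d)$, still with norm bounded by a constant $C$ independent of $x$. Dualizing, $\mathsf T_x^* : \mathcal H^{-J,\gamma}(\mathbf R^d)\to \mathcal H^{-J,\gamma}(\mathbf R^d)$ is bounded with the same norm bound.

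The subtlety is that in \eqref{eq du lemme 4.4} the element $\Upsilon$ lies a priori only in $\mathcal H^{-J+1,\gamma}(\mathbf R^d)$, which is a larger space than $\mathcal H^{-J,\gamma}(\mathbf R^d)$; but $\mathcal H^{-J+1,\gamma}(\mathbf R^d)\hookrightarrow \mathcal H^{-J,\gamma}(\mathbf R^d)$ (dual of $\mathcal H^{J,\gamma}(\mathbf R^d)\hookrightarrow \mathcal H^{J-1,\gamma}(\mathbf R^d)$), so $\Upsilon$ is indeed a well-defined element of $\mathcal H^{-J,\gamma}(\mathbf R^d)$, and $\langle \Upsilon, \mathsf T_x^*\Upsilon\rangle_{\mathcal H^{-J,\gamma}}$ makes sense. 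Concretely, letting $\{f_a\}_{a\ge 1}$ be an orthonormal basis of $\mathcal H^{J,\gamma}(\mathbf R^d)$, I would write $\langle \Upsilon, \mathsf T_x^*\Upsilon\rangle_{\mathcal H^{-J,\gamma}} = \sum_{a\ge 1}\langle f_a,\Upsilon\rangle\langle \mathsf T_x f_a,\Upsilon\rangle$ and bound it by Cauchy–Schwarz:
\begin{align*}
\Big|\sum_{a\ge1}\langle f_a,\Upsilon\rangle\langle \mathsf T_x f_a,\Upsilon\rangle\Big|
\le \Big(\sum_{a\ge1}\langle f_a,\Upsilon\rangle^2\Big)^{1/2}\Big(\sum_{a\ge1}\langle \mathsf T_x f_a,\Upsilon\rangle^2\Big)^{1/2}
= \|\Upsilon\|_{\mathcal H^{-J,\gamma}}\, \|\mathsf T_x^*\Upsilon\|_{\mathcal H^{-J,\gamma}},
\end{align*}
where the last equality uses that $\{f_a\}$ is orthonormal and $\langle \mathsf T_x f_a,\Upsilon\rangle = \langle f_a, \mathsf T_x^*\Upsilon\rangle_{\mathcal H^{J,\gamma}}$. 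Then $\|\mathsf T_x^*\Upsilon\|_{\mathcal H^{-J,\gamma}}\le C\|\Upsilon\|_{\mathcal H^{-J,\gamma}}$ by the operator norm bound of the previous paragraph, giving \eqref{eq du lemme 4.4}.

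I expect the main (and only real) obstacle to be making precise the boundedness of $\mathsf T_x$ uniformly in $x$, i.e. checking that the constant in $\|\nabla f\cdot\nabla\sigma_*(\cdot,x)\|_{\mathcal H^{J-1,\gamma}}\le C\|f\|_{\mathcal H^{J,\gamma}}$ does not depend on $x$; this is a direct consequence of $\sigma_*\in\mathcal C^\infty_b(\mathbf R^d\times\mathcal X)$ (so all mixed partials of $\sigma_*$ in the $w$-variable are bounded uniformly over $\mathbf R^d\times\mathcal X$) together with the Leibniz formula and the definition of the weighted Sobolev norm, and it was already used implicitly in the paper (e.g.\ around \eqref{eq.Tx}). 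The remaining steps — Cauchy–Schwarz, Parseval, dualization — are routine. One should also note that the right-hand side uses the $\mathcal H^{-J,\gamma}$ norm even though $\Upsilon\in\mathcal H^{-J+1,\gamma}(\mathbf R^d)$, which is consistent since the embedding $\mathcal H^{-J+1,\gamma}(\mathbf R^d)\hookrightarrow\mathcal H^{-J,\gamma}(\mathbf R^d)$ makes $\|\Upsilon\|_{\mathcal H^{-J,\gamma}}\le\|\Upsilon\|_{\mathcal H^{-J+1,\gamma}}$, so the bound is in fact also valid with the stronger norm on the right if needed.
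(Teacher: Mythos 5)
There is a genuine gap, and it sits exactly at the step the paper warns about. You claim that composing $\mathsf T_x:\mathcal H^{J,\gamma}(\mathbf R^d)\to\mathcal H^{J-1,\gamma}(\mathbf R^d)$ with ``the inclusion $\mathcal H^{J-1,\gamma}(\mathbf R^d)\hookrightarrow\mathcal H^{J,\gamma}(\mathbf R^d)$'' yields a bounded operator on $\mathcal H^{J,\gamma}(\mathbf R^d)$. That inclusion does not exist: the embedding goes the other way, $\mathcal H^{J,\gamma}(\mathbf R^d)\hookrightarrow\mathcal H^{J-1,\gamma}(\mathbf R^d)$, since controlling $J$ derivatives is more demanding than controlling $J-1$. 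The map $f\mapsto\nabla f\cdot\nabla\sigma_*(\cdot,x)$ genuinely loses one derivative, so $\mathsf T_x$ is \emph{not} bounded from $\mathcal H^{J,\gamma}(\mathbf R^d)$ to itself, and consequently $\mathsf T_x^*$ is not bounded from $\mathcal H^{-J,\gamma}(\mathbf R^d)$ to itself. Without that, your Parseval--Cauchy--Schwarz chain only yields
$|\langle\Upsilon,\mathsf T_x^*\Upsilon\rangle_{\mathcal H^{-J,\gamma}}|\le\|\Upsilon\|_{\mathcal H^{-J,\gamma}}\,\|\mathsf T_x^*\Upsilon\|_{\mathcal H^{-J,\gamma}}\le C\|\Upsilon\|_{\mathcal H^{-J,\gamma}}\|\Upsilon\|_{\mathcal H^{-J+1,\gamma}}$,
which is precisely the weaker estimate the paper states immediately after the lemma and explains is \emph{not} the content of the result.

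The missing idea is a cancellation in the quadratic form, not an operator-norm bound. The paper's proof passes to the Riesz representative $\Psi=F(\Upsilon)\in\mathcal H^{J,\gamma}(\mathbf R^d)$ (taken in $\mathcal C_c^\infty$ by density), so that $\langle\Upsilon,\mathsf T_x^*\Upsilon\rangle_{\mathcal H^{-J,\gamma}}=\langle\mathsf T_x\Psi,\Psi\rangle_{\mathcal H^{J,\gamma}}$, and then inspects the top-order terms $|k|=J$. The only dangerous contribution is of the form
$\int_{\mathbf R^d}\partial_1^{J+1}\Psi\,\partial_1\sigma_*\,\partial_1^J\Psi\,(1+|w|^{2\gamma})^{-1}\,\di w
=\tfrac12\int_{\mathbf R^d}\partial_1\big(|\partial_1^J\Psi|^2\big)\,\partial_1\sigma_*\,(1+|w|^{2\gamma})^{-1}\,\di w$,
and an integration by parts moves the extra derivative onto the smooth bounded factor $\partial_1\sigma_*(w,x)(1+|w|^{2\gamma})^{-1}$, leaving only $J$ derivatives of $\Psi$ and hence a bound by $C\|\Psi\|_{\mathcal H^{J,\gamma}}^2=C\|\Upsilon\|_{\mathcal H^{-J,\gamma}}^2$. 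This antisymmetry/integration-by-parts argument for the first-order transport operator is the heart of the lemma; your proposal does not contain it, and the route you chose cannot avoid needing it.
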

This result is stronger than what one obtains with the Cauchy-Schwarz
inequality. Indeed, the Cauchy-Schwarz inequality only implies
\begin{align*}
|\langle\Upsilon,\mathsf T_x^*\Upsilon\rangle_{\mathcal H^{-J,\gamma}}|\le \|\Upsilon\|_{\mathcal H^{-J,\gamma}}\|\mathsf T_x^*\Upsilon\|_{\mathcal H^{-J,\gamma}}\leq C\|\Upsilon\|_{\mathcal H^{-J,\gamma}}\|\Upsilon\|_{\mathcal H^{-J+1,\gamma}}.
\end{align*}
Let us mention that Lemma~\ref{lem : <xi,gxi>} extends \cite[Lemma
B.1]{sirignano2020clt} to the non compact and weighted case.
\begin{proof} Let $x\in \mathcal{X}$ and
  $\Upsilon\in \mathcal H^{-J+1,\gamma}(\mathbf{R}^d)\hookrightarrow
  \mathcal H^{-J,\gamma}(\mathbf{R}^d)$. By the Riesz representation
  theorem, there exists a unique
  $\Psi\in \mathcal H^{J,\gamma}(\mathbf{R}^d)$ such that,
$$\langle f,\Upsilon\rangle=\langle f,\Psi\rangle_{\mathcal H^{J,\gamma}}, \ \text{for} \ f\in \mathcal H^{J,\gamma}(\mathbf{R}^d).$$
We set $F(\Upsilon)= \Psi$.  The density of
$\mathcal C_c^\infty(\mathbf{R}^d)$ in
$\mathcal H^{J,\gamma}(\mathbf{R}^d)$ implies that
$\lbrace\Upsilon\in \mathcal H^{-J,\gamma}(\mathbf{R}^d) :
F(\Upsilon)\in \mathcal C_c^\infty(\mathbf{R}^d)\rbrace$ is dense in
$\mathcal H^{-J,\gamma}(\mathbf{R}^d)$.  It is thus sufficient to
show~\eqref{eq du lemme 4.4} for $\Upsilon$ such that
$\Psi=F(\Upsilon)\in \mathcal C_c^\infty(\mathbf{R}^d)$.
We have 
\begin{equation}\label{riesz}
\langle \Upsilon,\mathsf T_x^*\Upsilon\rangle_{\mathcal H^{-J,\gamma}}=\langle\Psi,\mathsf T_x^*\Upsilon\rangle=\langle\mathsf T_x\Psi,\Upsilon\rangle=\langle\mathsf T_x\Psi,\Psi\rangle_{\mathcal H^{J,\gamma}}.
\end{equation}
Let us prove that
$\left|\langle\mathsf T_x\Psi,\Psi\rangle_{J,\gamma}\right|\leq
C\|\Psi\|_{\mathcal H^{J,\gamma}}^2$ for
$\Psi \in \mathcal C_c^\infty(\mathbf{R}^d)$.  By definition, we
have
$$\langle\mathsf T_x\Psi,\Psi\rangle_{J,\gamma}=\sum_{|k|\leq
  J}\int_{\mathbf{R}^d}\left[D^k(\nabla\Psi(w)\cdot\nabla\sigma_*(w,x))D^k\Psi(w)\right]\times\frac{1}{1+|w|^{2\gamma}}\di
w.$$ In the previous sum, the only terms involving derivatives of
$\Psi$ of order greater than $J$ are the terms for which
$|k|=J$. Therefore, it is sufficient to only deal with such $k$. Pick
a multi-index $k$ such that $|k|=J$. For all $x\in\mathcal{X}$, we
have
\begin{align*}
\int_{\mathbf{R}^d}\!\! \big[D^k(\nabla\Psi(w)\cdot\nabla\sigma_*(w,x))D^k\Psi(w)\big]\times\frac{\di w }{1+|w|^{2\gamma}} &=\int_{\mathbf{R}^d}\!\! D^k\Big(\sum_{i=1}^d\partial_i\Psi(w)\partial_i\sigma_*(w,x)\Big)\times\frac{D^k\Psi(w)}{1+|w|^{2\gamma}}\di w\\
&=\sum_{i=1}^d\int_{\mathbf{R}^d}D^k\left(\partial_i\Psi(w)\partial_i\sigma_*(w,x)\right)\times\frac{D^k\Psi(w)}{1+|w|^{2\gamma}}\di w.
\end{align*} 
Let us consider the case when $i=1$ and $k=(J,0\dots,0)$. The other
cases can be treated similarly. For all $x\in \mathcal{X}$,
\begin{align}
\nonumber
 \int_{\mathbf{R}^d}D^k\left(\partial_1\Psi(w)\partial_1\sigma_*(w,x)\right)\times\frac{D^k\Psi(w)}{1+|w|^{2\gamma}}\di w&=\int_{\mathbf{R}^d}\partial_1^J\left(\partial_1\Psi(w)\partial_1\sigma_*(w,x)\right)\times\frac{\partial_1^J\Psi(w)}{1+|w|^{2\gamma}}\di w\nonumber\\
&=\int_{\mathbf{R}^d}\partial_1^{J+1}\Psi(w)\partial_1\sigma_*(w,x)\times\frac{\partial_1^J\Psi(w)}{1+|w|^{2\gamma}}\di w\nonumber\\
\label{eq before ipp}
&\quad +\sum_{j=0}^{J-1}\binom{J}{j}\int_{\mathbf{R}^d}\partial_1^{j+1}\Psi(w)\partial_1^{J-j+1}\sigma_*(w,x)\times\frac{\partial_1^J\Psi(w)}{1+|w|^{2\gamma}}\di w.
\end{align}
Since $\sigma_*$ and all its derivatives are bounded, one
has:\begin{equation*}\begin{split}
    \sum_{j=0}^{J-1}\binom{J}{j}\int_{\mathbf{R}^d}\left|\partial_1^{j+1}\Psi(w)\partial_1^{J-j+1}\sigma_*(w,x)\times\frac{\partial_1^J\Psi(w)}{1+|w|^{2\gamma}}\right|\di
    w
&\leq C\|\Psi\|_{\mathcal H^{J,\gamma}}.
\end{split}\end{equation*}
Let us now deal with the first term in the right-hand side
of~\eqref{eq before ipp}.  By Fubini's theorem :
\begin{equation*}
\int_{\mathbf{R}^d}\partial_1^{J+1}\Psi(w)\partial_1\sigma_*(w,x)\times\frac{\partial_1^J\Psi(w)}{1+|w|^{2\gamma}}\di w=\int_{\mathbf{R}^{d-1}}\int_\mathbf{R}\partial_1^{J+1}\Psi(z,w')\partial_1\sigma_*(z,w',x)\times\frac{\partial_1^J\Psi(z,w')}{1+|(z,w')|^{2\gamma}}\di z\di w'.
\end{equation*}
An integration by parts yields, for all $w'\in \mathbf{R}^{d-1}$,
using that $\Psi$ is compactly supported,
\begin{equation*}\begin{split}
&2\int_{\mathbf{R}}\partial_1^{J+1}\Psi(z,w')\partial_1\sigma_*(z,w',x)\times\frac{\partial_1^J\Psi(z,w')}{1+|(z,w')|^{2\gamma}}\di z\\
&=\left[|\partial_1^J\Psi(z,w')|^2\frac{\partial_1\sigma_*(z,w',x)}{1+|(z,w')|^{2\gamma}}\right]_{-\infty}^{+\infty}-\int_{\mathbf{R}}|\partial_1^J\Psi(z,w')|^2\partial_1\left(\frac{\partial_1\sigma_*(z,w',x)}{1+|(z,w')|^{2\gamma}}\right)\di z\\
&=\int_\mathbf{R}|\partial_1^J\Psi(z,w')|^2\times\frac{2\gamma z|(z,w')|^{2\gamma-2}\partial_1\sigma_*(z,w',x)-\partial_1^2\sigma_*(z,w',x)(1+|(z,w')|^{2\gamma})}{\left(1+|(z,w')|^{2\gamma}\right)^2}\di z.
\end{split}\end{equation*}
Therefore, 
\begin{equation*}\begin{split}
&\left|\int_{\mathbf{R}^d}\partial_1^{J+1}\Psi(w)\partial_1\sigma_*(w,x)\times\frac{\partial_1^J\Psi(w)}{1+|w|^{2\gamma}}\di w\right|\\
&=\frac 12\left|\int_{\mathbf{R}^{d-1}}\int_\mathbf{R}|\partial_1^J\Psi(z,w')|^2\times\frac{2\gamma w|(z,w')|^{2\gamma-2}\partial_1\sigma_*(z,w',x)-\partial_1^2\sigma_*(z,w',x)(1+|(z,w')|^{2\gamma})}{\left(1+|(z,w')|^{2\gamma}\right)^2}\di z\di w'\right|\\
&\leq C\int_{\mathbf{R}^{d-1}}\int_\mathbf{R}\frac{|\partial_1^J\Psi(z,w')|^2}{1+|(z,w')|^{2\gamma}}\times\di z\di w'\leq C\|\Psi\|_{\mathcal H^{J,\gamma}}^2.
\end{split}\end{equation*}
To summarize, we have shown the existence of $C<\infty$ (independent
of $x$) such that for any $\Psi\in \mathcal C^\infty_c(\mathbf{R}^d)$,
$ \left|\langle\mathsf T_x\Psi,\Psi\rangle_{J,\gamma}\right|\leq
C\|\Psi\|_{\mathcal H^{J,\gamma}}^2$.  Consequently, by~\eqref{riesz},
$ \left|\langle\Upsilon,\mathsf
  T_x^*\Upsilon\rangle_{-J,\gamma}\right|\leq C\|\Upsilon\|_{\mathcal
  H^{J,\gamma}}^2$.  This completes the proof of the lemma.
\end{proof}

\begin{lem}\label{le.CR}
  Let $N\ge 1$ and $f:\mathbf R_+\to\mathbf R$ be a piecewise
  continuous function whose jumps occur only at the times $k/N$,
  $k\ge1$. Introduce the function $g:\mathbf R_+\to\mathbf R$ defined
  by, for all $t\ge 0$,
  $g(t)=\sum_{k=0}^{\lfloor Nt\rfloor-1}\alpha_k$ where for all
  $k\ge 0$, $\alpha_k\in \mathbf R$ (recall the convention
  $\sum_{k=0}^{-1}=0$). Set for $t\ge 0$,
$$F(t)=\int_0^tf(s)\di s \text{ and } \psi(t) =F(t) +g(t).$$  
Then, for all $t\ge 0$,
$$\psi (t)^2=2\int_0^tf(s)\psi(s)\di s+\sum_{k=0}^{\lfloor Nt\rfloor-1}|\alpha_k|^2+2\sum_{k=0}^{\lfloor Nt\rfloor-1}\psi\Big(\frac{k+1}{N}^-\Big)\Big(\underbrace{g\Big(\frac{k+1}{N}\Big)-g\Big(\frac kN\Big)}_{=\alpha_k}\Big).$$
\end{lem}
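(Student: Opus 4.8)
The statement is an It\^o-type chain rule for the sum of an absolutely continuous part $F$ and a pure-jump part $g$. Since $g$ is piecewise constant with jumps $\alpha_k$ at the grid points $k/N$, the plan is to reduce to a telescoping identity over the intervals $[k/N,(k+1)/N)$ and to treat the continuous part by the usual fundamental theorem of calculus. First I would fix $t\ge 0$ and, writing $t_k=k/N$, decompose $[0,t]$ into the consecutive pieces $[0,1/N),[1/N,2/N),\dots,[\lfloor Nt\rfloor/N,t]$, so that $\psi(t)^2-\psi(0)^2=\sum_{k=0}^{\lfloor Nt\rfloor-1}\big(\psi(t_{k+1}^-)^2-\psi(t_k)^2\big)+\big(\psi(t)^2-\psi(t_{\lfloor Nt\rfloor})^2\big)$, keeping in mind $\psi(0)=0$ and that $\psi$ has no jump strictly inside any of these half-open intervals (only $F$ varies there, continuously).

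On each interval $[t_k,t_{k+1})$ the function $\psi$ coincides with $t\mapsto \psi(t_k)+\big(F(t)-F(t_k)\big)$ up to the final jump; more precisely $\psi(s)=g(t_k)+F(s)$ for $s\in[t_k,t_{k+1})$, hence $\psi$ is absolutely continuous on $[t_k,t_{k+1})$ with derivative $f$, and $\psi(t_{k+1}^-)=g(t_k)+F(t_{k+1})=\psi(t_{k+1})-\alpha_k$. Applying the ordinary chain rule to the $C^1$ (in fact absolutely continuous) map $s\mapsto\psi(s)^2$ on $[t_k,t_{k+1})$ gives $\psi(t_{k+1}^-)^2-\psi(t_k)^2=2\int_{t_k}^{t_{k+1}}f(s)\psi(s)\di s$. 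Then the jump at $t_{k+1}$ contributes $\psi(t_{k+1})^2-\psi(t_{k+1}^-)^2=(\psi(t_{k+1}^-)+\alpha_k)^2-\psi(t_{k+1}^-)^2=2\psi(t_{k+1}^-)\alpha_k+|\alpha_k|^2$. Summing over $k=0,\dots,\lfloor Nt\rfloor-1$ and adding the last continuous piece $\psi(t)^2-\psi(t_{\lfloor Nt\rfloor})^2=2\int_{t_{\lfloor Nt\rfloor}}^t f(s)\psi(s)\di s$ (valid because $g$ is constant on $[t_{\lfloor Nt\rfloor},t]$, so $\psi$ is absolutely continuous there), the continuous integrals recombine into $2\int_0^t f(s)\psi(s)\di s$, and, recalling $\alpha_k=g(t_{k+1})-g(t_k)$, one obtains exactly the claimed identity.

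There is no real obstacle here; the only points requiring a little care are bookkeeping ones. One must be careful about whether the jump at $t_{k+1}$ is assigned to the $k$-th or $(k+1)$-th interval and about the left-limit $\psi(t_{k+1}^-)$ versus the value $\psi(t_{k+1})$ — the convention in the statement is that $g$ is right-continuous with $g(t)=\sum_{k=0}^{\lfloor Nt\rfloor-1}\alpha_k$, so the jump of size $\alpha_k$ happens precisely at $t_{k+1}$ and is counted once $t$ reaches $t_{k+1}$. One should also note the degenerate case $0\le t<1/N$, where $\lfloor Nt\rfloor=0$, all sums are empty, and the identity reduces to $F(t)^2=2\int_0^t f(s)F(s)\di s$, which is the plain chain rule. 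Finally, $f$ being merely piecewise continuous is no issue: it is locally bounded and integrable, so $F$ is Lipschitz on each $[t_k,t_{k+1}]$ and the chain rule for $\psi^2$ applies on each subinterval. Assembling these observations gives the result.
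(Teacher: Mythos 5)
Your argument is correct and rests on the same basic decomposition as the paper's proof (partition $[0,t]$ at the grid points $k/N$ and apply the chain rule to $\psi^2$ on each subinterval where $\psi$ is absolutely continuous), but your handling of the jump contributions is genuinely more direct. You compute the jump of $\psi^2$ at $t_{k+1}$ in one line as $\psi(t_{k+1})^2-\psi(t_{k+1}^-)^2=2\psi(t_{k+1}^-)\alpha_k+|\alpha_k|^2$ and telescope $\psi^2$ itself; the paper instead expands $\psi(t_{k+1}^-)^2-\psi(t_k)^2$ in terms of $F$ and $g$ separately and then needs two discrete summation-by-parts manipulations (one to move $g$ onto the increments of $F$, and one for the identity expressing $g(k/N)^2$ through the $|\alpha_j|^2$ and cross terms) before the pieces recombine. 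Your route is shorter and makes the structure of the identity (continuous part plus quadratic-variation-type jump corrections) transparent; the paper's is more computational but avoids ever writing the jump of $\psi^2$ explicitly. One bookkeeping slip to fix before writing this up: your first displayed decomposition, $\psi(t)^2-\psi(0)^2=\sum_{k}\big(\psi(t_{k+1}^-)^2-\psi(t_k)^2\big)+\big(\psi(t)^2-\psi(t_{\lfloor Nt\rfloor})^2\big)$, is not an identity as written, because $\psi(t_{k+1}^-)\neq\psi(t_{k+1})$; the sum must also include the jump terms $\psi(t_{k+1})^2-\psi(t_{k+1}^-)^2$, which you do compute and add in the second paragraph, so the final assembly is correct — only that intermediate display needs correcting.
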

\begin{proof}
  For all $k\ge 0$ and $t\in [\frac{k}{N},\frac{k+1}{N})$, it holds
  $\psi(t)^2-\psi(\frac{k}{N})^2=2\int_{\frac{k}{N}}^t\psi'(s)\psi(s)\di
  s$.  Letting $t\to \frac{k+1}{N}^-$, we obtain
\begin{equation}\label{eq. psi - psi}
\psi\Big(\frac{k+1}{N}^-\Big)^2-\psi\Big(\frac{k}{N}\Big)^2=2\int_{\frac{k}{N}}^{\frac{k+1}{N}}\psi'(s)\psi(s)\di s.
\end{equation}
Since   $F$ is continuous and by definition of  $g$, it holds  $\psi(\frac{k+1}{N}^-)^2=(F(\frac{k+1}{N})+g(\frac kN))^2$. Hence,  
$$\psi\Big(\frac{k+1}{N}^-\Big)^2-\psi\Big(\frac{k}{N}\Big)^2=F\Big(\frac{k+1}{N}\Big)^2-F\Big(\frac kN\Big)^2+2g\Big(\frac kN\Big)\Big(F\Big(\frac{k+1}{N}\Big)-F\Big(\frac kN\Big)\Big).$$
Therefore,~\eqref{eq. psi - psi} reads (using also that $g'(s)=0$ for all $s\in (\frac kN,\frac{k+1}{N})$) 
$$F\Big(\frac{k+1}{N}\Big)^2-F\Big(\frac kN\Big)^2+2g\Big(\frac kN\Big)\Big(F\Big(\frac{k+1}{N}\Big)-F\Big(\frac kN\Big)\Big)=2\int_{\frac{k}{N}}^{\frac{k+1}{N}}f(s)\psi(s)\di s.$$
Now, for all $t\ge 0$, denoting $k=\lfloor Nt\rfloor$,
\begin{align*}
2\int_0^tf(s)\psi(s)\di s&=\sum_{j=0}^{k-1}2\int_{\frac jN}^{\frac{j+1}{N}}f(s)\psi(s)\di s+2\int_{\frac{k}{N}}^tf(s)\psi(s)\di s\\
&=\sum_{j=0}^{k-1}F\Big(\frac{j+1}{N}\Big)^2-F\Big(\frac jN\Big)^2+2g\Big(\frac jN\Big)\Big(F\Big(\frac{j+1}{N}\Big)-F\Big(\frac jN\Big)\Big)+\psi(t)^2-\psi\Big(\frac kN\Big)^2\\
&=F\Big(\frac kN\Big)^2+\sum_{j=0}^{k-1}2g\Big(\frac jN\Big)\Big(F\Big(\frac{j+1}{N}\Big)-F\Big(\frac jN\Big)\Big)+\psi(t)^2-\psi\Big(\frac kN\Big)^2\\
&=F\Big(\frac kN\Big)^2+\sum_{j=0}^{k-2}2F\Big(\frac{j+1}{N}\Big)\Big(g\Big(\frac jN\Big)-g\Big(\frac{j+1}{N}\Big)\Big)+2g\Big(\frac{k-1}{N}\Big)F\Big(\frac{k}{N}\Big)+\psi(t)^2-\psi\Big(\frac kN\Big)^2\\
&=-g\Big(\frac kN\Big)^2+\sum_{j=0}^{k-1}2F\Big(\frac{j+1}{N}\Big)\Big(g\Big(\frac jN\Big)-g\Big(\frac{j+1}{N}\Big)\Big)+\psi(t)^2.
\end{align*}
Hence, 
\begin{align*}
\psi(t)^2=2\int_0^tf(s)\psi(s)\di s+g\Big(\frac kN\Big)^2+2\sum_{j=0}^{k-1}F\Big(\frac{j+1}{N}\Big)\Big(g\Big(\frac{j+1}{N}\Big)-g\Big(\frac jN\Big)\Big).
\end{align*}
Using that $g(0)=0$, one can write
$g(\frac
kN)^2=\sum_{j=0}^{k-1}|\alpha_k|^2+2\sum_{j=0}^{k-1}(g(\frac{j+1}{N})-g(\frac
jN)) g(\frac jN)$. This yields,
\begin{align*}
\psi(t)^2&=2\int_0^tf(s)\psi(s)\di s+\sum_{j=0}^{k-1}|\alpha_k|^2+2\sum_{j=0}^{k-1}\Big( F\Big(\frac{j+1}{N}\Big)+g\Big(\frac jN\Big)\Big)\Big(g\Big(\frac{j+1}{N}\Big)-g\Big(\frac jN\Big)\Big)\\
&=2\int_0^tf(s)\psi(s)\di s+\sum_{j=0}^{k-1}|\alpha_k|^2+2\sum_{j=0}^{k-1}\psi\Big(\frac{j+1}{N}^-\Big)\Big(g\Big(\frac{j+1}{N}\Big)-g\Big(\frac jN\Big)\Big),
\end{align*}
which is the desired formula. 
\end{proof}
\begin{rem}\label{re.CR}
  Notice by Lemma~\ref{le.CR} and~\eqref{convexity inequality} (with
  $m=4$ there), if
  $\alpha_k=\alpha^1_k+\alpha ^2_k+\alpha ^3_k+\alpha ^4_k$, it holds
$$\psi(t)^2\le 2\int_0^tf(s)\psi(s)\di s+4\sum_{\ell=1}^4 \sum_{k=0}^{\lfloor Nt\rfloor-1} |\alpha ^\ell_k|^2+2 \sum_{\ell=1}^4 \sum_{k=0}^{\lfloor Nt\rfloor-1}\alpha_k ^\ell \, \psi\Big(\frac{k+1}{N}^-\Big).$$
\end{rem}

\end{document}